\documentclass[preprint,12pt]{elsarticle}

\usepackage{amsmath,amssymb,amsthm}
\usepackage{enumitem}
\usepackage{geometry}
\usepackage{booktabs}
\usepackage{dsfont}
\usepackage[all]{xy}
\usepackage{mathrsfs}
\usepackage{graphicx}
\usepackage{url}

\numberwithin{equation}{section}
\newtheorem{theorem}{Theorem}[section]
\newtheorem{lemma}[theorem]{Lemma}
\newtheorem{proposition}[theorem]{Proposition}
\newtheorem{corollary}[theorem]{Corollary}
\newtheorem{definition}[theorem]{Definition}

\newtheorem{remark}[theorem]{Remark}
\newtheorem{example}[theorem]{Example}
\newtheorem{problem}[theorem]{Problem}

\newcommand{\dn}{\mathord{\downarrow}\hspace{0.05em}}
\newcommand{\up}{\mathord{\uparrow}\hspace{0.05em}}
\newcommand{\uuar}{\mathord{\Downarrow}\hspace{0.05em}}

\begin{document}
\begin{frontmatter}
\title{A new method for constructing a compact and non SI-compact space}
\author{ Zhengmao He, Bin Zhao}

\address{School of Mathematics and Statistics, Shaanxi Normal University,\\ Xi'an 710119, P.R. China}

\begin{abstract}\quad In this paper, we provide a new method for constructing a compact and non $SI$-compact space and thus give a new answer for the question posed by Zhao and Ho. Using this method, we find a countable compact space which fails to be $SI$-compact. As a corollary, some new non sober frames are obtained and thus we give a new answer for a question raised by A. Jung. Inspired by the definition of $SI$-compactness, we introduce a new notion of compactness of topological spaces, namely, $\upsilon$-compactness, which is stronger than $SI$-compactness.  Some properties of $\upsilon$-compactness, $SI$-compactness and weak $\upsilon$-compactness are discussed. Specifically, we prove that (1) all closed subspaces of a $\upsilon$-compact ($SI$-compact, weakly $\upsilon$-compact) space are $\upsilon$-compact ($SI$-compact, weakly $\upsilon$-compact); (2) a characterization of $\upsilon$-compactness for the upper topology is given; (3) some characterizations of weakly $\upsilon$-compact subsets of real line and Sorgenfrey line are given.
\end{abstract}

\begin{keyword}$SI$-compact; $\upsilon$-compact; weakly $\upsilon$-compact; sober; Scott topology
\vspace*{0.6cm}

 {\it{MSC:}} 06B35, 06B30, 54A05

\end{keyword}

\end{frontmatter}
\vspace{0.1cm}

\begingroup
\renewcommand{\thefootnote}{}
\footnote{This work is supported by the National Natural Science Foundation of China (No.12331016).}
\endgroup
\begingroup
\renewcommand{\thefootnote}{}
\footnote{Corresponding author: Bin Zhao.}
\endgroup\begingroup
\renewcommand{\thefootnote}{}
\footnote{E-mail addresses: hezhengmao@snnu.edu.cn,\ zhaobin@snnu.edu.cn.}
\endgroup

\renewcommand*\contentsname{\hfill Contents \hfill}
\tableofcontents

\section{Introduction}
\setlength{\parskip}{0.5\baselineskip}
\quad In Domain theory, there are serval order topologies which are frequently discussed by scholars, such as the upper topology, lower topology, Scott topology and Lawson topology of posets. In this paper, we will focus on the order topologies of open set lattice. As we know, compactness  is one  of the most important properties for topological spaces (see \cite{RE77}). In \cite{DME15}, Zhao and Ho introduce a new type of compactness, namely $SI$-compactness which is defined by using the Scott topology of the open set lattice. Specifically, a topological space $X$ is called $SI$-compact if for every Scott irreducible subset $\mathcal{U}$ of $\mathcal{O}(X)$, $\bigcup \mathcal{U}= X$ implies $X\in\mathcal{U}$. Note that A subset $F$ of a topological space $X$ is irreducible if for each pair closed sets $A$ and $B$ of $X$, $F\subseteq A\cup B$ implies $F\subseteq A$ or $F\subseteq B$. Inspired by the definition of $SI$-compactness, we propose a new
compactness, namely, $\mathcal{T}$-compactness which is defined by using the order compatible topology $\mathcal{T}$ on the open set lattice. In this paper, we mainly consider $SI$-compactness and $\upsilon$-compactness, where the $\upsilon$-compactness is defined by the upper topology of the open set lattice. The motivation for this study is based on two aspects. Firstly, from the perspective of general topology, the compactness induced by different order topologies is used to distinguish between supercompactness and compactness. This facilitates a more profound understanding of the strengths and weaknesses of several types of compactness. This serves to reinforce the connection between general topology and order theory, while also highlighting the significant role of order topology in general topology. In addition, by studying $SI$-compactness and $\upsilon$-compactness, we provide a new Scott non-Sober frame, which offers a new method to discuss sobriety for the Scott topology in Domain theory. This method provides a possibility for solving the problem raised by H. Miao, X. Xi, Q. Li and D. Zhao in \cite{AB75}.

\quad In \cite{DME15}, Ho and Zhao find that every $SI$-compact space is always compact and they posed the following problem.

{\bf Problem 1:  Is there a compact space which is not $SI$-compact?}

In \cite{BZ35}, we first solve problem 1 by giving a compact and not $SI$-compact space. However, that space is uncountable. The following problem arises naturally.

{\bf Problem 2:  Is there a countable compact space which is not $SI$-compact?}

 In this paper, we give a new example which is countable compact but not $SI$-compact space by using the lower topology of posets. Furthermore, using this example, a new method for constructing a frame whose Scott space is non sober for the Scott topology is obtained and then we solve the following problem 3 again which is proposed by A. Jung in \cite{RTT45}. We call a topological space $X$ is sober if every irreducible closed set of $X$ is the closure of a singleton point.

{\bf Problem 3:  Is there a distributive complete lattice whose Scott space is non-sober?}

The above 3 problems play an important role in strengthening the further connection among order theory, general topology and Domain theory. Concretely, these 3 problems has a great effect on solving some of open problems related to sobriety in Domain theory. The problem 3 is first solved by Xu, Xi and Zhao in \cite{HZ16}. Concretely, they prove that the Scott space of the frame $\mathcal{Q}(L)$ is non sober, where $\mathcal{Q}(L)$ is the poset of all nonempty compact saturated subsets of the Scott space of Isbell's complete lattice with inverse inclusion order. In addition, the examples of non-sobriety for the Scott topology were given by many scholars(see \cite{SOJ81,WSMJ81,GOUY03,HZT96,DFG15}). In this paper, we give a new answer for the problem 3. Specifically, we proved that the lower topology of the dcpo $\widehat{P}$ is non sober for the Scott topology, where the dcpo $\widehat{P}$ is constructed by Miao, xi, Li and Zhao in \cite{AB75}. Furthermore, we know that there is a countable $T_{1}$ topological space $X$ such that the open set lattice $\mathcal{O}(X)$ is non sober for the Scott topology. In addition, we proved that the closed subspaces of a $SI$-compact space are $SI$-compact in this paper. Unfortunately, the open subspaces of a $SI$-compact space are not $SI$-compact in general.

\quad  We further discuss the properties of $\upsilon$-compactness. It is proved that $\upsilon$-compactness is stronger than compactness and they are equivalent in locally hypercompact spaces. In addition, we show that every closed subspace of a $\upsilon$-compact space is $\upsilon$-compact and every topological space is an open subspace of a $\upsilon$-compact space. Besides, the characterization of the $\upsilon$-compactness of the upper topology is obtained.  Furthermore, we find that a $T_{1}$ space $X$ is $\upsilon$-compact if and only if $X$ is finite. So we modify the definition of $\upsilon$-compactness. The modified compactness (called weak $\upsilon$-compactness) can hold in infinite Hausdorff spaces. Simultaneously, we characterize the weakly $\upsilon$-compact subsets of the real line and Sorgenfrey line.

\section{Preliminaries}

\quad A set $P$ with a relation $\leq$ is a {\em poset} if the relation $\leq$ is reflexive, antisymmetric and transitive.

\quad  A subset $D$ of a poset $P$ is {\em directed} if for each pair $a,b\in D$, there is a $c\in D$ such that $a,b\leq c$.

\quad Given a poset $P$ and $A\subseteq P$, let $$\up A=\{x\in P\mid a\leq x\ \mbox{for some}\ a\in A\}$$ and $$\dn A=\{x\in P\mid x\leq a\ \mbox{for some}\ a\in A\}.$$
For every $x\in P$, we write $\dn x$ for $\dn\{x\}$  and $\up x$ for $\up\{x\}$ .

\quad  Let $P$ be a poset and $x,a\in P, A\subseteq P$. We say $x$ an upper(lower) bound of $A$ if $A\subseteq\dn x$ ($A\subseteq\up x$). If the set of upper (lower) bounds of $A$ has a smallest (largest) element $a$, we say $a$ the least upper bound of $A$ and write is as $\bigvee A$ (for supremum). The infimum of set $A$ can be defined dually.

\quad  A poset $P$ is called a {\em directed complete poset} ({\em  dcpo}, for short) if every
directed subset of the poset has a supremum.

\quad Let $P$ be a poset and $x\in P$, $A\subseteq P$. We say $x$ is a {\em minimal(maximal)} element of $A$ if $x\in A$ and $\dn x\cap A=\{x\}$ ($\up x\cap A=\{x\}$). The set of all minimal (maximal) elements of $A$ is denoted by ${\rm min(A)}$ (${\rm max(A)}$).

\quad Let $P$ be a poset. A subset $U$ of  $P$ is {\em Scott open} (see \cite{GG03})
 if

 (i) $U=\up U=\{x\in P\mid y\leq x \ \mbox{for some}\ y\in U\}$; and

 (ii) for any directed subset $D$, $\bigvee D\in U$ implies
$D\cap U\neq \emptyset$ whenever $\bigvee D$ exists.

The Scott open sets on $P$ form the Scott topology $\sigma(P)$. The Scott space $(P, \sigma(P))$ will be simply written as $\Sigma P$. The Scott closed set lattice of $P$ is denoted by  $\Gamma_\sigma(P)$. The lower topology (upper topology) $\omega (P)$ ($\upsilon (P)$) on a poset $P$ is the topology that has $\{P\setminus\up x\mid x\in P\}$ ($\{P\setminus\dn x\mid x\in P\}$) as a subbase. The Alexandroff topology $\alpha (P)$ on a poset $P$ is a topology that consists of all the upper sets of $P$. A topology $\tau$ on poset $P$ is order compatible if $\upsilon(P)\subseteq \tau\subseteq\alpha(P)$.

\quad Let $P$ be a poset and $x, y\in P$. We use $x\|y$ to denote that $x$ and $y$ are incomparable.  We say that $x$ is {\em way below} $y$, in symbols $x\ll y$, iff for all directed subsets $D\subseteq P$ that has a supremum $\bigvee D$, $y\leq\bigvee D$ implies $x\leq d$ for some $d\in D$.\ If  $$\uuar a=\{b\in P\mid b\ll a\}$$ is  directed  and $\bigvee \uuar a=a$\ for all \ $a\in P$, we call $P$  a {\em continuous poset}. A complete lattice $L$ is called a {\em continuous lattice} if $L$ is a continuous poset.

\quad  The open (closed) set lattice of a topological space $X$ is denoted by $\mathcal{O}(X)$ ($\Gamma(X)$). Let $X$ be a $T_{0}$ space. The {\em specialization order} $\leq$ on $X$ is defined by $x\leq y$ if and only if $x\in cl(\{y\})$ (see \cite{GG03}). Unless otherwise stated, throughout the paper, whenever an order-theoretic concept is mentioned in the context of a $T_{0}$ space $X$, it is to be interpreted with respect to the specialization order on $X$.

\quad A topological space $X$ is said to be {\em locally compact} if for every $x\in X$ and every open set $U$ containing $x$, there is an open set $V$ and a compact subset $K$ such that $x\in V\subseteq K\subseteq U$. A topological space $X$ is called {\em locally hypercompact} if for every open set $U$ and $x\in U$, there is a finite set $F\subseteq U$ such that $x\in int(\up F)\subseteq \up F\subseteq U$ (see \cite{KDFGY35}). A topological space $X$ is said to be  {\em core-compact } if  $\mathcal{O}(X)$ is a continuous lattice (see \cite{JGKD13}). Let $P$ be a dcpo. Then for each dcpo $Q$, $\sigma(P\times Q)=\mathcal{O}(\Sigma P\times\Sigma Q)$ iff $\Sigma P$ is core-compact(see \cite{GG03}). It is known that every locally compact space is core-compact. A subset $K$ of a topological space $X$ is {\em strongly compact} if for each open set $U$, $K\subseteq U$ implies that there is a finite subset $F\subseteq X$ such that $K\subseteq\up F\subseteq U$ (see \cite{WM34}).

\quad Let $X$ be a $T_{0}$ space. A subset $K$ is called  {\em supercompact} (see \cite{MU13})  if for each open cover $\{U_{i}\mid i\in I\}\subseteq\mathcal{O}(X)$ of $K$, there is  $i_0\in I$ such that $K \subseteq U_{i_0}$.

\quad A subset $A$ of a  topological space $X$ is {\em saturated} if $A$ equals the intersection of all open sets containing $A$.

\begin{lemma}(see \cite{MU13}) The supercompact saturated sets of a $T_{0}$ space $X$ are exactly the
sets $\up x$ with $x\in X$.
\end{lemma}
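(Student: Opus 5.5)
We must show two inclusions: each set $\up x$ is supercompact and saturated, and each nonempty supercompact saturated set $K$ has the form $\up x$. (The empty set is trivially supercompact; as in the cited source, supercompact sets are understood to be nonempty.) Throughout we use the specialization order, under which $x\le y$ iff $x\in cl(\{y\})$. The basic fact is that every open set is an upper set: if $x\in U$ and $x\le y$, then $y\notin U$ would put $y$ in the closed set $X\setminus U$, hence $cl(\{y\})\subseteq X\setminus U$ and $x\notin U$, a contradiction.

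First we show $\up x$ is saturated. Let $y$ lie in every open set containing $\up x$, hence in every open set containing $x$. Then every open neighbourhood of $y$ must contain $x$: otherwise some open $V\ni y$ misses $x$, and $X\setminus V$ is a closed set containing $x$, hence containing $cl(\{x\})$... that is the wrong direction, so we argue instead as follows. The point $y$ lies in every open set containing $x$ exactly when $x\in cl(\{y\})$. Indeed, if $x\notin cl(\{y\})$, then $X\setminus cl(\{y\})$ is an open set containing $x$ but not $y$. Hence $x\le y$, so $y\in\up x$. Conversely $\up x\subseteq\bigcap\{U\in\mathcal{O}(X)\mid \up x\subseteq U\}$ trivially.

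Next, supercompactness of $\up x$. Given an open cover $\{U_i\}$ of $\up x$, the point $x$ lies in some $U_{i_0}$. Since $U_{i_0}$ is an upper set, $\up x\subseteq U_{i_0}$.

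Conversely, let $K$ be nonempty, supercompact and saturated. Since $K$ is saturated it is an upper set, being an intersection of upper sets. The key step, and the only real obstacle, is to produce a least element of $K$. Suppose, towards a contradiction, that no $x\in K$ satisfies $K\subseteq\up x$. Then for each $x\in K$ pick $k_x\in K$ with $x\not\le k_x$, i.e.\ $x\notin cl(\{k_x\})$. The sets $U_x=X\setminus cl(\{k_x\})$ are open and $x\in U_x$, so $\{U_x\mid x\in K\}$ covers $K$. By supercompactness some $U_{x_0}\supseteq K$, but $k_{x_0}\in K$ and $k_{x_0}\notin U_{x_0}$, a contradiction. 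Hence there is $x\in K$ with $K\subseteq\up x$, and since $K$ is an upper set containing $x$, we get $K=\up x$.

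Uniqueness of $x$ follows from $T_0$ (antisymmetry), though it is not needed for the statement.
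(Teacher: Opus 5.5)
Your proof is correct and complete. The paper gives no proof of this lemma; it only quotes it from Heckmann--Keimel. What you wrote is the standard argument: $\up x$ is supercompact because open sets are upper sets. A nonempty supercompact saturated $K$ must have a least element, since otherwise the open sets $X\setminus cl(\{k\})$, $k\in K$, would cover $K$ while no single one of them contains $K$.

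Three small corrections to the write-up:

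First, delete the abandoned false start in the saturation paragraph (the sentence ending ``that is the wrong direction, so we argue instead as follows''). The argument after it is the right one.

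Second, your parenthetical about the empty set is backwards. Under the paper's definition, the empty family $\{U_i\mid i\in\emptyset\}$ is an open cover of $\emptyset$ that has no index $i_0$. So $\emptyset$ is \emph{not} supercompact, and no extra convention is needed: nonemptiness is forced by the definition itself. This point matters. Since $\emptyset$ is open, it is saturated, so if $\emptyset$ were supercompact the lemma as stated would be false.

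Third, you do not need to choose a witness $k_x$ for each $x\in K$. Just take the family $\{X\setminus cl(\{k\})\mid k\in K\}$, which covers $K$ directly under your assumption that no $x\in K$ satisfies $K\subseteq\up x$.
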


\quad  A nonempty subset $F$ of a topological space $X$ is {\em irreducible}, if $F\subseteq A\cup B$ for closed subsets $A$ and $B$ implies $F\subseteq A$ or $F\subseteq B$. For every $x\in X$,\ $cl(\{x\})$ is an irreducible closed set of $X$. There are servals properties of irreducible sets which is used in this paper.

(1) Let $X$ be a topological space and $A\subseteq X$. Then $A$ is irreducible iff $cl(A)$ is irreducible.

(2) Let $X$ be a topological space and $A\subseteq X$. Then $A$ is irreducible iff for each pair open sets $U,V$, $A\cap U\neq\emptyset$ and $A\cap V\neq\emptyset$ implies $A\cap U\cap V\neq\emptyset$.

(3) Let $f:X\longrightarrow Y$ be a continuous map and $A$ an irreducible set of $X$. Then $f(A)$ is an irreducible set of $Y$.

\quad A topological space $X$ is called to be {\em sober} if every irreducible closed set of $X$ is the closure of a unique singleton set. It is proved in  \cite{GG03} that the product of some sober spaces is sober.  A topological space $X$ is called {\em hyper-sober} if for any irreducible set $F$ there is a unique $x \in F$ such that $F \subseteq cl(\{x\})$.

\quad It is proved in \cite{KU13} that for a poset $P$, $(P,\upsilon(P))$ is sober if and only if every irreducible subset of $(P,\upsilon(P))$ has a supremum. So the upper topology on the open set lattice $\mathcal{O}(X)$ of a topological space $X$ is always sober.

\quad  A topological space $X$ is {\em well-filtered} if for each filtered family $\mathcal{F}$ of compact saturated subsets of $X$ and each open set $U$ of $X$, $\bigcap \mathcal{F}\subseteq U$ implies $F\subseteq U$ for some $F\in\mathcal{F}$. It is well known
that every sober space is well-filtered (see \cite{GG03}).

\quad Let $X$ be a $T_{0}$ space. We shall use $\mathcal{Q}(X)$ to denote the poset of all nonempty
compact saturated subsets of $X$ with the reverse inclusion order. The upper Vietoris topology on $\mathcal{Q}(X)$ is the topology that has $\{\Box U\mid U\in\mathcal{O}(X)\}$ as a base, where $$\Box U=\{K\in\mathcal{Q}(X)\mid K\subseteq U\}.$$ The set $\mathcal{Q}(X)$ equipped with the upper Vietoris topology is called the upper space of $X$, denoted by  $P_{S}(X)$. Then the specialization order of the upper space $P_{S}(X)$ is the reverse inclusion order (see \cite{AGL23,MU13,ANVB75}).

\quad Given a topological space $X$, the lower powerspace on $\Gamma_{0}(X)$ (the collection of nonempty closed sets of $X$) is the topology that has $\{\diamond U\mid U\in\mathcal{O}(X)\}$ as a subbase, where $$\diamond U=\{V\in\Gamma_{0}(X)\mid U\cap V\neq\emptyset\}.$$ The lower powerspace is denoted by $P_{H}(X)$. Then $\{\Box U\mid U\in\Gamma_{0}(X)\}$ is a subbase of the closed sets of $P_{H}(X)$, where $\Box U=\{F\in\Gamma_{0}(X)\mid F\subseteq U\}$ (see \cite{ANVB75}).

\begin{lemma} (see \cite{AQ21})Let $X$ be a $T_{0}$ space and $C\in \mathcal{Q}(X)$. Then $C=\up {\rm min(C)}$ and ${\rm min(C)}$ is compact.
\end{lemma}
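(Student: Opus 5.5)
The plan is to work throughout with the specialization order $\leq$ of $X$ and to use two standard facts. First, every open set is an upper set, so every saturated set, being an intersection of open sets, is an upper set. Second, $cl(\{x\})=\dn x$ for every $x\in X$. Since $C$ is saturated, $C=\up C$, so $\up{\rm min}(C)\subseteq C$ holds at once. The content of the lemma is therefore the reverse inclusion $C\subseteq \up{\rm min}(C)$, followed by the compactness of ${\rm min}(C)$.

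For the reverse inclusion, fix $x\in C$. I will show that $\dn x\cap C$ has a minimal element by applying Zorn's lemma to $(\dn x\cap C,\geq)$, i.e.\ by showing that every chain has a lower bound in $\dn x\cap C$. Let $\{x_i\mid i\in I\}$ be a nonempty chain in $\dn x\cap C$. Each set $C\cap\dn x_i=C\cap cl(\{x_i\})$ is closed in the subspace $C$ and nonempty, since it contains $x_i$. Because the $x_i$ form a chain, any finitely many of these sets are nested, so the family has the finite intersection property. Compactness of $C$ then gives a point $y\in\bigcap_{i\in I}(C\cap\dn x_i)$. This $y$ lies in $C$, lies below $x$, and is a lower bound of the chain.

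Zorn's lemma now yields a minimal element $m$ of $\dn x\cap C$. Then $m$ is minimal in $C$: if $z\in C$ and $z\leq m$, then $z\leq x$, so $z\in\dn x\cap C$ and hence $z=m$. Thus $x\in\up m\subseteq\up{\rm min}(C)$, which gives $C=\up{\rm min}(C)$. I expect this step to be the main obstacle, since it is the only place where compactness and the closure description $cl(\{x_i\})=\dn x_i$ must interact. The point to check is that the finite intersection property is applied inside the compact subspace $C$ to sets that are relatively closed there.

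For the compactness of ${\rm min}(C)$, let $\{U_j\mid j\in J\}$ be a family of open sets covering ${\rm min}(C)$. Open sets are upper sets, so $\bigcup_j U_j\supseteq\up{\rm min}(C)=C$. By compactness of $C$, finitely many of the $U_j$ cover $C$, and in particular they cover ${\rm min}(C)\subseteq C$.
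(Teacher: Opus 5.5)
Your proof is correct and complete. The paper does not prove this lemma itself; it only cites Ern\'{e}. Your argument is the standard one: Zorn's lemma on $(\dn x\cap C,\geq)$, with every chain bounded below via the finite intersection property of the relatively closed sets $C\cap cl(\{x_i\})$ in the compact subspace $C$, followed by the observation that an open cover of ${\rm min}(C)$ has an upper-set union and therefore covers $C$. Two details are left implicit but are routine: the empty chain is bounded by $x$ itself, and $T_{0}$ makes the specialization order antisymmetric, so the element Zorn's lemma produces satisfies $\dn m\cap C=\{m\}$ in the paper's sense of minimality.
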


\quad Given a set $X$,  $\mathbf{Fin}(X)$ is the collection of all the nonempty finite subsets of $X$ and let $\mathscr{F}(X)=\{A\mid X\setminus A\in\mathbf{Fin}(X)\}$.

Let $(X,\tau)$ be a topological space and $Y\subseteq X$. We use $\tau|_{Y}$ to denote the subspace topology on $Y$.

\section{The compactness defined by the compatible topology }
\quad In this section, we introduce some new kinds of compactness by using the order compatible topology on open set lattice and discuss the properties of these kinds of compactness.

\begin{definition} Let $X$ be a topological space and $\mathcal{T}$ an order compatible topology on $\mathcal{O}(X)$. $X$ is called $\mathcal{T}$-compact if for every open cover $\mathcal{U}$ of $X$ which is irreducible in the topological space $(\mathcal{O}(X),\mathcal{T})$, $\bigcup\mathcal{U}=X$ implies $X\in\mathcal{U}$.
\end{definition}

\begin{remark} {\rm (1) Note that the irreducible subsets in Alexandroff topology are exactly all directed subsets. Thus $X$ is $\alpha(\mathcal{O}(X))$-compact iff $X$ is compact iff every directed open cover $\mathcal{U}$ contains $X$. Furthermore, $X$ is always compact if $X$ is $\mathcal{T}$-compact, where $\mathcal{T}$ is an order compatible topology on $\mathcal{O}(X)$.

(2) Given a topological space $X$ and $\mathcal{T}_{1},\mathcal{T}_{2}$ two order compatible topologies on $\mathcal{O}(X)$ with $\mathcal{T}_{1}\subseteq\mathcal{T}_{2}$. Then $X$ is $\mathcal{T}_{2}$ compact if $X$ is $\mathcal{T}_{1}$ compact.

(3) Let $X$ be a topological space and $\mathcal{T}$ a order compatible topology on $\mathcal{O}(X)$. If $(\mathcal{O}(X),\mathcal{T})$ is hyper-sober, then $X$ is $\mathcal{T}$-compact. As a matter of fact, for every $\mathcal{T}$-irreducible open cover $\mathcal{U}$ of $X$, there is a $U\in\mathcal{U}$ such that $\mathcal{U}\subseteq cl_{\mathcal{T}}(\{U\})$ since $(\mathcal{O}(X),\mathcal{T})$ is hyper-sober. Then we can see $$U\subseteq\bigcup\mathcal{U}=X\subseteq\bigcup cl_{\mathcal{T}}(\{U\})=U.$$ Whence, $U=X\in\mathcal{U}$. Hence, $X$ is $\mathcal{T}$-compact.
}
\end{remark}

\quad For every poset $P$, a  nonempty subset $D$ is  countably directed if for any countbale subset $D_{0}\subseteq D$, there is a $d\in D$ such that $D_{0}\subseteq\dn d$. A subset $U\subseteq P$ is $\sigma_{\mathcal{C}}$-Scott open if (i) $U$$=\up U$ and (ii) for any countably directed set $D$, $\bigvee D\in U$ implies
$D\cap U\neq \emptyset$ whenever $\bigvee D$ exists. The the set of all $\sigma_{\mathcal{C}}$-Scott open subsets of $P$ is denoted by $\sigma_{\mathcal{C}}(P)$. Then $\sigma_{\mathcal{C}}(P)$ is still an order compatible topology on $P$ (see \cite{GGL23,KOY35,HQT116}). The space $(P, \sigma_{\mathcal{C}}(P))$ will be simply written as $\Sigma_{\mathcal{C}}P$.

\quad In the following, a $\mathcal{T}$-compact space $X$ is called $\upsilon$-compact, $\sigma_{\mathcal{C}}$-compact and $SI$-compact if  $\mathcal{T}$ is equals to $\upsilon(\mathcal{O}(X))$, $\sigma_{\mathcal{C}}(\mathcal{O}(X))$ and $\sigma(\mathcal{O}(X))$, respectively.

\begin{proposition} Let $f:X\longrightarrow Y$ be a surjective continuous map and $X$ a $\sigma_{\mathcal{C}}$-compact space. Then $Y$ is $\sigma_{\mathcal{C}}$-compact.
\end{proposition}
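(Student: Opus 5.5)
The plan is to pull back covers along $f$ via the preimage map $f^{-1}:\mathcal{O}(Y)\longrightarrow\mathcal{O}(X)$, $V\mapsto f^{-1}(V)$. Let $\mathcal{V}$ be an open cover of $Y$ that is irreducible in $\Sigma_{\mathcal{C}}\mathcal{O}(Y)$, and set $\mathcal{U}=\{f^{-1}(V)\mid V\in\mathcal{V}\}$. Since $f$ is surjective, $\bigcup\mathcal{U}=f^{-1}(\bigcup\mathcal{V})=X$. If $\mathcal{U}$ is irreducible in $\Sigma_{\mathcal{C}}\mathcal{O}(X)$, then $\sigma_{\mathcal{C}}$-compactness of $X$ gives some $V\in\mathcal{V}$ with $f^{-1}(V)=X$. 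Surjectivity then yields $V=f(f^{-1}(V))=Y$, so $Y\in\mathcal{V}$, which is what we want.

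The whole argument therefore reduces to showing that $\mathcal{U}=f^{-1}(\mathcal{V})$ is irreducible. By property (3) of irreducible sets, it suffices to check that $f^{-1}:\Sigma_{\mathcal{C}}\mathcal{O}(Y)\longrightarrow\Sigma_{\mathcal{C}}\mathcal{O}(X)$ is continuous. The standard route is that a monotone map preserving existing sups of countably directed sets is continuous for the $\sigma_{\mathcal{C}}$ topologies, and I would verify this directly.

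Take $\mathcal{W}\in\sigma_{\mathcal{C}}(\mathcal{O}(X))$ and let $\mathcal{A}=\{V\in\mathcal{O}(Y)\mid f^{-1}(V)\in\mathcal{W}\}$.

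\emph{$\mathcal{A}$ is an upper set.} The map $f^{-1}$ is monotone and $\mathcal{W}$ is an upper set, so $V\in\mathcal{A}$ and $V\subseteq V'$ give $f^{-1}(V)\subseteq f^{-1}(V')$, hence $V'\in\mathcal{A}$.

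\emph{$\mathcal{A}$ is inaccessible by countably directed sups.} Let $\mathcal{D}\subseteq\mathcal{O}(Y)$ be countably directed with $\bigvee\mathcal{D}\in\mathcal{A}$. In a lattice of open sets the join is the union, so $f^{-1}(\bigvee\mathcal{D})=\bigcup_{D\in\mathcal{D}}f^{-1}(D)=\bigvee f^{-1}(\mathcal{D})$ in $\mathcal{O}(X)$. Because $f^{-1}$ is monotone, $f^{-1}(\mathcal{D})$ is again countably directed: any countable subfamily is the image of a countable subfamily of $\mathcal{D}$, which has an upper bound $d\in\mathcal{D}$, and then $f^{-1}(d)$ bounds the subfamily. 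Since $\mathcal{W}$ is $\sigma_{\mathcal{C}}$-Scott open, some $f^{-1}(D)\in\mathcal{W}$, that is, $D\in\mathcal{A}\cap\mathcal{D}$.

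Hence $\mathcal{A}$ is $\sigma_{\mathcal{C}}$-open and $f^{-1}$ is continuous. Property (3) then makes $\mathcal{U}$ irreducible, and the argument in the first paragraph finishes the proof.

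I do not expect a genuine obstacle. The step needing care is the countable directedness of the image family, together with the fact that joins in $\mathcal{O}(X)$ are unions, so the supremum exists and is preserved. The same argument works verbatim for the Scott topology, so it also gives the analogous statement for $SI$-compactness.
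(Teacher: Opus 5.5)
Your proof is correct and follows essentially the paper's argument: the paper likewise uses the preimage map $U\mapsto f^{-1}(U)$, notes that it is continuous between the $\sigma_{\mathcal{C}}$ topologies because it preserves suprema of countably directed families, and concludes from $X\in f^{-1}(\mathcal{V})$ and surjectivity that $Y\in\mathcal{V}$. One small remark: $\bigcup f^{-1}(\mathcal{V})=f^{-1}(Y)=X$ holds without surjectivity, which is needed only in your final step.
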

\begin{proof}Let $\mathcal{U}\subseteq\mathcal{O}(Y)$ be an irreducible open over of $Y$ with respect to $\sigma_{\mathcal{C}}(\mathcal{O}(Y))$. Consider the map $g:\Sigma_{\mathcal{C}}\mathcal{O}(Y)\longrightarrow\Sigma_{\mathcal{C}}\mathcal{O}(X)$ which sends $U$ to $f^{-1}(U)$. Then $g$ is continuous for $g$ preserves the suprema of all countably directed subsets. Hence $g(\mathcal{U})$ is irreducible and $\bigcup g(\mathcal{U})=X$ since $f$ is surjective. As $X$ is $\sigma_{\mathcal{C}}$-compact, $X\in g(\mathcal{U})$. It follows that $Y\in\mathcal{U}$. So $Y$ is $\sigma_{\mathcal{C}}$-compact.
\end{proof}

\begin{theorem} Let $X$ be a topological space. If $\Sigma_{\mathcal{C}}\mathcal{O}(X)$ is sober, then $X$ is compact iff $X$ is $\sigma_{\mathcal{C}}$-compact.

\end{theorem}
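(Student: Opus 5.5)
One direction is already settled. By Remark (1), every $\mathcal{T}$-compact space is compact, so a $\sigma_{\mathcal{C}}$-compact space is compact. This holds because $\sigma_{\mathcal{C}}(\mathcal{O}(X))$ is an order compatible topology: a directed open cover is irreducible in the Alexandroff topology, hence irreducible in every coarser topology. The work lies in the converse. Assume $X$ is compact and $\Sigma_{\mathcal{C}}\mathcal{O}(X)$ is sober. Let $\mathcal{U}\subseteq\mathcal{O}(X)$ be irreducible in $\Sigma_{\mathcal{C}}\mathcal{O}(X)$ with $\bigcup\mathcal{U}=X$. We must show $X\in\mathcal{U}$.

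First, by property (1) of irreducible sets, the closure $cl_{\sigma_{\mathcal{C}}}(\mathcal{U})$ is an irreducible closed set. Sobriety then gives a $W\in\mathcal{O}(X)$ with $cl_{\sigma_{\mathcal{C}}}(\mathcal{U})=cl_{\sigma_{\mathcal{C}}}(\{W\})$. Since $\sigma_{\mathcal{C}}$ is order compatible, its specialization order is inclusion, so $cl(\{W\})=\dn W$. Hence every $U\in\mathcal{U}$ satisfies $U\subseteq W$, and therefore $X=\bigcup\mathcal{U}\subseteq W$, i.e. $W=X$.

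Next, we return from the closure to $\mathcal{U}$ itself using compactness. Since $X=W\in cl_{\sigma_{\mathcal{C}}}(\mathcal{U})$, every $\sigma_{\mathcal{C}}$-open neighbourhood of $X$ in $\mathcal{O}(X)$ meets $\mathcal{U}$. The key observation is that $\{X\}$ is itself $\sigma_{\mathcal{C}}$-open when $X$ is compact. It is an upper set, since $X$ is the top element. If $D$ is countably directed with $\bigcup D=X$, then $D$ is in particular directed, so by compactness finitely many members of $D$ cover $X$. Directedness gives one $d\in D$ containing them all, so $d=X\in D$ and $D\cap\{X\}\neq\emptyset$. As $\{X\}$ is an open neighbourhood of $X$, it meets $\mathcal{U}$, so $X\in\mathcal{U}$.

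The main point to get right is this openness of $\{X\}$, which uses compactness together with the fact that countably directed sets are directed. The argument also needs the specialization order of $\sigma_{\mathcal{C}}(\mathcal{O}(X))$ to be inclusion, which follows from order compatibility since $\upsilon\subseteq\sigma_{\mathcal{C}}\subseteq\alpha$ and both extremes have inclusion as their specialization order. The rest is routine. Sobriety is used only to identify the generic point $W$, and in fact even hyper-sobriety is not needed.
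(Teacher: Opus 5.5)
Your proof is correct and follows the paper's argument. The paper uses compactness to show that $\mathcal{O}(X)\setminus\{X\}$ is closed in $\Sigma_{\mathcal{C}}\mathcal{O}(X)$; this is the complement of your open set $\{X\}$. It then uses sobriety to write $cl(\mathcal{U})=\dn A$ with $A=X$, which contradicts the assumption $X\notin\mathcal{U}$. You run the same two steps directly rather than by contradiction, and you also spell out two details the paper leaves implicit: that the specialization order of $\sigma_{\mathcal{C}}(\mathcal{O}(X))$ is inclusion, and the easy direction via Remark 3.2(1).
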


\begin{proof}It suffices to show the necessity. Let $\mathcal{U}$  be an irreducible subset with respect to $\Sigma_{\mathcal{C}}\mathcal{O}(X)$ and $\bigcup\mathcal{U}=X$. Assume that $X\not \in \mathcal{U}$. Clearly, $\mathcal{O}(X)\setminus \{X\}$ is a lower set in $\mathcal{O}(X)$. Let $\mathcal{B}$ be a countably directed subset of $\mathcal{O}(X)\setminus\{X\}$. Since $X$ is compact, by Remark 3.2(1), we have that $\bigcup\mathcal{B}\in\mathcal{O}(X)\setminus \{X\}$. Then $\mathcal{O}(X)\setminus \{X\}$ is closed in $\Sigma_{\mathcal{C}}\mathcal{O}(X)$. Since $\mathcal{U}$ is irreducible in $\Sigma_{\mathcal{C}}\mathcal{O}(X)$ and $\mathcal{U}\subseteq \mathcal{O}(X)\setminus \{X\}$, $cl_{\Sigma(\mathcal{O}(X))}(\mathcal{U})$ is a irreducible closed set in $\Sigma_{\mathcal{C}}\mathcal{O}(X)$  and $cl_{\Sigma_{\mathcal{C}}(\mathcal{O}(X))}(\mathcal{U})\subseteq \mathcal{O}(X)\setminus \{X\}$. It follows from the sobriety of $\Sigma_{\mathcal{C}}\mathcal{O}(X)$ that there exists a unique open set $A$ of $X$ such that $$cl_{\Sigma_{\mathcal{C}}\mathcal{O}(X)}(\mathcal{U})=cl_{\Sigma_{\mathcal{C}}\mathcal{O}(X)}\{A\}=\dn_{\mathcal{O}(X)} A\subseteq\mathcal{O}(X)\setminus \{X\}.$$ Since $$X=\bigcup \mathcal{U}\subseteq\bigcup cl_{\Sigma(\mathcal{O}(X))}(\mathcal{U})=A,$$ we have $X=A$. It is a contradiction. Thus $X$ is $\sigma_{\mathcal{C}}$-compact.\end{proof}

\quad For a complete lattice $L$, we let $\mathcal{O}(\hat{L})=\{\hat{L}\setminus K \mid K\in\mathcal{Q}(L)\}$, where $\hat{L}=L\setminus\{1_{L}\}$ and $1_L$ is the top element of $L$. Then $(\hat{L},\mathcal{O}(\hat{L}))$ is a topological space and $\mathcal{Q}(L)$ is order isomorphic to the open set lattice $(\mathcal{O}(\hat{L}),\subseteq)$ (see \cite{BZ35}). Actually, $(\hat{L},\mathcal{O}(\hat{L}))$ is constructed by using the work of W. Thron (see \cite{ARU175}).

\begin{lemma} (see \cite{BZ35}) Let $X$ be a topological space. If $\Sigma\mathcal{O}(X)$ is sober, then $X$ is compact iff $X$ is $SI$-compact.

\end{lemma}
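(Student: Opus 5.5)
The plan is to copy the proof of Theorem 3.4 almost word for word, with $\sigma_{\mathcal{C}}$ replaced by the Scott topology $\sigma$. One direction is already done. By Remark 3.2(1), $SI$-compactness implies compactness, since $\sigma(\mathcal{O}(X))$ is an order compatible topology on $\mathcal{O}(X)$. So only the necessity remains: if $X$ is compact and $\Sigma\mathcal{O}(X)$ is sober, then $X$ is $SI$-compact.

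Assume $X$ is compact. Let $\mathcal{U}\subseteq\mathcal{O}(X)$ be irreducible in $\Sigma\mathcal{O}(X)$ with $\bigcup\mathcal{U}=X$, and suppose for contradiction that $X\notin\mathcal{U}$. The first step is to show that $\mathcal{C}=\mathcal{O}(X)\setminus\{X\}$ is Scott closed.
\begin{itemize}
\item $\mathcal{C}$ is a lower set, because any open subset of a proper open set is itself proper.
\item $\mathcal{C}$ is closed under directed suprema. In $\mathcal{O}(X)$ the supremum of a directed family $\mathcal{B}$ is $\bigcup\mathcal{B}$. If $\mathcal{B}\subseteq\mathcal{C}$ and $\bigcup\mathcal{B}=X$, then $\mathcal{B}$ is a directed open cover of $X$. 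Compactness then forces $X\in\mathcal{B}$ (Remark 3.2(1)), which is impossible.
\end{itemize}
This step is where compactness is used, and I expect it to be the only nontrivial point.

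Next, $\mathcal{U}\subseteq\mathcal{C}$, so the Scott closure $cl_{\sigma}(\mathcal{U})$ is an irreducible closed subset of $\Sigma\mathcal{O}(X)$ contained in $\mathcal{C}$. Here I use property (1) of irreducible sets (a set is irreducible iff its closure is). Sobriety gives an $A\in\mathcal{O}(X)$ with $cl_{\sigma}(\mathcal{U})=cl_{\sigma}(\{A\})$. The Scott closure of a point is $\dn A$, so $cl_{\sigma}(\mathcal{U})=\dn A$.

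To finish, $\mathcal{U}\subseteq\dn A$ gives $X=\bigcup\mathcal{U}\subseteq A$, so $A=X$. But $A\in cl_{\sigma}(\mathcal{U})\subseteq\mathcal{C}$, so $A\neq X$, a contradiction. Hence $X\in\mathcal{U}$, and $X$ is $SI$-compact.
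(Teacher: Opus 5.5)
Your proof is correct. It is essentially the paper's own argument: the paper only cites \cite{BZ35} for this lemma, but its proof of Theorem 3.4 for $\sigma_{\mathcal{C}}$ is exactly what you transpose to $\sigma$ (compactness makes $\mathcal{O}(X)\setminus\{X\}$ Scott closed, and sobriety forces the closure of the irreducible cover to be $\dn A$ with $A=X$, a contradiction).
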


\begin{lemma}(see \cite{BZ35}) Let $L$ be the Isbell complete lattice. Then we have the following statements

${\rm(1)}$ $(\hat{L},\mathcal{O}(\hat{L}))$ is compact;

${\rm(2)}$ $\mathcal{U}=\mathcal{O}(\hat{L})\setminus\{\hat{L}\}$ is irreducible with respect to the Scott topology of $\mathcal{O}(\hat{L})$ and $\bigcup\mathcal{U}=\hat{L}$.

\end{lemma}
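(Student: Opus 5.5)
We need to prove both parts of the lemma for $L$ the Isbell complete lattice: (1) compactness of $(\hat{L},\mathcal{O}(\hat{L}))$, and (2) that $\mathcal{U}=\mathcal{O}(\hat{L})\setminus\{\hat{L}\}$ is Scott irreducible and covers $\hat{L}$. The plan is to transfer everything through the order isomorphism $K\mapsto \hat{L}\setminus K$ between $\mathcal{Q}(L)$ (reverse inclusion) and $(\mathcal{O}(\hat{L}),\subseteq)$. In particular $\hat{L}\setminus K$ is the top element $\hat{L}$ exactly when $K\cap\hat{L}=\emptyset$, i.e.\ $K=\{1_L\}=\up 1_L$.

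\textbf{Part (1).} Let $\{\hat{L}\setminus K_i\}_{i\in I}$ be an open cover of $\hat{L}$, i.e.\ $\bigcap_i K_i\subseteq\{1_L\}$. By Remark 3.2(1) it suffices to treat directed covers, which correspond to filtered families $\{K_i\}$ in $\mathcal{Q}(L)$. Each $K_i$ is a nonempty upper set, so it contains $1_L$, and hence $\bigcap_i K_i=\{1_L\}$. Now $\Sigma L$ is well-filtered: this is standard for any dcpo whose Scott space is well-filtered, and the Isbell lattice is known to have a well-filtered (even though non-sober) Scott space. Alternatively one can argue directly that $\{1_L\}$ is Scott-open-compact in the appropriate sense, using the finite-meet structure. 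Well-filteredness applied to the open set $\up 1_L$ would require $\up 1_L$ to be Scott open, which fails. So I would instead use the standard fact that in $\mathcal{Q}(X)$ the intersection of a filtered family, computed in a well-filtered space, is its infimum. Then $\bigcap K_i=\{1_L\}$ means the filtered family has infimum $\{1_L\}$ in $\mathcal{Q}(L)$, and I need some $K_i=\{1_L\}$. This is the main obstacle. My first attempt would use the concrete structure of Isbell's lattice: every nonempty compact saturated set other than $\{1_L\}$ contains a non-top element, and I would show, by compactness of the union of minimal elements (Lemma 2.2, $K=\up\min(K)$ with $\min(K)$ compact), that a strictly decreasing filtered family cannot shrink to $\{1_L\}$ without some member equal to it.

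\textbf{Part (2).} The equality $\bigcup\mathcal{U}=\hat{L}$ is immediate: for $x\in\hat{L}$, the set $\up x$ is compact saturated and $x\ne 1_L$. Hence $K=\up\{x,1_L\}$, or more simply the principal filter $\up y$ for any $y\not\le x$ (with $\up 1_L$ excluded), gives an open set $\hat{L}\setminus K\neq\hat{L}$ containing $x$. For irreducibility, translate via the isomorphism: $\mathcal{U}$ corresponds to $\mathcal{Q}(L)\setminus\{\{1_L\}\}$ in the Scott topology of $\mathcal{Q}(L)$. This is exactly the known Xu--Xi--Zhao fact that $\mathcal{Q}(L)$ minus its top is Scott irreducible (it is the irreducible closed set witnessing non-sobriety of $\Sigma\mathcal{Q}(L)$). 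I would reprove it as follows. Given two Scott open sets $\mathcal{V}_1,\mathcal{V}_2$ meeting it, pick $\up x_1\in\mathcal{V}_1$ and $\up x_2\in\mathcal{V}_2$ (using $K=\up\min K$ and Scott openness to pass to principal filters). Then use the irreducibility of $\Sigma L$ minus top in Isbell's lattice to find $x$ below both in the sense needed, namely $\up x\in\mathcal{V}_1\cap\mathcal{V}_2$. The key input, and the second obstacle, is exactly Isbell's non-sober irreducible closed set, which I would cite rather than reconstruct.
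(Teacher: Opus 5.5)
Part (2) is essentially right. Part (1) has a genuine gap, and it comes from a false claim.

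You abandon the natural argument because ``$\uparrow 1_L=\{1_L\}$ is not Scott open''. In Isbell's lattice it is Scott open. Saying $\{1_L\}$ is Scott open is the same as saying $\hat{L}=L\setminus\{1_L\}$ is Scott closed, and in Part (2) you yourself use that $\hat{L}$ is Isbell's irreducible closed set witnessing non-sobriety.

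Worse, your claim contradicts statement (1) itself. Suppose some directed $D\subseteq\hat{L}$ had $\bigvee D=1_L$. Then $\bigcap_{d\in D}\uparrow d=\uparrow\bigvee D=\{1_L\}$, so $\{\hat{L}\setminus\uparrow d\mid d\in D\}$ would be a directed open cover of $\hat{L}$. None of its members equals $\hat{L}$, since $d\notin\hat{L}\setminus\uparrow d$. So $(\hat{L},\mathcal{O}(\hat{L}))$ would not be compact.

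With $\{1_L\}$ Scott open, (1) is exactly the well-filtered condition for this single open set. A directed cover $\{\hat{L}\setminus K_i\}$ is a filtered family $\{K_i\}\subseteq\mathcal{Q}(L)$ with $\bigcap_i K_i=\{1_L\}$. The well-filteredness of $\Sigma L$ that you invoke then gives some $K_i\subseteq\{1_L\}$, i.e.\ $\hat{L}\setminus K_i=\hat{L}$.

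What you put in its place proves nothing. Knowing that $\{1_L\}$ is the supremum of the family in $\mathcal{Q}(L)$ does not show the supremum is attained; that is compactness of the top of $\mathcal{Q}(L)$, which is (1) again. Lemma 2.2 gives compactness of $\min K$ for a single $K$ but says nothing about filtered intersections. Either use well-filteredness at $\{1_L\}$ as above, or extract this instance directly from Isbell's construction. As written, (1) is unproved.

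In Part (2), state precisely the step you call finding $x$ ``below both in the sense needed''. The map $x\mapsto\uparrow x$ from $L$ to $\mathcal{Q}(L)$ preserves directed suprema. Indeed, $\uparrow\bigvee D=\bigcap_{d\in D}\uparrow d$ lies in $\mathcal{Q}(L)$, so it is the supremum of $\{\uparrow d\mid d\in D\}$ in the reverse-inclusion order; this is the same computation as in Theorem 4.3. Hence each set $\{x\in L\mid \uparrow x\in\mathcal{V}_j\}$ is Scott open and meets $\hat{L}$. Irreducibility of $\hat{L}$ gives a common point $x\in\hat{L}$, so $\uparrow x$ is a member of $\mathcal{V}_1\cap\mathcal{V}_2$ different from $\{1_L\}$. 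Passing from $K\in\mathcal{V}_j$ to $\uparrow x$ needs only that $\mathcal{V}_j$ is an upper set.

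Finally, drop $K=\uparrow\{x,1_L\}$: its complement does not contain $x$. Your second choice, $\uparrow y$ with $y\not\le x$ and $y\neq 1_L$, works because $\hat{L}$ has no greatest element.
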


\quad Note that for a $SI$-compact space $X$, $\Sigma\mathcal{O}(X)$ is non sober in general. See the following example.

\begin{example} {\rm Let $X$ be the topological space given in Lemma 3.6. Then $X$ is an uncountable compact space and $\mathcal{U}=\mathcal{O}(X)\setminus\{X\}$ is a Scott irreducible open cover of $X$. Take a $\infty\not\in X$ and let $Y=X\cup\{\infty\}$. The topology $\tau$ on $Y$ is given by
$$\tau=\mathcal{O}(X)\cup\{Y\}.$$
Obviously, the inclusion map $i:\mathcal{O}(X)\longrightarrow\mathcal{O}(Y)$ is Scott continuous and $(Y,\tau)$ is $SI$-compact(actually, supercompact). Then $i(\mathcal{U})=\mathcal{U}$ is still irreducible in $\Sigma\mathcal{O}(Y)$. Since $X$ and $(Y,\tau)$ are compact, $\mathcal{U}=\dn\mathcal{U}$ is Scott irreducible closed in $\Sigma\mathcal{O}(Y)$. However, $\dn\mathcal{U}\neq cl_{\sigma(\mathcal{O}(Y))}\{U\}$ for all $U\in\mathcal{O}(Y)$. Thus, $\Sigma \mathcal{O}(Y)$ is non-sober.

}
\end{example}

\begin{example}{\rm Let $L$ be the Isbell complete lattice (see \cite{IJ82}). It has proved in \cite{IJ82} that $\Sigma L$ is non sober. By Lemma 3.6, $(\hat{L},\mathcal{O}(\hat{L}))$ is compact but not $SI$-compact. Actually, by the proof of Example 4.6 in \cite{BZ35}, we see that $\mathcal{U}=\mathcal{O}(\hat{L})\setminus\{\hat{L}\}$ is also irreducible with respect to $(\mathcal{O}(\hat{L}),\sigma_{\mathcal{C}}\mathcal{O}(\hat{L}))$ and $\bigcup\mathcal{U}=\hat{L}$. Therefore, $(\hat{L},\mathcal{O}(\hat{L}))$ is compact but not $\sigma_{\mathcal{C}}$-compact.}
\end{example}

\begin{corollary}  Let $L$ be the Isbell complete lattice. Then $\Sigma_{\mathcal{C}}\mathcal{O}(\hat{L})$ is non-sober.
\end{corollary}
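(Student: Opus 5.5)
The plan is a contrapositive argument using Theorem 3.4 together with Example 3.8. Theorem 3.4 says that if $\Sigma_{\mathcal{C}}\mathcal{O}(X)$ is sober, then compactness and $\sigma_{\mathcal{C}}$-compactness coincide for $X$. Example 3.8 provides a space where they do not coincide.

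First, take $X=(\hat{L},\mathcal{O}(\hat{L}))$, where $L$ is the Isbell complete lattice. By Lemma 3.6(1), $X$ is compact.

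Second, as recorded in Example 3.8, the family $\mathcal{U}=\mathcal{O}(\hat{L})\setminus\{\hat{L}\}$ is irreducible in $\Sigma_{\mathcal{C}}\mathcal{O}(\hat{L})$, satisfies $\bigcup\mathcal{U}=\hat{L}$, and does not contain $\hat{L}$. Hence $X$ is not $\sigma_{\mathcal{C}}$-compact.

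Third, suppose for contradiction that $\Sigma_{\mathcal{C}}\mathcal{O}(\hat{L})$ is sober. Then Theorem 3.4 applies, so the compact space $X$ would be $\sigma_{\mathcal{C}}$-compact. This contradicts the second step, so $\Sigma_{\mathcal{C}}\mathcal{O}(\hat{L})$ is non-sober.

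The only substantive point is the irreducibility of $\mathcal{U}$ in the $\sigma_{\mathcal{C}}$-topology, which Example 3.8 imports from \cite{BZ35}. If that needs checking directly, I would proceed as follows. Since $\sigma_{\mathcal{C}}(\mathcal{O}(\hat{L}))\supseteq\sigma(\mathcal{O}(\hat{L}))$, irreducibility does not transfer automatically from Lemma 3.6(2). Instead I would use the order isomorphism $\mathcal{O}(\hat{L})\cong\mathcal{Q}(L)$. Then I would show that for any two $\sigma_{\mathcal{C}}$-open sets meeting $\mathcal{U}$, their intersection also meets $\mathcal{U}$. The route is to mirror Isbell's argument: countably directed families of compact saturated sets in $\Sigma L$ behave like the directed families used in Isbell's proof, since that argument only uses suprema of chains of countable type.

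If this verification fails, one can still argue directly. Build the closed set $\dn\mathcal{U}=\mathcal{U}$, which is $\sigma_{\mathcal{C}}$-closed by the argument in the proof of Theorem 3.4 using the compactness of $X$. It is irreducible, and it is not the closure $\dn A$ of any point $A$: if $\mathcal{U}=\dn A$, then $A\supseteq\bigcup\mathcal{U}=\hat{L}$, so $A=\hat{L}\notin\mathcal{U}$, a contradiction. This would again give non-sobriety.
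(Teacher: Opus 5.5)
Your main line is exactly the paper's proof: compactness from Lemma 3.6(1), failure of $\sigma_{\mathcal C}$-compactness from the irreducibility asserted in Example 3.8, and Theorem 3.4 to conclude. The supplementary paragraphs do not hold up, though.

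\textbf{The proposed direct check fails.} Under the paper's definition, a countable countably directed set must contain a largest element. So the $\omega$-chains on which Isbell's argument runs are not countably directed. Examples are $e_0\le e_1\le\cdots$ with supremum $\Omega_\alpha$, or the corresponding increasing chain $\hat L\setminus\up e_k$ in $\mathcal{O}(\hat L)$. Thus $\sigma_{\mathcal C}$ tests far fewer directed sets than $\sigma$, and ``mirroring Isbell's proof'' transfers nothing. Concretely, irreducibility of $\mathcal U$ requires that none of the two-element filters $\{\hat L\setminus\{\Omega_\alpha\},\hat L\}$ is $\sigma_{\mathcal C}$-open. Any two of these filters meet only in $\hat L$, which lies outside $\mathcal U$. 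The natural witness against their openness is the chain $\hat L\setminus\up e_k$, whose union is $\hat L\setminus\{\Omega_\alpha\}$, and it is no longer admissible. So the imported irreducibility is a genuinely delicate claim, and your sketch gives no way to verify it.

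\textbf{The fallback is circular.} It simply asserts that $\mathcal U$ is irreducible, which is the very point in doubt. Otherwise it just re-runs the proof of Theorem 3.4.

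\textbf{A proof that avoids Example 3.8.} Take a strictly increasing sequence $x_0<x_1<\cdots$ in $L$, e.g.\ $x_n=e_n$ in one gadget. Put $V_n=\hat L\setminus\up x_n$, a strictly increasing sequence in $\mathcal{O}(\hat L)$, and let $\mathcal{A}=\dn\{V_n\mid n\in\mathbb{N}\}$.
\begin{itemize}
\item $\mathcal{A}$ is $\sigma_{\mathcal C}$-closed. Let $\mathcal{D}\subseteq\mathcal{A}$ be countably directed and set $k(W)=\min\{n\mid W\subseteq V_n\}$ for $W\in\mathcal{D}$. Suppose $k$ were unbounded on $\mathcal{D}$. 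Pick $W_j\in\mathcal{D}$ with $k(W_j)\ge j$, and an upper bound $W\in\mathcal{D}$ of all the $W_j$. Then $j\le k(W_j)\le k(W)$ for every $j$, which is absurd. Hence $\bigcup\mathcal{D}\subseteq V_N$ for some $N$, so $\bigcup\mathcal{D}\in\mathcal{A}$.
\item $\mathcal{A}$ is irreducible, being the closure of the chain $\{V_n\}$.
\item $\mathcal{A}$ is not the closure of a point. If $\mathcal{A}=\dn G$, then $V_{n+1}\subseteq G\subseteq V_n$ for some $n$, contradicting strict increase.
\end{itemize}
Hence $\Sigma_{\mathcal C}\mathcal{O}(\hat L)$ is non-sober. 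The same argument shows that $\Sigma_{\mathcal C}P$ is non-sober for every poset $P$ containing a strictly increasing $\omega$-sequence. So the corollary depends neither on Isbell's lattice nor on the irreducibility claim in Example 3.8.
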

\begin{proof}It is straightforward by using Theorem 3.4 and Example 3.8.\end{proof}

\quad A topological space $X$ is called co-consonant if for every $\mathcal{F}\in\sigma(\mathcal{O}(X))$ and $U\in\mathcal{F}$, there is a finite subset $\mathcal{E}\subseteq\Gamma(X)$ such that $$U\in\bigcap\{\lozenge A\mid A\in\mathcal{E}\}\subseteq \mathcal{F},$$ where $\lozenge A=\{V\in\mathcal{O}(X)\mid A\cap V\neq\emptyset\}$. Equivalently, $X$ is co-consonant if and only if the upper topology and the Scott topology on $\mathcal{O}(X)$ coincide (see \cite{WM34}). It is evident that a $SI$-compact space $X$ is $\upsilon$-compact if $X$ is co-consonant. Given a finite topological space $Y$, it is clear that the upper topology and Alexandroff topology on $\mathcal{O}(Y)$ coincide. Thus $Y$ is always $\upsilon$-compact. Generally, we have the following Proposition 3.10.

\begin{proposition} Let $X$ be a locally hypercompact space. Then $X$ is $\upsilon$-compact iff $X$ is compact.

\end{proposition}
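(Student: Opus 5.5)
Necessity is already covered by Remark 3.2(1): $\upsilon$-compactness implies compactness. For sufficiency, let $X$ be compact and locally hypercompact, and let $\mathcal{U}\subseteq\mathcal{O}(X)$ be irreducible in $(\mathcal{O}(X),\upsilon(\mathcal{O}(X)))$ with $\bigcup\mathcal{U}=X$. We must show $X\in\mathcal{U}$.

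The main step is to cover $X$ by interiors of hypercompact pieces. For each $x\in X$, local hypercompactness applied to the open set $X$ gives a finite $F_x$ with $x\in int(\up F_x)$. However, we want each $\up F_x$ to sit inside a single member of $\mathcal{U}$, so we apply local hypercompactness inside members of $\mathcal{U}$ instead. Pick $U_x\in\mathcal{U}$ with $x\in U_x$. Choose a finite $F_x\subseteq U_x$ such that $x\in int(\up F_x)\subseteq \up F_x\subseteq U_x$. By compactness, finitely many interiors cover $X$, say $X=\bigcup_{i=1}^n int(\up F_{x_i})$. Put $F=\bigcup_i F_{x_i}$; this is a finite set with $\up F=X$.

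Next, use the upper-topology irreducibility to collapse the finitely many points of $F$ into one member of $\mathcal{U}$. For $y\in X$, the set $\lozenge\, cl(\{y\})=\{V\in\mathcal{O}(X)\mid y\in V\}$ is the complement of $\dn_{\mathcal{O}(X)}(X\setminus cl(\{y\}))$, since $V\not\subseteq X\setminus cl(\{y\})$ iff $V\cap cl(\{y\})\neq\emptyset$ iff $y\in V$ ($V$ open). Hence it is a subbasic open set of $\upsilon(\mathcal{O}(X))$. Each such set meets $\mathcal{U}$ because $\bigcup\mathcal{U}=X$. By the characterization (2) of irreducible sets, applied inductively to the finitely many open sets $\lozenge\, cl(\{y\})$ with $y\in F$, there is a $W\in\mathcal{U}$ with $F\subseteq W$. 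Since $W$ is open, it is an upper set in the specialization order, so $X=\up F\subseteq W$. Thus $W=X\in\mathcal{U}$, as required.

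The only delicate point is the identification of $\{V\mid y\in V\}$ as $\upsilon$-open. It rests on $X\setminus cl(\{y\})$ being the largest open set missing $y$, which holds in any space. In fact the choice of $U_x$ in the second paragraph is unnecessary: one only needs the finite set $F$ with $\up F=X$, which compactness plus local hypercompactness supplies. It is worth checking that the inductive use of property (2) is legitimate, namely that intersections of finitely many $\upsilon$-open sets each meeting $\mathcal{U}$ still meet $\mathcal{U}$. This follows by induction, because finite intersections of open sets are open.
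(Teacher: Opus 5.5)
Your proof is correct, but it takes a different route from the paper. The paper never works with the $\upsilon$-irreducible cover directly. It argues that a locally hypercompact space is locally compact, so $\Sigma\mathcal{O}(X)$ is sober and $X$ is $SI$-compact by Lemma 3.5. It then shows that $X$ is co-consonant: for a Scott open $\mathcal{U}$ containing $U$, directedness of $\{int(\up F)\mid F\in\mathbf{Fin}(U)\}$ gives some $F_0$ with $int(\up F_0)\in\mathcal{U}$, and hence $U\in\{V\in\mathcal{O}(X)\mid F_0\subseteq V\}\subseteq\mathcal{U}$. Consequently $\upsilon$-irreducible and Scott-irreducible families in $\mathcal{O}(X)$ coincide.

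You instead use compactness once, to get a finite $F$ with $X=\up F$. You then play irreducibility against the subbasic $\upsilon$-open sets $\{V\mid y\in V\}=\mathcal{O}(X)\setminus\dn(X\setminus cl(\{y\}))$ for $y\in F$. These are the same upper-open sets that appear in the paper's co-consonance step, and your argument is the dual form of the paper's own proofs of sufficiency in Theorem 3.18 and of (2)$\Rightarrow$(4) in Proposition 4.14.

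Your route is elementary and self-contained: it needs neither sobriety of the Scott topology on a continuous lattice nor the externally sourced Lemma 3.5. It also proves more: every space with $X=\up F$ for some finite $F$ is $\upsilon$-compact, and local hypercompactness serves only to extract such an $F$ from compactness. So, as you note yourself, the detour choosing $U_x$ can simply be dropped. The paper's route is heavier, but it yields as a by-product that every locally hypercompact space is co-consonant, which is what Proposition 3.13 and Theorem 3.15 later rely on.
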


\begin{proof}Necessity: Obvious.

 Sufficiency: Clearly, $\Sigma \mathcal{O}(X)$ is sober since $X$ is locally compact. According to Lemma 3.5, $X$ is $SI$-compact. Then the proof is complete by checking that $X$ is co-consonant. Let $\mathcal{U}\subseteq\mathcal{O}(X)$ be a Scott open set and $U\in\mathcal{U}$. Without loss of generality, we suppose $U$ is nonempty.
As $X$ is a locally hypercompact compact space, we have $U=\bigcup\mathcal{A}$, where $$\mathcal{A}=\{int(\up F)\mid F\in\mathbf{Fin}(U)\}.$$ Since $\mathcal{U}$ is Scott open and $\mathcal{A}$ is directed, there is a $F_{0}\in\mathbf{Fin}(U)$ such that $int(\up F_{0})\in\mathcal{U}$. Consider $$\mathcal{V}=\{V\in\mathcal{O}(X)\mid F_{0}\subseteq V\}=\mathcal{O}(X)\setminus\dn_{\mathcal{O}(X)}\{X\setminus\dn f\mid f\in F_{0}\}.$$ Clearly, $U\in\mathcal{V}$ and $\mathcal{V}$ is open with respect to the upper topology on $\mathcal{O}(X)$. If $W\in\mathcal{V}$, then $int(\up F_{0})\subseteq\up F_{0}\subseteq W$. It follows from $int(\up F_{0})\in\mathcal{U}\in\sigma(\mathcal{O}(X))$ that $W\in\mathcal{U}$. Consequently, $U\in\mathcal{V}\subseteq\mathcal{U}$ and then $\mathcal{U}\in\upsilon(\mathcal{O}(X))$. Thus, $\Sigma P$ is co-consonant.\end{proof}

Let $X$ be a topological space. We consider three topologies on $\mathcal{O}(X)$. They are $\sigma (\mathcal{O}(X))$, $\upsilon(\mathcal{O}(X))$ and the topology $\tau_{X}$ having a subbase $$\{\square Q\mid Q\in\mathcal{Q}(X)\},$$ where $\square Q=\{V\in\mathcal{O}(X)\mid Q\subseteq V\}$. It is clear that $\upsilon(\mathcal{O}(X))\subseteq\tau_{X}\subseteq\sigma (\mathcal{O}(X))$. We say $X$ is consonant (co-consonant) if $\tau_{X}=\sigma (\mathcal{O}(X))$ ($\upsilon(\mathcal{O}(X))=\sigma (\mathcal{O}(X))$). It is easy to see that every co-consonant space is consonant.

\begin{lemma}(see \cite{GHY30})$X$ is locally compact if and only if $X$ is core-compact and consonant.

\end{lemma}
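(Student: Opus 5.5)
The plan is to prove the two implications separately. The forward direction uses the standard fact that local compactness gives enough compact neighbourhoods. The reverse direction uses the continuity of $\mathcal{O}(X)$ to produce a Scott open filter, and then uses consonance to turn that filter into a compact saturated set.

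\emph{Locally compact $\Rightarrow$ core-compact and consonant.} Core-compactness of a locally compact space is already recorded in the preliminaries. For consonance, take $\mathcal{F}\in\sigma(\mathcal{O}(X))$ and $U\in\mathcal{F}$; we must find $Q\in\mathcal{Q}(X)$ with $U\in\square Q\subseteq\mathcal{F}$ (the case $U=\emptyset$ is trivial, since then $\mathcal{F}=\mathcal{O}(X)$).
\begin{enumerate}[label=(\roman*)]
\item By local compactness, every $x\in U$ lies in $int(K)$ for some compact $K\subseteq U$, and we may replace $K$ by its saturation $\up K\subseteq U$.
\item The family $\mathcal{A}=\{int(Q)\mid Q\in\mathcal{Q}(X),\ Q\subseteq U\}$ is therefore directed, because a finite union of compact saturated sets is compact saturated.
\item Moreover $\bigcup\mathcal{A}=U$.
\item Since $\mathcal{F}$ is Scott open, some $int(Q)$ belongs to $\mathcal{F}$.
\item Then $U\in\square Q$. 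If $V\in\square Q$, then $int(Q)\subseteq V$, so $V\in\mathcal{F}$ because $\mathcal{F}$ is an upper set.
\end{enumerate}
Hence $\mathcal{F}\in\tau_X$, so $\tau_X=\sigma(\mathcal{O}(X))$.

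\emph{Core-compact and consonant $\Rightarrow$ locally compact.} Fix $x\in U\in\mathcal{O}(X)$.
\begin{enumerate}[label=(\roman*)]
\item Since $\mathcal{O}(X)$ is a continuous lattice, $U=\bigcup\{W\mid W\ll U\}$, so we can choose an open $W$ with $x\in W\ll U$.
\item Let $\mathcal{F}=\{O\in\mathcal{O}(X)\mid W\ll O\}$. By the interpolation property of continuous lattices, $\mathcal{F}$ is Scott open, and $U\in\mathcal{F}$.
\item By consonance there are $Q_1,\dots,Q_n\in\mathcal{Q}(X)$ with $U\in\bigcap_i\square Q_i\subseteq\mathcal{F}$.
\item Put $Q=Q_1\cup\dots\cup Q_n$. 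Then $\bigcap_i\square Q_i=\square Q$, and $Q$ is compact and saturated.
\item From $U\in\square Q$ we get $Q\subseteq U$.
\item Every open $O\supseteq Q$ lies in $\mathcal{F}$, so $W\ll O$ and in particular $W\subseteq O$.
\item Since $Q$ is saturated, it is the intersection of all open sets containing it, so $W\subseteq Q$.
\end{enumerate}
(Were $Q$ empty, we would get $W\ll\emptyset$ and hence $W=\emptyset$, contradicting $x\in W$.) Thus $x\in W\subseteq Q\subseteq U$ with $W$ open and $Q$ compact, so $X$ is locally compact.

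The main obstacle is the reverse direction. The key choice is the Scott open set $\mathcal{F}=\{O\mid W\ll O\}$, and we must confirm it is Scott open; this is exactly where interpolation in the continuous lattice $\mathcal{O}(X)$ is used. A second point needing care is that a basic $\tau_X$-neighbourhood is a finite intersection of subbasic sets. This is harmless because such an intersection collapses to a single $\square Q$.
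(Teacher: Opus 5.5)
Your proof is correct in both directions, including the edge cases: $U=\emptyset$ in the forward direction, and $n=0$, i.e.\ $Q=\emptyset$, in the reverse direction. The paper itself gives no proof of this lemma; it only quotes the result from Chen, Kou and Lyu \cite{GHY30}. Your argument is the standard one.

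Your forward direction is the compact-saturated analogue of the paper's own argument in Proposition 3.10 (locally hypercompact implies co-consonant). There, the directed family $\{int(\up F)\mid F\in\mathbf{Fin}(U)\}$ plays the role of your $\mathcal{A}$, and $\{V\mid F_{0}\subseteq V\}$ plays the role of $\square Q$.

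Your reverse direction uses the Scott open set $\{O\mid W\ll O\}$ and then the saturation of $Q$. This is exactly the implication (core-compact and consonant implies locally compact) that the paper later relies on in Proposition 3.13.
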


According to the proof of Proposition 3.10, every locally hypercompact space is co-consonant. So it is natural to ask whether every core-compact and co-consonant space is locally hypercompact. In the following, we give a positive answer in Proposition 3.13.

\begin{lemma} (see \cite{WM34}) Let $X$ be a co-consonant $T_{0}$ space. Then every compact subset of $X$ is strongly compact.

\end{lemma}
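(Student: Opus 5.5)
Suppose $K\subseteq X$ is compact and $K\subseteq U$ with $U$ open. The goal is a finite $F\subseteq X$ with $K\subseteq\up F\subseteq U$.

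First I will show that the family
$$\mathcal{F}=\{V\in\mathcal{O}(X)\mid K\subseteq V\}$$
is Scott open in $\mathcal{O}(X)$. It is clearly an upper set. If $\mathcal{D}$ is a directed family of open sets with $K\subseteq\bigcup\mathcal{D}$, compactness of $K$ gives finitely many members covering $K$, and directedness gives a single $D\in\mathcal{D}$ with $K\subseteq D$. Hence $\mathcal{F}\in\sigma(\mathcal{O}(X))$, and by co-consonance $\mathcal{F}\in\upsilon(\mathcal{O}(X))$.

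Since $U\in\mathcal{F}$, there is a basic upper-topology neighbourhood of $U$ inside $\mathcal{F}$. Basic open sets of $\upsilon(\mathcal{O}(X))$ are finite intersections of complements of principal down-sets, so this neighbourhood has the form
$$\mathcal{O}(X)\setminus\bigcup_{i=1}^n\dn_{\mathcal{O}(X)}W_i$$
for some open sets $W_1,\dots,W_n$ with $U\not\subseteq W_i$ for each $i$. For each $i$ choose $x_i\in U\setminus W_i$ and put $F=\{x_1,\dots,x_n\}\subseteq U$.

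Next I show $K\subseteq\up F$, which is the key step. Suppose $y\in K\setminus\up F$. Then $V=X\setminus cl(\{y\})=X\setminus\dn y$ is open. For each $i$ we have $y\not\geq x_i$, so $x_i\notin cl(\{y\})$, hence $x_i\in V$. Because $x_i\notin W_i$ and $x_i\in V$, we get $V\not\subseteq W_i$ for every $i$. So $V$ lies in the basic neighbourhood, hence in $\mathcal{F}$, giving $K\subseteq V$. But $y\in K$ and $y\notin V$, a contradiction. Therefore $K\subseteq\up F\subseteq U$, where the last inclusion holds because $U$ is an upper set and $F\subseteq U$.

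The edge case $n=0$ means the neighbourhood is all of $\mathcal{O}(X)$, so $\emptyset\in\mathcal{F}$ and $K=\emptyset$, where $F=\emptyset$ works. If nonempty $F$ is required and $K\neq\emptyset$, then $n\geq1$ anyway. The only subtle point is the choice of the witness open set $V=X\setminus\dn y$ in the contradiction step. The $T_0$ assumption is used only so that the specialization order is a partial order.
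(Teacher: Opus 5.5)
Your proof is correct and complete. The paper gives no proof of this lemma: it imports it from de Brecht--Kawai \cite{WM34} and uses it in Proposition 3.13. So there is no in-paper route to compare against, and your write-up supplies a self-contained argument.

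Each step checks out:
\begin{itemize}
\item $\mathcal{F}=\{V\in\mathcal{O}(X)\mid K\subseteq V\}$ is Scott open exactly because $K$ is compact.
\item Co-consonance then gives a basic upper-topology neighbourhood $\mathcal{O}(X)\setminus\bigcup_{i}\dn_{\mathcal{O}(X)} W_i$ of $U$ inside $\mathcal{F}$.
\item The test set $X\setminus\dn y$ contains every $x_i$ precisely when $y\notin\up F$. That places it in the neighbourhood, hence in $\mathcal{F}$, which forces $y\notin K$.
\end{itemize}

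Your witness is the device the paper itself uses in the proof of Proposition 3.10. There it writes $\{V\in\mathcal{O}(X)\mid F_0\subseteq V\}=\mathcal{O}(X)\setminus\dn_{\mathcal{O}(X)}\{X\setminus\dn f\mid f\in F_0\}$; you run that translation in the opposite direction.

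Your argument also yields $F\subseteq U$ directly, which is the form in which Proposition 3.13 uses the lemma. As you note, it never uses antisymmetry of the specialization order, so the $T_0$ hypothesis is inessential.
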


\begin{proposition} A topological space $X$ is locally hypercompact if and only if $X$ is core-compact and co-consonant.
\end{proposition}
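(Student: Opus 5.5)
For the necessity, I will use two facts. Every locally hypercompact space is locally compact: given $x\in U$, pick a finite $F\subseteq U$ with $x\in int(\up F)\subseteq\up F\subseteq U$, and $\up F$ is compact. Hence $X$ is core-compact. Co-consonance was already shown inside the proof of Proposition 3.10. That argument only used local hypercompactness to write every nonempty open $U$ as the directed union $\bigcup\{int(\up F)\mid F\in\mathbf{Fin}(U)\}$ and then trap a Scott open $\mathcal{U}\ni U$ between $U$ and the upper open set $\{V\mid F_0\subseteq V\}$. So that argument is simply cited and does not need to be redone.

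For the sufficiency, assume $X$ is core-compact and co-consonant. Since co-consonant spaces are consonant, Lemma 3.11 gives that $X$ is locally compact. Fix $x\in U\in\mathcal{O}(X)$. By local compactness there is an open $V$ and a compact $K$ with $x\in V\subseteq K\subseteq U$. I will take the saturation $\up K$, which is compact, saturated and contained in $U$ because $U$ is an upper set.

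By Lemma 3.12, $\up K$ is strongly compact. Applying this to the open set $U\supseteq \up K$ gives a finite $F\subseteq X$ with $\up K\subseteq\up F\subseteq U$. Since $U$ is an upper set, $\up F\subseteq U$ forces $F\subseteq U$. Then $x\in V\subseteq\up K\subseteq\up F$ with $V$ open, so $x\in int(\up F)\subseteq\up F\subseteq U$. This is exactly local hypercompactness.

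The only delicate point is ensuring that the finite set can be chosen inside $U$, which is immediate as noted. Some care is also needed with the $T_0$ hypothesis in Lemma 3.12: I will assume $X$ is $T_0$, which is implicit because the paper uses the specialization order and $\up F$. No step looks like a serious obstacle; the proof is essentially a combination of Lemmas 3.11 and 3.12 with the observation from Proposition 3.10.
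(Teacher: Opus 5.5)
Your proof is correct and follows the paper's argument. For sufficiency you, like the paper, combine Lemma 3.11 (consonance plus core-compactness gives local compactness) with Lemma 3.12 (compact sets are strongly compact) to obtain a finite $F\subseteq U$ with $x\in V\subseteq int(\up F)\subseteq\up F\subseteq U$. For necessity you use local compactness (hence core-compactness) together with the co-consonance argument from the proof of Proposition 3.10, which is exactly what the paper's ``obviously'' relies on; passing to the saturation $\up K$ is a harmless refinement.
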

\begin{proof}Necessity: Obviously.

Sufficiency: Let $U\in\mathcal{O}(X)$ and $x\in U$. Since every co-consonant space is consonant, by Lemma 3.11, $X$ is locally compact. Then there is a $V\in\mathcal{O}(X)$ and $K\in\mathcal{Q}(X)$ such that $x\in V\subseteq K\subseteq U$. By Lemma 3.12, $X$ is strongly compact. This means that $K\subseteq \up F$ for some finite subset $F\subseteq U$. Consequently, $$x\in V\subseteq int(\up F)\subseteq \up F\subseteq U.$$ Therefore $X$ is locally hypercompact.\end{proof}

\quad Note that the Alexandroff topology of a poset is always locally hypercompact. So we have the following Corollary 3.14.

\begin{corollary} Let $P$ be a poset. The following statements are equivalent.

{\rm (1)} $\Sigma P$ is compact;

{\rm (2)} $(P,\alpha(P))$ is compact;

{\rm (3)} $(P,\alpha(P))$ is $\upsilon$-compact;

{\rm (4)} $P=\up F$ for some finite subset $F\subseteq P$.

\end{corollary}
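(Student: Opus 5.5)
I will prove the cycle $(1)\Rightarrow(4)\Rightarrow(3)\Rightarrow(2)\Rightarrow(1)$, relying mainly on Proposition 3.10 and Remark 3.2.

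For $(1)\Rightarrow(4)$, I use only the basic fact about Scott open sets. The family $\{\up F\mid F\in\mathbf{Fin}(P)\}$ is an open cover of $P$, and it is directed under inclusion. However, $\up F$ need not be Scott open, since it may fail to be inaccessible by directed suprema. So I instead use the sets $P\setminus\dn x$; these are Scott open because $\dn x$ is Scott closed. Here is the argument. Suppose $P\neq\up F$ for every finite $F$. Consider $\mathcal{D}=\{P\setminus \dn F\mid F\in\mathbf{Fin}(P)\}$, where $\dn F$ is a finite union of principal ideals and hence Scott closed. Then $\mathcal{D}$ is a family of Scott open sets, directed downward, which is the wrong direction for a cover. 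So I revert to a direct compactness argument with a different cover.

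The cleaner route is to pass through $(2)$ using the minimal-element structure. In $\Sigma P$ every point $x$ lies in the Scott closure of nothing smaller, and compactness of $\Sigma P$ means $P$, viewed as a saturated set, is compact. The open cover I use is $\{P\setminus\dn x\mid x\in P\}\cup\{\ldots\}$, but this does not obviously cover $P$. This is the step I expect to be the main obstacle, and I will resolve it with the following claim.

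\textbf{Claim.} If $\Sigma P$ is compact then $P=\up F$ with $F$ finite.

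To prove the claim, I argue via the specialization order. $P$ is compact in $\Sigma P$, and the specialization order of $\Sigma P$ is the order of $P$. So $P$ is a compact saturated set of $\Sigma P$. By Lemma 2.2, $P=\up\min(P)$ and $\min(P)$ is compact. On $\min(P)$, which is an antichain, each point $m$ is isolated in the subspace topology, because $P\setminus\dn m\cup\{m\}$… In fact $\up m$ need not be Scott open, so I take the open set $P\setminus\bigcup_{m'\neq m}\dn m'$. For finitely many $m'$ this complement is Scott open, but for infinitely many it need not be. Instead I use the fact that the subspace $\min(P)$ carries a topology in which $\min(P)\setminus\{m\}=\min(P)\setminus \dn m$ is open. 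So the subspace is $T_1$ with a cofinite-type structure, which alone does not force finiteness. I therefore expect to have to refine this step, for example by using directed suprema inside $\min(P)$, which are trivial since $\min(P)$ is an antichain. Every subset of an antichain is Scott closed in $P$ provided its lower set is Scott closed, and $\dn A$ for $A\subseteq\min(P)$ equals $A$, which is Scott closed because directed subsets of $A$ are singletons. Hence $\min(P)$ is discrete and compact, so it is finite. This gives $(4)$.

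For $(4)\Rightarrow(3)$, $(P,\alpha(P))$ is locally hypercompact, as noted before the corollary. If $P=\up F$ then any open cover, which consists of upper sets, has finitely many members covering $F$ and hence covering $P$, so $(P,\alpha(P))$ is compact. Proposition 3.10 then gives $\upsilon$-compactness. The implication $(3)\Rightarrow(2)$ is Remark 3.2(1). Finally, $(2)\Rightarrow(1)$ holds because $\sigma(P)\subseteq\alpha(P)$, and compactness passes to coarser topologies.
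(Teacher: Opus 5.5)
Your proof is correct and follows the paper's. After Lemma 2.2 gives $P=\up {\rm min}(P)$ with ${\rm min}(P)$ compact, you observe that every subset of ${\rm min}(P)$ is Scott closed, so ${\rm min}(P)$ is compact and discrete, hence finite. This is the same fact the paper uses to build its directed Scott open cover $\{P\setminus E\mid E\subseteq {\rm min}(P)\ \mbox{cofinite}\}$. Your other implications rest, as in the paper, on Proposition 3.10, Remark 3.2(1) and $\sigma(P)\subseteq\alpha(P)$.

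You should delete the abandoned attempts in $(1)\Rightarrow(4)$, because they contain a false remark. For $m,m'\in{\rm min}(P)$, the set $P\setminus\bigcup_{m'\neq m}\dn m'=P\setminus({\rm min}(P)\setminus\{m\})$ is always Scott open, by the very argument you give at the end.
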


\begin{proof}(1)$\Longrightarrow$(4): Using Lemma 2.2, we have $P=\up {\rm min(P)}$ and ${\rm min(P)}$ is a compact subset of $\Sigma P$. Assume that ${\rm min(P)}$ is infinite. Consider $$\mathcal{S}=\{P\setminus E\mid E\subseteq {\rm min(P)}, {\rm min(P)}\setminus E \in \mathbf{Fin}({\rm min(P)})\}.$$ Then $\mathcal{S}\subseteq\sigma(P)$ is directed and $\bigcup \mathcal{S}=P$. However, $P\not\in \mathcal{S}$, a contradiction. Thus, ${\rm min(P)}$ is finite.

(2)$\Longleftrightarrow$(3): By Proposition 3.10.

(2)$\Longrightarrow$(1),(4)$\Longrightarrow$(1) and (4)$\Longrightarrow$(2): Clearly.
\end{proof}

\quad  For every continuous poset $P$, $\Sigma P$ is always locally hypercompact and hence by proposition 3.10, $\Sigma P$ is co-consonant. It is proved in \cite {BWR30} that for a continuous poset $P$, $P_{H}(\Sigma P)$ is still co-consonant. In \cite {BWR30}, He and Zhao asked whether the lower powerspace of a core-compact and co-consonant space is co-consonant. We will give a solution for this problem in Theorem 3.15.

\begin{theorem} Let $X$ be a locally hypercompact $T_{0}$ space. Then $P_{H}(X)$ is co-consonant.
\end{theorem}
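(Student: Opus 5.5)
Since the paper has just shown that co-consonance follows once a space has a base of sets of the form $int(\up F)$ with $F$ finite, I would prove Theorem 3.15 by showing that $P_{H}(X)$ is itself locally hypercompact and then invoking the argument of Proposition 3.10. That proof only uses local hypercompactness to obtain co-consonance; compactness is needed only for the $SI$-compact part. So the whole task reduces to verifying local hypercompactness of the lower powerspace.

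First I would fix the order. The specialization order on $P_{H}(X)$ is inclusion of nonempty closed sets. Hence for finitely many closed sets $A_1,\dots,A_n$ we have $\up\{A_1,\dots,A_n\}=\{B\in\Gamma_{0}(X)\mid A_i\subseteq B \text{ for some } i\}$. A basic open set of $P_{H}(X)$ has the form $\diamond U_1\cap\cdots\cap\diamond U_m$ with $U_j\in\mathcal{O}(X)$. So let $A\in\diamond U_1\cap\cdots\cap\diamond U_m$ and pick $x_j\in A\cap U_j$. Since $X$ is locally hypercompact, there is for each $j$ a finite set $F_j\subseteq U_j$ with $x_j\in int(\up F_j)\subseteq\up F_j\subseteq U_j$.

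Next I would take the candidate finite set
$$\mathcal{F}=\{cl(\{f_1,\dots,f_m\})\mid f_j\in F_j,\ j=1,\dots,m\},$$
where each choice picks one point $f_j$ from each $F_j$. This set is finite. I would then check the following.

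(a) $\mathcal{F}\subseteq\diamond U_1\cap\cdots\cap\diamond U_m$. This holds because $f_j\in U_j$ and $U_j$ is open.

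(b) $\up\mathcal{F}\subseteq\diamond U_1\cap\cdots\cap\diamond U_m$. This holds because basic open sets are upper sets.

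(c) $A\in\mathcal{V}\subseteq\up\mathcal{F}$, where $\mathcal{V}=\diamond int(\up F_1)\cap\cdots\cap\diamond int(\up F_m)$. The set $\mathcal{V}$ is open, and it contains $A$ because $x_j\in A\cap int(\up F_j)$. Now let $B\in\mathcal{V}$. Then for each $j$ there is $y_j\in B\cap\up F_j$, so $y_j\geq f_j$ for some $f_j\in F_j$. In the specialization order this means $f_j\in cl(\{y_j\})\subseteq B$, since $B$ is closed. Hence $cl(\{f_1,\dots,f_m\})\subseteq B$, that is, $B\in\up\mathcal{F}$.

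Together these give $A\in int(\up\mathcal{F})\subseteq\up\mathcal{F}\subseteq\diamond U_1\cap\cdots\cap\diamond U_m$. So $P_{H}(X)$ is locally hypercompact. One small point is that local hypercompactness is stated for arbitrary open sets rather than basic ones; since every open set is a union of basic opens and the condition is local, it suffices to check basic opens.

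Finally I would repeat the co-consonance half of the proof of Proposition 3.10 verbatim for $Y=P_{H}(X)$. Take a Scott open $\mathcal{W}\subseteq\mathcal{O}(Y)$ and $W\in\mathcal{W}$. The directed family $\{int(\up\mathcal{F})\mid\mathcal{F}\in\mathbf{Fin}(W)\}$ has union $W$, so $int(\up\mathcal{F}_0)\in\mathcal{W}$ for some $\mathcal{F}_0$. Then $\{V\mid\mathcal{F}_0\subseteq V\}$ is an upper-topology open neighbourhood of $W$ contained in $\mathcal{W}$. No compactness or $T_0$ issue enters here; $P_{H}(X)$ is automatically $T_0$.

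The main obstacle is step (c), specifically the use of specialization order and closedness to pass from $y_j\in\up F_j$ to $f_j\in B$. A lesser point is the bookkeeping showing that the product choice over the $F_j$ yields a finite set $\mathcal{F}$ that works simultaneously for all $j$.
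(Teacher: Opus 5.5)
Your proof is correct, and it takes a genuinely different route from the paper.

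The paper verifies co-consonance of $P_{H}(X)$ directly. Given a Scott open $\mathcal{F}\subseteq\mathcal{O}(P_{H}(X))$ and $\mathfrak{F}\in\mathcal{F}$, it does the following:
\begin{enumerate}
\item it uses Scott openness to shrink $\mathfrak{F}$ to a finite intersection $\bigcap_{k}\diamond U_{k}$;
\item it uses core-compactness of $X$, so that $\Sigma(\prod^{n}\mathcal{O}(X))=\prod^{n}\Sigma\mathcal{O}(X)$, to make $(W_{1},\dots,W_{n})\mapsto\bigcap_{k}\diamond W_{k}$ continuous on the product;
\item it applies co-consonance of $X$ in each coordinate to get finite sets $F_{k}\subseteq U_{k}$;
\item it checks by a fairly delicate argument (Claims 1 and 2) that the finitely many closed sets $\Box(\bigcup_{k}\dn u_{k})$, with $u_{k}\in F_{k}$, give an upper-topology neighbourhood of $\mathfrak{F}$ inside $\mathcal{F}$.
\end{enumerate}

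You instead transport local hypercompactness itself through $P_{H}$ by a short neighbourhood computation. You then invoke the observation, which the paper extracts from the proof of Proposition 3.10, that local hypercompactness alone yields co-consonance.

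The combinatorial heart is similar. Your points $cl(\{f_{1},\dots,f_{m}\})=\bigcup_{j}\dn f_{j}$, one choice from each $F_{j}$, have the same shape as the points whose closures $\Box(\bigcup_{k}\dn u_{k})$ make up the paper's finite family.

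Your packaging buys three things:
\begin{itemize}
\item it dispenses with core-compactness and with the identification of the Scott topology on $\mathcal{O}(X)^{n}$ with the product topology;
\item it replaces the paper's case bookkeeping by a transparent inclusion check;
\item it proves the stronger fact that $P_{H}(X)$ is itself locally hypercompact. By Proposition 3.13, $P_{H}(X)$ is then again core-compact and co-consonant, so this class of spaces is closed under $P_{H}$, which is the natural strengthening of He and Zhao's question.
\end{itemize}

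The only detail you left implicit is the degenerate basic open set $\Gamma_{0}(X)$, the empty intersection. It is harmless, because $\Gamma_{0}(X)=\diamond X$, so you may always assume $m\geq 1$.
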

\setlength{\baselineskip}{1.1\baselineskip}
\begin{proof}Let $\mathcal{F}\in\sigma(\mathcal{O}(P_{H}(X)))$ and $\mathfrak{F}\in\mathcal{F}$. Then there are $\{V_{j}\mid j\in J_{i},i\in I\}\subseteq\mathcal{O}(X)$ and  finite sets $J_{i}$ ($\forall\ i\in I$), such that $\mathfrak{F}=\bigcup\limits_{i\in I}(\bigcap\limits_{j\in J_{i}}\diamond V_{j})$. Since $\mathcal{F}$ is Scott open, there exists a finite subset $K_{0}\subseteq I$ such that $\bigcup\limits_{i\in K_{0}}(\bigcap\limits_{j\in J_{i}}\diamond V_{j})\in\mathcal{F}$. For the sake of convenience, let $K_{0}=\{1,2,\cdot\cdot\cdot,k_{0}\}$. It is not difficult to verify that $$\bigcup\limits_{i\in K_{0}}(\bigcap\limits_{j\in J_{i}}\diamond V_{j})=\bigcap\limits_{V\in\mathcal{S}_{1}}\diamond V,$$ where $$
\mathcal{S}_{1}=\{\bigcup\limits_{1\leq k\leq k_{0}}V_{i_{k}}\mid (i_{1},i_{2},...,i_{k_{0}})\in J_{1}\times J_{2}\times\cdot\cdot\cdot\times J_{k_{0}}\}.$$
 Let $n=|\mathcal{S}_{1}|$ and $\mathcal{S}_{1}=\{U_{1},U_{2},\cdot\cdot\cdot,U_{n}\}$. Then $$\bigcup\limits_{i\in K_{0}}(\bigcap\limits_{j\in J_{i}}\diamond V_{j})=\bigcap\limits_{U\in\mathcal{S}_{1}}\diamond V=\bigcap\limits_{1\leq k\leq n}\diamond U_{k},$$  and $\bigcap\limits_{1\leq k\leq n}\diamond U_{k}\in\mathcal{F}$.
Define a mapping $\beta_{n}:\Sigma(\prod\limits^{n}(\mathcal{O}(X)))\longrightarrow\Sigma(\mathcal{O}(P_{H}(X)))$ as follows:
$$\beta_{n}(W_{1}, W_{2},\cdot\cdot\cdot,W_{n})=\bigcap\limits^{n}\limits_{i=1}\diamond W_{i}.$$
Since $\beta_{n}$ is Scott continuous for each component $W_{i}$, $\beta_{n}$ is continuous. Since $X$ is core-compact, $\Sigma(\prod\limits^{n}\mathcal{O}(X))=\prod\limits^{n}\Sigma(\mathcal{O}(X))$. Consequently, $\beta_{n}$ is also continuous from $\prod\limits^{n}\Sigma(\mathcal{O}(X))$ to $\Sigma(\mathcal{O}(P_{H}(X)))$. This implies that $\beta_{n}^{-1}(\mathcal{F})\in\mathcal{O}(\prod\limits^{n}\Sigma(\mathcal{O}(X)))$.  So there are Scott open sets $\{\mathcal{A}_{k}\mid 1\leq k\leq n\}\subseteq\sigma(\mathcal{O}(X))$ such that
$$(U_{1},U_{2},\cdot\cdot\cdot, U_{n})\in\mathcal{A}_{1}\times\mathcal{A}_{2}\times\cdot\cdot\cdot\times\mathcal{A}_{n}\subseteq\beta_{n}^{-1}(\mathcal{F}).$$ By Proposition 3.13, $X$ is co-consonant and for each $1\leq k\leq n$, there is a finite subset  $F_{k}=\{u_{k1},u_{k2},\cdot\cdot\cdot,u_{km_{k}}\}\subseteq U_{k}$ such that $$U_{k}\in\{V\in\mathcal{O}(X)\mid F_{k}\subseteq V\}=\bigcap\{\lozenge \dn u_{kj}\mid 1\leq j\leq m_{k}\}\subseteq\mathcal{A}_{k}.$$ Consider the finite family of closed sets of $P_{H}(X)$ $$\mathcal{U}=\{\Box(\bigcup\limits^{n}\limits_{k=1}\dn u_{km})\mid 1\leq m\leq m_{k}\}.$$

Claim 1: $\mathfrak{F}\in\bigcap\{\lozenge V\mid V\in\mathcal{U}\}$.

Let $V=\Box(\bigcup\limits^{n}\limits_{k=1}\dn u_{km})\in\mathcal{U}$, where for all $1\leq k\leq n$, $u_{km}\in F_{k}$ and $1\leq m\leq m_{k}$.
Since $U_{k}\in\bigcap\{\lozenge \dn u_{kj}\mid 1\leq j\leq m_{k}\}$ for all $1\leq k\leq n$ , then $u_{km}\in U_{k}$. Whence, $\bigcup\limits^{n}\limits_{k=1}\dn u_{km}\in(\bigcap\limits^{n}\limits_{k=1} \diamond U_{k})\cap V$. So $\bigcap\limits^{n}\limits_{k=1} \diamond U_{k}\in\bigcap\{\lozenge V\mid V\in\mathcal{U}\}$ and thus $\mathfrak{F}\in\bigcap\{\lozenge V\mid V\in\mathcal{U}\}$.

Claim 2: $\bigcap\{\lozenge V\mid V\in\mathcal{U}\}\subseteq\mathcal{F}$.

Let $\mathfrak{A}=\bigcup\limits_{r\in R}(\bigcap\limits_{s\in S_{r}}\diamond G_{s})\in\bigcap\{\lozenge V\mid V\in\mathcal{U}\}$, where $S_{r}$ is finite for each $r\in R$ and $\{G_{s}\mid s\in S_{r},r\in R\}\subseteq\mathcal{O}(X)$. Since $\bigcap\{\lozenge V\mid V\in\mathcal{U}\}$ is Scott open in $\mathcal{O}(P_{H}(X))$, there are finitely many open sets $W_{1}, W_{2},\cdot\cdot\cdot,W_{l}$ of $X$ such that $\bigcap\limits^{l}\limits_{i=1}\diamond W_{i}\subseteq \mathfrak{A}$ and $$\bigcap\limits^{l}\limits_{i=1}\diamond W_{i}\in\bigcap\{\lozenge V\mid V\in\mathcal{U}\}.$$ Assume that there is $l^{\ast}\in\{1,2,...,l\}$ such that $$W_{l^{\ast}}\not\in\bigcap\{\lozenge \dn u_{kj}\mid 1\leq j\leq m_{k}\}$$ for each $1\leq k\leq n$. Then there is $1\leq l_{k}\leq m_{k}$ satisfying $u_{kl_{k}}\not\in W_{l^{\ast}}$ for each $1\leq k\leq n$. Let $V^{\ast}=\Box(\bigcup\limits^{n}\limits_{k=1}\dn u_{kl_{k}})\in\mathcal{U}$. Then $\bigcap\limits^{l}\limits_{i=1}\diamond W_{i}\in\lozenge V^{\ast}$. So there is a $C\in\Gamma(X)$ such that $C\subseteq\bigcup\limits^{n}\limits_{k=1}\dn u_{kl_{k}}$ and $C\cap W_{l^{\ast}}\neq\emptyset$. This contradicts $u_{kl_{k}}\not\in W_{l^{\ast}}$, for each $1\leq k\leq n$. Then we have that for each $1\leq i\leq l$, there is a $1\leq k\leq n$ such that $$W_{i}\in\bigcap\{\lozenge \dn u_{kj}\mid 1\leq j\leq m_{k}\}.$$ For each $1\leq k\leq n$, let $$E_{k}=U_{k}\cap\{W_{i}\mid 1\leq i\leq l,W_{i}\in\bigcap\{\lozenge \dn u_{kj}\mid 1\leq j\leq m_{k}\}\}.$$ It is not difficult to check that $\bigcap\limits^{n}\limits_{k=1}\diamond E_{k}\subseteq\bigcap\limits^{l}\limits_{i=1}\diamond W_{i}$. Furthermore, $$E_{k}\in \bigcap\{\lozenge \dn u_{kj}\mid 1\leq j\leq m_{k}\}\subseteq\mathcal{A}_{k},$$ for each $1\leq k\leq n$. Then we have $$(E_{1},E_{2},\cdot\cdot\cdot,E_{n})\in\mathcal{A}_{1}\times\mathcal{A}_{2}\times\cdot\cdot\cdot\times\mathcal{A}_{n}\subseteq\beta_{n}^{-1}(\mathcal{F}).$$  Whence, $\bigcap\limits^{n}\limits_{k=1}\diamond E_{k}=\beta_{n}(E_{1},E_{2},...,E_{n})\in\mathcal{F}$. Since $\mathcal{F}$ is Scott open, it follows from $\bigcap\limits^{n}\limits_{k=1}\diamond E_{k}\subseteq\bigcap\limits^{l}\limits_{i=1}\diamond W_{i}\subseteq\mathfrak{A}$ that $\mathfrak{A}\in\mathcal{F}$. So the Claim 2 is proved.

Therefore, $P_{H}(X)$ is co-consonant.\end{proof}

\begin{example}{\rm  Let $P=\{a_{n}\mid n\in\mathbb{N}\}\cup\{b_{n}\mid n\in\mathbb{N}\}\cup\{\infty,b\}$. The order $\leq$ on $P$ is given by

(1) $a_{0}\leq a_{1}\leq a_{2}\leq\cdot\cdot\cdot\leq a_{n}\leq\cdot\cdot\cdot\leq\infty$, for all $n\in\mathbf{N}$;

(2) $a_{n}\leq b_{n}$, for all $n\in\mathbf{N}$;

(3) $b\leq\infty$ and $b\leq b_{n}$ , for all $n\in\mathbf{N}$.

\begin{center}
\centering
\includegraphics[totalheight=2.4in]{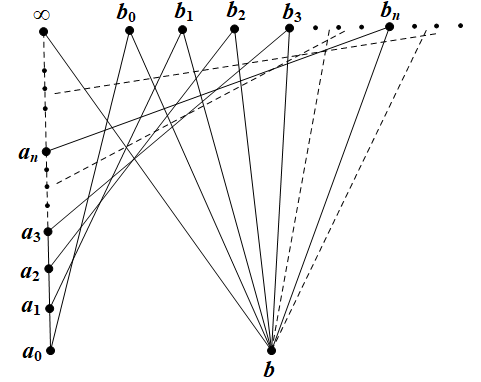}\\Figure 1
\end{center}

Then $P$ is a dcpo but not a continuous dcpo. Furthermore, $(P,\upsilon(P))$ is locally hypercompact and compact. Hence, by Proposition 3.10, $(P,\upsilon(P))$ is $\upsilon$-compact and co-consonant.

}
\end{example}

\quad In the following, we present an example which is compact but not $\upsilon$-compact. The following Lemma is useful and it has been proved in \cite{GOY03}.

\begin{lemma} (see \cite{GOY03}) Let $P$ be a poset. Then the following statements are equivalent:

{\rm (1)} $(P,\omega(P))$ is compact;

{\rm (2)} Every nonempty subset $A\subseteq P$  has an upper bound in $P$ if every finite
subset of $A$ has an upper bound in $P$.

\end{lemma}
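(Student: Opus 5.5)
The plan is to reformulate compactness of $(P,\omega(P))$ through closed sets and apply the Alexander subbase lemma. By definition $\omega(P)$ has subbase $\{P\setminus\up x\mid x\in P\}$, so the family $\{\up x\mid x\in P\}$ is a subbase for the closed sets of $(P,\omega(P))$. The whole argument rests on one translation. For a nonempty $A\subseteq P$, the family $\{\up a\mid a\in A\}$ has the finite intersection property iff every finite subset of $A$ has an upper bound in $P$, because
$$\bigcap_{a\in F}\up a=\{x\in P\mid F\subseteq\dn x\}.$$
By the same identity, $\bigcap_{a\in A}\up a\neq\emptyset$ iff $A$ has an upper bound in $P$.

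For (1)$\Longrightarrow$(2), let $A$ be a nonempty subset all of whose finite subsets have upper bounds. Then $\{\up a\mid a\in A\}$ is a family of closed sets of $(P,\omega(P))$ with the finite intersection property. Compactness gives $\bigcap_{a\in A}\up a\neq\emptyset$, and any point in this intersection is an upper bound of $A$.

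For (2)$\Longrightarrow$(1), I would invoke the Alexander subbase lemma in its closed-set form. It says a space is compact as soon as every family of subbasic closed sets with the finite intersection property has nonempty intersection. So take any family $\{\up a\mid a\in A\}$ with the finite intersection property. If $A=\emptyset$ the intersection is $P$, which we take to be nonempty; if $P=\emptyset$, compactness is trivial anyway. Otherwise the translation above shows that every finite subset of $A$ has an upper bound, so by (2) $A$ has an upper bound $x$, and $x\in\bigcap_{a\in A}\up a$. Hence $(P,\omega(P))$ is compact.

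The only nonroutine ingredient is the use of the Alexander subbase lemma in the direction (2)$\Longrightarrow$(1); it relies on choice (Zorn's lemma) but is standard, so I expect no real obstacle. The remaining care is bookkeeping in the degenerate cases, namely the empty family and the convention that $P$ itself counts as the empty intersection of subbasic closed sets.
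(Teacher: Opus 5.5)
Your proof is correct: the identity $\bigcap_{a\in F}\up a=\{x\in P\mid F\subseteq\dn x\}$ turns the finite intersection property of $\{\up a\mid a\in A\}$ into finite boundedness of $A$, and the closed-set form of the Alexander subbase lemma then gives (2)$\Longrightarrow$(1), with the empty family and $P=\emptyset$ handled as you do. The paper gives no proof of this lemma and only cites Lawson, Wu and Xi, so there is no argument in the paper to compare against; your subbase argument is the standard proof of this fact.
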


\begin{theorem} Let $P$ be a poset. Then $(P,\upsilon(P))$ is $\upsilon$-compact iff $P=\up F$ for some finite subset $F$ of $P$.

\end{theorem}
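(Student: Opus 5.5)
The plan is to prove both directions directly from the definitions, working in the space $X=(P,\upsilon(P))$. Two facts are used throughout. First, the specialization order of $X$ is the original order of $P$, so every $V\in\upsilon(P)$ is an upper set. Second, the upper topology $\upsilon(\mathcal{O}(X))$ has the sets $\mathcal{O}(X)\setminus\dn_{\mathcal{O}(X)}A$, $A\in\mathcal{O}(X)$, as a subbase. Consequently every closed set of $\upsilon(\mathcal{O}(X))$ is an intersection of finite unions $\dn A_1\cup\cdots\cup\dn A_n$.

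\emph{Sufficiency.} Suppose $P=\up F$ with $F$ finite, and let $\mathcal{U}$ be a $\upsilon(\mathcal{O}(X))$-irreducible open cover of $X$.
\begin{itemize}
\item For each $f\in F$, the set $P\setminus\dn f$ is open in $X$. Since open sets of $X$ are upper sets, a $V\in\mathcal{O}(X)$ satisfies $V\not\subseteq P\setminus\dn f$ iff $V\cap\dn f\neq\emptyset$ iff $f\in V$.
\item Hence $\mathcal{V}_f=\{V\in\mathcal{O}(X)\mid f\in V\}=\mathcal{O}(X)\setminus\dn(P\setminus\dn f)$ is $\upsilon$-open in $\mathcal{O}(X)$.
\item Each $\mathcal{V}_f$ meets $\mathcal{U}$, because $\mathcal{U}$ covers $X$.
\item Property (2) of irreducible sets, applied to the finitely many opens $\mathcal{V}_f$, gives some $W\in\mathcal{U}\cap\bigcap_{f\in F}\mathcal{V}_f$.
\item Then $F\subseteq W$, and since $W$ is an upper set, $W\supseteq\up F=P$. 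So $X\in\mathcal{U}$.
\end{itemize}
(If $F=\emptyset$ then $P=\emptyset$ and the claim is trivial.)

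\emph{Necessity.} Assume $P\neq\up F$ for every finite $F\subseteq P$. Consider
$$\mathcal{U}=\{P\setminus\dn z\mid z\in P\}\subseteq\mathcal{O}(X).$$
\begin{itemize}
\item $X\notin\mathcal{U}$, since $z\notin P\setminus\dn z$.
\item $\bigcup\mathcal{U}=P$: for $x\in P$, choose $z\notin\up x$, which exists by the assumption applied to $F=\{x\}$. Then $x\not\leq z$, so $x\in P\setminus\dn z$.
\end{itemize}
It therefore remains to show that $\mathcal{U}$ is irreducible in $\upsilon(\mathcal{O}(X))$, which contradicts $\upsilon$-compactness.

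The key step is the following claim. Whenever $\mathcal{U}\subseteq\dn A_1\cup\cdots\cup\dn A_n$ with $A_i\in\mathcal{O}(X)$, some $A_i$ equals $X$, so $\mathcal{U}\subseteq\dn A_i=\mathcal{O}(X)$. To prove it, suppose every $A_i\neq P$ and pick $x_i\notin A_i$. By hypothesis there is $z\notin\up\{x_1,\dots,x_n\}$. Then every $x_i$ lies in $P\setminus\dn z\in\mathcal{U}$. But $P\setminus\dn z\subseteq A_j$ for some $j$, which forces $x_j\in A_j$, a contradiction.

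From the claim, irreducibility of $\mathcal{U}$ follows by a routine reduction. Suppose $\mathcal{U}\subseteq\mathcal{C}_1\cup\mathcal{C}_2$ with $\mathcal{C}_1,\mathcal{C}_2$ closed and $\mathcal{U}\not\subseteq\mathcal{C}_k$ for $k=1,2$.
\begin{itemize}
\item Write each $\mathcal{C}_k$ as an intersection of basic closed sets, i.e. finite unions of principal down-sets. Pick basic closed sets $\mathcal{B}_k\supseteq\mathcal{C}_k$ with $\mathcal{U}\not\subseteq\mathcal{B}_k$.
\item Then $\mathcal{U}\subseteq\mathcal{B}_1\cup\mathcal{B}_2$, and $\mathcal{B}_1\cup\mathcal{B}_2$ is again a finite union of principal down-sets.
\item By the claim, one of these down-sets is all of $\mathcal{O}(X)$. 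Hence one of $\mathcal{B}_k$ is $\mathcal{O}(X)\supseteq\mathcal{U}$, a contradiction.
\end{itemize}
Nonemptiness of $\mathcal{U}$ is clear when $P\neq\emptyset$.

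The main obstacle I expect is exactly this irreducibility verification: choosing a cover whose members avoid a single point each, so that one point $z$ outside $\up\{x_1,\dots,x_n\}$ defeats any finite family of proper open sets. This also correctly handles spaces such as an infinite antichain, whose upper topology is the cofinite topology: these are compact, so compactness alone cannot be what the argument relies on.
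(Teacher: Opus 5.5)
Your proof is correct; sufficiency matches the paper, but necessity takes a different route.

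\textbf{Sufficiency.} This is the paper's argument in dual form. The paper passes to $F_1=\min(F)=\min(P)$ and notes that every proper open set misses some $x\in F_1$. Hence $\mathcal{U}\subseteq\bigcup_{x\in F_1}\dn_{\upsilon(P)}(P\setminus\dn x)$, and irreducibility is applied to this finite union of closed sets. Your open sets $\mathcal{V}_f$ are exactly the complements of these closed sets. Working with the condition $f\in V$ directly lets you skip the passage to minimal elements.

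\textbf{Necessity.} The paper proceeds in three steps:
\begin{enumerate}
\item It uses that $\upsilon$-compactness implies compactness.
\item It invokes Lemma 2.2 to get $P=\up\min(P)$, so only infinitely many minimal elements need to be ruled out.
\item It takes the cover $\{P\setminus\dn a\mid a\in\min(P)\}$ and shows its $\upsilon$-closure is all of $\upsilon(P)$. The key fact is that each $P\setminus\dn a=P\setminus\{a\}$ is maximal among proper open sets. So a finite family not containing $P$ can dominate only finitely many of these covering sets.
\end{enumerate}

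You instead argue by contraposition, using the cover by all co-principal sets $P\setminus\dn z$, $z\in P$. Your key claim is that a single $z\notin\up\{x_1,\dots,x_n\}$ defeats any finite family of proper open sets. This shows directly that the cover is dense in $\upsilon(\mathcal{O}(X))$, hence irreducible.

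\textbf{Comparison.} Your route bypasses compactness, Lemma 2.2 (whose content, that every point lies above a minimal element, needs a Zorn-type argument), and the maximality observation. It is therefore more elementary and self-contained. The paper's route instead pinpoints the obstruction concretely as infinitely many minimal elements, and it parallels the argument used for $T_1$ spaces in Proposition 3.20.
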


\begin{proof} Sufficiency: Let $F_{1}={\rm min(F)}$. Then $P=\up F_{1}$ and ${\rm min(P)}=F_{1}$ is finite.  Let $\mathcal{U}\subseteq\upsilon(P)$ be an irreducible open cover of $P$ with respect to the upper topology of $\upsilon(P)$. Assume that $P\not\in\mathcal{U}$. Then for every $U\in\mathcal{U}$, $U\subseteq P\setminus\dn x=P\setminus\{x\}$ for some $x\in F_{1}$. Equivalently, $$\mathcal{U}\subseteq\bigcup\limits_{x\in F_{1}}\dn_{\upsilon(P)}P\setminus\dn x.$$
By the irreduciblity of $\mathcal{U}$, $\mathcal{U}\subseteq\dn_{\upsilon(P)}P\setminus\dn x_{0}$ for some $x_{0}\in F_{1}$. This implies $\bigcup\mathcal{U}\subseteq P\setminus\dn x_{0}$, a contradiction. Thus $P\in\mathcal{U}$ and then $(P,\upsilon(P))$ is $\upsilon$-compact.

Necessity: Since $(P,\upsilon(P))$ is $\upsilon$-compact, $(P,\upsilon(P))$ is compact. Then by Lemma 2.2, $P=\up {\rm min(P)}$ and ${\rm min(P)}$ is compact in $(P,\upsilon(P))$. Assume that ${\rm min(P)}$ is infinite. Consider $$\mathcal{S}=\{P\setminus\dn a\mid a\in {\rm min(P)}\}.$$ Clearly, $\mathcal{S}\subseteq\upsilon(P)$ and $\bigcup\mathcal{S}=P$. If $P\not\in cl_{\upsilon(\upsilon(P))}(\mathcal{S})$, then there exist $\{\mathcal{F}_{i}\mid i\in I\}\subseteq \mathbf{Fin}(\upsilon(P))$ such that $$cl_{\upsilon(\upsilon(P))}(\mathcal{S})=\bigcap\limits_{i\in I}\dn_{\upsilon(P)}\mathcal{F}_{i}.$$ It follows that $\mathcal{S}\subseteq\dn_{\upsilon(P)}\mathcal{F}_{i}$, for all $i\in I$.
Note that every element of $\mathcal{S}$ is maximal in $\upsilon(P)\setminus\{P\}$. Since ${\rm min(P)}$ is infinite and $\mathcal{F}_{i}$ is finite, $P\in\mathcal{F}_{i}$ for all $i\in I$. Whence, $$cl_{\upsilon(\upsilon(P))}(\mathcal{S})=\bigcap\limits_{i\in I}\dn_{\upsilon(P)}\mathcal{F}_{i}=\dn_{\upsilon(P)} P=\upsilon(P).$$ This means that $\mathcal{S}$ is irreducible in the upper topology of $\upsilon(P)$. As
$(P,\upsilon(P))$ is $\upsilon$-compact, $P\in\mathcal{S}$. It is a contradiction. Thus, ${\rm min(P)}$ is finite.
\end{proof}

\begin{example}{\rm  Let $P=\mathbb{N}\cup\{\infty\}$, where $\mathbb{N}$ is the natural numbers set. The order $\leq$ on $P$ is given by
$$\forall\ x\in P, x\leq\infty.$$
Let $Y=(P,\upsilon(P))$. Then $Y$ is compact and locally compact sober. Thus $\Sigma\mathcal{O}(Y)$ is sober and by Lemma 3.5, $Y$ is $SI$-compact. Clearly, we have $$\upsilon(P)=\{\emptyset,P\}\cup\{P\setminus F\mid F\ \mbox{is a nonempty finite subset of}\ \mathbb{N}\}.$$ So every directed subset of $\mathcal{O}(Y)$ contains a largest element. This means that the Scott topology agrees with the Alexandroff topology on $\mathcal{O}(Y)$. However, the upper topology on $\mathcal{O}(Y)$ is strictly less than the Alexandroff topology. Hence, $Y$ is not co-consonant. Note that ${\rm min(P)}=\mathbb{N}$ is infinite. Hence by Theorem 3.18, $Y$ is not $\upsilon$-compact.}
\end{example}

\quad Unfortunately, $\upsilon$-compactness does not hold for all infinite $T_{1}$ spaces. See the following Proposition 3.20.

\begin{proposition} Let $X$ be a $T_{1}$ space. If $X$ is $\upsilon$-compact, then $X$ is finite.
\end{proposition}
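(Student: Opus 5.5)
We argue by contraposition: assume $X$ is an infinite $T_{1}$ space and exhibit an open cover of $X$ that is irreducible in $(\mathcal{O}(X),\upsilon(\mathcal{O}(X)))$ but does not contain $X$. The natural candidate mirrors the proof of Theorem 3.18. Since $X$ is $T_{1}$, every singleton is closed, so $X\setminus\{x\}\in\mathcal{O}(X)$ for each $x\in X$. Put
$$\mathcal{S}=\{X\setminus\{x\}\mid x\in X\}.$$
Because $X$ has at least two points, $\bigcup\mathcal{S}=X$, and clearly $X\notin\mathcal{S}$. It remains to show that $\mathcal{S}$ is irreducible with respect to $\upsilon(\mathcal{O}(X))$; then $\upsilon$-compactness would force $X\in\mathcal{S}$, a contradiction.

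For the irreducibility, I will show that the $\upsilon(\mathcal{O}(X))$-closure of $\mathcal{S}$ is all of $\mathcal{O}(X)=\dn_{\mathcal{O}(X)}X$, which is the closure of the singleton $\{X\}$ and hence irreducible; by property (1) of irreducible sets, $\mathcal{S}$ is then irreducible. Closed sets of the upper topology are intersections of finite unions of principal ideals, i.e. sets of the form $\bigcap_{i\in I}\dn_{\mathcal{O}(X)}\mathcal{F}_{i}$ with each $\mathcal{F}_{i}$ a finite subset of $\mathcal{O}(X)$. So take such a closed set containing $\mathcal{S}$. For each $i$, $\mathcal{S}\subseteq\dn\mathcal{F}_{i}$. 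If $X\notin\mathcal{F}_{i}$, every member of $\mathcal{F}_{i}$ is a proper open subset, and each $X\setminus\{x\}$ lies below some $W\in\mathcal{F}_{i}$ with $X\setminus\{x\}\subseteq W\neq X$, forcing $W=X\setminus\{x\}$. Thus the map $x\mapsto X\setminus\{x\}$ would inject the infinite set $X$ into the finite set $\mathcal{F}_{i}$, which is impossible. Hence $X\in\mathcal{F}_{i}$ for every $i$, so the closed set is $\mathcal{O}(X)$, as required.

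The main point needing care is the maximality step: each $X\setminus\{x\}$ is a maximal element of $\mathcal{O}(X)\setminus\{X\}$, which is exactly where $T_{1}$ is used, since it makes these sets open and pairwise distinct. Beyond this, the argument is the same counting trick as in the necessity part of Theorem 3.18, with $\mathrm{min}(P)$ replaced by the infinite set of points of $X$. Finally, note that $\upsilon$-compactness is defined via irreducible open covers, so exhibiting $\mathcal{S}$ with $X\notin\mathcal{S}$ directly contradicts the hypothesis, and therefore $X$ must be finite.
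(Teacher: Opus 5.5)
Your proposal is correct and matches the paper's proof: the same cover $\{X\setminus\{x\}\mid x\in X\}$, the same description of upper-closed sets as $\bigcap_{i\in I}\dn\mathcal{F}_{i}$, and the same maximality-plus-counting argument forcing $X\in\mathcal{F}_{i}$ for every $i$. Hence the closure is $\mathcal{O}(X)=\dn X$, so the cover is irreducible. Your version is a little more explicit than the paper's about why maximality forces $W=X\setminus\{x\}$ and about where the $T_{1}$ hypothesis enters.
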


\begin{proof}Assume that $X$ is infinite. Let $\mathcal{U}=\{X\setminus\{x\}\mid x\in X\}$. Then $\mathcal{U}$ covers $X$. If $cl_{\upsilon(\mathcal{O}(X))}(\mathcal{U})=\bigcap\limits_{i\in I}\dn \mathcal{F}_{i}$ for some $\{ \mathcal{F}_{i}\mid i\in I\}\subseteq \mathbf{Fin}(\mathcal{O}(X))$, then $\mathcal{U}\subseteq\dn \mathcal{F}_{i}$. Since $X$ is infinite and every member of $\mathcal{U}$ is maximal in $\mathcal{O}(X)\setminus\{X\}$, $X\in\bigcap\limits_{i\in I}\mathcal{F}_{i}$. Hence, we have $$cl_{\upsilon(\mathcal{O}(X))}(\mathcal{U})=\bigcap\limits_{i\in I}\dn \mathcal{F}_{i}=\mathcal{O}(X).$$ This means that $\mathcal{U}$ is irreducible in the upper topology of $\mathcal{O}(X)$. This contradicts the $\upsilon$-compactness of $X$. The finiteness of $X$ is obtained.\end{proof}

\section{A new method for getting Scott non sober frames}

\quad In the following, we consider the $SI$-compactness of the lower topology, Scott topology, and Lawson topology of posets. Using the above conclusions, we obtain some new frames which are non sober for the Scott topology.

\begin{lemma} (see \cite{GG03}) Let $L$ be a complete lattice. If $\Sigma (L\times L)=\Sigma L\times\Sigma L$, then $\Sigma L$ is sober.
\end{lemma}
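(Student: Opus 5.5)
The plan is to show directly that every irreducible Scott closed set $A\subseteq L$ is of the form $\dn a=cl_{\sigma(L)}(\{a\})$. The key idea is to use the hypothesis $\Sigma(L\times L)=\Sigma L\times\Sigma L$ to make the binary join map continuous for the product topology, and then use irreducibility to show that $A$ is directed.

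First I would note that the join map $\vee:L\times L\longrightarrow L$, $(x,y)\mapsto x\vee y$, preserves all directed suprema. Indeed, directed sups in $L\times L$ are computed componentwise, and joins commute with directed joins in a complete lattice. Hence $\vee$ is continuous from $\Sigma(L\times L)$ to $\Sigma L$. By the hypothesis, $\vee$ is then continuous from the product space $\Sigma L\times\Sigma L$ to $\Sigma L$.

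The main step is to show that $A$ is closed under binary joins, so that it is directed (it is nonempty, being irreducible). Take $x,y\in A$ and suppose $x\vee y\notin A$. Then $W=\vee^{-1}(L\setminus A)$ is open in $\Sigma L\times\Sigma L$ and contains $(x,y)$. So there are Scott open sets $U\ni x$ and $V\ni y$ with $U\times V\subseteq W$. Now $A\cap U\neq\emptyset$ and $A\cap V\neq\emptyset$, so by property (2) of irreducible sets there is some $z\in A\cap U\cap V$. Then $(z,z)\in U\times V\subseteq W$, which gives $z=z\vee z\notin A$, a contradiction. Hence $x\vee y\in A$. This step is the crux: it is exactly where the product topology, rather than the possibly finer Scott topology on $L\times L$, is needed to split the open neighbourhood into a rectangle.

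Finally, $A$ is a directed subset of the complete lattice $L$, so $a=\bigvee A$ exists. Because $A$ is Scott closed, it is closed under directed suprema, so $a\in A$. Since $A$ is a lower set, this gives $A=\dn a=cl_{\sigma(L)}(\{a\})$. Uniqueness of $a$ follows from $\Sigma L$ being $T_0$ (antisymmetry of the order). Therefore $\Sigma L$ is sober.
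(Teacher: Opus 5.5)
Your proof is correct and is essentially the standard argument behind the result the paper cites from \cite{GG03} (the paper itself gives no proof). Continuity of $\vee$ for the product topology, together with irreducibility applied to a basic rectangle $U\times V$ (equivalently, density of the diagonal $\{(z,z)\mid z\in A\}$ in $A\times A$), shows that an irreducible Scott closed set $A$ is closed under binary joins, hence directed, hence equal to $\dn\bigvee A$.
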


By Lemma 3.5 and Lemma 4.1, we have have the following Corollary 4.2 immediately.

\begin{corollary} Let $P$ be a poset. If $(P,\upsilon(P))$ is compact and $$\Sigma (\upsilon(P)\times \upsilon(P))=\Sigma \upsilon(P)\times\Sigma \upsilon(P),$$ then $(P,\upsilon(P))$ is $SI$-compact.

\end{corollary}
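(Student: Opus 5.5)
The plan is to chain Lemma 4.1 with Lemma 3.5, taking $L=\upsilon(P)$, the open set lattice of the space $X=(P,\upsilon(P))$.

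First, $L=\upsilon(P)=\mathcal{O}(X)$ is a complete lattice under inclusion: arbitrary unions of open sets are open, so all joins exist, and hence all meets exist too (the meet of a family is the interior of its intersection).

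Second, the hypothesis $\Sigma(\upsilon(P)\times\upsilon(P))=\Sigma\upsilon(P)\times\Sigma\upsilon(P)$ is exactly the condition $\Sigma(L\times L)=\Sigma L\times\Sigma L$ in Lemma 4.1. That lemma therefore gives that $\Sigma L=\Sigma\mathcal{O}(X)$ is sober.

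Third, $X$ is compact by hypothesis and $\Sigma\mathcal{O}(X)$ is sober, so Lemma 3.5 says compactness and $SI$-compactness coincide for $X$. Hence $(P,\upsilon(P))$ is $SI$-compact.

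The only point needing care is the identification in the first step. The Scott topology in Corollary 4.2 must be the one on $\upsilon(P)$ ordered by inclusion, so that $\Sigma\upsilon(P)$ really is $\Sigma\mathcal{O}(X)$ as used in Lemma 3.5. Once that is fixed, I expect no real obstacle: the argument is a direct composition of the two lemmas.
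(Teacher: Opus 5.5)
Your proposal is correct and is exactly the paper's argument: the paper obtains this corollary immediately by applying Lemma 4.1 to the complete lattice $L=\upsilon(P)=\mathcal{O}(X)$, which makes $\Sigma\mathcal{O}(X)$ sober, and then invoking Lemma 3.5. Your check that $\upsilon(P)$ is a complete lattice under inclusion is the one implicit step the paper leaves unstated, and you handle it correctly.
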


\quad The following Theorem 4.3 is is useful for proving the compactness and non $SI$-compactness of some spaces. Specifically, an infinite countable example will be given using the lower topology.

\begin{theorem} Let $L$ be a dcpo. If ${\rm Max(L)}$ is irreducible in $\Sigma L$, then $(L,\omega(L))$ is $SI$-compact iff ${\rm Max(L)}$ is finite.
\end{theorem}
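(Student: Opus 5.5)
For the sufficiency direction I will imitate the proof of Theorem 3.18. For the necessity direction I will exhibit an explicit Scott irreducible open cover of $(L,\omega(L))$ that omits $L$.

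\emph{Preliminaries on $\omega(L)$.} The sets $\up x$ are the basic closed sets of $\omega(L)$, so $cl_{\omega}(\{x\})=\up x$. Hence every $\omega$-open set is a lower set of $L$. Since $L$ is a dcpo, Zorn's lemma shows that every element lies below some element of ${\rm Max(L)}$. For $m\in{\rm Max(L)}$ we have $\up m=\{m\}$, so $L\setminus\{m\}=L\setminus\up m\in\omega(L)$. Consequently, if $U\in\omega(L)$ and $U\neq L$, pick $y\notin U$ and a maximal $m\geq y$. Because $U$ is a lower set, $m\notin U$, and therefore $U\subseteq L\setminus\{m\}$.

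\emph{Sufficiency.} Suppose ${\rm Max(L)}=\{m_1,\dots,m_k\}$ is finite. Let $\mathcal{U}$ be a Scott irreducible open cover of $(L,\omega(L))$ and assume $L\notin\mathcal{U}$. By the preliminaries,
$$\mathcal{U}\subseteq\bigcup_{i=1}^{k}\dn_{\omega(L)}(L\setminus\{m_i\}).$$
Each principal down-set $\dn_{\omega(L)}(L\setminus\{m_i\})$ is Scott closed in $\mathcal{O}(L,\omega(L))$, and the union is finite. Irreducibility of $\mathcal{U}$ therefore gives $\mathcal{U}\subseteq\dn(L\setminus\{m_{i_0}\})$ for some $i_0$. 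Then $\bigcup\mathcal{U}\subseteq L\setminus\{m_{i_0}\}$, which contradicts the fact that $\mathcal{U}$ covers $L$. Hence $L\in\mathcal{U}$, and $(L,\omega(L))$ is $SI$-compact. Note that this direction does not use the irreducibility hypothesis.

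\emph{Necessity.} Suppose ${\rm Max(L)}$ is infinite, and set
$$\mathcal{U}=\{L\setminus\{m\}\mid m\in{\rm Max(L)}\}.$$
Since there are at least two maximal elements, $\bigcup\mathcal{U}=L$, and clearly $L\notin\mathcal{U}$. It remains to show that $\mathcal{U}$ is irreducible in $\Sigma\mathcal{O}(L,\omega(L))$. The plan is to transport the irreducibility of ${\rm Max(L)}$ in $\Sigma L$ through the map $e\colon m\mapsto L\setminus\{m\}$ on ${\rm Max(L)}$. Concretely, given Scott open $\mathcal{A},\mathcal{B}$ in $\mathcal{O}(L,\omega(L))$ that both meet $\mathcal{U}$, I want Scott open sets $V_{\mathcal{A}},V_{\mathcal{B}}$ of $L$ with $V_{\mathcal{A}}\cap{\rm Max(L)}=e^{-1}(\mathcal{A})$, and similarly for $\mathcal{B}$. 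Irreducibility of ${\rm Max(L)}$ then yields a maximal $m$ with $L\setminus\{m\}\in\mathcal{A}\cap\mathcal{B}$, as required.

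\emph{Expected obstacle.} The hard step is constructing $V_{\mathcal{A}}$. The natural map $x\mapsto L\setminus\up x$ is antitone, so it is not Scott continuous, and I cannot simply pull back along it. My first attempt will be
$$V_{\mathcal{A}}=\{x\in L\mid L\setminus\{m\}\in\mathcal{A}\ \mbox{for every maximal}\ m\geq x\}$$
or a variant of it. I would need to check Scott openness using the facts that $\mathcal{A}$ is an upper set closed under directed approximation and that every $\omega$-open set is a lower set. If this fails, the fallback is to compute directly. Each $\omega$-open set is a union of finite intersections of the sets $L\setminus\up x$, and Scott openness of $\mathcal{A}$ lets one pass to a finitely generated open set $L\setminus\up F\in\mathcal{A}$ lying below $L\setminus\{m\}$. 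I would then use the irreducibility of ${\rm Max(L)}$ against the Scott open sets $L\setminus\dn F$ to find a common maximal point.
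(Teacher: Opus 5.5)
Your sufficiency argument is correct and is essentially the paper's. The paper just points to Theorem 3.18, dualized via $\omega(L)=\upsilon(L^{op})$, and then uses that $\upsilon$-compactness implies $SI$-compactness. You run the same argument directly with the Scott closed principal down-sets $\downarrow(L\setminus\{m_i\})$. Your cover $\mathcal U=\{L\setminus\{m\}\mid m\in\mathrm{Max}(L)\}$ for necessity is also the paper's.

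The gap is that you never prove $\mathcal U$ is Scott irreducible, and your reason for abandoning the direct route is wrong. The map $f(x)=L\setminus\uparrow x$ is \emph{monotone}, not antitone: $x\le y$ gives $\uparrow y\subseteq\uparrow x$, hence $f(x)\subseteq f(y)$. It also preserves directed suprema. For directed $D$, the set $\bigcap_{d\in D}\uparrow d$ is exactly the set of upper bounds of $D$, namely $\uparrow\bigvee D$, so
\[
\bigcup_{d\in D}f(d)=L\setminus\bigcap_{d\in D}\uparrow d=L\setminus\uparrow\bigvee D=f\Bigl(\bigvee D\Bigr).
\]
Directed unions are the directed suprema in $\omega(L)$, so $f\colon\Sigma L\to\Sigma\omega(L)$ is continuous. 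Since $\uparrow m=\{m\}$ for maximal $m$, we have $\mathcal U=f(\mathrm{Max}(L))$, which is irreducible as a continuous image of an irreducible set. That is the whole of the paper's argument.

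In your own framework, the set you were looking for is $V_{\mathcal A}=f^{-1}(\mathcal A)=\{x\in L\mid L\setminus\uparrow x\in\mathcal A\}$. It is Scott open by continuity of $f$, and its trace on $\mathrm{Max}(L)$ is $e^{-1}(\mathcal A)$ because $f$ restricts to $e$ there.

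Your candidate, which requires $L\setminus\{m\}\in\mathcal A$ for all maximal $m\ge x$, contains $f^{-1}(\mathcal A)$ but can be strictly larger and need not be Scott open. So a check based only on the general facts you list cannot succeed. For example, take the dcpo with $d_0<d_1<\cdots<x^*<m^*$, $x^*=\bigvee_n d_n$, and extra maximal elements $m_n$ with $d_j\le m_n$ iff $j\le n$. There $\mathcal A=\{L\setminus\{m^*\},L\}$ is Scott open in $\omega(L)$, because $L\setminus\{m^*\}$ is compact, yet your set is $\{x^*,m^*\}$, which no $d_n$ enters.

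The fallback is too vague to assess, and its first step is already off. Scott openness of $\mathcal A$ only gives a finite \emph{union} of basic opens $L\setminus\uparrow F$ in $\mathcal A$, not a single one. As it stands, your necessity direction is a plan whose key step is missing.
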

\begin{proof}Sufficiency: Straightforward using Theorem 3.18.

Necessity: Assume that ${\rm Max(L)}$ is infinite. Consider $\mathcal{S}=\{L\setminus\up x\mid x\in {\rm Max(L)}\}$. Clearly, $\mathcal{S}\subseteq\omega(L)$ and $\bigcup \mathcal{S}= L$. Define a map $f:L\longrightarrow \omega(L)$ as follows:
$$\forall\ x\in L, f(x)=L\setminus\up x.$$
Let $D\subseteq L$ be a directed subset. Then we have $$\bigvee_{\omega(L)} f(D)=\bigcup\limits_{d\in D}L\setminus\up d=L\setminus\bigcap\limits_{d\in D}\up d=L\setminus\up \bigvee D=f(\bigvee D).$$
Thus $f$ is Scott continuous.
So $f({\rm Max(L)})=\mathcal{S}$ is irreducible in $\Sigma\omega(L)$. As $(L,\omega(L))$ is $SI$-compact, $L\in\mathcal{S}$, a contradiction. Thus ${\rm Max(L)}$ is finite.\end{proof}

\begin{corollary}Let $L$ be a dcpo with $\omega\leq|{\rm Max(L)}|$. If ${\rm Max(L)}$ is irreducible in $\Sigma L$ and $(L,\omega(L))$ is compact, then $\Sigma\omega(L)$ is non-sober.
\end{corollary}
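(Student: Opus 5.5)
The plan is to combine Theorem 4.3 with Lemma 3.5 by contraposition. The space in question is $X=(L,\omega(L))$, and we have $\mathcal{O}(X)=\omega(L)$, so that $\Sigma\omega(L)=\Sigma\mathcal{O}(X)$. By hypothesis, ${\rm Max(L)}$ is irreducible in $\Sigma L$ and $|{\rm Max(L)}|\geq\omega$, so ${\rm Max(L)}$ is infinite. The necessity direction of Theorem 4.3 then says that $(L,\omega(L))$ is not $SI$-compact.

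Suppose, towards a contradiction, that $\Sigma\omega(L)$ is sober. Since $(L,\omega(L))$ is compact by hypothesis, Lemma 3.5 applied to $X=(L,\omega(L))$ shows that $X$ is $SI$-compact. This contradicts the previous paragraph, so $\Sigma\omega(L)$ is non-sober.

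The only points needing care are the following.
\begin{enumerate}
\item Lemma 3.5 is stated for an arbitrary topological space $X$ with $\Sigma\mathcal{O}(X)$ sober, so it applies verbatim with $\mathcal{O}(X)=\omega(L)$ ordered by inclusion.
\item The necessity part of Theorem 4.3 uses only that $L$ is a dcpo and that ${\rm Max(L)}$ is irreducible in $\Sigma L$. These are exactly our hypotheses.
\end{enumerate}

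If one prefers a direct argument instead, the family $\mathcal{S}=\{L\setminus\up x\mid x\in{\rm Max(L)}\}$ from the proof of Theorem 4.3 can be used. It is Scott irreducible in $\omega(L)$, it covers $L$, and it omits $L$ because ${\rm Max(L)}$ is infinite. The set $\omega(L)\setminus\{L\}$ is Scott closed because $X$ is compact. Hence $cl_{\sigma(\omega(L))}(\mathcal{S})$ is an irreducible Scott closed set whose union is $L$. It cannot be $\dn U$ for any $U\in\omega(L)$, since that would force $U=L$, and $L\notin\omega(L)\setminus\{L\}$. So this irreducible closed set is not the closure of a point, and $\Sigma\omega(L)$ fails to be sober.

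I expect no real obstacle here; the corollary follows directly from the two cited results.
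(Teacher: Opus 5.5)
Your proposal is correct and matches the paper's proof, which is the one-line combination of the necessity direction of Theorem 4.3 (an infinite, Scott-irreducible ${\rm Max}(L)$ makes $(L,\omega(L))$ non-$SI$-compact) with Lemma 3.5 applied to $X=(L,\omega(L))$. Your optional direct argument, via the irreducible cover $\{L\setminus\up x\mid x\in{\rm Max}(L)\}$ and the Scott closed set $\omega(L)\setminus\{L\}$, is also valid and simply unwinds the proofs of those two results.
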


\begin{proof}By Lemma 3.5 and Theorem 4.3, $\Sigma\omega(L)$ is non-sober.\end{proof}

\quad In the following, we present two new examples which are compact but not $SI$-compact by using the lower topology.

\begin{example}\setlength{\baselineskip}{1.3\baselineskip}
{\rm (see \cite{BZ35}) Let $P_{0}=(\mathbb{N}\times\mathbb{N})\cup\{e_{k}\mid k\in\mathbb{N}\}\cup\{(n,f)\mid n\in\mathbb{N}, f\in\mathbb{N}^{\mathbb{N}}\}\cup\{\Omega\}$ where $\mathbb{N}^{\mathbb{N}}$ denotes the set of all maps form $\mathbb{N}$ to $\mathbb{N}$. For convenience, we use $m_{k}$ ($n_{f}$) to denote the element $(m,k)$ ($(n,f)$) coming form $\mathbb{N}\times\mathbb{N}$ ($\{(n,f)\mid n\in\mathbb{N}, f\in\mathbb{N}^{\mathbb{N}}\}$). The order $\leq$ on $P_{0}$ is defined as follows:

(1) $e_{0}\leq e_{1}\leq e_{2}\leq\cdot\cdot\cdot \leq e_{k}\leq e_{k+1}\leq\cdot\cdot\cdot\leq \Omega$, for all $k\in\mathbb{N}$;

(2) $0_{k}\leq 1_{k}\leq 2_{k}\leq\cdot\cdot\cdot\leq m_{k}\leq (m+1)_{k}\leq\cdot\cdot\cdot\leq e_{k}$, for all $k,m\in\mathbb{N}$;

(3) $0_{f}\leq 1_{f}\leq 2_{f}\leq\cdot\cdot\cdot\leq m_{f}\leq (m+1)_{f}\leq\cdot\cdot\cdot\leq \Omega$, for all $m\in\mathbb{N},f\in\mathbb{N}^{\mathbb{N}}$.

The poset $P_{0}$ is also called a gadget. See figure 2.

\begin{center}
\centering
\includegraphics[totalheight=2.4in]{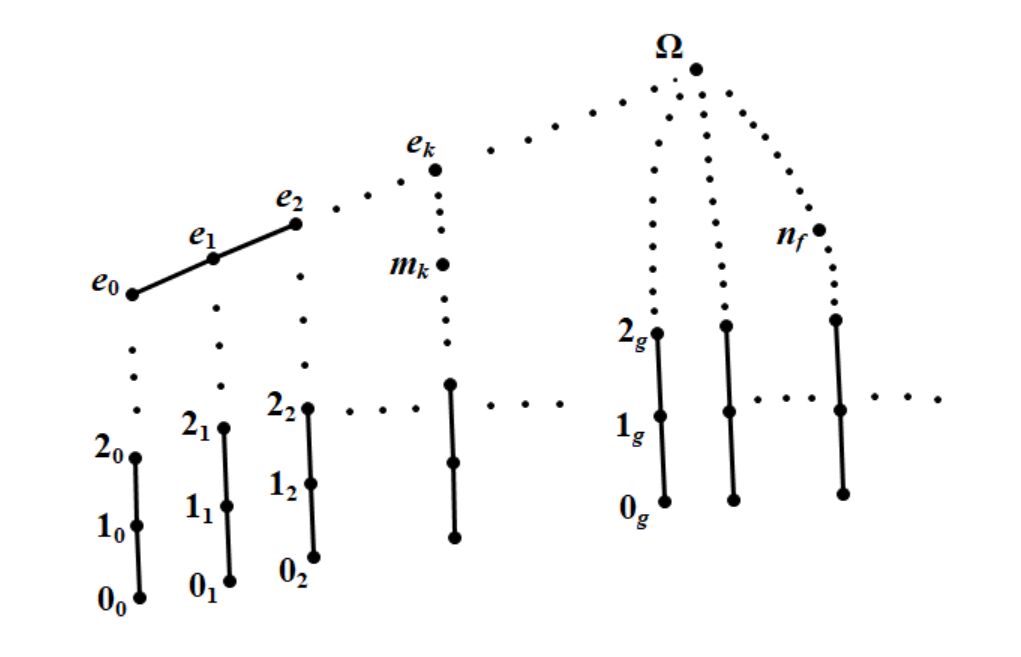}\\Figure 2 The gadget $P_{0}$
\end{center}

 The dcpo $P_{1}$ is obtained by putting $\mathbf{c}$ many copies of gadget $P_{0}$ side by side, where $\mathbf{c}$ is the cardinality of the set $\mathbb{R}$ of all real numbers. Let $i:\mathbb{R}^{+}\times\mathbb{R}^{+}\times\mathbb{N}^{\mathbb{N}}\times\mathbb{N}\rightarrow\mathbb{R}^{+}$ be an injection. Those copies will be indexed by positive real numbers $\alpha,\beta$. Next, we assemble the gadgets $\alpha$ and $\beta$ $(\alpha\neq\beta)$. The elements coming form gadget $\alpha$ and those of the gadget $\beta$ are incomparable except for the following new relations. For every $m_{k}$ coming form gadget $\alpha$ and every element $n_{f}$ in gadget $\beta$ with $m=f(k), n=k$, there is the top element $\Omega$ in the gadget $\gamma$ such that $m_{k},n_{f}\leq\Omega$ where the gadget $\gamma$ is the image of an injective function $i$ at $(\alpha,\beta,f,k)$. Please see the following Figure 3.

\begin{center}
\centering
\includegraphics[totalheight=2.4in]{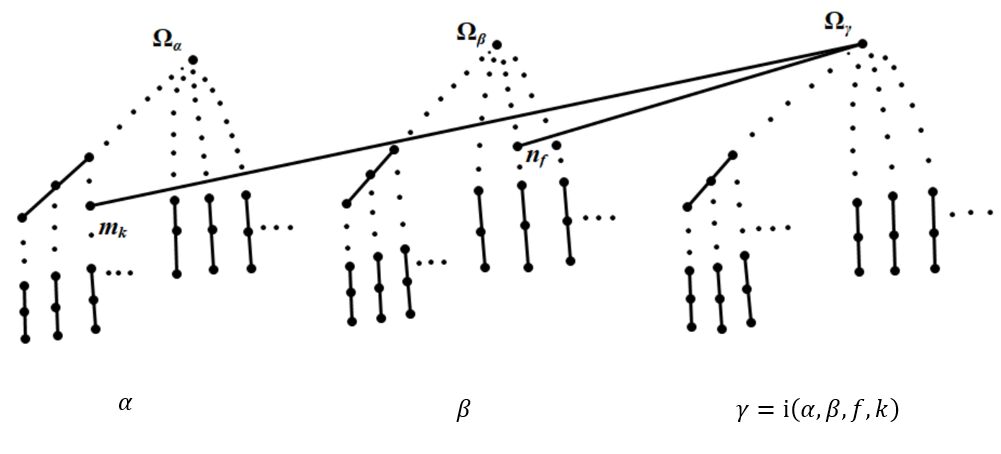}\\Figure 3 The gadget $\alpha$, $\beta$ and $\gamma=i(\alpha,\beta,f,k)$ with extra order
\end{center}
We take the dcpo $P_{\ast}$ constructed by assembling $\mathbf{c}$ many copies of gadget $P_{1}$ with the extra relations shown in figure 3.  The Dedekind-MacNeille completion of $P_{\ast}$ is just the Isbell complete lattice (see \cite{IJ82}).

Claim 1: ${\rm Max(P_{\ast})}=\{\Omega_{\alpha}\mid \alpha \in\mathbb{R}^{+}\}$ is infinite.

Claim 2: $P_{\ast}$ is irreducible in $\Sigma P_{\ast}$.

For the sake of completeness, we give a brief proof of this claim (or can see \cite{IJ82}).

If $U,V\in\sigma(P_{\ast})$ with $U\cap P_{\ast}\neq\emptyset$, and $V\cap P_{\ast}\neq\emptyset$, then there are reals $\alpha$ and $\beta$ such that $\Omega_{\alpha}\in U$ and $\Omega_{\beta}\in V$.
Take $D=\{e_{k}\mid k\in\mathbb{N}\}$ in gadget $\alpha$. Then $D$ is directed and $\bigvee D=\Omega_{\alpha}\in U$. As $U\in\sigma(P)$, there is a $k_{0}\in\mathbb{N}$ such that $e_{k}\in U$ for every $k_{0}\leq k$. For every  $k_{0}\leq k$, let $D_{k}=\{m_{k}\mid m\in\mathbb{N}\}$ in gadget $\alpha$. Similarly, $\bigvee D_{k}=e_{k}\in U$ and $D_{k}$ is directed. Then there is a $t_{k}\in \mathbb{N}$ such that $m_{t_{k}}\in U$. Define a map $f_{0}:\mathbb{N}\longrightarrow\mathbb{N}$ as follows:
$$\forall \ k\in\mathbb{N}, \ f_{0}(k)=\left\{
             \begin{array}{ll}
              t_{k}, &\ \ \  k_{0}\leq k, \\
               0, &\ \ \ 0\leq k< k_{0}.
             \end{array}
           \right.$$
Consider $D_{\ast}=\{n_{f_{0}}\mid n\in\mathbb{N}\}$ in gadget $\beta$. Then $\bigvee D^{\ast}=\Omega_{\beta}\in V$. Thus there is a $\hat{n}\in \mathbb{N}$ satisfying $n_{f_{0}}\in V$ for all $\hat{n}\leq n$. Choose a $n_{1}=k_{1}=max\{k_{0},\hat{n}\}$ and $m_{1}=f_{0}(n_{1})=t_{k_{1}}$. Using the extra relations, there is a $\Omega_{\gamma}$ in gadget $\gamma$ such that $(m_{1})_{k_{1}},(n_{1})_{f_{0}}\leq\Omega_{\gamma}$. Note that
$(m_{1})_{k_{1}}\in U,(n_{1})_{f_{0}}\in V$ and $U,V\in\sigma(P_{\ast})$. This implies $\Omega_{\gamma}\in U\cap V$. Thus claim 2 holds.

Claim 3: $(P_{\ast},\omega(P_{\ast}))$ is compact.

Suppose $A$ is a nonempty subset such that every finite subset of $A$ has an upper bound in $P_{\ast}$. If $A\cap {\rm max}(P_{\ast})\neq\emptyset$, then $A\cap {\rm max}(P_{\ast})$ is a singleton, namely $\Omega_{\alpha}$. Furthermore, we can see $A\subseteq\dn \Omega_{\alpha}$. Equivalently, $\Omega_{\alpha}$ is an upper bound of $A$ in $P_{\ast}$. If $A\cap {\rm max}(P_{\ast})=\emptyset$, then $A$ is contained in at most two different gadgets $\mu$ and $\nu$. Otherwise, $A$ meets three different gadgets $\alpha_{1},\alpha_{2}$ and $\alpha_{3}$ at least, for $i=1,2,3$, let $\{x_{i}\}$ be an element of $A$ which belongs to the gadget $\alpha_{i}$. However, by the order of $P^{\ast}$, $\{x_{1},x_{2},x_{3}\}$ has no upper bound, impossible. So we just consider the case that $A$ meets gadget $\mu$,$\nu$ with $\mu\neq\nu$ and $A\cap {\rm max}(P_{\ast})=\emptyset$. There are 5 cases in total.

case 1: $A\cap\{e_{k}\mid k\in\mathbb{N}\}\neq\emptyset$ in gadget $\mu$ or gadget $\nu$.

Suppose $e_{m}\in A$ in gadget $\mu$. Take an $x\in A$ in gadget $\nu$ with $x\neq\Omega_{\nu}$. Clearly, $\{e_{m},x\}$ has no upper bounds in $P_{\ast}$, impossible.

case 2: $A\cap(\mathbb{N}\times\mathbb{N})\neq\emptyset$ in gadget $\mu$ and gadget $\nu$.

Choose $a,b\in A\cap(\mathbb{N}\times\mathbb{N})$ in gadget $\mu$ and gadget $\nu$ respectively. Clearly, $\{a,b\}$ has no upper bounds in $P_{\ast}$, impossible.

case 3: $A\cap\{(n,f)\mid n\in\mathbb{N}, f\in\mathbb{N}^{\mathbb{N}}\}\neq\emptyset$ in gadget $\mu$ and gadget $\nu$.

It is also impossible.

case 4: $A\cap(\mathbb{N}\times\mathbb{N})\neq\emptyset$ in gadget $\mu$ and $A\cap\{(n,f)\mid n\in\mathbb{N}, f\in\mathbb{N}^{\mathbb{N}}\}\neq\emptyset$ in gadget $\nu$.

subcase (1): $|\{k\in\mathbb{N}\mid \exists m\in\mathbb{N}, m_{k}\in A \ \mbox{in gadget}\ \mu\}|\geq 2$.

Take $m_{k},\hat{m}_{\hat{k}}\in A$ in gadget $\mu$ with $k\neq\hat{k}$ and a $s_{g}\in A$ in gadget $\nu$ with $g\in\mathbb{N}^{\mathbb{N}}$. Then $s_{g}$ cannot be equal to $\Omega_{\nu}$. Obviously, $\{m_{k},\hat{m}_{\hat{k}},s_{g}\}$ has no upper bounds in $P_{\ast}$, impossible.

subcase (2): $|\{h\in\mathbb{N}^{\mathbb{N}}\mid \exists t\in\mathbb{N}, t_{h}\in A \ \mbox{in gadget}\ \nu\}|\geq 2$.

Take $p_{h},\hat{p}_{\hat{h}}\in A$ in gadget $\nu$ with $h\neq\hat{h}$ and a $q_{n}\in A$ in gadget $\mu$ with $q_{n}\neq\Omega_{\mu}$. $\{p_{h},\hat{p}_{\hat{h}},q_{n}\}$ has no upper bounds in $P_{\ast}$, impossible.

subcase (3): $|\{k\in\mathbb{N}\mid \exists m\in\mathbb{N}, m_{k}\in A \ \mbox{in gadget}\ \mu\}|=1$ and $|\{h\in\mathbb{N}^{\mathbb{N}}\mid \exists t\in\mathbb{N}, t_{h}\in A \ \mbox{in gadget}\ \nu\}|=1$.

Suppose $\{k\in\mathbb{N}\mid \exists m\in\mathbb{N}, m_{k}\in A \ \mbox{in gadget}\ \mu\}=\{k_{\ast}\}$ and $\{h\in\mathbb{N}^{\mathbb{N}}\mid \exists t\in\mathbb{N}, t_{h}\in A \ \mbox{in gadget}\ \nu\}=\{f_{\ast}\}$.

Let $E=\{m\in\mathbb{N}\mid m_{k_{\ast}}\in A\ \mbox{in gadget}\ \mu\}$ and $F=\{n\in\mathbb{N}\mid n_{f_{\ast}}\in A\ \mbox{in gadget}\ \nu\}$.

Note that if $x\in\mathbb{N}\times\{k_{\ast}\}$ and $y\in\{s_{f_{\ast}}\mid s\in\mathbb{N}\}$, then $x$ and $y$ has an upper bound in $P_{\ast}$ iff $x=(f(k_{\ast}))_{k_{\ast}}$ and $y=(k_{\ast})_{f_{\ast}}$.
Thus, $E$ and $F$ are singletons. Whence, $A$ has 2 elements and $A$ has an upper bound $\Omega_{\varepsilon}$ in gadget $\varepsilon=i(\mu,\nu,k_{\ast},f_{\ast})$.

case 5: $A\cap(\mathbb{N}\times\mathbb{N})\neq\emptyset$ in gadget $\nu$ and $A\cap\{(n,f)\mid n\in\mathbb{N}, f\in\mathbb{N}^{\mathbb{N}}\}\neq\emptyset$ in gadget $\mu$.

It is similar to case 4.

All in all, $A$ has an upper bound in $P_{\ast}$. Then by Lemma 3.17, $(P_{\ast},\omega(P_{\ast}))$ is compact. Using the three claims above, Theorem 4.3 and Corollary 4.4, $(P_{\ast},\omega(P_{\ast}))$  is not $SI$-compact and $\Sigma \omega(P_{\ast})$ is non-sober.

}
\end{example}

\quad The other example which is similar to the topological space $(P^{\ast},\omega(P^{\ast}))$ is the lower topology on the dcpo $\widehat{P}$ constructed by Miao, Xi, Li and Zhao in \cite{AB75}. Furthermore, the Dedekind-MacNeille completion of $\widehat{P}$ is a countable complete lattice which is non sober for the Scott topology (see \cite{AB75}). First we need to recall to definition of the dcpo $\hat P$.

\begin{example}{\rm (see \cite{AB75})\setlength{\baselineskip}{1.3\baselineskip}
 Let $\mathbb{N}^{<\omega}$ be the set of all nonempty finite sequences over $\mathbb{N}$. The
order $\leq $ on $\mathbb{N}^{<\omega}$ is the usual prefix order.

For $s=x_{1}x_{2}\cdot\cdot\cdot x_{n}\in\mathbb{N}^{<\omega}$, the length of $s$ is denoted by $len(s)$ and hence $len(s)=n$.

Let $\{p_{1},p_{2},\cdot\cdot\cdot,p_{n},\cdot\cdot\cdot\}$ be the set of all prime numbers of $\mathbb{N}$.

(i) $\mathbb{N}$ can be expressed as the union of countably disjoint infinite subsets of $\mathbb{N}$.

Let $E_{1}=\{p_{n}\mid 1\leq n\}\cup\{1\}$ and for every $2\leq k$
$$E_{k}=\{p_{r_{1}}\times p_{r_{2}}\times\cdot\cdot\cdot\times p_{r_{k}}\mid 1\leq r_{1},r_{2},\cdot\cdot\cdot, r_{k}\in\mathbb{N}\}.$$
For example, $$E_{2}=\{2\times2,2\times3,2\times5,\cdot\cdot\cdot\},$$ $$E_{3}=\{2\times2\times2,2\times2\times3,2\times2\times5,\cdot\cdot\cdot\}.$$ Then we can see $\bigcup\limits_{n\in\mathbb{N}} E_{n}=\mathbb{N}$ and $E_{m}\cap E_{n}=\emptyset$ for all $m\neq n$.

(ii) There is a bijection $h:\mathbb{N}\times \mathbb{N}\longrightarrow\mathbb{N}$ such that for all $m,n\in\mathbb{N}$, $m<h(m,n)$ and $n<h(m,n)$.

For every $n\in\mathbb{N}$, $x_{1}x_{2}\cdot\cdot\cdot x_{n_{0}}$ is the binary notation of $n$, where $n_{0}\in\mathbb{N}$ and $x_{k}\in\{0,1\}$ for all $1\leq k\leq n_{0}$. The function $h$ is defined by interleaving the binary notations of the two parameters. For example $x_{1}=22$ (or written $x_{1}$ as 10110), $y_{1}=55$ (or written $y_{1}$ as 110111), $h(x_{1},y_{1})=101100111110$, in other words $h(x_{1},y_{1})=2878$.

(ii) For every $(m,n)\in\mathbb{N}\times \mathbb{N}$, let $i(m,n)=E_{h(m,n)}$. Then the map $i:\mathbb{N}\times\mathbb{N}\longrightarrow\mathcal{P}(\mathbb{N})$ is an injection. In addition, every $i(m,n)$ is infinite and $i(m,n)\cap i(m_{1},n_{1})=\emptyset$ for $(m,n)\neq(m_{1},n_{1})$.

(iii) There is a  monotone bijection $f:\mathbb{N}^{<\omega}\longrightarrow \mathbb{N}$.

Define a monotone bijection $f:\mathbb{N}^{<\omega}\longrightarrow \mathbb{N}$ as follows:
$$\forall\ x=a_{1}a_{2}\cdot\cdot\cdot a_{n}\in\mathbb{N}^{<\omega}, f(x)=p_{1}^{a_{1}}p_{2}^{a_{2}}\cdot\cdot\cdot p_{n}^{a_{n}}.$$
(iv) For every $m,n\in\mathbb{N}$, let $h_{m,n}:\mathbb{N}\longrightarrow i(m,n)$ be a monotone injection. Then $f_{m,n}=h_{m,n}\circ f$ is still a monotone injection.

Let $L=\mathbb{N}\cup\mathbb{N}^{<\omega}\cup\{\top\}$. For all $x,y\in L$, $x\leq y$ in $L$ if and only if one of the following cases hold

(1) $x\in L, y=\top$;

(2) $x,y\in \mathbb{N}$ and $x\leq y$ in the poset $\mathbb{N}$;

(3) $x,y\in \mathbb{N}^{<\omega}$ and $x\leq y$ in the poset $\mathbb{N}^{<\omega}$;

so that both $\mathbb{N}^{<\omega}$ and $\mathbb{N}$ are sub-posets of $L$ and $\top$ is its top element.

Let $\widehat{P} = \mathbb{N }\times L$ and $L_{n} = \{(n, x)\in \widehat{P}\mid x\in L\}$. Define the four irreflexive relations $<_{1}, <_{2}, <_{3}$ and $<_{4}$ on $\widehat{P}$ as follows:

(1) $(n, x) <_{1} (m, y)$ if $n = m$ and $x < y$ in $L$;

(2) $(n, x) <_{2 }(m, y)$ if $y = \top, x \in \mathbb{N}^{<\omega}$ and there exists a unique $k\in \mathbb{N}$ with $k> n$ such that $m = f_{n,k}(x)$ and therefore $m \in i(n, k)$.

(3) $(n, x) <_{3} (m, y)$ if $y = \top, x \in \mathbb{N}$ and there exists a unique $d \in \mathbb{N}$ with $d < n$ such that $m = f_{d,n}(x)$ and therefore $m\in i(d, n)$.

(4) $(n, x) <_{4} (m, y)$ if $y = \top, x \in\mathbb{N} $ and there exists $a, b \in \mathbb{N}, s \in \mathbb{N}^{<\omega}$ with $a < b$ such that $f_{a,b}(s) =
n$ and $f_{a,b}(s.x) = m$.

\begin{center}
\centering
\includegraphics[totalheight=1.7in]{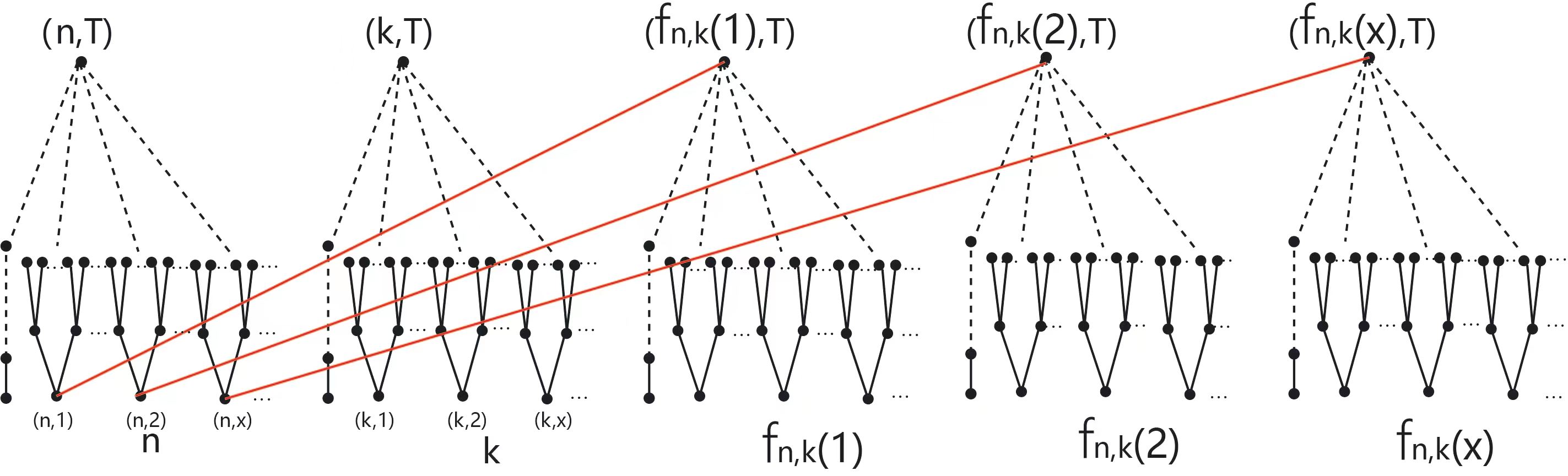}\\Figure 4 The strict order $<_{2}$.
\end{center}

\begin{center}
\centering
\includegraphics[totalheight=1.7in]{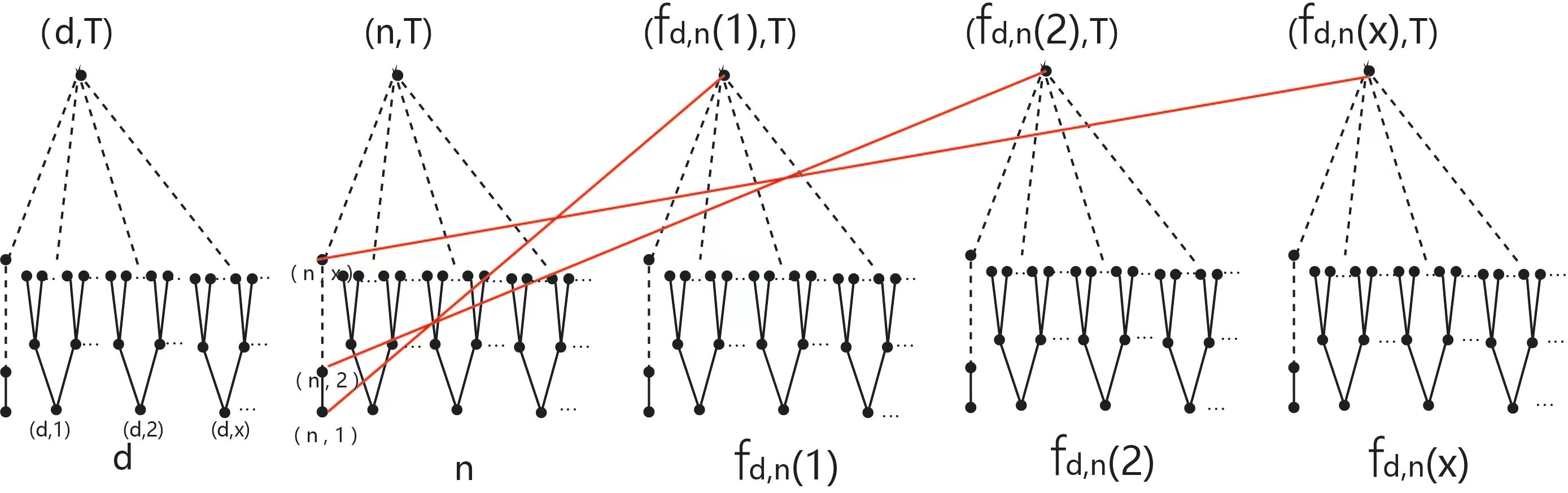}\\Figure 5 The strict order $<_{3}$.
\end{center}

\begin{center}
\centering
\includegraphics[totalheight=1.7in]{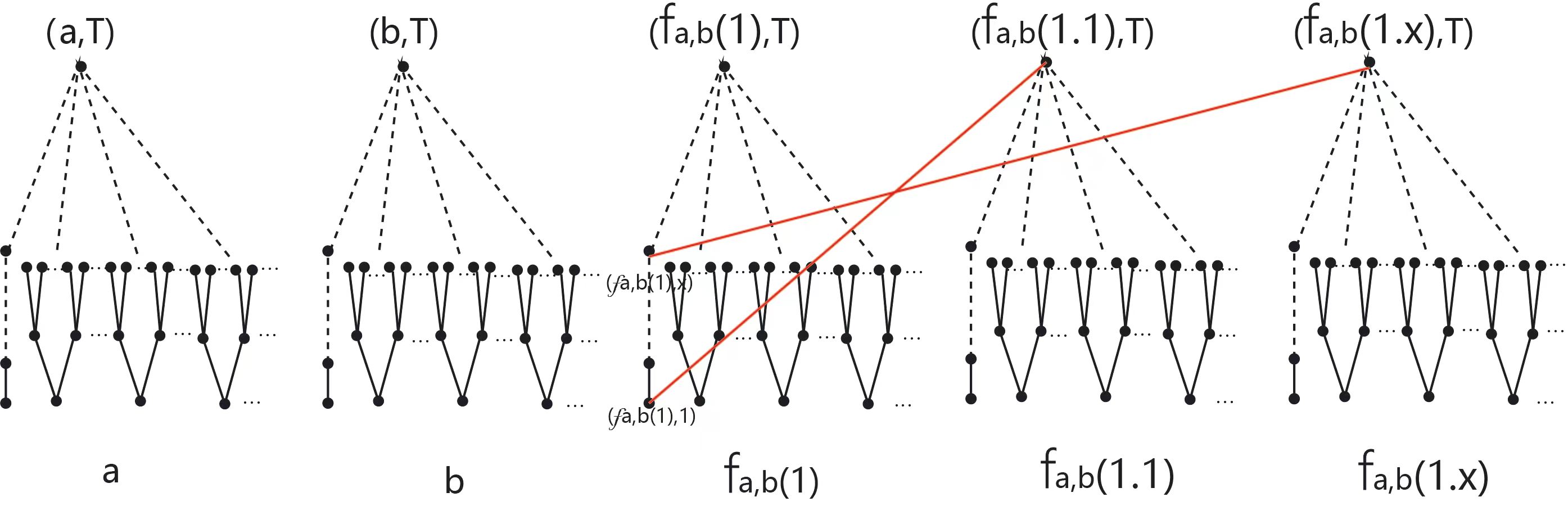}\\Figure 6 The strict order $<_{4}$.
\end{center}

\begin{center}
\centering
\includegraphics[totalheight=1.7in]{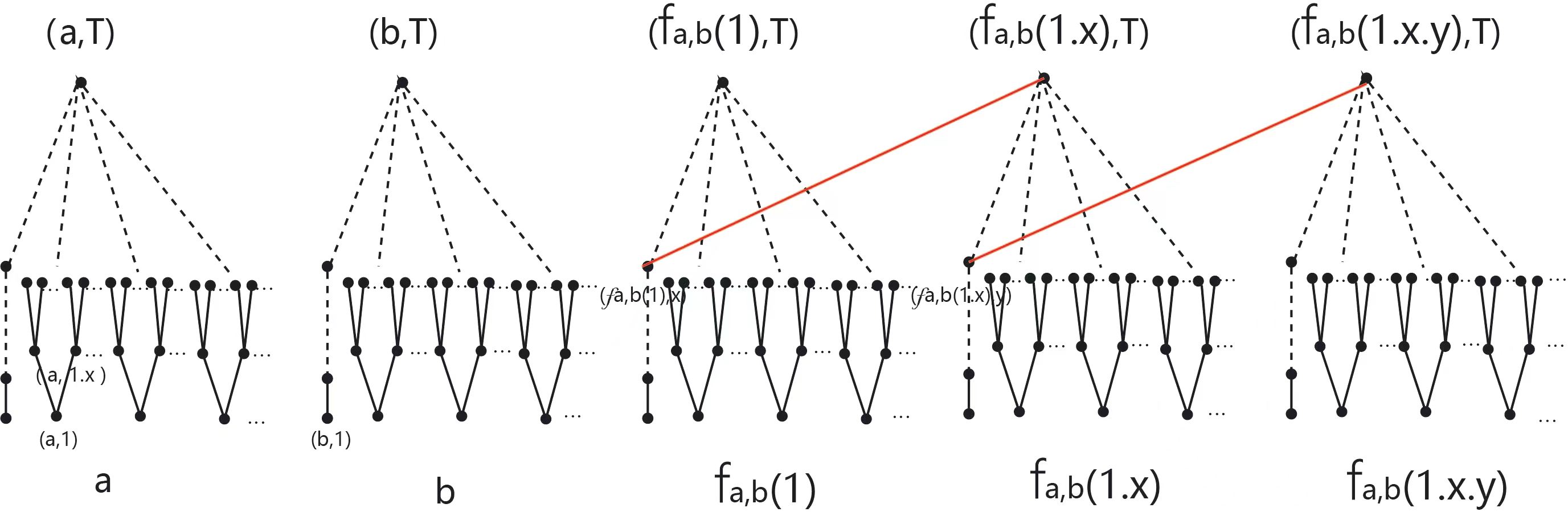}\\Figure 7 The strict order $<_{4}$.
\end{center}
The order $\leq$ on $\widehat{P}$ is $\leq_{1}\cup(\leq_{1}\circ<{2} )\cup(\leq_{1}\circ<_{3})\cup(\leq_{1}\circ<_{4})$. Then $(\widehat{P},\leq)$ is a dcpo. We use $\prec_{i}$ to denote the strict order obtained  form $\leq_{1}\circ<_{i}$, $i=2,3,4$.

Observe the following facts, which are useful in the sequel

(1) For all $a,b\in\mathbb{N},s\in\mathbb{N}^{<\omega}$ if $a<b$, then $a<f_{a,b}(s)$ and $b<f_{a,b}(s)$;

(2) If $x_{1}x_{2}\cdot\cdot\cdot x_{n}< y_{1}y_{2}\cdot\cdot\cdot y_{n}$ in $\mathbb{N}^{<\omega}$, then $f_{a,b}(x_{1}x_{2}\cdot\cdot\cdot x_{n})<f_{a,b}(y_{1}y_{2}\cdot\cdot\cdot y_{n})$ for all $a,b\in \mathbb{N}$ with $a<b$;

(3) For all $n\in\mathbb{N}$, the sets $$\{m\mid \exists t \ \mbox{such that}\ \ (m,t)\in\dn(n,\top)\cap(\mathbb{N}\times\mathbb{N})\}$$ and $$\{m\mid \exists s\ \mbox{such that}\ \ (m,s)\in\dn(n,\top)\cap(\mathbb{N}\times\mathbb{N}^{<\omega})\}$$ have at most two different elements;

(4) For every $n\in\mathbb{N},s\in\mathbb{N}^{<\omega}$, $(n,s)\prec_{2}(p,\top)$ iff there are $n<k$ and $s\leq s_{1}$ in $\mathbb{N}^{<\omega}$, such that  $p=f_{n,k}(s_{1})$;

(5) For every $n,m\in\mathbb{N}$, $(n,m)\prec_{3}(q,\top)$ iff there are $d<n$ and $m\leq m_{1}$, such that $q=f_{d,n}(m_{1})$;

(6) For every $n,m\in\mathbb{N}$ with $n=f_{a,b}(s)$ for some $a,b\in\mathbb{N}$ with $a<b$ and $s\in\mathbb{N}^{<\omega}$, $(n,m)\prec_{4}(r,\top)$ iff there are a $m\leq m_{1}\in\mathbb{N}$ such that $r=f_{a,b}(sm_{1})$;

(7) For every $a,b,c,d\in\mathbb{N}$ with $a<b$, $c<d$ and $s,s_{1}\in\mathbb{N}^{<\omega}$, $f_{a,b}(s)=f_{c,d}(s_{1})$ iff $a=c,b=d,s=s_{1}$. Furthermore, $\mathbb{N}=\{f_{a,b}(s)\mid a,b\in\mathbb{N}, s\in\mathbb{N}^{<\omega}\}$;

(8) $\prec_{2},\prec_{3},\prec_{4}$ are disjoint as sets of pairs.}
\end{example}

\begin{lemma} (See \cite{AB75}) $\widehat{P}$ is irreducible in $\Sigma\widehat{P}$.

\end{lemma}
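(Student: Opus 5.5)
The plan is to verify directly the characterization of irreducibility recalled in Section 2, property (2): for any two nonempty Scott open sets $U,V\subseteq\widehat{P}$ we show $U\cap V\neq\emptyset$. First note that every element of $\widehat{P}$ lies below some maximal element $(n,\top)$, and $U,V$ are upper sets. Hence there are $a,b\in\mathbb{N}$ with $(a,\top)\in U$ and $(b,\top)\in V$. If $a=b$ we are done. The relations $<_2,<_3,<_4$ are asymmetric in the indices, so by exchanging the roles of $U$ and $V$ if necessary we may assume $a<b$. The argument is an Isbell-type diagonalization, as in Claim 2 of Example 4.5: we build a large subtree of $\mathbb{N}^{<\omega}$ using $V$, and then let $U$ pick a node of it.

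\emph{Step 1 (growing a tree inside $V$).} In $L_b$ the chain $\{(b,m)\mid m\in\mathbb{N}\}$ has supremum $(b,\top)\in V$. Scott openness therefore gives $m_0$ with $(b,m)\in V$ for all $m\geq m_0$. Since $a<b$, relation $<_3$ gives $(b,m)<_3(f_{a,b}(m),\top)$, where $m$ is read as a length-one sequence. Thus $(f_{a,b}(m),\top)\in V$ for all $m\geq m_0$.

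Inductively, suppose $(f_{a,b}(t),\top)\in V$ for some $t\in\mathbb{N}^{<\omega}$, and put $n_t=f_{a,b}(t)$. The chain $\{(n_t,x)\mid x\in\mathbb{N}\}$ has supremum $(n_t,\top)\in V$, so there is $x_t$ with $(n_t,x)\in V$ for all $x\geq x_t$. By $<_4$ (using fact (7) to identify $a,b,t$ from $n_t$) we get $(n_t,x)<_4(f_{a,b}(t.x),\top)$. Hence $(f_{a,b}(t.x),\top)\in V$ for all $x\geq x_t$. This produces a subtree $T\subseteq\mathbb{N}^{<\omega}$ with the following properties: every $t\in T$ has cofinitely many immediate successors in $T$, $T$ contains the length-one sequences $m\geq m_0$, and $(f_{a,b}(t),\top)\in V$ for all $t\in T$.

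\emph{Step 2 (letting $U$ choose a node).} Pick any infinite branch $t_1<t_2<\cdots$ of $T$; this is possible since every node has successors. The set $\{(a,t_i)\mid i\in\mathbb{N}\}$ is a chain in $L_a$. It has no upper bound inside $\mathbb{N}^{<\omega}$, and we must check that it has no other upper bound in $\widehat{P}$ except $(a,\top)$. Upper bounds of the form $(p,\top)$ via $\prec_2$ are ruled out by facts (4) and (7): they would force $p=f_{a,k}(s_1)$ with $s_1$ above every $t_i$, which is impossible. Hence $\bigvee_i (a,t_i)=(a,\top)\in U$, so some $(a,t_i)\in U$. Finally $b>a$, so $(a,t_i)<_2(f_{a,b}(t_i),\top)$. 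This element lies in $U$ because $U$ is an upper set, and in $V$ by Step 1. Thus $U\cap V\neq\emptyset$.

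\emph{Expected obstacle.} The main difficulty is the order-theoretic bookkeeping. We must check that the suprema used (of $\{(n,x)\}_x$ and of a branch in $L_a$) really are $(n,\top)$ and $(a,\top)$ in $\widehat{P}$, and not some other $(p,\top)$ reached through $\prec_2,\prec_3,\prec_4$. We must also check that the uniqueness clauses in $<_2$ and $<_3$ are satisfied. For this I would rely on facts (3)--(8) recorded after the definition of $\widehat{P}$, particularly the injectivity statement (7) and the disjointness statement (8).
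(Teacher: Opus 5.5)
Your proof is correct. The paper gives no argument of its own for this lemma; it only cites Miao, Xi, Li and Zhao. The natural internal point of comparison is therefore Claim~2 of Example~4.5, the analogous irreducibility argument for Isbell's $P_{\ast}$. Your proof is the countable counterpart of that argument, with one genuine structural change.

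In $P_{\ast}$ the linking top elements $\Omega_{i(\alpha,\beta,f,k)}$ are indexed by whole functions $f\in\mathbb{N}^{\mathbb{N}}$. So $U$ can choose all the values $t_k$ independently, assembling $f_0$ at once, and only afterwards does $V$ pick a finite stage of the chain $\{n_{f_0}\}$. In the countable $\widehat{P}$ the linking tops $(f_{a,b}(t),\top)$ are indexed only by finite sequences $t$, so the infinite object must be grown adaptively:
\begin{itemize}
\item $V$ extends a branch of $\mathbb{N}^{<\omega}$ node by node, using $<_{3}$ at the root and $<_{4}$ at each successor;
\item the branch is a chain in $L_{a}$ whose supremum is $(a,\top)\in U$;
\item $U$ then selects a finite node, which $<_{2}$ links back into $V$.
\end{itemize}

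The verifications you flag as the main obstacle are routine. For $p\neq n$, every $(n,x)\leq(p,\top)$ factors as $(n,x)\leq_{1}(n,y)<_{i}(p,\top)$, with $y$ determined by $p$ and $i$ through fact (7). Hence $\dn(p,\top)\cap L_{n}$ is finite, and no infinite chain in $L_{n}\setminus\{(n,\top)\}$ has an upper bound other than $(n,\top)$. This settles all the suprema you use, and the uniqueness clauses in $<_{2}$ and $<_{3}$ also follow from (7).

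One small wording point: the reduction to $a<b$ is justified because the goal $U\cap V\neq\emptyset$ is symmetric in $U$ and $V$, not by any asymmetry of the relations $<_{2},<_{3},<_{4}$.
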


\begin{lemma} Let $A\subseteq\widehat{P}$ be a subset such that every finite subset of $A$ has an upper bound. Then $A$ meet at most 3 different sub-posets $L_{n}$.
\end{lemma}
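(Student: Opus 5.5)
Assume, towards a contradiction, that $A$ meets four different sub-posets $L_{n_1},L_{n_2},L_{n_3},L_{n_4}$. Pick $a_j\in A\cap L_{n_j}$ for $j=1,\dots,4$. By hypothesis the finite set $\{a_1,a_2,a_3,a_4\}$ has an upper bound $u\in\widehat{P}$. The plan is to show that no element of $\widehat{P}$ lies above points of four different levels.

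First, describe $\dn u$. If $u=(m,y)$ with $y\neq\top$, then only $\leq_1$ can produce elements below $u$, since $<_2,<_3,<_4$ all end at elements of the form $(m,\top)$. Hence $\dn u\subseteq L_m$, and $\dn u$ meets only one level. So we may assume $u=(m,\top)$.

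By the definition $\leq=\leq_1\cup(\leq_1\circ<_2)\cup(\leq_1\circ<_3)\cup(\leq_1\circ<_4)$, every element of $\dn(m,\top)$ is one of the following:
\begin{itemize}
\item[(a)] an element of $L_m$;
\item[(b)] an element of the form $(n,s)$ with $s\in\mathbb{N}^{<\omega}$ and $(n,s)\prec_2(m,\top)$;
\item[(c)] an element of the form $(n,x)$ with $x\in\mathbb{N}$ and $(n,x)\prec_3(m,\top)$;
\item[(d)] an element of the form $(n,x)$ with $x\in\mathbb{N}$ and $(n,x)\prec_4(m,\top)$.
\end{itemize}

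Next, pin down the levels occurring in (b)--(d) using fact (7): $m=f_{a,b}(s_0)$ for a unique triple $(a,b,s_0)$ with $a<b$.
\begin{itemize}
\item[(b)] By fact (4), $m=f_{n,k}(s_1)$ for some $k>n$. Uniqueness in (7) forces $n=a$.
\item[(c)] By fact (5), $m=f_{d,n}(m_1)$ for some $d<n$. Uniqueness forces $n=b$.
\item[(d)] By fact (6), $m=f_{a',b'}(s'm_1)$ and $n=f_{a',b'}(s')$. Uniqueness forces $a'=a$, $b'=b$ and $s'm_1=s_0$. Thus $s'$ is the prefix of $s_0$ obtained by deleting its last letter, and so $n=f_{a,b}(s')$ is uniquely determined. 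If $s_0$ has length one, such an $s'$ does not exist, since $\mathbb{N}^{<\omega}$ consists of nonempty sequences; in that case (d) contributes nothing.
\end{itemize}

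Consequently,
\[
\{\,n\mid \dn(m,\top)\cap L_n\neq\emptyset\,\}\subseteq\{m,\ a,\ b,\ f_{a,b}(s')\}.
\]
This set has at most four elements, which alone is not enough to exclude four levels. The main obstacle is therefore to rule out the case where all four levels actually occur.

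My first attempt is to sharpen the count using fact (3). Note that type (a) and type (b) elements have second coordinate in $\mathbb{N}^{<\omega}$, or equal to $\top$ in the case of $L_m$ itself. Type (c) and type (d) elements have second coordinate in $\mathbb{N}$. Fact (3) says the levels carrying elements of the form $(n,t)$ with $t\in\mathbb{N}$ below $(m,\top)$ number at most two, and likewise for those with $t\in\mathbb{N}^{<\omega}$. Counting $L_m$ together with these gives at most $1+2+2$ levels, which is still too many.

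So fact (3) does not suffice on its own, and I would refine the argument with the additional constraint that the $a_j$ must pairwise share the upper bound $u$ and all lie in $A$. Suppose $a_1,\dots,a_4$ realize all four levels $m,a,b,f_{a,b}(s')$. Then in particular $A$ contains a point of $L_m$, possibly $(m,\top)$ itself. The key idea is to vary the choice of upper bound: apply the hypothesis to other finite subsets of $A$ and intersect the resulting level sets.

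Concretely, any element of $L_m$ other than $(m,\top)$ lies below only elements of $L_m$ and elements $(m',\top)$ with $m'$ determined by $m$ as above. Moreover, by fact (1), such $m'$ satisfy $m'>m$. The strict inequalities of fact (1) ($a,b<f_{a,b}(s)$) should then force the four levels to be incompatible with every choice of upper bound.

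If this monotonicity argument gets stuck, I would fall back on a direct case analysis: enumerate the possible types (a)--(d) of the four points $a_j$, and derive a contradiction from uniqueness in (7) in each case.
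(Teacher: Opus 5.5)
\textbf{There is a genuine gap: the proposal never proves the bound of three.} Your reduction to $u=(m,\top)$ is correct, and so is your classification of $\dn(m,\top)$ into types (a)--(d). This is essentially the paper's route: take an upper bound of four points lying in four levels, note that it is maximal, write it as $(f_{a,b}(s),\top)$, and read off the levels of its down-set. However, you stop at ``at most four levels'' and then only sketch a monotonicity idea and a case analysis you do not carry out.

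\textbf{The missing step is already in your computation.} In case (c), the relation $<_3$ requires $m=f_{d,n}(m_1)$ with $m_1\in\mathbb{N}$, i.e.\ a one-letter sequence. Applying fact (7) to $f_{a,b}(s_0)=f_{d,n}(m_1)$ therefore gives not only $n=b$ but also $s_0=m_1$, so $len(s_0)=1$. In case (d), fact (7) gives $s_0=s'm_1$, so $len(s_0)\ge 2$. Hence (c) and (d) can never both contribute:
\begin{itemize}
\item if $len(s_0)=1$, the levels met by $\dn(m,\top)$ lie in $\{m,a,b\}$;
\item if $len(s_0)\ge 2$, they lie in $\{m,a,f_{a,b}(s')\}$.
\end{itemize}
In either case there are at most three levels, which immediately contradicts the choice of $a_1,\dots,a_4$. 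This is exactly the paper's case split on $len(s^{\diamond})$. With this step added, your argument is the paper's. It is also slightly more uniform, since it does not need the preliminary split on whether $A$ meets ${\rm max}(\widehat{P})$.

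\textbf{Your use of fact (3) fails only through a miscount.} Type (a) elements are arbitrary elements of $L_m$, so they include points $(m,x)$ with $x\in\mathbb{N}$ as well as points with $x\in\mathbb{N}^{<\omega}$. Thus the level $m$ belongs to both sets in fact (3). The union of the two sets then has at most $2+2-1=3$ elements, with no separate $+1$ for $L_m$.

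\textbf{The fallback ideas are unnecessary.} The monotonicity idea via fact (1), and the plan of varying the upper bound across other finite subsets of $A$, are not needed. As stated they are not yet an argument. The bound of three levels already holds for the down-set of the single upper bound $u$.
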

\begin{proof}If $A\cap {\rm max}(\widehat{P})\neq\emptyset$, then we can deduce that $A\cap {\rm max}(\widehat{P})$ is a singleton $(n_{0},\top)$ and $A\subseteq\dn (n_{0},\top)$. If $A\cap {\rm max}(\widehat{P})=\emptyset$, we consider $M=\{n\in\mathbb{N}\mid \exists x\in L\setminus\{\top\}, (n,x)\in A\}$. Clearly, $A$ has an upper bound if $|M|=1$ or $A$ is finite.

Next, we prove that $|M|\leq 3$.

Suppose $4\leq|M|$ and take $(\overline{n}_{i},\overline{t}_{i})\in A$, $i=1,2,3,4$ with $\{\overline{n}_{1},\overline{n}_{2},\overline{n}_{3},\overline{n}_{4}\}\subseteq M$. Then $\{(\overline{n}_{1},\overline{t}_{1}),(\overline{n}_{2},\overline{t}_{2}),(\overline{n}_{3},\overline{t}_{3}),(\overline{n}_{4},\overline{t}_{4})\}$
has an upper bound $a^{u}$. By the order of $\widehat{P}$, we can see $a^{u}\in {\rm max(\widehat{P})}$. Therefore $a^{u}=(f_{a^{\diamond},b^{\diamond}}(s^{\diamond}),\top)$ for some $s\in\mathbb{N}^{<\omega}$ and $a^{\diamond},b^{\diamond}\in\mathbb{N}$. For convenience, let $s^{\diamond}=a'_{1}a'_{2}\cdot\cdot\cdot a'_{n^{\diamond}}$. If $len(s^{\diamond})=1$, then
$\dn a^{u}=\dn(f_{a^{\diamond},b^{\diamond}}(a'_{1}),\top)=L_{f_{a^{\diamond},b^{\diamond}}(a'_{1})}\cup\{(a,a'_{1})\}\cup\{(b,a)\mid 1\leq a\leq a'_{1}\}$. Now, we can see the first projection of $\dn a^{u}$ has 3 natural numbers, a contradiction. If $1<len(s^{\diamond})$, then $\dn a^{u}=\dn(f_{a^{\diamond},b^{\diamond}}(a'_{1}a'_{2}\cdot\cdot\cdot a'_{n^{\diamond}}),\top)=L_{f_{a^{\diamond},b^{\diamond}}(a'_{1}a'_{2}\cdot\cdot\cdot a'_{n^{\diamond}})}\cup\{(a,a'_{1}a'_{2}\cdot\cdot\cdot a'_{m^{\diamond}})\mid 1\leq m^{\diamond}\leq n^{\diamond}\}\cup\{(f_{a^{\diamond},b^{\diamond}}(a'_{1}a'_{2}\cdot\cdot\cdot a'_{n^{\diamond}-1}),a)\mid 1\leq a\leq a'_{n^{\diamond}}\}$. Similarly, the first projection of $\dn a^{u}$ has 3 natural numbers, a contradiction. So we can conclude that $|M|\leq3$.\end{proof}

\begin{lemma} {\rm Let $A\subseteq\widehat{P}$ be subset such that

(1) every finite subset of $A$ has an upper bound.

(2) $A$ just meet $L_{n_{0}}$ and  $L_{n_{1}}$ nonempty.

Then $A$ has an upper bound.}

\end{lemma}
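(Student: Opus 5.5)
We argue by cases on the shape of $A$, in the style of Claim 3 of Example 4.5. First, if $A$ contains a maximal element $(m,\top)$, then as in the proof of Lemma 4.7, $A\cap{\rm max}(\widehat{P})=\{(m,\top)\}$. Every other $a\in A$ must lie below $(m,\top)$: the pair $\{a,(m,\top)\}$ has an upper bound, and the only element above a maximal element is itself. So $(m,\top)$ is an upper bound of $A$. Hence we may assume $A\subseteq (L_{n_0}\cup L_{n_1})\setminus{\rm max}(\widehat{P})$ with $n_0\neq n_1$, and both $A_0=A\cap L_{n_0}$ and $A_1=A\cap L_{n_1}$ nonempty. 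Pick $a_0\in A_0$ and $a_1\in A_1$. Since they lie in different $L_n$ and neither is a top, any common upper bound is some $(p,\top)$ with $p\neq n_0,n_1$, reached via the relations $\prec_2,\prec_3,\prec_4$.

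The key step is to describe exactly which pairs from $L_{n_0}\times L_{n_1}$ have a common upper bound. We use facts (3)--(8) listed after Example 4.6. By (4)--(6), each non-top $(n,x)$ lies below $(p,\top)$ with $p\neq n$ only in three ways:
\begin{itemize}[label={}]
\item via $\prec_2$, when $x=s\in\mathbb{N}^{<\omega}$ and $p=f_{n,k}(s_1)$ with $k>n$ and $s\le s_1$;
\item via $\prec_3$, when $x\in\mathbb{N}$ and $p=f_{d,n}(m_1)$ with $d<n$ and $x\le m_1$;
\item via $\prec_4$, when $x\in\mathbb{N}$, $n=f_{a,b}(s)$ and $p=f_{a,b}(s m_1)$ with $x\le m_1$.
\end{itemize}
By the uniqueness in (7), writing $p=f_{a,b}(t)$ uniquely determines which $L_n$'s sit below $(p,\top)$ and how. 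The two $\mathbb{N}^{<\omega}$-type elements reaching it come from $L_a$ via $\prec_2$, with sequence part at most $t$. The $\mathbb{N}$-type elements come from $L_b$ via $\prec_3$ (when $len(t)=1$, with value at most $t$) or from $L_{f_{a,b}(t^-)}$ via $\prec_4$ (when $len(t)>1$, with value at most the last entry of $t$). This is exactly the computation of $\dn(p,\top)$ done in the proof of Lemma 4.7. Consequently the set $C=\{p\mid (p,\top)\ \mbox{is above elements of both}\ L_{n_0}\ \mbox{and}\ L_{n_1}\}$ is constrained: $\{n_0,n_1\}$ must equal $\{a,b\}$ with $len(t)=1$, or $\{a,f_{a,b}(t^-)\}$ with $len(t)>1$. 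In the first case only the single value $t$ varies. In the second case $a,b,t^-$ are determined by $n_0,n_1$ via (7), and only the last entry of $t$ varies.

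Having fixed this parametrization of candidate bounds by $(p,\top)$ with $p$ of the described form, we show that some single candidate works for all of $A$. Suppose $n_0<n_1$ is the pair $\{a,b\}$ and $len(t)=1$ (the other subcase is analogous). Then $A_0$ consists of sequences $s$ with $s\le t$, and $A_1$ consists of naturals $x\le t$. Each pair $(a_0,a_1)$ has a bound, so all $s\in A_0$ are pairwise comparable prefixes with some length bound. More precisely, we combine pairs with three-element subsets: if $A_0$ contained incomparable sequences, or sequences of unbounded length, some finite subset would fail to have a bound. So $A_0$ has a largest element $s^\ast$ of length $1$, or else $A_0$ is finite. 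For $A_1$, the naturals must all be $\le$ a single fixed $t$ forced by $s^\ast$, so they are bounded and hence finite. Thus $A$ effectively reduces to a finite set, whose bound exists by hypothesis (1), or to a set whose maximal elements already determine one $(p,\top)$. The $len(t)>1$ case runs the same way, using that all $\prec_4$-bounds share the same $t^-$.

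The main obstacle is the middle step: making rigorous, from facts (3)--(8), that any two pairs from $A_0\times A_1$ with bounds force the \emph{same} parameters $(a,b,t^-)$, and that the remaining free coordinate is bounded. The risk is an infinite chain in one $L_{n_i}$, for example naturals $x$ increasing in $L_{n_1}$. I expect this to be handled by noting that the bound for $\{s^\ast,x\}$ needs $x\le$ the last entry of $t$, where $t$ is fixed by $s^\ast$ through $s^\ast\le t$ in the $\prec_2$ description. If that fails, I would instead reduce to three-element finite subsets and invoke the counting from the proof of Lemma 4.7.
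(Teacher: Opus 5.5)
Your argument fails at its first structural claim, that a common upper bound of $a_0\in A_0$ and $a_1\in A_1$ ``is some $(p,\top)$ with $p\neq n_0,n_1$.'' The tops $(n_0,\top)$ and $(n_1,\top)$ are not excluded:
\begin{itemize}
\item $(n_0,\top)$ lies above all of $L_{n_0}$ by $\leq_1$;
\item it lies above elements of $L_{n_1}$ whenever $n_0=f_{n_1,k}(s)$ (via $\prec_2$), or $n_0=f_{d,n_1}(m)$ (via $\prec_3$), or $n_1=f_{a,b}(s)$ and $n_0=f_{a,b}(s.m)$ (via $\prec_4$).
\end{itemize}
This is not a marginal case; it is the only case with real content. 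For example, take $k>n_1$, $s\in\mathbb{N}^{<\omega}$, $n_0=f_{n_1,k}(s)$, and $A=(L_{n_0}\setminus\{(n_0,\top)\})\cup\{(n_1,s)\}$. Then $(n_0,\top)$ bounds every subset of $A$, so hypotheses (1) and (2) hold. Yet $A\cap L_{n_0}$ contains an infinite chain of naturals and pairwise incomparable sequences of unbounded length.

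This example refutes three of your intermediate claims: that the sequences in $A_0$ are comparable prefixes of bounded length, that the naturals involved are bounded, and that $A$ ``effectively reduces to a finite set.'' A parametrization restricted to third-party tops can never find the bound that actually exists here. Your fallback to the three-layer counting does not help either, because the difficulty is an infinite part inside a single layer.

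The missing idea is the reduction the paper uses. The finite case is immediate from (1), so assume $A\cap L_{n_0}$ is infinite and prove $A\cap L_{n_1}\subseteq\dn(n_0,\top)$. The paper splits into cases: whether the infinite part lies in $\{n_0\}\times\mathbb{N}$ or in $\{n_0\}\times\mathbb{N}^{<\omega}$, and whether the chosen element of $L_{n_1}$ is a natural or a sequence. In each case it first forces the bound of a suitable pair or triple to be $(n_0,\top)$. It then shows by contradiction, using uniqueness in (7) and $a,b<f_{a,b}(s)$, that no element of $A\cap L_{n_1}$ escapes $\dn(n_0,\top)$.

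Your description of $\dn(p,\top)$ is the right tool for a uniform version of this. Suppose some $a_1\in A\cap L_{n_1}$ were not below $(n_0,\top)$. Then there is a constant $N(a_1)$, depending only on $a_1$, bounding the number of elements of $L_{n_0}$ below any top $(p,\top)\geq a_1$ with $p\neq n_0$:
\begin{itemize}
\item $p=n_1$ contributes the fixed finite set $\dn(n_1,\top)\cap L_{n_0}$.
\item If $a_1$ is a natural, the members of $L_{n_0}$ below such a top are prefixes of a sequence of length $1$ (via $\prec_3$) or $len(s)+1$ (via $\prec_4$), where $n_1=f_{a,b}(s)$ is fixed by (7).
\item If $a_1=(n_1,y)$ is a sequence, they are naturals bounded by the last entry of $y$. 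This is because $y\leq t$ together with $a_1\not\leq(n_0,\top)$ forces the relevant $t$ to equal $y$.
\end{itemize}
Now take $a_1$ together with $N(a_1)+1$ elements of $A\cap L_{n_0}$. This is a finite subset of $A$ with no upper bound, which contradicts (1). Hence $(n_0,\top)$ bounds $A$.
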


\begin{proof}We just consider the case $A\cap {\rm max}(\widehat{P})=\emptyset$ and $A$ is infinite.

We assume that $A\cap L_{n_{0}}$ is infinite and $A\cap L_{n_{1}}\neq\emptyset$. There are four cases (1)-(4).

(1) $A\cap (\{n_{0}\}\times \mathbb{N})$ is infinite and $(n_{1},m_{1})\in A$ for some $m_{1}\in\mathbb{N}$.

Then $\{(n_{0},m_{0}),(n_{1},m_{1})\}$ has an upper bound in $\widehat{P}$, for some $m_{0}\in\mathbb{N}$ such that $(n_{0},m_{0})\in A$. By order relation $\prec_{2},\prec_{3},\prec_{4}$, we have the following 4 cases:

$(n_{0},m_{0})\prec_{3}(n_{1},\top)$;

$(n_{0},m_{0})\prec_{4}(n_{1},\top)$;

$(n_{1},m_{1})\prec_{3}(n_{0},\top)$;

$(n_{1},m_{1})\prec_{4}(n_{0},\top)$.

Since  $A\cap (\{n_{0}\}\times \mathbb{N})$ is infinite, the element $m_{0}$ has infinitely many possibilities. This means that we need only consider the cases $(n_{1},m_{1})\prec_{3}(n_{0},\top)$ or $(n_{1},m_{1})\prec_{4}(n_{0},\top)$.

Subcase 1: $(n_{1},m_{1})\prec_{3}(n_{0},\top)$.

Then $n_{0}=f_{\widetilde{d},n_{1}}(\overline{m}_{1})$ for some $\widetilde{d}<n_{1}$ and $m_{1}\leq\overline{m}_{1}$.

Next, we claim that $A\subseteq \dn (n_{0},\top)=\dn (f_{\widetilde{d},n_{1}}(\overline{m}_{1}),\top)$.

If on the contrary, $A\not\subseteq\dn (f_{\widetilde{d},n_{1}}(m_{1}),\top)$. Take a $(n_{2},m_{2})\in A\setminus\dn (f_{\widetilde{d},n_{1}}(m_{1}),\top)$. Note that
$$\dn (f_{\widetilde{d},n_{1}}(\overline{m}_{1}),\top)=L_{f_{\widetilde{d},n_{1}}(\overline{m}_{1})}\cup \{(n_{1},m)\mid 1\leq m\leq \overline{m}_{1}\}\cup\{(\widetilde{d},\overline{m}_{1})\},$$
where $(\widetilde{d},\overline{m}_{1})\in \{\widetilde{d}\}\times \mathbb{N}^{<\omega}$. There are only 2 possibilities ((1.1) and (1.2)).

(1.1) $n_{2}=n_{1}$ and $\overline{m}_{1}<m_{2}$. Then $\{(n_{1},m_{2}),(n_{0},m')\}$ has an upper bound $\overline{u}$ for every $(n_{0},m')\in A$. We further know that $\overline{u}\in\{(n_{0},\top),(n_{1},\top)\}$ for $n_{0}\neq n_{1}$. Suppose $\overline{u}=(n_{0},\top)=(f_{\widetilde{d},n_{1}}(\overline{m}_{1}),\top)$. Using strict order $\prec_{3}$, $(n_{1},m_{2})\prec_{3} (n_{0},\top)$. Then $n_{0}=f_{d^{0}n_{1}}(\overline{m}_{2})$ for some $m_{2}\leq \overline{m}_{2}$. Whence, $f_{d^{0}n_{1}}(\overline{m}_{2})=f_{\widetilde{d},n_{1}}(\overline{m}_{1})$ and then $\overline{m}_{1}=\overline{m}_{2}$. It ia a contradiction, impossible. Suppose $\overline{u}=(n_{1},\top)$. Then we know $(n_{0},m')\prec_{3}(n_{1},\top)$ or $(n_{0},m')\prec_{4}(n_{1},\top)$. If $(n_{0},m')\prec_{3}(n_{1},\top)$, $n_{1}=f_{d_{0}n_{0}}(\overline{m}')$ for some $m'<\overline{m}'$. So we know $n_{0}<f_{d_{0}n_{0}}(\overline{m}')=n_{1}$ and $n_{1}<f_{\widetilde{d},n_{1}}(\overline{m}_{1})=n_{0}$, impossible. If $(n_{0},m')\prec_{4}(n_{1},\top)$, combining $f_{\widetilde{d},n_{1}}(\overline{m}_{1})=n_{0}$, we have $n_{1}=f_{\widetilde{d},n_{1}}(m_{1}\overline{m}'')$ for some $m'<\overline{m}''$ . This is still impossible since $n_{1}=f_{\widetilde{d},n_{1}}(m_{1}m')>n_{1}$.

(1.2) $n_{2}=n_{1}$ and $m_{2}\in\mathbb{N}^{<\omega}$. then $\{(n_{1},m_{2}),(n_{0},\tilde{m}),(n_{1},m_{1})\}$ has an upper bound $\widetilde{u}$ for every $(n_{0},\tilde{m})\in A$. Using strict order $\prec_{2}$, $\widetilde{u}=(n_{1},\top)$ or $\widetilde{u}=(f_{n_{1}\widetilde{k}}(\overline{m}_{2}),\top)$ for some $n_{1}<\widetilde{k}$ and $m_{2}< \overline{m}_{2}$ in $\mathbb{N}^{<\omega}$. Suppose $\widetilde{u}=(f_{n_{1}\widetilde{k}}(\overline{m}_{2}),\top)$ with $n_{1}<\widetilde{k}$. If $\widetilde{u}=(f_{n_{1}\widetilde{k}}(\overline{m}_{2}),\top)\neq(n_{0},\top)$. Then $\widetilde{u}$ is not an upper bound of $\{(n_{0},\tilde{m}),(n_{1},m_{1})\}$ because $f_{n_{1}\widetilde{k}}(\overline{m}_{2}),n_{0},n_{1}$ are different, impossible. Thus, $\widetilde{u}=(f_{n_{1}\widetilde{k}}(m_{2}),\top)=(n_{0},\top)$. Equivalently, $f_{n_{1}\widetilde{k}}(\overline{m}_{2})=n_{0}=f_{\widetilde{d},n_{1}}(\overline{m}_{1})$ and then $\widetilde{k}=\widetilde{d}=n_{1}$, impossible. Suppose $\widetilde{u}=(n_{1},\top)$. Then $(n_{0},\tilde{m})\prec_{3}\widetilde{u}$ or $(n_{0},\tilde{m})\prec_{4}\widetilde{u}$. If $(n_{0},\tilde{m})\prec_{3}\widetilde{u}=(n_{1},\top)$, $n_{1}=f_{d_{1}n_{0}}(\tilde{m}')$ for some $d_{1}<n_{0}$ and $\tilde{m}<\tilde{m}'$.
Now $n_{0}<f_{d_{1}n_{0}}(\tilde{m}')=n_{1}$ and $n_{1}<f_{\widetilde{d},n_{1}}(\overline{m}_{1})=n_{0}$, impossible. If $(n_{0},\tilde{m})\prec_{4}\widetilde{u}=(n_{1},\top)$, combining $f_{\widetilde{d},n_{1}}(\overline{m}_{1})=n_{0}$, $n_{1}=f_{\widetilde{d},n_{1}}(\overline{m}_{1}\tilde{m}'')>n_{1}$ for some $\tilde{m}<\tilde{m}''$, impossible.

In other words, $A\subseteq \dn (n_{0},\top)=\dn (f_{\widetilde{d},n_{1}}(\overline{m}_{1}),\top)$ in subcase 1.1.

Subcase 2: $(n_{1},m_{1})\prec_{4}(n_{0},\top)$.

Then $n_{1}=f_{\widehat{a},\widehat{b}}(\widetilde{s})$ and $n_{0}=f_{\widehat{a},\widehat{b}}(\widetilde{s}\widetilde{m}'_{1})$ for some $\widehat{a},\widehat{b},\widetilde{s}\in \mathbb{N}$, $\widehat{a}<\widehat{b}$ and $m_{1}<\widetilde{m}'_{1}$.

In this subcase 2, we verify that $A\subseteq \dn(n_{0},\top)=\dn (f_{\widehat{a},\widehat{b}}(\widetilde{s}\widetilde{m}'_{1}),\top)$. Note that
$$\dn (f_{\widehat{a},\widehat{b}}(\widetilde{s}\widetilde{m}'_{1}),\top)=L_{f_{\widehat{a},\widehat{b}}(\widetilde{s}\widetilde{m}'_{1})}\cup\{(f_{\widehat{a},\widehat{b}}(\widetilde{s}),m)\mid 1\leq m\leq \widetilde{m}'_{1}\} \cup \{(\widehat{a},\widetilde{s}),(\widehat{a},\widetilde{s}\widetilde{m}'_{1})\},$$ where $\{(\widehat{a},\widetilde{s}),(\widehat{a},\widetilde{s}\widetilde{m}'_{1})\}\subseteq\{\widehat{a}\}\times \mathbb{N}^{<\omega}$.

Assume that $A\not\subseteq\dn (f_{\widehat{a},\widehat{b}}(\widetilde{s}\widetilde{m}'_{1}),\top)$. Take a $(n_{3},m_{3})\in A\setminus\dn (f_{\widehat{a},\widehat{b}}(\widetilde{s}\widetilde{m}'_{1}),\top)$. There are 2 possibilities ((2.1) and (2.2)).

(2.1) $n_{3}=n_{1}$ and $\widetilde{m}'_{1}<m_{3}$. Then $\{(n_{0},m'''),(n_{1},m_{3})\}$ has an upper bound $v'$ for all $(n_{0},m''')\in A\cap (\{n_{0}\}\times \mathbb{N})$. Since $A\cap (\{n_{0}\}\times \mathbb{N})$ is infinite, we assume that $max\{\widetilde{s},\widetilde{m}'_{1}\}+1<m'''$. Since $n_{0}\neq n_{1}$ and $\widetilde{m}'_{1}<m_{3}$, $v'=(n_{1},\top)$ or $v'=(n_{0},\top)$. Suppose $v'=(n_{1},\top)$. Then  $(n_{0},m''')\prec_{3}(n_{1},\top)=(f_{\widehat{a},\widehat{b}}(\widetilde{s}),\top)$. By strict order $\prec_{3}$, $n_{0}=\widehat{b}$ and $m'''\leq \widetilde{s}$, a contradiction. Suppose $v'=(n_{0},\top)$. Then $(n_{1},m_{3})\prec_{4}(n_{0},\top)$. Whence, $m_{3}\leq \widetilde{m}'_{1}$, impossible.

(2.2) $n_{3}=n_{1}$ and $m_{3}\in\mathbb{N}^{<\omega}$. Then $\{(n_{0},m''),(n_{1},m_{1}),(n_{1},m_{3})\}$ has an upper bound $\widehat{v}$ for some $(n_{0},m'')\in A\cap (\{n_{0}\}\times \mathbb{N})$ with $\widetilde{s}<m''$ . Using strict order $\prec_{2}$, we have $\widehat{v}=(n_{1},\top)$ or $\widehat{v}=(f_{n_{1},k'}(\overline{m}_{3}),\top)$ for some $n_{3}< k'$ and $m_{3}<\overline{m}_{3}$ in $\mathbb{N}^{<\omega}$.

If $\widehat{v}=(n_{1},\top)$, then $(n_{0},m'')\prec_{3}(f_{\widehat{a},\widehat{b}}(\widetilde{s}),\top)$. This means that $n_{0}=\widehat{b}$ and $m''\leq \widetilde{s}$, a contradiction.

If $\widehat{v}=(f_{n_{1},k'}(\overline{m}_{3}),\top)$ for some $n_{1}< k'$ and $m_{3}<\overline{m}_{3}$ in $\mathbb{N}^{<\omega}$, considering $(n_{0},m''),(n_{1},m_{1})\leq \widehat{v}$ and $n_{0}\neq n_{1}$, then $\widehat{v}=(n_{0},\top)$ or $\widehat{v}=(n_{1},\top)$. Suppose $\widehat{v}=(n_{1},\top)=(f_{\widehat{a},\widehat{b}}(\widetilde{s}),\top)$. According to $(n_{0},m'')<\widehat{v}=(f_{\widehat{a},\widehat{b}}(\widetilde{s}),\top)$, $(n_{0},m'')\prec_{3}(f_{\widehat{a},\widehat{b}}(\widetilde{s}),\top)$. This implies that $n_{0}=\widehat{b}=f_{\widehat{a},\widehat{b}}(\widetilde{s}\widetilde{m}'_{1})>\widehat{b}$, impossible. Suppose $\widehat{v}=(n_{0},\top)=(f_{\widehat{a},\widehat{b}}(\widetilde{s}\widetilde{m}'_{1}),\top)$. Now, we know $f_{\widehat{a},\widehat{b}}(\widetilde{s}\widetilde{m}'_{1})=f_{n_{1},k'}(\overline{m}_{3})$. Thus $n_{1}=\widehat{a}$ and $n_{1}=f_{\widehat{a},\widehat{b}}(\widetilde{s})>\widehat{a}$, impossible.
Thus, $A\subseteq \dn(n_{0},\top)=\dn (f_{\widehat{a},\widehat{b}}(\widetilde{s}\widetilde{m}'_{1}),\top)$ in subcase 2.

Now, we can conclude that $A$ has an upper bound in case (1).

(2) $A\cap (\{n_{0}\}\times \mathbb{N})$ is infinite and $(n_{1},y_{1}y_{2}\cdot\cdot\cdot y_{m})\in A$ for some $y_{1}y_{2}\cdot\cdot\cdot y_{m}\in\mathbb{N}^{<\omega}$.

The details can see Appendix.

(3) $A\cap (\{n_{0}\}\times \mathbb{N}^{<\omega})$ is infinite and $(n_{1},\hat{m}_{1})\in A$ for some $\hat{m}_{1}\in\mathbb{N}$.

The details can see Appendix.

(4) $A\cap (\{n_{0}\}\times \mathbb{N}^{<\omega})$ is infinite and $(n_{1},e_{1}e_{2}\cdot\cdot\cdot e_{k_{1}^{\prime}})\in A$ for some $e_{1}e_{2}\cdot\cdot\cdot e_{k_{1}^{\prime}}\in\mathbb{N}^{<\omega}$.

The details can see Appendix.

Therefore, $A$ has an upper bound.\end{proof}

\begin{lemma} {\rm Let $A\subseteq\widehat{P}$ be subset such that

(1) every finite subset of $A$ has an upper bound.

(2) $A$ just meet $L_{n_{0}}$, $L_{n_{1}}$ and  $L_{n_{2}}$ nonempty.

Then $A$ has an upper bound.}

\end{lemma}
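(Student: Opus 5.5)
Since $A$ meets $L_{n_{0}}$, $L_{n_{1}}$ and $L_{n_{2}}$ with $n_{0},n_{1},n_{2}$ pairwise distinct, the plan is to show that $A$ lies inside one principal ideal $\dn(n,\top)$. As in the proof of the preceding two lemmas, the case $A\cap{\rm max}(\widehat{P})\neq\emptyset$ is immediate. If $(n,\top)\in A$, every finite subset $\{(n,\top),a\}$ with $a\in A$ has an upper bound, and $(n,\top)$ is maximal, so that bound is $(n,\top)$ itself; hence $A\subseteq\dn(n,\top)$. The case where $A$ is finite is also trivial. So we assume $A\cap{\rm max}(\widehat{P})=\emptyset$ and that $A$ is infinite.

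The key step is to pick witnesses $a_{i}=(n_{i},t_{i})\in A$, $i=0,1,2$, with $t_{i}\neq\top$. The finite set $\{a_{0},a_{1},a_{2}\}$ has an upper bound $u$. Because the $n_{i}$ are distinct, $u$ cannot lie in any single $L_{n_{i}}$ below $\top$, so $u$ is maximal, say $u=(N,\top)$. By the description of $\dn(N,\top)$ used in the proof of Lemma 4.8, the first projection of $\dn(N,\top)$ is $\{N,a,b\}$: for $N=f_{a,b}(s)$ with $len(s)=1$, the set $\dn(N,\top)$ lies in $L_{N}\cup(\{a\}\times\mathbb{N}^{<\omega})\cup(\{b\}\times\mathbb{N})$, and for longer $s$ the element $b$ is replaced by $f_{a,b}(s^{-})$. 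This projection has exactly three elements. Since it must contain $n_{0},n_{1},n_{2}$, we get $\{n_{0},n_{1},n_{2}\}=\{N,a,b\}$ (respectively $\{N,a,f_{a,b}(s^{-})\}$), and in particular $N\in\{n_{0},n_{1},n_{2}\}$.

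Next we show that the same $N$ works for every triple. For arbitrary $x_{i}\in A\cap L_{n_{i}}$, the upper bound of $\{x_{0},x_{1},x_{2}\}$ is some $(N',\top)$ with $N'\in\{n_{0},n_{1},n_{2}\}$, and its predecessors are arranged in the pattern above. The triple $(a,b,N)$ with $a<b<N$ is determined uniquely from $N$ by fact (7) and the injectivity of the $f_{a,b}$. By fact (1), $N=f_{a,b}(s)$ exceeds both $a$ and $b$, and likewise exceeds $f_{a,b}(s^{-})$ in the $\prec_{4}$ case, so $N'$ must be the largest of the three indices $n_{0},n_{1},n_{2}$. Therefore $N'=N=\max\{n_{0},n_{1},n_{2}\}$ for all triples.

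Finally, every $x\in A$ lies in some $L_{n_{i}}$. We complete $x$ to a triple by adding one element of $A$ from each of the other two sub-posets. That triple is bounded by $(N,\top)$, so $x\leq(N,\top)$. Hence $A\subseteq\dn(N,\top)$, which shows that $A$ has an upper bound.

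The main obstacle is the structural step: verifying carefully, from the definitions of $\prec_{2},\prec_{3},\prec_{4}$ and facts (1)--(7), that the first projection of $\dn(N,\top)$ is exactly $\{N,a,b\}$ or $\{N,a,f_{a,b}(s^{-})\}$, with $N$ strictly the largest element. This step involves the same kind of case analysis as in Lemma 4.8. Once it is established, the uniqueness of $N$ gives the conclusion without the lengthy subcase analysis needed in Lemma 4.9.
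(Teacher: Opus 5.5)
Your proof is correct, and it takes a genuinely different route from the paper.

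\textbf{The paper's route.} It assumes $A\cap L_{n_0}$ is infinite and splits into eight cases, according to whether $A$ meets $\{n_i\}\times\mathbb{N}$ or $\{n_i\}\times\mathbb{N}^{<\omega}$ in each of the three sub-posets. It rules out cases (1), (4), (5) and (8). In the remaining cases it extracts the bound $(n_0,\top)$ from one particular triple. It then refutes, sort by sort, every possible element of $A$ lying outside $\dn(n_0,\top)$.

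\textbf{Your route.} You replace all of this with one structural invariant. For a maximal element $(N,\top)$ with $N=f_{a,b}(s)$, the first projection of $\dn(N,\top)$ is $\{N,a,c\}$, where $c=b$ or $c=f_{a,b}(s^{-})$, and $N$ is its strict maximum. Hence any upper bound of a finite subset meeting all three $L_{n_i}$ is forced to be $(\max\{n_0,n_1,n_2\},\top)$, whatever the subset is. The inclusion $A\subseteq\dn(\max_i n_i,\top)$ follows at once.

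\textbf{What each buys.} Your argument needs no infiniteness assumption and no case split, and it names the bound explicitly. It also explains the paper's pattern of possible and impossible cases: $A\cap L_a$ is forced into $\{a\}\times\mathbb{N}^{<\omega}$, $A\cap L_c$ is forced into $\{c\}\times\mathbb{N}$, and only $L_N$ can carry infinitely many points of $A$. The paper's long analysis gains only finer bookkeeping of which sorts of elements occur, which the lemma does not need.

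\textbf{Ingredients to cite.} Your argument uses:
\begin{itemize}
\item the description of $\dn(N,\top)$ already used in Lemma 4.8, which rests on the uniqueness in fact (7);
\item fact (1);
\item the strict inequality $f_{a,b}(s^{-})<f_{a,b}(s)$.
\end{itemize}
That last inequality is not fact (1). It is also not literally fact (2), which only compares sequences of equal length. It follows from $f_{a,b}$ being a monotone injection into the chain $\mathbb{N}$, and the paper itself uses it in case (2.2.2). Cite it that way.
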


\begin{proof}We need only to consider the case that $A$ is infinite. Assume that $A\cap L_{n_{0}}$ is infinite and $A\cap L_{n_{1}}\neq\emptyset$, $A\cap L_{n_{2}}\neq\emptyset$. There are only 8 possibilities ((1)-(8)).

(1) $A\cap(\{n_{0}\}\times\mathbb{N})$ is infinite and $A\cap(\{n_{1}\}\times\mathbb{N})\neq\emptyset$, $A\cap(\{n_{2}\}\times\mathbb{N})\neq\emptyset$.

Choose a $(n_{i},m_{i})\in A\cap(\{n_{i}\}\times\mathbb{N})$, $i=1,2,3$.  By strict orders $\prec_{2},\prec_{3} $ and $\prec_{4}$,\\ $\{(n_{1},m_{1}),(n_{2},m_{2}),(n_{3},m_{3})\}$ has no upper bounds. So case (1) is impossible.

(2) $A\cap(\{n_{0}\}\times\mathbb{N})$ is infinite and $A\cap(\{n_{1}\}\times\mathbb{N}^{<\omega})\neq\emptyset$, $A\cap(\{n_{2}\}\times\mathbb{N})\neq\emptyset$.

Choose a $(n_{0},m_{0})\in A\cap(\{n_{0}\}\times\mathbb{N})$, a $(n_{1},\xi_{1}\xi_{2}\cdot\cdot\cdot\xi_{m})\in A\cap(\{n_{1}\}\times\mathbb{N}^{<\omega})$ and a $(n_{2},m_{2})\in A\cap(\{n_{2}\}\times\mathbb{N})$.

Then $\{(n_{0},m_{0}),(n_{1},\xi_{1}\xi_{2}\cdot\cdot\cdot\xi_{m}),(n_{2},m_{2})\}$ has an upper bound $l$. As $n_{0}\neq n_{2}$, $l=(n_{0},\top)$ or $l=(n_{2},\top)$.

(2.1) $l=(n_{2},\top)$.

Since $A\cap(\{n_{0}\}\times\mathbb{N})$ is infinite, the element $m_{0}$ has infinitely many possibilities. Hence $l=(n_{2},\top)$ is impossible.

(2.2) $l=(n_{0},\top)$

Then $(n_{1},\xi_{1}\xi_{2}\cdot\cdot\cdot\xi_{m})\prec_{2}l=(n_{0},\top)$ and $(n_{2},m_{2})\leq l=(n_{0},\top)$. By $(n_{1},\xi_{1}\xi_{2}\cdot\cdot\cdot\xi_{m})\prec_{2}l=(n_{0},\top)$, $n_{0}=f_{n_{1}r_{1}}(\alpha_{1}\alpha_{2}\cdot\cdot\cdot\alpha_{t})$ for some $n_{1}<r_{1}$ and $\xi_{1}\xi_{2}\cdot\cdot\cdot\xi_{m}\leq\alpha_{1}\alpha_{2}\cdot\cdot\cdot\alpha_{t}$ in $\mathbb{N}^{<\omega}$. There are only two cases (2.2.1) and (2.2.2).

(2.2.1) $t=1$.

In this case, $\xi_{1}\xi_{2}\cdot\cdot\cdot\xi_{m}=\alpha_{1}\alpha_{2}\cdot\cdot\cdot\alpha_{t}=\xi_{1}$ and $(n_{2},m_{2})\prec_{3}\leq l=(n_{0},\top)=(f_{n_{1}r_{1}}(\xi_{1}),\top)$. Then $n_{2}=r_{1}>n_{1}$ and $m_{2}\leq\xi_{1}$. In the following, we assert that $$A\subseteq\dn(n_{0},\top)=\dn(f_{n_{1}r_{1}}(\xi_{1}),\top).$$
Assume that $A\not\subseteq\dn(f_{n_{1}r_{1}}(\xi_{1}),\top)$. Take a $(n_{3},m_{3})\in A\setminus\dn(f_{n_{1}r_{1}}(\xi_{1}),\top)$. Note that
$$\dn(f_{n_{1}r_{1}}(\xi_{1}),\top)=L_{f_{n_{1}r_{6}}(\xi_{1})}\cup\{(n_{1},\xi_{1})\}\cup\{(r_{1},k)\mid 1\leq k\leq\xi_{1}\},$$
where $(n_{1},\xi_{1})\in\{n_{1}\}\times\mathbb{N}^{<\omega}$. So we just consider the following 4 cases (2.2.1-1)-(2.2.1-4).

(2.2.1-1) $n_{3}=n_{1}$, $m_{3}\in\mathbb{N}$.

Then $\{(n_{0},m_{0}),(n_{1},\xi_{1}),(n_{2},m_{2}),(n_{1},m_{3})\}$ has an upper bound $j_{2}$. This is impossible since $n_{0},n_{1},n_{2}$ are different.

(2.2.1-2) $n_{3}=n_{1}$, $m_{3}\in\mathbb{N}^{<\omega}\setminus\{\xi_{1}\}$.

Let $m_{3}=\beta_{1}\beta_{2}\cdot\cdot\cdot\beta_{t_{1}}$. Then $\{(n_{0},m_{0}),(n_{1},\xi_{1}),(n_{2},m_{2}),(n_{1},\beta_{1}\beta_{2}\cdot\cdot\cdot\beta_{t_{1}})\}$ has an upper bound $j'_{2}$.
Similarly, $j'_{2}=(n_{0},\top)$ or $j'_{2}=(n_{2},\top)$.

If $j'_{2}=(n_{0},\top)$, $(n_{1},\beta_{1}\beta_{2}\cdot\cdot\cdot\beta_{t_{1}})\prec_{2}(n_{0},\top)$. This implies $n_{0}=f_{n_{1}r_{2}}(\beta'_{1}\beta'_{2}\cdot\cdot\cdot\beta'_{t'_{1}})$ for some $n_{1}<r_{2}$ and $\beta_{1}\beta_{2}\cdot\cdot\cdot\beta_{t_{1}}\leq\beta'_{1}\beta'_{2}\cdot\cdot\cdot\beta'_{t'_{1}}$. Now, we have $n_{0}=f_{n_{1}r_{2}}(\beta'_{1}\beta'_{2}\cdot\cdot\cdot\beta'_{t'_{1}})=f_{n_{1}r_{1}}(\xi_{1})$ and then $\beta'_{1}\beta'_{2}\cdot\cdot\cdot\beta'_{t'_{1}}=\xi_{1}$. Since $\beta_{1}\beta_{2}\cdot\cdot\cdot\beta_{t_{1}}\leq\beta'_{1}\beta'_{2}\cdot\cdot\cdot\beta'_{t'_{1}}=\xi_{1}$, we have $\beta_{1}\beta_{2}\cdot\cdot\cdot\beta_{t_{1}}=\xi_{1}$, a contradiction.

If $j'_{2}=(n_{2},\top)$, then $(n_{0},m_{0})\prec_{3}(n_{2},\top)$ or $(n_{0},m_{0})\prec_{4}(n_{2},\top)$. Hence, by the strict orders $\prec_{3}$ and $\prec_{4}$, $n_{0}<n_{2}$. However, $n_{2}=r_{1}<f_{n_{1}r_{1}}(\xi_{1})=n_{0}$, a contradiction.

(2.2.1-3) $n_{3}=n_{2}=r_{1}$, $m_{3}\in\mathbb{N}^{<\omega}$.

Let $m_{3}=\beta'_{1}\beta'_{2}\cdot\cdot\cdot\beta'_{t'_{1}}$. Then $\{(n_{0},m_{0}),(n_{1},\xi_{1}),(n_{2},m_{2}),(n_{2},\beta'_{1}\beta'_{2}\cdot\cdot\cdot\beta'_{t'_{1}})\}$ has an upper bound $j_{3}$. As $n_{1}\neq n_{2}$ and $\xi_{1},\beta'_{1}\beta'_{2}\cdot\cdot\cdot\beta'_{t'_{1}}\in\mathbb{N}^{<\omega}$, $j_{3}=(n_{1},\top)$ or $j_{3}=(n_{2},\top)$.

If $j_{3}=(n_{1},\top)$, $(n_{0},m_{0})\prec_{3}(n_{1},\top)$ or $(n_{0},m_{0})\prec_{4}(n_{1},\top)$. Using the strict orders $\prec_{3}$ and $\prec_{4}$, $n_{0}<n_{1}$. However, $n_{1}<f_{n_{1}r_{6}}(\xi_{1})=n_{0}$, a contradiction.

If $j_{3}=(n_{2},\top)$, $(n_{0},m_{0})\prec_{3}(n_{2},\top)$ or $(n_{0},m_{0})\prec_{4}(n_{2},\top)$. Similarly, $n_{0}<n_{2}$ but $n_{2}=r_{1}<f_{n_{1}r_{6}}(\xi_{1})=n_{0}$, a contradiction.

(2.2.1-4) $n_{3}=n_{2}=r_{1}$, $m_{3}\in\mathbb{N}$ with $\xi_{1}<m_{3}$.

Then $\{(n_{0},m_{0}),(n_{1},\xi_{1}),(n_{2},m_{3})\}$ has an upper bound $j'_{3}$. Similarly, $j'_{3}=(n_{0},\top)$ or $j'_{3}=(n_{2},\top)$.

If $j'_{3}=(n_{2},\top)$, $(n_{0},m_{0})\prec_{3}(n_{2},\top)$ or $(n_{0},m_{0})\prec_{4}(n_{2},\top)$. So we have $n_{0}<n_{2}$ but $n_{2}=r_{1}<f_{n_{1}r_{1}}(\xi_{1})=n_{0}$, a contradiction.

If $j'_{3}=(n_{0},\top)$, then  $(n_{2},m_{3})\prec_{3}(n_{0},\top)=(f_{n_{1}r_{1}}(\xi_{1}),\top)$. Thus, $m_{3}\leq\xi_{1}$, a contradiction.

(2.2.2) $1<t$.

In this case $(n_{2},m_{2})\prec_{4}l=(n_{0},\top)=(f_{n_{1}r_{1}}(\alpha_{1}\alpha_{2}\cdot\cdot\cdot\alpha_{t}),\top)$. So we have that $n_{2}=f_{n_{1}r_{1}}(\alpha_{1}\alpha_{2}\cdot\cdot\cdot\alpha_{t-1})$ and $m_{2}\leq\alpha_{t}$. In the following, we assert that $$A\subseteq\dn(n_{0},\top)=\dn(f_{n_{1}r_{1}}(\alpha_{1}\alpha_{2}\cdot\cdot\cdot\alpha_{t}),\top).$$
Assume that $A\not\subseteq\dn(f_{n_{1}r_{1}}(\alpha_{1}\alpha_{2}\cdot\cdot\cdot\alpha_{t}),\top)$. Choose a $(n_{4},m_{4})\in A\setminus\dn(f_{n_{1}r_{1}}(\alpha_{1}\alpha_{2}\cdot\cdot\cdot\alpha_{t}),\top)$. Note that
\begin{center}$\begin{array}{lll}
\dn(f_{n_{1}r_{1}}(\alpha_{1}\alpha_{2}\cdot\cdot\cdot\alpha_{t}),\top)&=&L_{f_{n_{1}r_{1}}(\alpha_{1}\alpha_{2}\cdot\cdot\cdot\alpha_{t})}\cup\{(n_{1},\alpha_{1}\alpha_{2}\cdot\cdot\cdot\alpha_{t'})\mid 1\leq t'\leq t\} \\
&&\cup\{(f_{n_{1}r_{1}}(\alpha_{1}\alpha_{2}\cdot\cdot\cdot\alpha_{t-1}),k)\mid 1\leq k\leq\alpha_{t}\}.\\
\end{array}$\end{center}
So we just consider the following 4 cases (2.2.2-1)-(2.2.2-4).

(2.2.2-1) $n_{4}=n_{1}$, $m_{4}\in\mathbb{N}$.

Then
$$\{(n_{0},m_{0}),(n_{1},\xi_{1}\xi_{2}\cdot\cdot\cdot\xi_{m}),(n_{2},m_{2}),(n_{1},m_{4})\}$$ has an upper bound $j_{4}$. It is impossible since $n_{0},n_{1},n_{2}$ are different.

(2.2.2-2) $n_{4}=n_{1}$, $m_{4}\in\mathbb{N}^{<\omega}\setminus\{(n_{1},\alpha_{1}\alpha_{2}\cdot\cdot\cdot\alpha_{t'})\mid 1\leq t'\leq t\}$.

Let $m_{4}=\varphi_{1}\varphi_{2}\cdot\cdot\cdot\varphi_{t_{2}}$. Then $$\{(n_{0},m_{0}),(n_{1},\xi_{1}\xi_{2}\cdot\cdot\cdot\xi_{m}),(n_{2},m_{2}),(n_{1},\varphi_{1}\varphi_{2}\cdot\cdot\cdot\varphi_{t_{2}})\}$$ has an upper bound $j'_{4}$. Clearly, $j'_{4}=(n_{0},\top)$ or $j'_{4}=(n_{2},\top)$.

If $j'_{4}=(n_{2},\top)$, then $(n_{0},m_{0})\prec_{3}j'_{4}=(n_{2},\top)$ or $(n_{0},m_{0})\prec_{4}j'_{4}=(n_{2},\top)$. So we have $n_{0}<n_{2}$. However, $$n_{0}=f_{n_{1}r_{1}}(\alpha_{1}\alpha_{2}\cdot\cdot\cdot\alpha_{t})>f_{n_{1}r_{1}}(\alpha_{1}\alpha_{2}\cdot\cdot\cdot\alpha_{t-1})=n_{2},$$ a contradiction.

If $j'_{4}=(n_{0},\top)$, then $(n_{1},\xi_{1}\xi_{2}\cdot\cdot\cdot\xi_{m})\prec_{2}j'_{4}=(n_{0},\top)$ , $(n_{1},\varphi_{1}\varphi_{2}\cdot\cdot\cdot\varphi_{t_{2}})\prec_{2}j'_{4}=(n_{0},\top)$ and $(n_{2},m_{2})\leq(n_{0},\top)$.
Then $n_{0}=f_{n_{1}r_{3}}(\psi_{1}\psi_{2}\cdot\cdot\cdot\psi_{t_{3}})$ for some $n_{1}<r_{3}$ and $$\xi_{1}\xi_{2}\cdot\cdot\cdot\xi_{m}\leq\psi_{1}\psi_{2}\cdot\cdot\cdot\psi_{t_{3}}, \varphi_{1}\varphi_{2}\cdot\cdot\cdot\varphi_{t_{2}}\leq\psi_{1}\psi_{2}\cdot\cdot\cdot\psi_{t_{3}}. $$ Now, we have $n_{0}=f_{n_{1}r_{3}}(\psi_{1}\psi_{2}\cdot\cdot\cdot\psi_{t_{3}})=f_{n_{1}r_{6}}(\alpha_{1}\alpha_{2}\cdot\cdot\cdot\alpha_{t})$. Whence, $\psi_{1}\psi_{2}\cdot\cdot\cdot\psi_{t_{3}}=\alpha_{1}\alpha_{2}\cdot\cdot\cdot\alpha_{t}$ and then $\varphi_{1}\varphi_{2}\cdot\cdot\cdot\varphi_{t_{2}}\leq\alpha_{1}\alpha_{2}\cdot\cdot\cdot\alpha_{t}$, a contradiction.

(2.2.2-3) $n_{4}=n_{2}$, $m_{4}\in\mathbb{N}$ with $\alpha_{t}<m_{4}$.

Then $\{(n_{0},m_{0}),(n_{1},\xi_{1}\xi_{2}\cdot\cdot\cdot\xi_{m}),(n_{2},m_{4})\}$ has an upper bound $j_{5}$. Obviously, $j_{5}=(n_{0},\top)$ or $j_{5}=(n_{2},\top)$.

If $j_{5}=(n_{2},\top)$, then $(n_{0},m_{0})\prec_{3}j'_{4}=(n_{2},\top)$ or $(n_{0},m_{0})\prec_{4}j'_{4}=(n_{2},\top)$. Whence, $n_{0}<n_{2}$ but $$n_{0}=f_{n_{1}r_{1}}(\alpha_{1}\alpha_{2}\cdot\cdot\cdot\alpha_{t})>f_{n_{1}r_{1}}(\alpha_{1}\alpha_{2}\cdot\cdot\cdot\alpha_{t-1})=n_{2},$$ a contradiction.

If $j_{5}=(n_{0},\top)$, then $(n_{2},m_{4})\prec_{4}(n_{0},\top)=(f_{n_{1}r_{6}}(\alpha_{1}\alpha_{2}\cdot\cdot\cdot\alpha_{t}),\top)$. Then $m_{4}\leq\alpha_{t}$, impossible.

(2.2.2-4) $n_{4}=n_{2}$, $m_{4}\in\mathbb{N}^{<\omega}$.

Then $\{(n_{0},m_{0}),(n_{1},\xi_{1}\xi_{2}\cdot\cdot\cdot\xi_{m}),(n_{2},m_{2}),(n_{2},m_{4})\}$ has an upper bound $j'_{5}$. Since $n_{0}\neq n_{2}$ and $m_{0}, m_{2}\in\mathbb{N}$, $j'_{5}=(n_{0},\top)$ or $j'_{5}=(n_{2},\top)$. Simultaneously, as  $n_{1}\neq n_{2}$ and $\xi_{1}\xi_{2}\cdot\cdot\cdot\xi_{m}, m_{4}\in\mathbb{N}^{<\omega}$, $j'_{5}=(n_{1},\top)$ or $j'_{5}=(n_{2},\top)$. Consequently, $j'_{5}=(n_{2},\top)$ and $(n_{0},m_{0})\prec_{3}j'_{5}=(n_{2},\top)$ or $(n_{0},m_{0})\prec_{4}j'_{5}=(n_{2},\top)$. Similarly, we have $n_{0}<n_{2}$ but it contradicts  $n_{0}=f_{n_{1}r_{1}}(\alpha_{1}\alpha_{2}\cdot\cdot\cdot\alpha_{t})>f_{n_{1}r_{1}}(\alpha_{1}\alpha_{2}\cdot\cdot\cdot\alpha_{t-1})=n_{2}$, impossible.

So we can conclude that $A$ has an upper bound in case (2).

(3) $A\cap(\{n_{0}\}\times\mathbb{N})$ is infinite and $A\cap(\{n_{1}\}\times\mathbb{N})\neq\emptyset$, $A\cap(\{n_{2}\}\times\mathbb{N}^{<\omega})\neq\emptyset$.

It is similar to the case (2) and hence $A$ still has an upper bound in case (3).

(4) $A\cap(\{n_{0}\}\times\mathbb{N})$ is infinite and $A\cap(\{n_{1}\}\times\mathbb{N}^{<\omega})\neq\emptyset$, $A\cap(\{n_{2}\}\times\mathbb{N}^{<\omega})\neq\emptyset$.

Choose a $(n_{1},\mu_{1}\mu_{2}\cdot\cdot\cdot\mu_{k})\in(\{n_{1}\}\times\mathbb{N}^{<\omega})$ and $(n_{2},\nu_{1}\nu_{2}\cdot\cdot\cdot\nu_{k'})\in(\{n_{2}\}\times\mathbb{N}^{<\omega})$. Since $A\cap(\{n_{0}\}\times\mathbb{N})$ is infinite, there is a $(n_{0},\overline{m}_{0})\in A$ such that $$max\{\mu_{1}+2,\mu_{2}+2,\cdot\cdot\cdot,\mu_{k}+2,\nu_{1}+2,\nu_{2}+2,\cdot\cdot\cdot,\nu_{k'}+2\}<\overline{m}_{0}.$$
Now, $$\{(n_{0},\overline{m}_{0}),(n_{1},\mu_{1}\mu_{2}\cdot\cdot\cdot\mu_{k}),(n_{2},\nu_{1}\nu_{2}\cdot\cdot\cdot\nu_{k'})\}$$ has an upper bound $u$. Clearly, $u=(n_{1},\top)$ or $u=(n_{2},\top)$. we just consider the following two possibilities (4.1) and (4.2).

(4.1) $u=(n_{1},\top)$.

In this case, $(n_{2},\nu_{1}\nu_{2}\cdot\cdot\cdot\nu_{k'})\prec_{2}(n_{1},\top)$ and $(n_{0},\overline{m}_{0})\leq(n_{1},\top)$. By $(n_{2},\nu_{1}\nu_{2}\cdot\cdot\cdot\nu_{k'})\prec_{2}(n_{1},\top)$, $n_{1}=f_{n_{2}s_{1}}(\sigma_{1}\sigma_{2}\cdot\cdot\cdot\sigma_{k_{1}})$ for some $n_{2}<s_{1}$ and $\nu_{1}\nu_{2}\cdot\cdot\cdot\nu_{k'}\leq\sigma_{1}\sigma_{2}\cdot\cdot\cdot\sigma_{k_{1}}$.

If $k_{1}=1$, then $\nu_{1}\nu_{2}\cdot\cdot\cdot\nu_{k'}=\sigma_{1}\sigma_{2}\cdot\cdot\cdot\sigma_{k_{1}}=\nu_{1}$ and $(n_{0},\overline{m}_{0})\prec_{3}(f_{n_{2}s_{1}}(\nu_{1}),\top)$. Whence $\overline{m}_{0}<\nu_{1}$, impossible.

If $1<k_{1}$, then $(n_{0},\overline{m}_{0})\prec_{4}(f_{n_{2}s_{1}}(\sigma_{1}\sigma_{2}\cdot\cdot\cdot\sigma_{k_{1}}),\top)$. Hence, $n_{0}=f_{n_{2}s_{1}}(\sigma_{1}\sigma_{2}\cdot\cdot\cdot\sigma_{k_{1}-1})$ and $\overline{m}_{0}\leq\sigma_{k_{1}}$. Since $A\cap(\{n_{0}\}\times\mathbb{N})$ is infinite, there is a $(n_{0},\widetilde{m}_{0})\in A$ such that $$max\{\overline{m}_{0}+2,\sigma_{1}+2,\sigma_{2}+2,\cdot\cdot\cdot,\sigma_{k_{1}}+2\}<\widetilde{m}_{0}.$$
Then $\{(n_{0},\widetilde{m}_{0}),(n_{1},\mu_{1}\mu_{2}\cdot\cdot\cdot\mu_{k}),(n_{2},\nu_{1}\nu_{2}\cdot\cdot\cdot\nu_{k'})\}$ has an upper bound $u'$. Similarly, $u'=(n_{1},\top)$ or $u'=(n_{2},\top)$. Assume that $u'=(n_{2},\top)$. Then $(n_{1},\mu_{1}\mu_{2}\cdot\cdot\cdot\mu_{k})\prec_{2}u'=(n_{2},\top)$ and hence $n_{1}< n_{2}$ by strict order $\prec_{2}$. However, $n_{1}=f_{n_{2}s_{1}}(\sigma_{1}\sigma_{2}\cdot\cdot\cdot\sigma_{k_{1}})>n_{2}$, impossible. So we have $u'=(n_{1},\top)$. As $1<k_{1}$, $(n_{0},\widetilde{m}_{0})\prec_{4}(n_{1},\top)=(f_{n_{2}s_{1}}(\sigma_{1}\sigma_{2}\cdot\cdot\cdot\sigma_{k_{1}}),\top)$. This means that $\widetilde{m}_{0}\leq\sigma_{k_{1}}$, a contradiction.

(4.2) $u=(n_{2},\top)$.

It is similar to (4.1) and hence (4.2) is still impossible.

In a conclusion, the case (4) is always impossible.

(5) $A\cap(\{n_{0}\}\times\mathbb{N}^{<\omega})$ is infinite and $A\cap(\{n_{1}\}\times\mathbb{N}^{<\omega})\neq\emptyset$, $A\cap(\{n_{2}\}\times\mathbb{N}^{<\omega})\neq\emptyset$.

This case is always impossible since $n_{0},n_{1},n_{2}$ are different.

(6) $A\cap(\{n_{0}\}\times\mathbb{N}^{<\omega})$ is infinite and $A\cap(\{n_{1}\}\times\mathbb{N})\neq\emptyset$, $A\cap(\{n_{2}\}\times\mathbb{N}^{<\omega})\neq\emptyset$.

The details can see Appendix.

(7) $A\cap(\{n_{0}\}\times\mathbb{N}^{<\omega})$ is infinite and $A\cap(\{n_{1}\}\times\mathbb{N}^{<\omega})\neq\emptyset$, $A\cap(\{n_{2}\}\times\mathbb{N})\neq\emptyset$.

It is similar to case (6) and hence $A$ still has an upper bound in case (7).

(8) $A\cap(\{n_{0}\}\times\mathbb{N}^{<\omega})$ is infinite and $A\cap(\{n_{1}\}\times\mathbb{N})\neq\emptyset$, $A\cap(\{n_{2}\}\times\mathbb{N})\neq\emptyset$.

The details can see Appendix.

According to case (1)-(8), $A$ has an upper bound in $\widehat{P}$.\end{proof}

\begin{proposition} $\omega(\widehat{P})$ is an uncountable frame which is not sober for the Scott topology.
\end{proposition}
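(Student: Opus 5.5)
The plan is to apply Corollary 4.4 to the dcpo $L=\widehat{P}$, and then to check uncountability and the frame property separately. $\widehat{P}$ is a dcpo, as recalled in Example 4.6. The maximal elements are ${\rm Max}(\widehat{P})=\{(n,\top)\mid n\in\mathbb{N}\}$. This set is countably infinite, so $\omega\leq|{\rm Max}(\widehat{P})|$. Two hypotheses of Corollary 4.4 then remain: ${\rm Max}(\widehat{P})$ must be irreducible in $\Sigma\widehat{P}$, and $(\widehat{P},\omega(\widehat{P}))$ must be compact. Once both hold, Corollary 4.4 gives that $\Sigma\omega(\widehat{P})$ is non-sober. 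Since $\omega(\widehat{P})$ is the open set lattice of a topological space, it is a frame.

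For irreducibility I will start from Lemma 4.7, which says $\widehat{P}$ is irreducible in $\Sigma\widehat{P}$. Every nonempty Scott open set is an upper set, and every element lies below some $(n,\top)$. Hence every nonempty Scott open set meets ${\rm Max}(\widehat{P})$, so ${\rm Max}(\widehat{P})$ is dense in $\Sigma\widehat{P}$. A dense subset has the irreducible set $\widehat{P}$ as its closure, so by property (1) of irreducible sets it is itself irreducible.

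For compactness I will use Lemma 3.17. Let $A\neq\emptyset$ be such that every finite subset of $A$ has an upper bound; I must show $A$ has an upper bound. If $A$ contains a maximal element $(n_0,\top)$, then compatibility of each pair $\{a,(n_0,\top)\}$ forces $A\subseteq\dn(n_0,\top)$. Otherwise, Lemma 4.8 shows that $A$ meets at most three of the sub-posets $L_n$, and I split by that number. If $A$ meets exactly one $L_n$, then $(n,\top)$ is an upper bound. If it meets exactly two or exactly three, Lemma 4.9 or Lemma 4.10 respectively supplies an upper bound. Lemma 3.17 then gives that $(\widehat{P},\omega(\widehat{P}))$ is compact.

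For uncountability, the subbasic open sets $\widehat{P}\setminus\up x$ are already available. I will choose a countably infinite antichain of non-maximal elements, for instance $\{(n,0)\mid n\in\mathbb{N}\}$; these are pairwise incomparable, since different indices $n$ are related only through $\top$-elements. The plan is to show that distinct subsets $S$ of this antichain give distinct open sets. One candidate is the union $\bigcup_{x\in S}(\widehat{P}\setminus\up x)$, but this union is often all of $\widehat{P}$, so it does not separate subsets. Instead I will use sets of the form $\widehat{P}\setminus\bigcup_{x\in F}\up x$ for finite $F$, together with their unions, and try to distinguish them by which minimal points $(m,1)$ or $(m,0)$ they contain. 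Concretely, for each infinite $S\subseteq\mathbb{N}$ I will take $U_S=\bigcup_{n\in S}(\widehat{P}\setminus\up(n,0))$ restricted away from a suitable fixed element. \textbf{This uncountability step is the one I expect to be fiddliest:} I need to pin down an injective family, and if it fails I will fall back on exhibiting $2^{\aleph_0}$ distinct lower-topology closed sets generated by finite unions of principal filters along the chains $\{n\}\times\mathbb{N}$.

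The main obstacle overall is compactness, but it is fully absorbed by Lemmas 4.8--4.10 together with the routine one-sub-poset case. That leaves only the uncountability bookkeeping as genuinely new work.
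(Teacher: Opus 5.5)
Your non-sobriety argument is correct and follows the paper's route. You apply Corollary 4.4 to $L=\widehat{P}$, with compactness from Lemma 3.17 and Lemmas 4.8--4.10. Your density argument, that ${\rm Max}(\widehat{P})$ is Scott dense and so inherits irreducibility from Lemma 4.7, makes explicit a step the paper only cites.

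The gap is uncountability. It is part of the statement, and you do not prove it; no injective family is ever produced. For $|S|\geq 3$ you have $\bigcap_{n\in S}\up(n,0)=\emptyset$: a common upper bound would be a top $(q,\top)$, and $\dn(q,\top)$ meets $\mathbb{N}\times\mathbb{N}$ in at most two sub-posets $L_n$. So $U_S=\widehat{P}$ for all such $S$, and no fixed modification of $U_S$ can separate them. The fallback is only named.

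More seriously, distinguishing open sets by which minimal points they contain cannot work for any family. A closed set $C\neq\widehat{P}$ of $\omega(\widehat{P})$ is an intersection $\bigcap_i\up F_i$ over a nonempty family of finite sets. Hence $C\subseteq\up F$ for a single finite $F$, and every minimal element of $\widehat{P}$ lying in $C$ belongs to $F$. So every nonempty lower-open set contains all but finitely many minimal points. The map $U\mapsto U\cap{\rm min}(\widehat{P})$ therefore takes only countably many values.

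What you need is a countably infinite set of points that finite sets can cover by tails, so that countable intersections $\bigcap_k\up G_k$ cut out arbitrary subsets of it. The paper takes side branches $t_n\geq s_n$ off an infinite increasing sequence $(s_n)$ in $\{1\}\times\mathbb{N}^{<\omega}$ and sets $F_n=\{t_1,\dots,t_n,s_{n+1}\}$. Then $\bigcap_n\up F_n=\bigcup_n\up t_n$, whose minimal elements are exactly the $t_n$.

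Your chain idea can be repaired by encoding subsets in maximal points instead of minimal ones. By the definitions of $\prec_3,\prec_4$ and fact (7), each top $(p,\top)$ lies above the chain $\{p\}\times\mathbb{N}$. Apart from that chain, it lies above only the elements $(n,m)$ with $m\leq x_p$ of at most one other chain $\{n\}\times\mathbb{N}$.

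Fix $c_0$ not least in $\mathbb{N}$. Let $Q_0$ be the countably infinite set of those $q$ whose other chain is $\{c_0\}\times\mathbb{N}$, and write $j(q)=x_q$. These $q$ lie on finitely many rays: $q=f_{d,c_0}(x)$ with $d<c_0$, or $q=f_{a,b}(s.x)$ with $c_0=f_{a,b}(s)$. Hence only finitely many $q\in Q_0$ have $j(q)<k$.

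For $S\subseteq Q_0$ put $G_k=\{(c_0,k)\}\cup\{(q,k)\mid q\in S,\ j(q)<k\}$, a finite set. Using the structural fact above, one checks that $\bigcap_k\up G_k=\{(q,\top)\mid q\in S\}\cup\{(c_0,\top)\}$. So $S\mapsto\bigcap_k\up G_k$ is injective, and $\omega(\widehat{P})$ has $2^{\aleph_0}$ members.
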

\setlength{\baselineskip}{1.1\baselineskip}

\begin{proof}Clearly, ${\rm max(\widehat{P})}=\{(n,\top)\mid n\in\mathbb{N}\}$ and hence ${\rm max(\widehat{P})}$ is infinite. By Lemma 4.7, ${\rm max}(\widehat{P})$ is irreducible in $\Sigma\widehat{P}$. By Lemma 3.17, Lemma 4.8, Lemma 4.9 and Lemma 4.10, $(\widehat{P},\omega(\widehat{P}))$ is compact. By Theorem 4.3, and Corollary 4.4, we have $(\widehat{P},\omega(\widehat{P}))$ is not $SI$-compact and  $\Sigma \omega(\widehat{P})$ is non sober.

Claim: $\omega(\widehat{P})$ is a uncountable frame.

Note that $\omega(\widehat{P})=\{\widehat{P}\setminus\bigcap\limits_{i\in I}\up F_{i}\mid (F_{i})_{i\in I}\ \mbox{is a family of finite subsets of}\ \widehat{P}\}.$ Since $\widehat{P}$ is countable, the set of all finite subsets of $\widehat{P}$ is still countable. So we have

 $\omega(\widehat{P})=\{\widehat{P}\setminus\bigcap\limits_{n\in\mathbb{N}}\up F_{n}\mid (F_{n})_{n\in\mathbb{N}}\ \mbox{is a family of finite subsets of}\ \widehat{P}\}.$
 In the following, we check that $\mathcal{A}=\{ \bigcap\limits_{n\in\mathbb{N}}\up F_{n}\mid (F_{n})_{n\in\mathbb{N}}\ \mbox{is a finite subsets of}\ \widehat{P}\}$ is uncountable.

In the following, we just consider the case $F_{n}\subseteq\{1\}\times\mathbb{N}^{<\omega}$ for all $n\in\mathbb{N}$.

Let $s=(s_{n})_{n\in \mathbb{N}},t=(t_{n})_{n\in \mathbb{N}}\in\{1\}\times \mathbb{N}^{<\omega}$ be sequences such that $s$ is increasing and

(1) $s_{n}\leq t_{n}$ in $\mathbb{N}^{<\omega}$ and $t_{n}\|s_{k}$, for all $ n,k\in\mathbb{N}$ with $n<k$;

(2) every two different elements of $\{t_{n}\mid n\in\mathbb{N}\}$ are incomparable.

\begin{center}
\centering
\includegraphics[totalheight=2.9in]{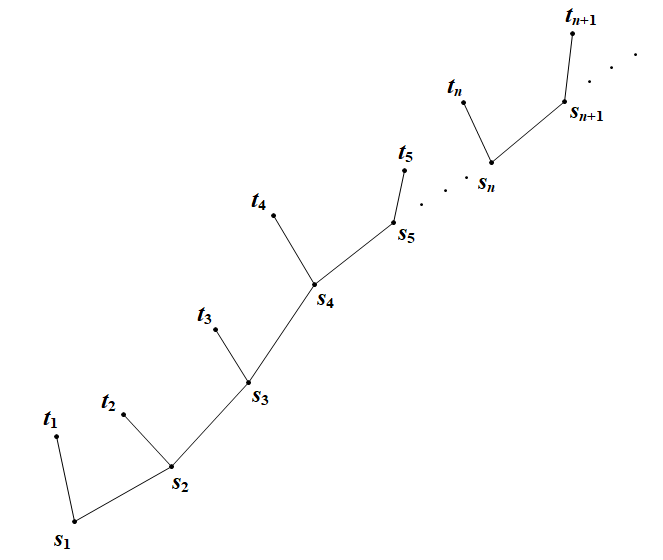}\\Figure 8 The elements $s_{n}$ and $t_{n}$.
\end{center}

Let $F_{n}=\{t_{1},t_{2},\cdot\cdot\cdot ,t_{n},s_{n+1}\}$ for all $n\in\mathbb{N}$. Then we have $\bigcap\limits_{n\in\mathbb{N}}\up F_{n}=\bigcup\limits_{n\in\mathbb{N}}\up t_{n}$.

Since the second projection of this set $H$ is uncountable, $\mathcal{A}$ is uncountable, where $H$ is the set of all pairs $(s,t)$ satisfying the conditions above.
Hence, $\omega(\widehat{P})$ is not countable.\end{proof}

\quad In the following, we present a countable compact $T_{1}$ space which is non $SI$-compact.

\begin{lemma} Let $(X,\tau)$ be a space which is not $SI$-compact. If $\widehat{\tau}$ is a topology on $X$ such that $\tau\subseteq\widehat{\tau}$, then $(X,\widehat{\tau})$ is not $SI$-compact.
\end{lemma}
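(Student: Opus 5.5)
The plan is to transport a witness of non-$SI$-compactness from $(X,\tau)$ to $(X,\widehat{\tau})$ along the inclusion of open-set lattices. Since $(X,\tau)$ is not $SI$-compact, there is a family $\mathcal{U}\subseteq\tau$ that is irreducible in $\Sigma\tau$, satisfies $\bigcup\mathcal{U}=X$, and has $X\notin\mathcal{U}$. We take the same family $\mathcal{U}$, now viewed as a subset of $\widehat{\tau}$, and show that it witnesses the failure of $SI$-compactness for $(X,\widehat{\tau})$.

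First consider the inclusion map $i:\tau\longrightarrow\widehat{\tau}$, $i(U)=U$. It is monotone. The supremum of a directed family $\mathcal{D}\subseteq\tau$ is $\bigcup\mathcal{D}$ in both lattices, since arbitrary unions of $\tau$-open sets are $\tau$-open and $\tau\subseteq\widehat{\tau}$. Hence $i$ preserves directed suprema, so $i:\Sigma\tau\longrightarrow\Sigma\widehat{\tau}$ is Scott continuous. By property (3) of irreducible sets in the Preliminaries, the continuous image $i(\mathcal{U})=\mathcal{U}$ is irreducible in $\Sigma\widehat{\tau}$.

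Next, $\bigcup i(\mathcal{U})=\bigcup\mathcal{U}=X$, so $\mathcal{U}$ is a Scott-irreducible open cover of $(X,\widehat{\tau})$. Its members are the same sets as before, so $X\notin i(\mathcal{U})$. Therefore $(X,\widehat{\tau})$ is not $SI$-compact.

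The only point needing care, and the one I expect to be the main obstacle if anything is, is that joins in $\widehat{\tau}$ of elements of $\tau$ agree with joins in $\tau$. Both are unions, which is exactly what makes the inclusion Scott continuous. The rest of the argument is immediate.
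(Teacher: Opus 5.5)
Your proof is correct and matches the paper's argument. Both push the Scott-irreducible cover $\mathcal{U}\subseteq\tau\setminus\{X\}$ forward along the Scott continuous inclusion $i:\tau\to\widehat{\tau}$, so that $\mathcal{U}$ stays irreducible in $\Sigma\widehat{\tau}$, still covers $X$, and still omits $X$; your check that directed joins in both lattices are unions is exactly the justification the paper packs into the word ``Clearly''.
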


\begin{proof}Consider the inclusion map $i:\tau\longrightarrow\widehat{\tau}$. Clearly, $i$ is Scott continuous. Since $(X,\tau)$ is a space which is not $SI$-compact, there is an open cover $\mathcal{U}\subseteq \tau\setminus\{X\}$ of $X$ which is irreducible in $\Sigma \tau$. Hence, $i(\mathcal{U})=\mathcal{U}$ is still irreducible in $\Sigma(\hat{\tau})$. Thus, $(X,\widehat{\tau})$ is not $SI$-compact.\end{proof}

\begin{corollary} There is a countable $T_{1}$ space $X$ which is compact but not $SI$-compact.

\end{corollary}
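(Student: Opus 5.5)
We take $X=\widehat{P}$, which is countable since $\widehat{P}=\mathbb{N}\times L$ with $L=\mathbb{N}\cup\mathbb{N}^{<\omega}\cup\{\top\}$ countable. We equip it with the topology $\widehat{\tau}$ generated by $\omega(\widehat{P})$ together with the cofinite topology, i.e.\ $\widehat{\tau}$ has the subbase $\omega(\widehat{P})\cup\{\widehat{P}\setminus\{x\}\mid x\in\widehat{P}\}$. Since every singleton is then closed, $(X,\widehat{\tau})$ is $T_{1}$.

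For non $SI$-compactness we invoke Lemma 4.12 with $\tau=\omega(\widehat{P})$. Proposition 4.11 (its proof via Theorem 4.3) shows that $(\widehat{P},\omega(\widehat{P}))$ is not $SI$-compact, and $\omega(\widehat{P})\subseteq\widehat{\tau}$. So this part is immediate.

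The main step is compactness of $(X,\widehat{\tau})$. By Alexander's subbase lemma it suffices to show that every cover of $\widehat{P}$ by subbasic sets has a finite subcover. Such a cover consists of sets $\widehat{P}\setminus\up x$ for $x\in B$ and sets $\widehat{P}\setminus\{y\}$ for $y\in C$.

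If $|C|\ge 2$, two cofinite sets with distinct removed points already cover. If $C=\{y\}$, the set $\widehat{P}\setminus\{y\}$ covers everything except $y$, and $y$ lies in some member of the cover; two sets then suffice.

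If $C=\emptyset$, the cover consists of sets $\widehat{P}\setminus\up x$, $x\in B$, covering $\widehat{P}$. This means $\bigcap_{x\in B}\up x=\emptyset$, i.e.\ $B$ has no upper bound. Compactness of $(\widehat{P},\omega(\widehat{P}))$, established in Proposition 4.11 via Lemma 3.17, then gives a finite subset of $B$ without an upper bound, which yields a finite subcover. This compactness fact from Proposition 4.11 is the only nontrivial input, and it has already been proved.

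Hence $(X,\widehat{\tau})$ is a countable compact $T_{1}$ space which is not $SI$-compact. The only point needing care is that refining the topology preserves the failure of $SI$-compactness, which is exactly the content of Lemma 4.12.
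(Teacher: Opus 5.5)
Your proposal is correct and is essentially the paper's proof. Both refine $\omega(\widehat{P})$ by the cofinite topology and get compactness of the refinement from the compactness of $(\widehat{P},\omega(\widehat{P}))$ in Proposition 4.11, handling cofinite members of a subbasic cover directly; the paper leaves Alexander's subbase lemma implicit, while you state it. Both then transfer non-$SI$-compactness via Lemma 4.12. The only slip is cosmetic: you declare the subbase to be all of $\omega(\widehat{P})$ plus the co-singletons, but then treat only covers by the sets $\widehat{P}\setminus\up x$ and co-singletons. This is harmless, since that smaller family generates the same topology.
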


\begin{proof}Let $X=(\widehat{P},\omega(\widehat{P}))$. Consider the topology $\widehat{\tau}$ which has $$\mathcal{A}=\tau\cup\{X\setminus F\mid F\ \mbox{is a finite subset of}\ X\}$$ as a subbase, where $\tau=\omega(\widehat{P})$. Obviously, $(X,\widehat{\tau})$ is $T_{1}$. Let $\mathcal{U}\subseteq\mathcal{A}$ be an cover of $X$. We consider the following two cases:

(1) There is a finite subset $F\subseteq X$ such that $X\setminus F\in\mathcal{U}$.

For each $x\in F$, take a $U_{x}\in\mathcal{U}$. We see that $\mathcal{V}\subseteq\mathcal{U}$ is a finite subcover of $X$, where $\mathcal{V}=\{X\setminus F\}\cup\{U_{x}\mid x\in F\}$.

(2) $\mathcal{U}\cap\{X\setminus F\mid F\ \mbox{is a finite subset of}\ X\}=\emptyset$.

In this case, we have $\mathcal{U}\subseteq\omega(\widehat{P})$. Since $(X,\omega(\widehat{P}))$ is  a compact space, there is a finite subset $\mathcal{W}\subseteq\mathcal{U}$ such that  $\mathcal{W}$ covers $X$.

Therefore, $(X,\widehat{\tau})$ is compact. By Lemma 4.12, $(X,\widehat{\tau})$ is not $SI$-compact.\end{proof}

\quad We have seen that compactness and $SI$-compactness are not equivalent in upper topology, lower topology of posets. The following Proposition 4.14 states that they are equivalent for the Scott topology of posets.

\begin{proposition}Let $L$ be a poset. The following statements are equivalent.

{\rm(1)} $\Sigma L$ is compact;

{\rm(2)} $L=\up F$ for some nonempty finite subset $F\subseteq L$;

{\rm(3)} $\Sigma L$ is $SI$-compact.

{\rm(4)} $\Sigma L$ is $\upsilon$-compact.

\end{proposition}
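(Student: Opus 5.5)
We will prove the cycle (1)$\Rightarrow$(2)$\Rightarrow$(4)$\Rightarrow$(3)$\Rightarrow$(1).

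\emph{(1)$\Rightarrow$(2).} This is the argument already given in the proof of Corollary 3.14. By Lemma 2.2, $L=\up{\rm min}(L)$. If ${\rm min}(L)$ were infinite, the family of sets $L\setminus E$, where $E\subseteq{\rm min}(L)$ has finite complement in ${\rm min}(L)$, would be a directed Scott open cover of $L$ not containing $L$. This contradicts compactness. Hence we may take $F={\rm min}(L)$, which is nonempty and finite. (If $L=\emptyset$ the statement is degenerate, so we assume $L\neq\emptyset$.)

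\emph{(2)$\Rightarrow$(4).} This is the main step, and we mimic the sufficiency part of Theorem 3.18. Put $F_1={\rm min}(F)$, so that $L=\up F_1$. Let $\mathcal{U}\subseteq\sigma(L)$ be an open cover of $L$ that is irreducible in $\upsilon(\sigma(L))$, and suppose $L\notin\mathcal{U}$.

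Each $x\in F_1$ is minimal in $L$. If a Scott open set $U$ contains every $x\in F_1$, then $U\supseteq\up F_1=L$. So every $U\in\mathcal{U}$ misses some $x\in F_1$. Since $U$ is an upper set, missing $x$ is equivalent to $U\subseteq L\setminus\dn x$. The set $L\setminus\dn x$ is Scott open, because $\dn x$ is Scott closed. Therefore
\[
\mathcal{U}\subseteq\bigcup_{x\in F_1}\dn_{\sigma(L)}(L\setminus\dn x).
\]
Each principal lower set $\dn_{\sigma(L)}V$ is closed in the upper topology of $\sigma(L)$, and $F_1$ is finite. By irreducibility, $\mathcal{U}\subseteq\dn_{\sigma(L)}(L\setminus\dn x_0)$ for some $x_0\in F_1$. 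Then $x_0\notin\bigcup\mathcal{U}$, contradicting that $\mathcal{U}$ covers $L$. Hence $L\in\mathcal{U}$, and $\Sigma L$ is $\upsilon$-compact.

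\emph{(4)$\Rightarrow$(3)$\Rightarrow$(1).} These follow from Remark 3.2. Since $\upsilon(\sigma(L))\subseteq\sigma(\sigma(L))\subseteq\alpha(\sigma(L))$, part (2) of the Remark shows that $\upsilon$-compactness implies $SI$-compactness. Part (1) shows that $SI$-compactness implies compactness.

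The only real content lies in (2)$\Rightarrow$(4). The point to check carefully there is that a Scott open set missing a point $x$ is contained in the Scott open set $L\setminus\dn x$. This holds because Scott open sets are upper sets, and it is exactly what makes the finite-union-of-principal-ideals argument work.
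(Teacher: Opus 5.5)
Your proof is correct and follows the paper's route: the paper also reduces everything to (2)$\Rightarrow$(4) via Corollary 3.14 and Remark 3.2, and proves that step with the same irreducibility argument over the finitely many upper-topology-closed sets $\dn_{\sigma(L)}(L\setminus\{x_i\})$, $x_i\in{\rm min}(L)$. Your $L\setminus\dn x$ is the same set as $L\setminus\{x\}$ for minimal $x$, and using it spares you the paper's separate check (its Claim 1) that this set is Scott open.
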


\begin{proof}By Corollary 3.14, it is enough to prove that (2) implies (4).

By (2), we can further require that $F=min(L)$. Assume that $$F=\{x_{0},x_{1},\cdot\cdot\cdot,x_{n}\}.$$

Claim 1: $L\setminus \{x_{i}\}\in\sigma(L)$ for all $0\leq i\leq n$.

Obviously, $L\setminus \{x_{i}\}$ is an upper set since $x_{i}$ is a minimal element. Let $D$ be a directed set such that $\bigvee D$ exists and $\bigvee D\in L\setminus \{x_{i}\}$. If $D\cap L\setminus \{x_{i}\}=\emptyset$, then $D=\{x_{i}\}$, a contradiction. So we have $D\cap L\setminus \{x_{i}\}\neq\emptyset$. This shows that $L\setminus \{x_{i}\}\in\sigma(L)$.

Let $\mathcal{U}\subseteq\sigma(L)$ be an irreducible open cover of $L$ in the upper topology of $\sigma(L)$. Assume that $L\not\in\mathcal{U}$. Then we have
$$\mathcal{U}\subseteq\bigcup\limits_{0\leq i\leq n}\dn_{\sigma(L)} L\setminus\{x_{i}\} .$$
By the irreducibility of $\mathcal{U}$, $\mathcal{U}\subseteq \dn_{\sigma(L)} L\setminus\{x_{i_{0}}\}$ for some $0\leq i_{0}\leq n$. This means that $\bigcup\mathcal{U}=L\setminus\{x_{i_{0}}\}$, a contradiction. Hence $X\in\mathcal{U}$ and thus $\Sigma L$ is $\upsilon$-compact.
\end{proof}

\section{Weak $\upsilon$-compactness}
\quad By the proof of Proposition 3.20, for a infinite $T_{1}$ space $X$, if an open cover $\mathcal{U}$ without $X$ is irreducible in $(\mathcal{O}(X),\upsilon(\mathcal{O}(X))$, then $\mathcal{U}$ does not contain infinitely many open sets coming from $\mathscr{F}(X)$. So we modify the definition of $\upsilon$-compactness.

\begin{definition}A topological space $X$ is called weakly $\upsilon$-compact if for every irreducible open cover $\mathcal{U}$ of $X$ in the upper topology of $\mathcal{O}(X)$ with $\mathcal{U}\cap\mathscr{F}(X)$ being finite, $X\in\mathcal{U}$.
\end{definition}

\begin{proposition} Every weakly $\upsilon$-compact space is compact.
\end{proposition}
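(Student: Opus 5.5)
We prove this the same way Remark 3.2(1) shows that $\mathcal{T}$-compactness implies compactness. By that remark, $X$ is compact iff every directed open cover of $X$ contains $X$. So we fix a directed family $\mathcal{U}\subseteq\mathcal{O}(X)$ with $\bigcup\mathcal{U}=X$, and we aim to show $X\in\mathcal{U}$. Weak $\upsilon$-compactness applies only to covers that are irreducible in $(\mathcal{O}(X),\upsilon(\mathcal{O}(X)))$ and meet $\mathscr{F}(X)$ in a finite set. The proof therefore has two parts: verifying irreducibility, and reducing to the finiteness condition on $\mathcal{U}\cap\mathscr{F}(X)$.

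\emph{Irreducibility.} Every directed set is irreducible in the Alexandroff topology $\alpha(\mathcal{O}(X))$. Indeed, take upper sets $\mathcal{V}_1,\mathcal{V}_2$ meeting $\mathcal{U}$ at $U_1$ and $U_2$ respectively, and choose $U_3\in\mathcal{U}$ above both; then $U_3\in\mathcal{V}_1\cap\mathcal{V}_2$. Since $\upsilon(\mathcal{O}(X))\subseteq\alpha(\mathcal{O}(X))$, the identity map from $\alpha$ to $\upsilon$ is continuous, so by property (3) of irreducible sets $\mathcal{U}$ is also irreducible in the upper topology. One can also check this directly: $\mathcal{U}\subseteq \dn\mathcal{F}_1\cup\dn\mathcal{F}_2$ with $\mathcal{F}_1,\mathcal{F}_2$ finite forces, by directedness, $\mathcal{U}\subseteq\dn\mathcal{F}_1$ or $\mathcal{U}\subseteq\dn\mathcal{F}_2$.

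\emph{The finiteness condition, which is the main obstacle.} A directed cover may contain infinitely many cofinite open sets, so the definition does not apply to $\mathcal{U}$ as it stands. We handle this by cases.

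\emph{Case 1: $\mathcal{U}\cap\mathscr{F}(X)$ is finite.} Weak $\upsilon$-compactness applies directly and gives $X\in\mathcal{U}$.

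\emph{Case 2: $\mathcal{U}\cap\mathscr{F}(X)$ is nonempty.} Pick $X\setminus F\in\mathcal{U}$ with $F$ finite. For each $x\in F$ choose $U_x\in\mathcal{U}$ containing $x$. Directedness gives a single $W\in\mathcal{U}$ containing $X\setminus F$ and every $U_x$, so $W=X$. This case also absorbs the situation where the intersection is infinite. Hence in every case $X\in\mathcal{U}$, and $X$ is compact by Remark 3.2(1).

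Case 2 is exactly the argument used in the proof of Corollary 4.13. It shows that directed covers with infinitely many cofinite members are harmless, so the restriction in the definition does not weaken the conclusion that the space is compact.
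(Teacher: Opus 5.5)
Your proof is correct and is essentially the paper's argument: a directed open cover is irreducible in the upper topology of $\mathcal{O}(X)$, directedness forces $X\in\mathcal{U}$ whenever $\mathcal{U}\cap\mathscr{F}(X)\neq\emptyset$, and weak $\upsilon$-compactness handles the remaining case. The paper splits on $\mathcal{U}\cap\mathscr{F}(X)$ being empty or nonempty, which is equivalent to your finite/nonempty split, and you write out the directedness step that the paper calls ``not difficult to show.''
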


\begin{proof}Let $\mathcal{U}\subseteq\mathcal{O}(X)$ be a directed open cover of $X$. Then $\mathcal{U}$ is irreducible in the upper topology of $\mathcal{O}(X)$. If $\mathcal{U}\cap\mathscr{F}(X)\neq\emptyset$, then by the directedness of $\mathcal{U}$,  it is not difficult to show that $X\in\mathcal{U}$. If on the contrary, $\mathcal{U}\cap\mathscr{F}(X)=\emptyset$, then by the weak $\upsilon$-compactness of $X$, $X\in\mathcal{U}$. Therefore, $X$ is compact.\end{proof}

\begin{example}{\rm (1) Equip the natural numbers set $\mathbf{N}$  with the co-finite topology $\tau_{cof}$. Then $(\mathbf{N},\tau_{cof})$ is weakly $\upsilon$-compact but not $\upsilon$-compact. Similarly, the topological space $Y$ given in Example 3.19 is weakly $\upsilon$-compact.

(2) Let $X$ be an uncountable infinite set and $a\in X$. The topology $\tau$ on $X$ is given by $$\tau=\{X\setminus F\mid F\ \mbox{is a finite set of}\ X\}\cup\{V\subseteq X\mid a\not\in V\}.$$ Then $X$ is a compact Hausdorff space but not first-countable. By Proposition 3.20, $(X,\tau)$ is not $\upsilon$-compact. Let $\mathcal{U}\subseteq\tau$ be an irreducible cover of $X$ in the upper topology of $\mathcal{O}(X)$ with $\mathcal{U}\cap \mathscr{F}(X)$ being finite. Assume that $X\not\in\mathcal{U}$. Clearly,
$$\mathcal{U}\subseteq \dn(\mathcal{U}\cap \mathscr{F}(X))\cup\dn V_{a},$$
where $V_{a}=\bigcup\{V\in\mathcal{U}\mid a\not\in V\}$. As $\mathcal{U}\cap \mathscr{F}(X)$ is finite,$\dn(\mathcal{U}\cap \mathscr{F}(X))\cup\dn V_{a}$ is closed in the upper topology of $\mathcal{O}(X)$. Then we have $cl_{\upsilon(\tau)}(\mathcal{U})\subseteq\dn(\mathcal{U}\cap \mathscr{F}(X))\cup\dn V_{a}$. It follows from the sobriety of $(\mathcal{O}(X),\upsilon(\mathcal{O}(X)))$ that $cl_{\upsilon(\tau)}(\mathcal{U})=\dn_{\mathcal{O}(X)}U_{0}$ for some $U_{0}\in\tau$. Obviously, $U_{0}=X$ since $\mathcal{U}$ covers $X$. It is a contradiction. Consequently, $X\in\mathcal{U}$ and then $(X,\tau)$ is weakly $\upsilon$-compact.

(3) The unit interval $[0,1]$ of the real line $\mathbb{E}$ is a compact metric space but not weakly $\upsilon$-compact. For every natural number $1\leq m$, take $$\mathcal{U}_{m}=\{[0,\frac{m}{m+1})\cup(\frac{m}{m+1},\frac{m+1}{m+2}-\frac{1}{n})\cup(\frac{m+1}{m+2},1]\mid n\in\mathbb{N}, 1+(m+1)(m+2)\leq n\},$$
and let $\mathcal{U}=\bigcup\limits_{1\leq m<\infty}\mathcal{U}_{m}$. Note that $\bigcup\mathcal{U}_{m}=[0,1]\setminus\{\frac{m}{m+1},\frac{m+1}{m+2}\}$ and hence $\mathcal{U}$ covers $[0,1]$. In addition, $\mathcal{U}\cap \mathscr{F}([0,1])=\emptyset$.
If $cl_{\upsilon(\mathcal{O}([0,1]))}(\mathcal{U})\neq\mathcal{O}([0,1])$, then there exists a nonempty family $\{\mathcal{F}_{i}\mid i\in I\}\subseteq\mathbf{Fin}(\mathcal{O}([0,1]))$ such that $cl_{\upsilon(\mathcal{O}([0,1]))}(\mathcal{U})=\bigcap\limits_{i\in I}\dn \mathcal{F}_{i}$. For every $i\in I$, $\mathcal{U}\subseteq\dn \mathcal{F}_{i}$. For convenience, suppose $\mathcal{F}_{i}=\{V_{i1},V_{i2},\cdot\cdot\cdot,V_{ik}\}\subseteq\mathcal{O}([0,1])\setminus\{[0,1]\}$. Thus, we have that for every $1\leq r\leq k$, there is a $a_{r}\in(0,1)$ such that $V_{i_{r}}\subseteq[0,1]\setminus\{a_{r}\}$. This means that $\mathcal{U}\subseteq\dn\{[0,1]\setminus\{a_{r}\}\mid 1\leq r\leq k \}$. We can find  $m_{0}\in\mathbb{N}$ such that $2\leq m_{0}$ and $max\{a_{1},a_{2},\cdot\cdot\cdot,a_{k}\}<\frac{m_{0}}{m_{0}+1}$. Consider $$W=[0,\frac{m_{0}}{m_{0}+1})\cup(\frac{m_{0}}{m_{0}+1},\frac{m_{0}+1}{m_{0}+2}-\frac{1}{(m_{0}+1)(m_{0}+2)+1})\cup(\frac{m_{0}+1}{m_{0}+2},1].$$ Then $W\in\mathcal{U}_{m_{0}}\subseteq\mathcal{U}$ but $W\not\subseteq[0,1]\setminus\{a_{r}\}$ for all $1\leq r\leq k$, a contradiction. So $cl_{\upsilon(\mathcal{O}([0,1]))}(\mathcal{U})=\mathcal{O}([0,1])$ and hence $\mathcal{U}$ is irreducible in the upper topology of $\mathcal{O}([0,1])$. Clearly, $[0,1]\not\in\mathcal{U}$ and then $[0,1]$ is not weakly $\upsilon$-compact.

(4) Let $X_{0}=\{0\}\cup\{\frac{1}{n}\mid n\in\mathbb{N}^{+}\}\subseteq\mathbb{R}$ be endowed with the subspace topology of $\mathbb{R}$. Then $X_{0}$ is a compact metric space. Let $\mathcal{V}\subseteq\mathcal{O}(X_{0})$ be an irreducible cover of $X$ with respect to the upper topology of $\mathcal{O}(X_{0})$ such that $\mathcal{V}\cap\mathscr{F}(X_{0})$ is finite. Then $\{V\in\mathcal{V}\mid 0\in V\}$ is a nonempty finite set. If $X_{0}\not\in \mathcal{V}$, then $$cl_{\upsilon(\mathcal{O}(X_{0}))}(\mathcal{U})\subseteq \dn(\{V\in\mathcal{V}\mid 0\in V\})\cup\dn\{\frac{1}{n}\mid n\in\mathbb{N}^{+}\}.$$ Similarly, $cl_{\upsilon(\mathcal{O}(X_{0}))}(\mathcal{U})=\mathcal{O}(X_{0})$, a contradiction. Therefore, $X_{0}$ is weakly $\upsilon$-compact.}

\end{example}

\quad Naturally the following question arises.

$\bullet$ Let $X\subseteq\mathbb{E}$ be a compact subspace. When is $X$  weakly $\upsilon$-compact?

\quad In the following, we will completely answer that question. Given a subset $M\subseteq \mathbb{R}^{m}$ and $x\in M$, $x$ is a limit point in $M$ if there is a strictly monotone sequence $(x_{n})_{n\in\mathbb{N}}\subseteq M\setminus\{x\}$ such that $\lim\limits_{n\longrightarrow\infty}x_{n}=x$. We use $lim(M)$ to denote the set of all limit-points in $M$.

\begin{theorem} Let $X\subseteq\mathbb{E}$ be a compact subspace of the real line. Then the following statements are equivalent.

(a) $X$ is weakly $\upsilon$-compact;

(b) $lim(X)$ is finite.
\end{theorem}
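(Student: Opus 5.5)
We prove the two implications separately, using the same two devices as in Proposition 3.20 and Example 5.3. First, every closed set of $(\mathcal{O}(X),\upsilon(\mathcal{O}(X)))$ is an intersection of sets $\dn\mathcal{F}$ with $\mathcal{F}$ finite. Second, a set $\mathcal{U}$ is irreducible exactly when it is not contained in a finite union of principal downsets unless it lies in a single one of them. Direction (b)$\Rightarrow$(a) generalizes Example 5.3(4); direction (a)$\Rightarrow$(b) generalizes Example 5.3(3).

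\emph{(b)$\Rightarrow$(a).} Let $lim(X)=\{y_1,\dots,y_k\}$, which may be empty.

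The key observation concerns an open $V\subseteq X$ with $lim(X)\subseteq V$. Then $X\setminus V$ is compact and consists only of isolated points of $X$, so it is finite. Hence either $V=X$ or $V\in\mathscr{F}(X)$. (When $k=0$, $X$ is finite and the same conclusion holds trivially.)

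Now let $\mathcal{U}$ be an irreducible open cover in the upper topology with $\mathcal{U}\cap\mathscr{F}(X)$ finite, and suppose $X\notin\mathcal{U}$. Every $V\in\mathcal{U}$ either misses some $y_i$, in which case $V\subseteq X\setminus\{y_i\}$, or lies in $\mathscr{F}(X)$. Therefore
$$\mathcal{U}\subseteq\bigcup_{i\le k}\dn(X\setminus\{y_i\})\cup\dn(\mathcal{U}\cap\mathscr{F}(X)).$$
The right-hand side is a finite union of principal downsets, hence $\upsilon$-closed. By irreducibility, $\mathcal{U}\subseteq\dn W$ for a single proper open set $W$. Then $X=\bigcup\mathcal{U}\subseteq W\neq X$, a contradiction.

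\emph{(a)$\Rightarrow$(b).} Suppose $lim(X)$ is infinite. Choose pairwise distinct limit points $z_1,z_2,\dots\in lim(X)$.

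For each $m$ and $n$, let $I_{m,n}$ be the closed interval of radius $1/n$ centred at $z_{m+1}$. Define
$$U_{m,n}=X\setminus\bigl(\{z_m\}\cup (X\cap I_{m,n})\bigr),\qquad \mathcal{U}=\{U_{m,n}\mid m,n\ge 1\}.$$
These sets have the following properties.
\begin{enumerate}[label=(\roman*)]
\item Each $U_{m,n}$ is open.
\item Since $z_{m+1}$ is a limit point, $X\cap I_{m,n}$ is infinite, so $U_{m,n}\notin\mathscr{F}(X)$ and $\mathcal{U}\cap\mathscr{F}(X)=\emptyset$.
\item $\bigcup_n U_{m,n}=X\setminus\{z_m,z_{m+1}\}$.
\item A given point of $X$ is omitted from these unions for at most two indices $m$, so $\mathcal{U}$ covers $X$.
\item $X\notin\mathcal{U}$.
\end{enumerate}

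It remains to show that $\mathcal{U}$ is irreducible. We show that $cl_{\upsilon}(\mathcal{U})=\mathcal{O}(X)$, arguing as in Proposition 3.20. Suppose $\mathcal{U}\subseteq\dn\mathcal{F}$ with $\mathcal{F}$ finite and $X\notin\mathcal{F}$. Each member of $\mathcal{F}$ is contained in some $X\setminus\{a_r\}$, $r\le k$, so $\mathcal{U}\subseteq\dn\{X\setminus\{a_r\}\mid r\le k\}$. Pick $m$ with $z_m,z_{m+1}\notin\{a_1,\dots,a_k\}$. Then pick $n$ large enough that no $a_r$ lies in $I_{m,n}$; this is possible because each $a_r\neq z_{m+1}$. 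Now $U_{m,n}$ contains every $a_r$, so $U_{m,n}\notin\dn\{X\setminus\{a_r\}\mid r\le k\}$, a contradiction. Hence every finite $\mathcal{F}$ with $\mathcal{U}\subseteq\dn\mathcal{F}$ contains $X$, so the closure of $\mathcal{U}$ is all of $\mathcal{O}(X)$ and $\mathcal{U}$ is irreducible. This contradicts weak $\upsilon$-compactness.

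The main obstacle is this last step: designing the cover so that each member has infinite complement, which keeps it outside $\mathscr{F}(X)$, while the removed sets still shrink away from any prescribed finite set of points. Using limit points as the centres of the removed intervals achieves both requirements.
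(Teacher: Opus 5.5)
Your proof is correct and follows the paper's overall strategy; the differences are in execution, mainly in (a)$\Rightarrow$(b).

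\textbf{(b)$\Rightarrow$(a).} The paper also shows that the members of the cover containing all limit points are cofinite. It does this with monotone subsequences, where you use compactness of $X\setminus V$ plus isolatedness. It then traps the cover in $\dn\{X\setminus\{x_i\}\}\cup\dn\mathcal{U}$ and appeals to sobriety of $(\mathcal{O}(X),\upsilon(\mathcal{O}(X)))$. You instead use directly that an irreducible set contained in a finite union of closed sets lies in one of them, which is more elementary. Your dichotomy ``$V=X$ or $V\in\mathscr{F}(X)$'' also handles the case $V=X$, which the paper's Claim 2 glosses over.

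\textbf{(a)$\Rightarrow$(b).} The paper takes a strictly monotone sequence $(k_n)$ of limit points converging to some $k$. It attaches to each $k_n$ a one-sided monotone sequence $(f^n_m)$ subject to conditions (1)--(4) and splits into left/right cases. It then separates the points $a_r$ using the order of $\mathbb{R}$, discarding those $\ge k$ and taking $N$ with $\max a_r<k_N$. You take arbitrary distinct limit points and symmetric closed balls. You separate by choosing $m$ with $z_m,z_{m+1}\notin\{a_r\}$ and then taking $n$ large.

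\textbf{What your route buys.} It removes all the order bookkeeping. It uses only that $X$ is Hausdorff with infinitely many non-isolated points, with closed neighbourhoods of $z_{m+1}$ in place of balls. Your (b)$\Rightarrow$(a) argument uses only compactness and $T_1$. Together they give Remark 5.5 for $\mathbb{R}^n$ immediately. More generally, they show that a compact Hausdorff space is weakly $\upsilon$-compact iff it has only finitely many non-isolated points. The paper's construction, by contrast, is tied to the order of the real line.
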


\begin{proof}${\rm (a)\Longrightarrow(b)}$ Assume that $lim(X)$ is infinite. Then there is a strictly increasing (or strictly decreasing) sequence $(k_{n})_{n\in\mathbb{N}}\subseteq lim(X)$. Without loss of generality, we suppose that $(k_{n})_{n\in\mathbb{N}}$ is increasing. By compactness, $X$ is a bounded closed subset of $\mathbb{R}$. Then the sequence $(k_{n})_{n\in\mathbb{N}}$ is  convergent in the real line $\mathbb{R}$ and let $k$ be its limit. It is evident that $k_{n}< k$ for all $n\in\mathbb{N}$ and $k\in X$ since $X$ is closed. Since $k_{n}$ is a limit point of $X$, there is a strictly monotone sequence $(f^{n}_{m})_{m\in\mathbb{N}}\subseteq X\setminus\{k_{n}\}$ such that $\lim\limits_{m\longrightarrow\infty}f^{n}_{m}=k_{n}$ for all $n\in\mathbb{N}$. Furthermore, upon possibly truncating the sequences $(f^{n}_{m})_{m\in\mathbb{N}}$, we can assume that for all $n\in\mathbb{N}$,

(1) $(f^{n}_{m})_{m\in\mathbb{N}}\subseteq(-\infty,k_{n})$ or $(f^{n}_{m})_{m\in\mathbb{N}}\subseteq(k_{n},+\infty)$;

(2) $(f^{n}_{m})_{m\in\mathbb{N}}$ is strictly increasing, whenever $(f^{n}_{m})_{m\in\mathbb{N}}\subseteq(-\infty,k_{n})$ and $(f^{n}_{m})_{m\in\mathbb{N}}$ is strictly decreasing, whenever $(f^{n}_{m})_{m\in\mathbb{N}}\subseteq(k_{n},+\infty)$;

(3) $(f^{n}_{m})_{m\in\mathbb{N}}\subseteq(-\infty,k_{n+1})$;

(4) $(f^{n}_{m})_{m\in\mathbb{N}}\subseteq\bigcap\limits_{m\in\mathbb{N}}(-\infty,f^{n+1}_{m})$.

Define a countable family $(\mathcal{U}_{n})_{n\in\mathbb{N}}$ consisting of open sets of $X$ as follows:

$\mathcal{U}_{n}=\{U^{n}_{m}\mid m\in\mathbb{N}\}$, where $\forall \ m\in\mathbb{N}$

 if $ (f^{n+1}_{m})_{m\in\mathbb{N}}\subseteq(-\infty,k_{n+1})$, $\ U^{n}_{m}= [(-\infty,k_{n})\cap X]\cup[(k_{n},f^{n+1}_{m})\cap X]\cup[(k_{n+1},+\infty)\cap X]$;

 if $(f^{n+1}_{m})_{m\in\mathbb{N}}\subseteq(k_{n+1},+\infty)$, $\ U^{n}_{m}=[(-\infty,k_{n+1})\cap X]\cup[(f^{n+1}_{m},k_{n+2})\cap X]\cup[(k_{n+2},+\infty)\cap X]$. Let $\mathcal{U}=\bigcup\limits_{m\in\mathbb{N}}\mathcal{U}_{m}$. Obviously, $\mathcal{U}\cap\mathscr{F}(X)=\emptyset$. Moreover, we can see that

 $$\bigcup\mathcal{U}_{n}=X\setminus\{k_{n},k_{n+1}\},\ \mbox{if}\ (f^{n+1}_{m})_{m\in\mathbb{N}}\subseteq(-\infty,k_{n+1})$$ and
 $$\bigcup\mathcal{U}_{n}=X\setminus\{k_{n+1},k_{n+2}\},\ \mbox{if}\ (f^{n+1}_{m})_{m\in\mathbb{N}}\subseteq(k_{n+1},+\infty).$$ This implies that $\mathcal{U}$ is an open cover of $X$.

 Claim 1: $\mathcal{U}$ is irreducible in the upper topology of $\mathcal{O}(X)$.

It suffices to show that $cl_{\upsilon(\mathcal{O}(X))}(\mathcal{U})=\mathcal{O}(X)$. Assume that $cl_{\upsilon(\mathcal{O}(X))}(\mathcal{U})\neq\mathcal{O}(X)$. Then there exists a nonempty family $\{\mathcal{F}_{i}\mid i\in I\}\subseteq\mathbf{Fin}(\mathcal{O}(X))$ such that $cl_{\upsilon(\mathcal{O}(X))}=\bigcap\limits_{i\in I}\dn_{\mathcal{O}(X))}\mathcal{F}_{i}$. So we have $\mathcal{U}\subseteq\dn_{\mathcal{O}(X)}\mathcal{F}_{i}$ for all $i\in I$ and $X\not\in\bigcap\limits_{i\in I}\mathcal{F}_{i}$ since $cl_{\upsilon(\mathcal{O}(X))}(\mathcal{U})\neq\mathcal{O}(X)$. Note that $\{X\setminus\{x\}\mid x\in X\}$ is exactly the set of all maximal elements in $\mathcal{O}(X)$. This means that for every $i\in I$, there are $a_{1},a_{2},\cdot\cdot\cdot,a_{s}\in X$ such that $\mathcal{F}_{i}\subseteq\dn\{X\setminus\{a_{1}\},X\setminus\{a_{2}\},\cdot\cdot\cdot,X\setminus\{a_{s}\}\}$. In addition, $[k,+\infty)\cap X=\bigcap\limits_{n\in \mathbb{N}}(k_{n},+\infty)\cap X\subseteq U^{p}_{q}$ for all $p,q\in\mathbb{N}$. If $\{a_{1},a_{2},\cdot\cdot\cdot,a_{s}\}\cap[k, +\infty)\neq\emptyset$, then $\mathcal{F}_{i}\subseteq\dn\{X\setminus\{a\}\mid a\in A\}$, where $A=\{a_{t}\mid 1\leq t\leq s\}\cap(-\infty,k)$. Based on this consideration, we can require $\{a_{1},a_{2},\cdot\cdot\cdot,a_{s}\}\subseteq(-\infty, k)$. Since $(k_{n})_{n\in\mathbb{\mathbb{N}}}$ is strictly increasing and $\lim\limits_{n\longrightarrow\infty}k_{n}=k$, there is an $N\in\mathbb{N}$ such that $max\{a_{1},a_{2},\cdot\cdot\cdot,a_{s}\}<k_{N}$. Now we consider $U^{N}_{0}\in\mathcal{U}_{N}\subseteq\mathcal{U}$. Whence, $$U^{N}_{0}\in\dn\{X\setminus\{a_{1}\},X\setminus\{a_{2}\},\cdot\cdot\cdot,X\setminus\{a_{s}\}\}.$$ It follows from $max\{a_{1},a_{2},\cdot\cdot\cdot,a_{s}\}<k_{N}$ that $\{a_{1},a_{2},\cdot\cdot\cdot,a_{s}\}\subseteq(-\infty,k_{N})\cap X\subseteq U^{N}_{0}$. This is a contradiction. Thus, $cl_{\upsilon(\mathcal{O}(X))}(\mathcal{U})=\mathcal{O}(X)$ and Claim 1 holds.

By Claim 1 and $X\not\in\mathcal{U}$, we know that $X$ is not weakly $\upsilon$-compact, a contradiction. Therefore, $lim(X)$ is finite.

${\rm (b)\Longrightarrow(a)}$ Let $\mathcal{V}\subseteq\mathcal{O}(X)$ be an irreducible cover of $X$ with respect to the upper topology of $\mathcal{O}(X)$ and $\mathcal{V}\cap\mathscr{F}(X)$ is finite. If $X$ is finite, then it is evident that $X\in\mathcal{V}$. Otherwise, we suppose $lim(X)=\{x_{0},x_{1},\cdot\cdot\cdot,x_{m}\}$. Let $\mathcal{U}$ be the set of all elements $V\in\mathcal{V}$ with $\{x_{0},x_{1},\cdot\cdot\cdot,x_{m}\}\subseteq V$.

Claim 2:  $\mathcal{U}\subseteq\mathscr{F}(X)$.

Assume that there is a $W\in\mathcal{U}$ such that $W\not\in\mathscr{F}(X)$. Since $\{x_{0},x_{1},\cdot\cdot\cdot,x_{m}\}\subseteq W\in\mathcal{O}(X)$, there are finitely many open intervals $$\{(a_{k},b_{k})\mid a_{k}<b_{k}, k=0,1,\cdot\cdot\cdot,m\}$$ such that $x_{k}\in(a_{k},b_{k})\cap X\subseteq W$ for all $0\leq k\leq m$. It follows from $W\not\in\mathscr{F}(X)$ that there is a strictly increasing (or strictly decreasing) sequence $(c_{n})_{n\in\mathbb{N}}\subseteq X\setminus W$. Since $X$ is a bounded closed subset of $\mathbb{R}$, $(x_{n})_{n\in\mathbb{N}}$ is a convergent sequence in $\mathbb{R}$ and we let $\lim\limits_{n\longrightarrow\infty}c_{n}=c$. Then $c_{n}< c$ (or $c< c_{n}$) for all $n\in \mathbb{N}$ and $c\in X$ since $X$ is closed. So we have $c\in lim(X)$ and hence $c=x_{s}$ for some $1\leq s\leq m$. Hence, there is a natural number $N$ such that for all $N\leq m$, $c_{m}\in(a_{s},b_{s})$. This is a contradiction. So the Claim 2 holds.

By Claim 2, $\mathcal{U}$ is finite because $\mathcal{V}\cap\mathscr{F}(X)$ is finite.  If $X\not\in \mathcal{V}$, we can conclude that $$\mathcal{V}\subseteq \dn_{\mathcal{O}(X)}\{X\setminus\{x_{0}\},X\setminus\{x_{1}\},\cdot\cdot\cdot,X\setminus\{x_{n}\}\}\cup\dn_{\mathcal{O}(X)} \mathcal{U}.$$
Since $(\mathcal{O}(X),\upsilon(\mathcal{O}(X)))$ is sober and $\bigcup\mathcal{V}=X$, $cl_{\upsilon(\mathcal{O}(X))}(\mathcal{V})=\dn_{\mathcal{O}(X)} X=\mathcal{O}(X)$. Whence $$cl_{\upsilon(\mathcal{O}(X))}(\mathcal{V})=\mathcal{O}(X)\subseteq\dn_{\mathcal{O}(X)}\{X\setminus\{x_{0}\},X\setminus\{x_{1}\},\cdot\cdot\cdot,X\setminus\{x_{n}\}\}\cup\dn \mathcal{U}.$$ It is a contradiction. Therefore $X\in\mathcal{V}$ and $X$ is weakly $\upsilon$-compact.\end{proof}

\begin{remark}\ {\rm Theorem 5.4 is still valid for the higher dimension space $\mathbb{R}^{n}$. Specifically, a compact subset $M\subseteq \mathbb{R}^{n}$ is weakly $\upsilon$-compact iff $lim(M)$ is finite.}
\end{remark}

\begin{example} {\rm Let $X$ be the topological space given in Example 5.3(4). Then $X$ is weakly $\upsilon$-compact. But $X\times X\subseteq\mathbb{R}^{2}$ has infinitely limit points. Hence $X\times X$ is not weakly $\upsilon$-compact.}
\end{example}

\quad The Sorgenfrey line $\mathbb{R}_{r}$ is a topological space on $\mathbb{R}$ having a base $\{[x,y)\mid x<y\}$. Dually, we equip $\mathbb{R}$ with the topology having a base $\{(x,y]\mid x<y\}$ and the corresponding topological space is denoted by $\mathbb{R}_{l}$.

\begin{lemma}(see \cite{XSW87}) A subset $A$ of $\mathbb{R}$ is compact in Sorgenfrey line iff $A\subseteq\mathbb{R}$ is bounded and closed in real line $\mathbb{R}$ and $A$ has no accumulation points in $\mathbb{R}_{l}$.
\end{lemma}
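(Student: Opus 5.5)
The plan is to translate both conditions into statements about the Euclidean topology. Two facts drive this.

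First, the Sorgenfrey topology of $\mathbb{R}_{r}$ is finer than the Euclidean topology. Hence every Sorgenfrey-compact set is Euclidean-compact, and every Euclidean open set is Sorgenfrey open.

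Second, a point $x$ is an accumulation point of $A$ in $\mathbb{R}_{l}$ iff every $(y,x]$ meets $A\setminus\{x\}$. Equivalently, there is a strictly increasing sequence $(a_{n})\subseteq A$ with $a_{n}\to x$, i.e. $x$ is approached by $A$ from the left.

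\emph{Necessity.} Let $A$ be compact in $\mathbb{R}_{r}$. By the first fact $A$ is compact in $\mathbb{R}$, hence bounded and closed there.

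Suppose, for a contradiction, that $x$ is an accumulation point of $A$ in $\mathbb{R}_{l}$. Choose a strictly increasing sequence $(a_{n})\subseteq A$ converging to $x$. Since $A$ is closed in $\mathbb{R}$, we have $x\in A$.

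Consider the family consisting of $(-\infty,a_{1})$, the half-open intervals $[a_{n},a_{n+1})$ for $n\geq 1$, and $[x,+\infty)$. All of these are open in $\mathbb{R}_{r}$, and together they cover $\mathbb{R}\supseteq A$. Each point $a_{n}$ lies in exactly one member of the family, namely $[a_{n},a_{n+1})$. Therefore any finite subfamily misses some $a_{n}\in A$, so there is no finite subcover. This contradicts compactness, so $A$ has no accumulation point in $\mathbb{R}_{l}$.

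\emph{Sufficiency.} Let $A$ be bounded, Euclidean closed, and without accumulation points in $\mathbb{R}_{l}$, and let $\mathcal{U}$ be an open cover of $A$ in $\mathbb{R}_{r}$. For each $x\in A$ we choose two radii:
\begin{itemize}
\item some $U_{x}\in\mathcal{U}$ and $\delta_{x}>0$ with $[x,x+\delta_{x})\subseteq U_{x}$, using the base of $\mathbb{R}_{r}$;
\item $\varepsilon_{x}>0$ with $(x-\varepsilon_{x},x)\cap A=\emptyset$, which exists because $x$ is not approached from the left by $A$.
\end{itemize}
Then the Euclidean open interval $I_{x}=(x-\varepsilon_{x},x+\delta_{x})$ satisfies $I_{x}\cap A=[x,x+\delta_{x})\cap A\subseteq U_{x}$.

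The intervals $\{I_{x}\mid x\in A\}$ form a Euclidean open cover of $A$. Since $A$ is compact in $\mathbb{R}$ by Heine--Borel, finitely many $I_{x_{1}},\ldots,I_{x_{k}}$ cover $A$. Consequently $U_{x_{1}},\ldots,U_{x_{k}}$ is a finite subcover of $\mathcal{U}$, and $A$ is compact in $\mathbb{R}_{r}$.

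The only delicate point is reading ``accumulation point in $\mathbb{R}_{l}$'' correctly as left-approach by a strictly increasing sequence. Once that translation is made, both directions are routine.
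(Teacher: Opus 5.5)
Your proof is correct in both directions. The paper gives no proof of this lemma; it only quotes it from \cite{XSW87}, so there is no internal argument to match.

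The one piece the paper does prove is your ``second fact''. That is Proposition 5.8: the accumulation points of $A$ in $\mathbb{R}_{l}$ are exactly the left limit points $l(A)$. The paper obtains it by the same recursive choice of points in shrinking intervals $(x-\frac{1}{n},x]$, and then combines it with this lemma in Theorem 5.9.

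Your argument therefore fills in what the paper outsources:
\begin{itemize}
\item For necessity, you give an explicit Sorgenfrey cover in which each $a_{n}$ lies in exactly one member, so no finite subfamily covers $A$.
\item For sufficiency, the observation that drives the proof is that a left gap $(x-\varepsilon_{x},x)\cap A=\emptyset$ makes $[x,x+\delta_{x})\cap A$ the trace on $A$ of the Euclidean interval $(x-\varepsilon_{x},x+\delta_{x})$. Heine--Borel then finishes the argument.
\end{itemize}

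Two small remarks on the necessity part. First, the fact $x\in A$ is never used there. Second, the sequence is not needed at all. If $x$ is an accumulation point of $A$ in $\mathbb{R}_{l}$, the Sorgenfrey-open cover $\{(-\infty,y)\mid y<x\}\cup\{[x,+\infty)\}$ already has no finite subcover of $A$. Any finite subfamily lies inside $(-\infty,y_{0})\cup[x,+\infty)$ for some $y_{0}<x$, and $(y_{0},x)\cap A\neq\emptyset$ by the definition of accumulation point. This lets you work purely with neighbourhoods and skip the sequence characterization.
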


\quad Given a subset $A\subseteq\mathbb{R}$, $x$ is a left(right) limit point of $A$ if there is a strictly increasing (decreasing) sequence $(x_{n})_{n\in\mathbb{N}}\subseteq A\setminus\{x\}$ such that $\lim\limits_{n\longrightarrow\infty}x_{n}=x$ in real line $\mathbb{R}$. The set of all left(right) limit points of $A$ is denoted by $l(A)$ ($r(A)$).

\begin{proposition}  Let $A\subseteq\mathbb{R}$. Then $x$ is an accumulation point of $A$ in $\mathbb{R}_{l}$ iff $x\in l(A)$. Dually, $x$ is an accumulation point of $A$ in $\mathbb{R}_{r}$ iff $x\in r(A)$.

\end{proposition}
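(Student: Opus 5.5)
I would prove the statement about $\mathbb{R}_{l}$ directly from the definitions and then obtain the statement about $\mathbb{R}_{r}$ by the order-reversing homeomorphism $x\mapsto -x$. This map sends $\mathbb{R}_{l}$ onto $\mathbb{R}_{r}$, because it carries each basic set $(x,y]$ to $[-y,-x)$. It also sends strictly increasing sequences to strictly decreasing ones, so it carries $l(A)$ to $r(-A)$. Here an accumulation point $x$ of $A$ in $\mathbb{R}_{l}$ means that every neighbourhood of $x$ meets $A\setminus\{x\}$. Since $\{(x-\varepsilon,x]\mid \varepsilon>0\}$ is a neighbourhood base at $x$ in $\mathbb{R}_{l}$, this is equivalent to
$$(x-\varepsilon,x)\cap A\neq\emptyset \quad\mbox{for every } \varepsilon>0.$$

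For sufficiency, let $x\in l(A)$ with a strictly increasing sequence $(x_{n})_{n\in\mathbb{N}}\subseteq A\setminus\{x\}$ converging to $x$ in $\mathbb{R}$. Since the sequence is strictly increasing with limit $x$, we have $x_{n}<x$ for all $n$. Given $\varepsilon>0$, there is an $N$ with $x-\varepsilon<x_{N}<x$. So every basic neighbourhood $(x-\varepsilon,x]$ meets $A\setminus\{x\}$, and $x$ is an accumulation point of $A$ in $\mathbb{R}_{l}$.

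For necessity, let $x$ be an accumulation point of $A$ in $\mathbb{R}_{l}$. I build the sequence inductively. Choose $x_{1}\in(x-1,x)\cap A$. Having $x_{n}<x$, put
$$\varepsilon_{n+1}=\min\{\tfrac{1}{n+1},\,x-x_{n}\}>0$$
and choose $x_{n+1}\in(x-\varepsilon_{n+1},x)\cap A$. Then $x_{n}<x_{n+1}<x$ and $|x-x_{n+1}|<\frac{1}{n+1}$. Hence $(x_{n})_{n\in\mathbb{N}}$ is a strictly increasing sequence in $A\setminus\{x\}$ converging to $x$ in $\mathbb{R}$, so $x\in l(A)$.

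The only point needing care is the necessity direction. There, the neighbourhood condition must be turned into a strictly monotone sequence, and shrinking the radius to $x-x_{n}$ at each step makes this routine. It is also worth stating explicitly that an accumulation point in $\mathbb{R}_{l}$ forces points of $A$ strictly to the left of $x$, because every basic neighbourhood of $x$ lies in $(-\infty,x]$.
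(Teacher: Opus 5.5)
Your proof is correct and follows the paper's argument: necessity uses the same inductive choice of $x_{n+1}\in(x-\frac{1}{n+1},x]\cap(x_n,x]\cap(A\setminus\{x\})$, and sufficiency uses the same observation that the tail of the sequence eventually enters every basic neighbourhood $(a,x]$ of $x$. The only difference is that you obtain the $\mathbb{R}_{r}$ case formally through the homeomorphism $x\mapsto -x$, whereas the paper simply says that case is similar.
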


\begin{proof}Suppose $x$ is an accumulation point of $A$ in $\mathbb{R}_{l}$. Then for every open neighbourhood $U$ of $x$ which is open in the subspace $A$ of $\mathbb{R}_{l}$, $U\cap (A\setminus\{x\})\neq\emptyset$.

Assuming that $U_{n}=(x-\frac{1}{n},x]\cap (x_{n-1}]\cap A$ and $x_{1},\cdot\cdot\cdot,x_{n}$ are defined, we set $U_{n+1}=(x-\frac{1}{n+1},x]\cap(x_{n},x]\cap A$ and take for  $x_{n+1}$ any element of $U_{n+1}\cap (A\setminus\{x\})$ which is nonempty by our assumption $U_{n}$ is open in the subspace $A$ of $\mathbb{R}_{l}$.

Then, we obtain a sequence $(x_{n})_{n\in\mathbb{N}}\subseteq A$ such that for all $n\in\mathbb{N}$, $x_{n}< x_{n+1}$ and $x-\frac{1}{n+1}<x_{n+1}<x$. Then $(x_{n})_{n\in\mathbb{N}}\subseteq A$ is strictly increasing and $\lim\limits_{n\longrightarrow\infty}x_{n}=x$. Equivalently, $x\in l(A)$.

Conversely, $x\in l(A)$ implies that there is a strictly increasing sequence $(a_{n})_{n\in\mathbb{N}}\subseteq A\setminus\{x\}$ such that $\lim\limits_{n\longrightarrow\infty}a_{n}=x$. If $a<x$, then there is a $N\in\mathbb{N}$ such that $|a_{N}-x|<x-a$. This implies $a_{N}\in(a,x]\cap A\setminus\{x\}$. Hence, $x$ is an accumulation point of $A$. The proof of the second statement is similar.\end{proof}

\begin{theorem} Let $X\subseteq\mathbb{R}_{r}$ be a subspace of the Sorgenfrey line. Then the following statements are equivalent.

(a) $X$ is weakly $\upsilon$-compact;

(b) $X\subseteq\mathbb{R}$ is a bounded closed subset in the real line $\mathbb{R}$ such that $l(X)=\emptyset$ and $r(X)$ is finite.
\end{theorem}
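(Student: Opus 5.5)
The plan mirrors the proof of Theorem 5.4, with the Sorgenfrey topology replacing the Euclidean one. Lemma 5.7 and Proposition 5.8 translate compactness into order conditions on $X$.

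For (a)$\Rightarrow$(b), Proposition 5.2 says a weakly $\upsilon$-compact space is compact. Lemma 5.7 together with Proposition 5.8 then makes $X$ bounded and closed in $\mathbb{R}$ with $l(X)=\emptyset$. It remains to show that $r(X)$ is finite, and we argue by contradiction.

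Suppose $r(X)$ is infinite. Choose a strictly monotone sequence $(k_n)\subseteq r(X)$ converging to some $k$; since $X$ is closed in $\mathbb{R}$, $k\in X$. The sequence must be strictly decreasing: if it were increasing, then $k\in l(X)$, which is impossible.

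Each $k_n$ is the limit of a strictly decreasing sequence $(f^n_m)_m\subseteq X$ with $f^n_m>k_n$. Truncate these sequences so that they lie in $(k_n,k_{n-1})$.

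Now build open sets exactly as in Theorem 5.4, with the orientation reversed. For $m\in\mathbb{N}$ put
\[U^n_m=\bigl[(-\infty,k_{n+1})\cup[f^{n}_m,k_{n-1})\cup[k_{n-1}+\varepsilon,+\infty)\bigr]\cap X\]
or a similar set. It should avoid exactly two points $k_n,k_{n-1}$ in the limit $m\to\infty$, so that $\bigcup_m U^n_m=X\setminus\{k_n,k_{n-1}\}$.

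These sets are open in $\mathbb{R}_r$ because half-open intervals $[a,b)$ are open. The point to check is that removing a single point $k_n$ is realised by sets of the form $[f^n_m,\cdot)$. Points approaching $k_n$ from the right are excluded only in the limit, so no $U^n_m$ is cofinite. Points to the left of $k_n$ can be kept using $(-\infty,k_n)$, which is open.

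Let $\mathcal{U}=\bigcup_n\{U^n_m\mid m\in\mathbb{N}\}$. It covers $X$, and $\mathcal{U}\cap\mathscr{F}(X)=\emptyset$. The claim is that $cl_{\upsilon(\mathcal{O}(X))}(\mathcal{U})=\mathcal{O}(X)$.

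The argument is the same as Claim 1 of Theorem 5.4. Any finite $\mathcal{F}_i$ with $X\notin\mathcal{F}_i$ lies below finitely many maximal elements $X\setminus\{a_1\},\dots,X\setminus\{a_s\}$; maximality uses that $X$ is $T_1$. We may discard those $a_j\le k$. Then some $U^N_0$ contains all the remaining $a_j$ (all $a_j>k_{N-1}$ for large $N$), which is a contradiction. Hence $\mathcal{U}$ is irreducible, $X\notin\mathcal{U}$, and weak $\upsilon$-compactness fails.

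For (b)$\Rightarrow$(a), Lemma 5.7 and Proposition 5.8 give that $X$ is compact in $\mathbb{R}_r$. Let $r(X)=\{x_0,\dots,x_m\}$ and let $\mathcal{V}$ be an irreducible cover with $\mathcal{V}\cap\mathscr{F}(X)$ finite.

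The analogue of Claim 2 is that every $V\in\mathcal{V}$ containing all the $x_j$ is cofinite. Suppose not. Then infinitely many points of $X\setminus V$ yield a strictly monotone sequence $(c_n)$ converging to some $c\in X$. A strictly increasing sequence would put $c\in l(X)=\emptyset$, so the sequence is decreasing and $c=x_j$ for some $j$. But $V\supseteq[x_j,x_j+\delta)\cap X$, which contains a tail of $(c_n)$, a contradiction.

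So only finitely many members of $\mathcal{V}$ contain all the $x_j$, and each of them lies in $\mathscr{F}(X)$. Suppose $X\notin\mathcal{V}$. Then
\[\mathcal{V}\subseteq\dn\{X\setminus\{x_j\}\}_j\cup\dn(\mathcal{V}\cap\mathscr{F}(X)),\]
and the right-hand side is $\upsilon$-closed and does not contain $X$. On the other hand, sobriety of the upper topology on $\mathcal{O}(X)$, together with $\bigcup\mathcal{V}=X$, forces $cl(\mathcal{V})=\mathcal{O}(X)$, a contradiction. If $r(X)=\emptyset$, the first part of the union is empty and the same argument applies.

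The main obstacle is the explicit construction in (a)$\Rightarrow$(b). The sets $U^n_m$ must be open in $\mathbb{R}_r$, must jointly miss exactly two points of the form $k_n$ for each $n$, and must not be cofinite. It is this right-sided approach that makes Sorgenfrey-open sets able to exclude $k_n$ only in the limit.
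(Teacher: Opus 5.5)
Your overall route is the paper's, and your (b)$\Rightarrow$(a) argument matches it: members of $\mathcal{V}$ containing all of $r(X)$ are cofinite, and then sobriety of the upper topology on $\mathcal{O}(X)$ finishes. The gap is in the construction that carries (a)$\Rightarrow$(b). The family you write down does not work, and the irreducibility check you sketch for it is false.

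With the pieces $(-\infty,k_{n+1})$ and $[f^n_m,k_{n-1})$, the omitted interval is $[k_{n+1},f^n_m)$. It contains $k_{n+1}$, $k_n$ and every $f^{n+1}_{m'}$, so $\bigcup_m U^n_m$ omits all of $[k_{n+1},k_n]\cap X$, not just $k_n$. More seriously, the second gap $[k_{n-1},k_{n-1}+\varepsilon)$ does not shrink with $m$, and since $k_{n-1}\in r(X)$ it contains infinitely many points of $X$. For fixed $\varepsilon$ this destroys irreducibility. Pick $M$ with $k_M<k+\varepsilon$. For $n\le M$, every $U^n_m$ misses $k_n$. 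For $n\ge M+1$ you have $k_{n-1}\le k_M<k+\varepsilon<k_{n-1}+\varepsilon$, so every $U^n_m$ misses $k_M$. Hence $\mathcal{U}\subseteq\dn\{X\setminus\{k_j\}\mid j\le M\}$, an $\upsilon$-closed set not containing $X$, so $\mathcal{U}$ is not irreducible. Your step ``all $a_j>k_{N-1}$ for large $N$'' overlooks exactly this: any $a_j\in(k,k+\varepsilon)$ lies in $[k_{N-1},k_{N-1}+\varepsilon)$ for all large $N$. Taking $\varepsilon=\varepsilon_n<k_{n-2}-k_{n-1}$ would rescue irreducibility, but the second gap serves no purpose.

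The fix is to drop the two-point pattern borrowed from Theorem 5.4 and use a single shrinking gap, as the paper does with $S^n_m=X\setminus[k_{n+1},h^{n+1}_m)$. In your notation, take $U^n_m=((-\infty,k_n)\cup[f^n_m,+\infty))\cap X=X\setminus[k_n,f^n_m)$.
\begin{itemize}
\item It is open in $\mathbb{R}_r$.
\item It is not cofinite, since it misses $f^n_{m'}$ for all $m'>m$.
\item $\bigcup_m U^n_m=X\setminus\{k_n\}$, so $\mathcal{U}$ covers $X$ with $\mathcal{U}\cap\mathscr{F}(X)=\emptyset$.
\end{itemize}
Every member contains $(-\infty,k]\cap X$, and by your truncation $f^n_m<k_{n-1}$ it also contains $[k_{n-1},+\infty)\cap X$. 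So for finitely many $a_j>k$, any $N$ with $k_{N-1}<\min_j a_j$ gives $a_j\in U^N_0$ for all $j$. The rest of your Claim 1 argument then goes through unchanged.
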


\begin{proof}${\rm (a)\Longrightarrow(b)}$ By Proposition 5.2, Lemma 5.7 and Proposition 5.8, $X\subseteq\mathbb{R}$ is a bounded closed subset and $l(X)=\emptyset$. Assume that $r(X)$ is infinite. Then there is a strictly monotone sequence $(k_{n})_{n\in\mathbb{N}}\subseteq r(X)$. If $(k_{n})_{n\in\mathbb{N}}$ is strictly increasing, then the sequence $(k_{n})_{n\in\mathbb{N}}$ is convergent in the real line $\mathbb{R}$ and let $k$ be its limit. One checks that $k$ is a left limit point of $X$. This is impossible since $l(X)=\emptyset$. Consequently, the sequence $(k_{n})_{n\in\mathbb{N}}$ is strictly decreasing and $k=\lim\limits_{n\longrightarrow\infty}k_{n}\in r(X)$. Since $k_{n}\in r(X)$, there is a strictly decreasing sequence $(h^{n}_{m})_{m\in\mathbb{N}}\subseteq X\setminus\{k_{n}\}$ such that $k_{n}=\lim\limits_{m\longrightarrow\infty}h^{n}_{m}$ for all $n\in\mathbb{N}$. Actually, we can further require that for all $n\in\mathbb{N}$

(1) $(h^{n}_{m})_{m\in\mathbb{N}}\subseteq(k_{n},+\infty)$;

(2) $(h^{n}_{m})_{m\in\mathbb{N}}$ is strictly decreasing;

(3) $(h^{n+1}_{m})_{m\in\mathbb{N}}\subseteq(-\infty,k_{n})$.

Define a family $(\mathcal{S}_{n})_{ n\in\mathbb{N}}$ such that each $\mathcal{S}_{n}$ consists of countably many open sets of $X$ as follows:

$\mathcal{S}_{n}=\{S^{n}_{m}\mid m\in\mathbb{N}\}$, where for $m\in\mathbb{N}$,
\setlength{\baselineskip}{0.9\baselineskip}
$$S^{n}_{m}=((-\infty,k_{n+1})\cap X)\cup([h^{n+1}_{m},k_{n})\cap X)\cup([k_{n},+\infty)\cap X).$$
Let $\mathcal{S}=\bigcup\limits_{m\in\mathbb{N}}\mathcal{S}_{m}$. It is evident that $\mathcal{S}\cap\mathscr{F}(X)=\emptyset$. Clearly, $\bigcup\mathcal{S}_{n}=X\setminus\{k_{n+1}\}$ for every $n\in\mathbb{N}$ and thus $\mathcal{S}$ covers $X$.

Claim 1: $\mathcal{S}$ is irreducible in the upper topology of $\mathcal{O}(X)$.

Assume that $X\not\in cl_{\upsilon(\mathcal{O}(X))}(\mathcal{S})$. Suppose $cl_{\upsilon(\mathcal{O}(X))}(\mathcal{S})=\bigcap\limits_{i\in I}\dn_{\mathcal{O}(X))}\mathcal{F}_{i}$, where $\{\mathcal{F}_{i}\mid i\in I\}\subseteq\mathbf{Fin}(\mathcal{O}(X))$ is nonempty. Then we have $\mathcal{S}\subseteq\dn_{\mathcal{O}(X)}\mathcal{F}_{i}$ for all $i\in I$ and $X\not\in\bigcap\limits_{i\in I}\mathcal{F}_{i}$. Note that $\{X\setminus\{x\}\mid x\in X\}$ is exactly the set of all maximal elements in $\mathcal{O}(X)$. Then we have for every $i\in I$, $\mathcal{F}_{i}\subseteq\dn_{\mathcal{O}(X)}\{X\setminus\{s_{1}\},X\setminus\{s_{2}\},\cdot\cdot\cdot,X\setminus\{s_{p}\}\}$ for $s_{1},s_{2},\cdot\cdot\cdot,s_{p}\in X$. It follows that $\mathcal{S}\subseteq\dn_{\mathcal{O}(X)}\{X\setminus\{s_{1}\},X\setminus\{s_{2}\},\cdot\cdot\cdot,X\setminus\{s_{p}\}\}$. As $(-\infty,k]\subseteq S^{n}_{m}$ for all $m,n\in\mathbb{N}$, we can require $\{s_{1},s_{2},\cdot\cdot\cdot,s_{p}\}\subseteq(k,+\infty)$. Since $k=\lim\limits_{n\longrightarrow\infty}k_{n}$ and $(k_{n})_{n\in\mathbb{N}}$ is strictly decreasing, there exists a $N\in\mathbb{N}$ such that $k_{N}<min\{s_{1},s_{2},\cdot\cdot\cdot,s_{p}\}$. Consider $S^{N}_{0}\in \mathcal{S}_{N}\subseteq\mathcal{S}$. Then $S^{N}_{0}\subseteq\dn_{\mathcal{O}(X)}\{X\setminus\{s_{1}\},X\setminus\{s_{2}\},\cdot\cdot\cdot,X\setminus\{s_{p}\}\}$. Obviously, $\{s_{1},s_{2},\cdot\cdot\cdot,s_{p}\}\subseteq[k_{N},+\infty)\cap X\subseteq S^{N}_{0}$, impossible. Thus, $X\in cl_{\upsilon(\mathcal{O}(X))}(\mathcal{S})$ and equivalently, $cl_{\upsilon(\mathcal{O}(X))}(\mathcal{S})=\mathcal{O}(X)$. Claim 1 is proved.

By Claim 1, we can assert that $X$ is not weakly $\upsilon$-compact. This is again a contradiction. Hence, we can conclude that $r(X)$ is finite.

${\rm (b)\Longrightarrow(a)}$ Suppose $X\subseteq\mathbb{R}$ is a bounded closed subset in the real line $\mathbb{R}$ such that $l(X)=\emptyset$ and $r(X)$ is finite. Clearly, $X$ is finite whenever $r(X)=\emptyset$. Hence $X$ is weakly $\upsilon$-compact. So we suppose $r(X)=\{x_{0},x_{1},\cdot\cdot\cdot,x_{m}\}$.  Let $\mathcal{U}$ be an irreducible open cover of $X$ in the upper topology $\mathcal{O}(X)$ such that $\mathcal{U}\cap\mathscr{F}(X)$ is finite. We use $\mathcal{V}$ to denote the set of all elements $U\in\mathcal{U}$ satisfying $\{x_{0},x_{1},\cdot\cdot\cdot,x_{m}\}\subseteq U$.

Claim 2: $\mathcal{V}\subseteq\mathscr{F}(X)$.

Assume that there is a $V_{0}\in\mathcal{V}$ such that $V_{0}\not\in\mathscr{F}(X)$. Then $X\setminus V_{0}$ is infinite. So there is a strictly monotone sequence $(a_{n})_{n\in\mathbb{N}}\subseteq X\setminus V_{0}$. As $X\subseteq\mathbb{\mathbb{R}}$ is bounded closed in the real line $\mathbb{R}$, $\lim\limits_{n\longrightarrow\infty}a_{n}=a$ exists and $a\in X$ with $(a_{n})_{n\in\mathbb{N}}\subseteq X\setminus\{a\}$. If $(a_{n})_{n\in\mathbb{N}}$ is strictly increasing, then $a\in l(X)$, a contradiction. If $(a_{n})_{n\in\mathbb{N}}$ is strictly decreasing, $a\in r(X)$. This means that $a\in V_{0}$ and then there is a $[a,r)\cap X\subseteq V_{0}$ for some $a<r$. Then there exists a $N\in\mathbb{N}$ such that $a_{N}\in[a,r)\cap X$. This is a contradiction. So Claim 2 holds.

By Claim 2, $\mathcal{V}$ is finite. If $X\not\in\mathcal{U}$, we have
$$\mathcal{U}\subseteq\dn_{\mathcal{O}(X)}\{X\setminus\{x_{0}\},X\setminus\{x_{1}\},\cdot\cdot\cdot,X\setminus\{x_{m}\}\}\cup\dn_{\mathcal{O}(X)} \mathcal{V}.$$ Note that $cl_{\upsilon(\mathcal{O}(X))}\mathcal{U}=\mathcal{O}(X)$. Consequently,$$\mathcal{O}(X)\subseteq\dn_{\mathcal{O}(X)}\{X\setminus\{x_{0}\},X\setminus\{x_{1}\},\cdot\cdot\cdot,X\setminus\{x_{m}\}\}\cup\dn_{\mathcal{O}(X)} \mathcal{V}.$$ This is a contradiction. Therefore $X\in\mathcal{U}$ and then $X$ is weakly $\upsilon$-compact.\end{proof}

\section{The $\mathcal{T}$-compactness of Subspaces, product spaces and power spaces}

\quad In this section, we discuss the properties of $SI$-compactness, $\upsilon$-compactness and weak $\upsilon$-compactness. Especially, the heredity for closed subspaces of those kinds of compactness is obtained.

\begin{theorem} Every topological space $(X,\tau)$ is an open subspace of a $\upsilon$-compact space.
\end{theorem}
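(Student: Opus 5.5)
We adjoin a new point and use the construction of Example 3.7. Take $\infty\notin X$, put $Y=X\cup\{\infty\}$, and equip $Y$ with
$$\tau_{Y}=\tau\cup\{Y\}.$$
This is a topology: $\tau$ is closed under finite intersections and arbitrary unions, and adding $Y$ preserves both. Since $X\in\tau$, the set $X$ is open in $Y$. The subspace topology on $X$ is $\{U\cap X\mid U\in\tau_{Y}\}=\tau$. Hence $(X,\tau)$ is an open subspace of $(Y,\tau_{Y})$.

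It remains to show that $Y$ is $\upsilon$-compact. Let $\mathcal{U}\subseteq\tau_{Y}$ be an open cover of $Y$ that is irreducible in $(\tau_{Y},\upsilon(\tau_{Y}))$. The only open set of $Y$ containing $\infty$ is $Y$ itself. Since $\bigcup\mathcal{U}=Y$ and $\infty$ must lie in some member of $\mathcal{U}$, we get $Y\in\mathcal{U}$.

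This is exactly the statement that $Y$ is supercompact. In fact irreducibility is not even needed: every open cover of $Y$ contains $Y$. Thus $Y$ is $\mathcal{T}$-compact for every order compatible topology $\mathcal{T}$ on $\mathcal{O}(Y)$, and in particular $\upsilon$-compact.

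The argument is routine throughout. The only points needing care are that $\tau_{Y}$ is a topology and that $X$ carries its original topology as a subspace, both of which are immediate. Note also that $Y$ is $T_{0}$ whenever $X$ is, since $\infty$ lies in the closure of every point, consistent with the specialization-order setting of the paper.
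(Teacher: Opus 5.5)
Your proof is correct. It is in fact the special case of the paper's construction in which the added remainder is a single point. The paper takes an arbitrary $\upsilon$-compact space $(Y\setminus X,\sigma)$ and puts $\rho=\tau\cup\{X\cup U\mid U\in\sigma\}$; with $\sigma=\{\emptyset,\{\infty\}\}$ this is exactly your $\tau\cup\{Y\}$, because $X\in\tau$.

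Where you differ is the verification. The paper shows that $\psi(W)=W\setminus X$ is continuous for the upper topologies, using $\psi^{-1}(\sigma\setminus{\downarrow}V)=\rho\setminus{\downarrow}(X\cup V)$. It then pushes the irreducible cover forward to an irreducible cover of the remainder. Next it invokes the remainder's $\upsilon$-compactness to find $U\in\mathcal{U}$ with $U\setminus X=Y\setminus X$, and concludes $U=Y$. You instead note that $Y$ is the only open set containing $\infty$, so $Y$ is supercompact and irreducibility never enters.

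Your route is shorter. As you observe, it also gives $\mathcal{T}$-compactness for every order compatible $\mathcal{T}$ at once. So the same one-point extension proves the paper's companion results (Theorems 6.2 and 6.6): every space is an open subspace of a weakly $\upsilon$-compact space and of an SI-compact space.

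The paper's route buys generality: any $\upsilon$-compact remainder can be attached, not necessarily a supercompact one. It also demonstrates the technique of transferring irreducibility along maps continuous in the upper topology, which it reuses in Section 6. The statement as posed does not need that generality.
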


\begin{proof}Let $Y$ be a set with $X\subseteq Y$ and $Y\setminus X\neq\emptyset$. Equip $Y\setminus X$ with a topology $\sigma$ such that $(Y\setminus X,\sigma)$ is $\upsilon$-compact. Put $\rho=\tau\cup\{X\cup U\mid U\in\sigma\}$. It is easy to show that $\rho$ is a topology on $Y$ and $X$ is a dense open subspace of $(Y,\rho)$. Define a map $\psi:\rho\longrightarrow\sigma$ by
$$\forall \ W\in \rho,\ \ \psi(W)=W\setminus X.$$
Then for every $V\in\sigma$, $\psi^{-1}(\sigma\setminus \dn V)=\rho\setminus\dn(X\cup V)$. It follows that $\psi$ is continuous with respect to the upper topology. Let $\mathcal{U}\subseteq\rho$ be an irreducible open cover of $Y$ in the upper topology of $\mathcal{O}(Y)$. By the continuity of $\psi$, $\psi(\mathcal{U})$ is irreducible in the upper topology of $\mathcal{O}(Y\setminus X)$ and $\psi(\mathcal{U})$ covers $Y\setminus X$. Since  $(Y\setminus X,\sigma)$ is $\upsilon$-compact, there is a $U\in\mathcal{U}$ such that $\psi(U)=Y\setminus X$. So we can conclude that $U=Y$ and hence $(Y,\rho)$ is $\upsilon$-compact.\end{proof}

\begin{theorem} Every topological space $(X,\tau)$ is an open subspace of a weakly $\upsilon$-compact space.

\end{theorem}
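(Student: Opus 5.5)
The plan is to reduce the statement to Theorem 6.1 by showing that $\upsilon$-compactness implies weak $\upsilon$-compactness; the statement then follows at once.

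First I compare the two definitions. A space $Z$ is $\upsilon$-compact if every open cover $\mathcal{U}$ of $Z$ that is irreducible in $(\mathcal{O}(Z),\upsilon(\mathcal{O}(Z)))$ satisfies $Z\in\mathcal{U}$. It is weakly $\upsilon$-compact (Definition 5.1) if the same conclusion holds for those irreducible covers $\mathcal{U}$ with $\mathcal{U}\cap\mathscr{F}(Z)$ finite. The second condition quantifies over a subfamily of the covers in the first. Hence every $\upsilon$-compact space is weakly $\upsilon$-compact, with nothing to check.

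Next, given $(X,\tau)$, I apply Theorem 6.1 to get a $\upsilon$-compact space $(Y,\rho)$ in which $X$ is an open subspace. By the first step, $(Y,\rho)$ is weakly $\upsilon$-compact, which is the claim.

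The only point where the argument could break down is the existence of the auxiliary $\upsilon$-compact space $(Y\setminus X,\sigma)$ used in the proof of Theorem 6.1. For this I take $Y=X\cup\{\infty\}$ with $\infty\notin X$, and give $Y\setminus X=\{\infty\}$ its unique topology. A finite space is $\upsilon$-compact, as observed before Proposition 3.10, since the upper and Alexandroff topologies on $\mathcal{O}$ of a finite space coincide. The same conclusion also follows from Theorem 3.18 applied to a one-point poset.

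Concretely, the resulting space is $\rho=\tau\cup\{Y\}$. If one wants a self-contained check without going through Theorem 6.1, I would argue directly. Let $\mathcal{U}$ be an upper-irreducible cover of $Y$ with $Y\notin\mathcal{U}$. Then every member of $\mathcal{U}$ lies in $\tau$, so $\mathcal{U}\subseteq\dn_{\rho}X$. The set $\dn_{\rho}X$ is upper-closed in $\rho$, so no member of $\mathcal{U}$ contains $\infty$, contradicting that $\mathcal{U}$ covers $Y$. I expect no real obstacle here.
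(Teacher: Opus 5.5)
Your proof is correct. It reaches the result by a shorter route than the paper, which proves this theorem only by saying it is ``similar to the proof of Theorem 6.1.'' That means rerunning the extension $\rho=\tau\cup\{X\cup U\mid U\in\sigma\}$ with a \emph{weakly} $\upsilon$-compact remainder $(Y\setminus X,\sigma)$, and pushing an irreducible cover through $\psi(W)=W\setminus X$. Done that way, one also has to check something the paper does not mention: the side condition must transfer, i.e.\ if $\mathcal{U}\cap\mathscr{F}(Y)$ is finite then so is $\psi(\mathcal{U})\cap\mathscr{F}(Y\setminus X)$. This holds for three reasons. First, $\psi$ is injective on members of the form $X\cup U$. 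Second, $\psi(X\cup U)\in\mathscr{F}(Y\setminus X)$ forces $X\cup U\in\mathscr{F}(Y)$. Third, the only other possible value is $\emptyset=\psi(W)$ for $W\in\tau$.

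You instead note that Definition 5.1 only restricts the family of covers quantified over in Definition 3.1. Hence $\upsilon$-compact implies weakly $\upsilon$-compact, and the statement is an immediate corollary of Theorem 6.1, with no bookkeeping about $\mathscr{F}$. The paper's version buys flexibility in the remainder: any weakly $\upsilon$-compact $\sigma$ is allowed, e.g.\ the cofinite $\mathbf{N}$ of Example 5.3(1), which is not $\upsilon$-compact. That flexibility is irrelevant for the bare existence claim.

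Your concrete choice $Y=X\cup\{\infty\}$, $\rho=\tau\cup\{Y\}$ (the construction of Example 3.7) is simpler still. $(Y,\rho)$ is supercompact, because $Y$ is the only open set containing $\infty$, so every open cover contains $Y$ and irreducibility is never used. In your last paragraph, the appeal to $\dn_{\rho}X$ being closed is therefore superfluous: members of $\tau$ are subsets of $X$, so they cannot cover $\infty$.
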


\begin{proof}It is similar to the proof of Theorem 6.1.\end{proof}

\begin{theorem} Let $(X,\tau)$ be a weakly $\upsilon$-compact topological space and $Y\subseteq X$ a closed set. Then the subspace $Y$ is weakly $\upsilon$-compact.

\end{theorem}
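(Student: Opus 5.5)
The plan is to transport an irreducible cover of $Y$ to one of $X$ by an upper-continuous map between open set lattices, in the same spirit as the proof of Theorem 6.1. If $Y=\emptyset$ the statement is trivial: $\mathcal{O}(Y)=\{\emptyset\}$, so any cover contains $Y$. So assume $Y\neq\emptyset$.

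Let $\mathcal{U}\subseteq\mathcal{O}(Y)$ be an open cover of $Y$ that is irreducible in $(\mathcal{O}(Y),\upsilon(\mathcal{O}(Y)))$ and such that $\mathcal{U}\cap\mathscr{F}(Y)$ is finite. Define $\varphi:\mathcal{O}(Y)\longrightarrow\mathcal{O}(X)$ by $\varphi(V)=V\cup(X\setminus Y)$.

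\textbf{Well-definedness.} If $V=W\cap Y$ with $W\in\mathcal{O}(X)$, then $\varphi(V)=W\cup(X\setminus Y)$. This is open because $Y$ is closed. The map $\varphi$ is clearly monotone and injective, since $\varphi(V)\cap Y=V$.

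\textbf{Upper continuity.} It suffices to check preimages of subbasic closed sets $\dn_{\mathcal{O}(X)}W$. For $W\in\mathcal{O}(X)$ we have
$$\varphi^{-1}(\dn_{\mathcal{O}(X)}W)=\{V\in\mathcal{O}(Y)\mid V\cup(X\setminus Y)\subseteq W\}.$$
If $X\setminus Y\not\subseteq W$, this set is empty. Otherwise it equals $\dn_{\mathcal{O}(Y)}(W\cap Y)$, and $W\cap Y\in\mathcal{O}(Y)$. In both cases the preimage is closed in $\upsilon(\mathcal{O}(Y))$. By property (3) of irreducible sets, $\varphi(\mathcal{U})$ is therefore irreducible in $\upsilon(\mathcal{O}(X))$. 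Moreover $\bigcup\varphi(\mathcal{U})=\bigcup\mathcal{U}\cup(X\setminus Y)=X$, so $\varphi(\mathcal{U})$ is an open cover of $X$.

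\textbf{Finiteness condition.} We have $X\setminus\varphi(V)=Y\setminus V$. Hence $\varphi(V)\in\mathscr{F}(X)$ if and only if $V\in\mathscr{F}(Y)$. Since $\varphi$ is injective, $\varphi(\mathcal{U})\cap\mathscr{F}(X)=\varphi(\mathcal{U}\cap\mathscr{F}(Y))$ is finite.

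\textbf{Conclusion.} Weak $\upsilon$-compactness of $X$ gives $X\in\varphi(\mathcal{U})$. So $V\cup(X\setminus Y)=X$ for some $V\in\mathcal{U}$, which means $V=Y$. Thus $Y\in\mathcal{U}$.

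The only step needing care is the continuity computation, in particular the case split on whether $X\setminus Y\subseteq W$. Everything else is bookkeeping.
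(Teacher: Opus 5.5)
Your proof is correct and follows the paper's route. The paper uses the map $g(U)=(X\setminus Y)\cup\bigcup\{V\in\mathcal{O}(X)\mid V\cap Y=U\}$, which, because $Y$ is closed, is exactly your $\varphi(U)=U\cup(X\setminus Y)$; it then does the same case split for $g^{-1}(\dn_{\mathcal{O}(X)}W)$ and the same transfer of the finiteness condition on $\mathscr{F}$. Writing the map in this explicit form lets you skip the paper's final step, which appeals to compactness of $X$ and directedness of $\mathcal{D}=\{(X\setminus Y)\cup H\mid H\in\mathcal{O}(X),\ H\cap Y=V_0\}$: every member of $\mathcal{D}$ already equals $V_0\cup(X\setminus Y)$, so $\mathcal{D}$ is a singleton.
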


\begin{proof}It is clear when $Y$ is finite, so assume this is not the case. Let $\mathcal{V}\subseteq\tau|_{Y}$ be an irreducible cover of $Y$ with respect to the upper topology of $\mathcal{O}(Y)$ and $\mathcal{V}\cap\mathscr{F}(Y)$ is finite. Define a map $g:\mathcal{O}(Y)\longrightarrow\mathcal{O}(X)$ by
$$\forall\ U\in\mathcal{O}(Y),\ g(U)=(X\setminus Y)\cup(\bigcup\{V\in\mathcal{O}(X)\mid V\cap Y=U\}).$$
Take a $W\in\mathcal{O}(X)$, $g^{-1}(\dn_{\mathcal{O}(X)}W)=\{G\in\mathcal{O}(Y)\mid g(G)\subseteq W\}$. Obviously, $g^{-1}(\dn_{\mathcal{O}(X)}W)=\emptyset$ if $X\setminus Y\not\subseteq W$. If $X\setminus Y\subseteq W$, we can verify  $g^{-1}(\dn_{\mathcal{O}(X)}W)=\dn_{\mathcal{O}(Y)}(W\cap Y)$. This means that $g$ is continuous with respect to the upper topology on $\mathcal{O}(X)$ and $\mathcal{O}(Y)$. Then $g(\mathcal{V})$ is still irreducible in the upper topology of $\mathcal{O}(X)$ and $g(\mathcal{V})$ coves $X$. In addition, as $Y$ and $\mathcal{V}\cap\mathscr{F}(Y)$ are infinite, $g(\mathcal{V})\cap\mathscr{F}(X)$ is finite. By the weak $\upsilon$-compactness of $X$, $X\in g(\mathcal{V})$. Whence, $X=g(V_{0})=\bigcup\mathcal{D}$ for some $V_{0}\in\mathcal{V}$, where $$\mathcal{D}=\{(X\setminus Y)\cup H\mid H\cap Y=V_{0}, H\in\mathcal{O}(X)\}.$$ By Theorem 6.1, $X$ is compact. Since $\mathcal{D}$ is directed and $X=\bigcup\mathcal{D}$, $X\in\mathcal{D}$. Equivalently, $Y=V_{0}\in\mathcal{V}$. Therefore, $Y$ is weakly $\upsilon$-compact.\end{proof}

\begin{theorem}Let $(X,\tau)$ be a $\upsilon$-compact topological space and $Y\subseteq X$ a closed set. Then the subspace $Y$ is  $\upsilon$-compact.
\end{theorem}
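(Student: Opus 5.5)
We follow the proof of Theorem 6.3, with the $\mathscr{F}$-finiteness condition removed. Let $\mathcal{V}\subseteq\tau|_{Y}$ be an open cover of $Y$ that is irreducible in the upper topology of $\mathcal{O}(Y)$. We must show $Y\in\mathcal{V}$.

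Use the same map $g:\mathcal{O}(Y)\longrightarrow\mathcal{O}(X)$,
$$g(U)=(X\setminus Y)\cup\bigcup\{V\in\mathcal{O}(X)\mid V\cap Y=U\}.$$
This is well defined because $X\setminus Y$ is open. Note that $g(U)\cap Y=U$, and $g(U)$ is the largest open set of $X$ whose trace on $Y$ is $U$.

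The first step is to check that $g$ is continuous for the upper topologies. It suffices to compute preimages of the subbasic closed sets $\dn_{\mathcal{O}(X)}W$.
\begin{itemize}
\item If $X\setminus Y\not\subseteq W$, the preimage is empty.
\item If $X\setminus Y\subseteq W$, the preimage is $\dn_{\mathcal{O}(Y)}(W\cap Y)$. Indeed, if $U\subseteq W\cap Y$, then every $V$ with $V\cap Y=U$ satisfies $V\subseteq (X\setminus Y)\cup U\subseteq W$. Conversely, $g(U)\subseteq W$ gives $U=g(U)\cap Y\subseteq W\cap Y$.
\end{itemize}
Both preimages are closed, so $g$ is continuous. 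By property (3) of irreducible sets, $g(\mathcal{V})$ is then irreducible in $(\mathcal{O}(X),\upsilon(\mathcal{O}(X)))$.

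Next, $g(\mathcal{V})$ covers $X$: its members contain $X\setminus Y$, and their traces $V\in\mathcal{V}$ cover $Y$. By $\upsilon$-compactness of $X$, some $V_{0}\in\mathcal{V}$ satisfies $g(V_{0})=X$. Hence $V_{0}=g(V_{0})\cap Y=Y$, so $Y\in\mathcal{V}$.

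In fact no appeal to compactness and directedness, as was used in Theorem 6.3, should be needed: $g(V_{0})=X$ already forces $V_{0}=Y$ directly. The only step needing care is the computation of $g^{-1}(\dn W)$, and that is routine.
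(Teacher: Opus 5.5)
Your proof is correct and is essentially the paper's argument. The paper proves this theorem by referring to the proof of Theorem 6.3, which uses the same map $g$, the same computation of the preimages of the subbasic closed sets $\dn W$, and the same transfer of irreducibility to $\mathcal{O}(X)$. You also observe that $g(V_{0})\cap Y=V_{0}$, which makes the paper's final appeal to compactness and directedness unnecessary; that observation is right and is a harmless simplification.
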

\begin{proof}It is similar to the proof of Theorem 6.3.\end{proof}

\begin{proposition} Let $(X,\tau)$ be a topological space and $Y\subseteq X$ such that $\{X\}=\{U\in\tau\mid Y\subseteq U\}$. The following statements hold:

{\rm(1)}  the subspace $Y$ is compact iff $X$ is compact;

{\rm(2)} if the subspace $Y$ is SI-compact, then $X$ is $SI$-compact.

\end{proposition}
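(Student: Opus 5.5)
The hypothesis says that the only open set of $X$ containing $Y$ is $X$ itself. Both parts will use one consequence of this: for any open $U\subseteq X$, we have $U\cap Y=Y$ if and only if $U=X$.

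For (1), suppose first that $Y$ is compact. Take an open cover $\mathcal{U}$ of $X$. It covers $Y$, so finitely many members $U_1,\dots,U_n$ cover $Y$. Their union is then an open set containing $Y$, hence equals $X$. Conversely, suppose $X$ is compact. Take a cover of $Y$ by relatively open sets $V_i=W_i\cap Y$ with $W_i\in\tau$. Then $\bigcup W_i$ is open and contains $Y$, so it equals $X$. Compactness of $X$ gives a finite subfamily covering $X$, and the corresponding $V_i$ cover $Y$. Alternatively, one can argue with directed covers as in Remark 3.2(1).

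For (2), I will use the restriction map $r:\mathcal{O}(X)\longrightarrow\mathcal{O}(Y)$, $r(U)=U\cap Y$. This map preserves arbitrary unions, so it preserves directed suprema and is Scott continuous from $\Sigma\mathcal{O}(X)$ to $\Sigma\mathcal{O}(Y)$.

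Now let $\mathcal{U}\subseteq\mathcal{O}(X)$ be a Scott irreducible open cover of $X$. By property (3) of irreducible sets, $r(\mathcal{U})$ is irreducible in $\Sigma\mathcal{O}(Y)$. It also satisfies $\bigcup r(\mathcal{U})=Y$. Since $Y$ is $SI$-compact, $Y\in r(\mathcal{U})$, so there is $U\in\mathcal{U}$ with $U\cap Y=Y$, i.e.\ $Y\subseteq U$. By hypothesis $U=X$, so $X\in\mathcal{U}$ and $X$ is $SI$-compact.

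I do not expect a real obstacle here. The only point to check is that $r$ is Scott continuous, which holds because it preserves all joins; note that surjectivity of $r$ is not needed for this argument.
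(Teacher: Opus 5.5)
Your proof is correct and follows the paper's argument essentially step for step: part (1) uses in both directions the fact that $X$ is the only open set containing $Y$, and part (2) pushes the Scott irreducible cover forward along the union-preserving restriction map $U\mapsto U\cap Y$ and then applies the hypothesis to a member containing $Y$. In the converse of (1) you lift the relatively open sets $V_i=W_i\cap Y$ to sets $W_i\in\tau$; this makes explicit a step the paper leaves implicit.
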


\begin{proof}(1) Assume first that $Y$ is compact. Let $\mathcal{V}\subseteq\tau$ be a cover of $X$. Then $\mathcal{V}$ covers $Y$ and hence by the compactness of $Y$, there are finitely $V_{1},V_{2},\cdot\cdot\cdot, V_{n}\in\mathcal{V}$ such that $Y\subseteq\bigcup\limits_{1\leq s\leq n} V_{s}$. By assumption, $X=\bigcup\limits_{1\leq s\leq n} V_{s}$. So $X$ is compact.

Conversely, let $\mathcal{U}$ be an open cover of $Y$. By assumption, $X=\bigcup\mathcal{U}$. Since $X$ is compact, there are $U_{1},U_{2},...,U_{m}\in\mathcal{U}$ such that $X=\bigcup\limits_{1\leq t\leq m}U_{t}$. This means that $Y$ is compact.

(2) Let $\mathcal{U}=\{U_{i}\mid U_{i}\in\tau, i\in I\}$ be a Scott irreducible open cover of $X$. Then $\hat{\mathcal{U}}=\{U_{i}\cap Y\mid i\in I\}$ covers $Y$.  Define a map $f_{Y}:\Sigma(\tau)\longrightarrow\Sigma(\tau|_{Y})$ by
$$\forall\ V\in\tau, f_{Y}(V)=Y\cap V.$$
Clearly, $f_{Y}$ is a continuous map since $f_{Y}$ preserves suprema of directed sets. Then $\hat{\mathcal{U}}=f(\mathcal{U})$ is also irreducible in $\Sigma(\tau|_{Y})$. It follows from the SI-compactness of $Y$ that $Y\in\hat{\mathcal{U}}$. This means that $A\subseteq U_{i}$ for $i\in I$. By the assumption, $X=U_{i}\in\mathcal{U}$. Therefore, $X$ is $SI$-compact.\end{proof}

\begin{theorem} Every space $(X,\tau)$ is an open subspace of a SI-compact space.
\end{theorem}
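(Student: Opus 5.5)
I would mimic the construction in the proof of Theorem 6.1, but I would verify $SI$-compactness directly rather than through a map into the added part. Take a point $\infty\notin X$, put $Y=X\cup\{\infty\}$, and equip $Y$ with
$$\rho=\tau\cup\{Y\}.$$
This is exactly the construction of Example 3.7. The first step is to check that $\rho$ is a topology on $Y$.

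\begin{itemize}
\item Unions of members of $\tau$ stay in $\tau$, and any union involving $Y$ equals $Y$.
\item Finite intersections stay in $\rho$, since $U\cap Y=U$.
\end{itemize}

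Since $X\in\tau\subseteq\rho$, $X$ is an open subset of $Y$. Its subspace topology is $\{V\cap X\mid V\in\rho\}=\tau$, so $(X,\tau)$ is an open subspace of $(Y,\rho)$.

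The second step is to show that $(Y,\rho)$ is $SI$-compact. The key observation is that the only open set of $Y$ containing $\infty$ is $Y$ itself, since every member of $\tau$ is a subset of $X$. So let $\mathcal{U}\subseteq\rho$ be any open cover of $Y$ that is irreducible in $\Sigma\rho$. Some $U\in\mathcal{U}$ must contain $\infty$, which forces $U=Y$ and hence $Y\in\mathcal{U}$. In fact $(Y,\rho)$ is supercompact, because every open cover of $Y$ contains $Y$. By Remark 3.2 the conclusion then holds for every order compatible topology on $\rho$, and in particular for $\sigma(\rho)$.

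Alternatively, I could derive this from Proposition 6.5(2) with the subset $\{\infty\}$. The only open set containing $\{\infty\}$ is $Y$, and the singleton subspace $\{\infty\}$ is trivially $SI$-compact. The direct argument above is simpler, though.

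I do not expect any real obstacle. The only point needing care is that the statement asks for an \emph{open} subspace, so I must confirm that $X\in\rho$ and that the induced topology on $X$ is exactly $\tau$. Both follow immediately from the definition of $\rho$. Note also that $X$ is dense in $Y$, since $\infty$ lies in the closure of $X$.
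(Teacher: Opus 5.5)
Your proof is correct. Your space is exactly the special case of the paper's construction in which the added part $Y\setminus X$ is a single point with its only topology. The difference lies in how $SI$-compactness is verified.

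The paper attaches an arbitrary $SI$-compact space $(Y\setminus X,\delta)$ and sets $\omega=\tau\cup\{X\cup U\mid U\in\delta\}$. It then argues by transfer. The map $\varphi(W)=W\setminus X$ preserves unions, so it is Scott continuous and sends a Scott irreducible cover of $Y$ to a Scott irreducible cover of $Y\setminus X$. $SI$-compactness of $Y\setminus X$ then yields some $U$ with $U\setminus X=Y\setminus X$, which forces $U=Y$.

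You instead observe that $Y$ is the only open set containing $\infty$. So every open cover of $Y$ contains $Y$, and irreducibility is never used.

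What your route buys:
\begin{itemize}
\item It is more elementary. It needs no Scott continuity, no preservation of irreducibility under continuous maps, and no auxiliary $SI$-compact $\delta$, which the paper leaves unspecified.
\item Since $Y$ is supercompact, as the paper itself notes for this very space in Example 3.7, you get $\mathcal{T}$-compactness for every order compatible $\mathcal{T}$ at once. So your argument also proves Theorems 6.1 and 6.2.
\item For the same reason, your appeal to Remark 3.2 is unnecessary.
\end{itemize}

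What the paper's route buys is modularity: any $SI$-compact space can be attached above $X$, and the same template with the upper topology gives Theorem 6.1.

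The density remark is not needed for the statement, and it fails when $X=\emptyset$.
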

\begin{proof}Let $Y$ be a set and $X\subseteq Y$ with $Y\setminus X\neq\emptyset$. Equip $Y\setminus X$ with a topology $\delta$ such that $(Y\setminus X,\delta)$ is SI-compact. Set $\omega=\tau\cup\{X\cup U\mid U\in\delta\}$. It is easy to show that $\omega$ is a topology on $Y$ and $X$ is a dense open subspace of $(Y,\omega)$. Define a map $\varphi:\omega\longrightarrow\delta$ by
$$ \forall \ W\in \omega,\ \ \varphi(W)=W\setminus X.$$
Then $\varphi$ is continuous with respect to the Scott topology. Let $\mathcal{U}\subseteq\omega$ be a Scott irreducible open cover of $Y$. By the continuity of $\varphi$, $\varphi(\mathcal{U})$ is Scott irreducible and $\varphi(\mathcal{U})$ covers $Y\setminus X$. Since  $(Y\setminus X,\delta)$ is SI-compact, there is a $U\in\mathcal{U}$ such that $\varphi(U)=Y\setminus X$. So we can conclude that $U=Y$ and hence $(Y,\omega)$ is SI-compact.\end{proof}

\begin{remark}{\rm By Theorem 6.6,  open subspaces of a $SI$-compact space are not $SI$-compact in general.}
\end{remark}

\begin{theorem} Let $(X,\tau)$ be a $SI$-compact topological space and $Y\subseteq X$ a closed set. Then the subspace $Y$ is $SI$-compact.
\end{theorem}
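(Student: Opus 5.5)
We would follow the proof of Theorem 6.3, with the upper topology replaced by the Scott topology. Let $\mathcal{V}\subseteq\tau|_{Y}$ be an open cover of $Y$ that is irreducible in $\Sigma(\tau|_{Y})$. We want to show $Y\in\mathcal{V}$. To do this we push $\mathcal{V}$ forward to $\mathcal{O}(X)$ by the map
$$g:\mathcal{O}(Y)\longrightarrow\mathcal{O}(X),\qquad g(U)=(X\setminus Y)\cup\bigcup\{V\in\tau\mid V\cap Y=U\}.$$
Since $Y$ is closed, $X\setminus Y$ is open, so $g(U)$ is open. Moreover $g(U)$ is the largest open set of $X$ whose trace on $Y$ is $U$: the union of all such $V$ still has trace $U$, and adding $X\setminus Y$ does not change the trace. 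Hence $g(U)\cap Y=U$, and $g$ is monotone.

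The key step is to check that $g$ is Scott continuous, i.e.\ that it preserves directed suprema. Let $\{U_{d}\}_{d\in D}$ be directed in $\mathcal{O}(Y)$ with union $U$. By monotonicity, $\bigcup_{d} g(U_{d})\subseteq g(U)$. For the reverse inclusion, take $x\in g(U)$. If $x\notin Y$, then $x\in X\setminus Y\subseteq g(U_{d})$ for every $d$. If $x\in Y$, then $x\in U$, so $x\in U_{d}$ for some $d$, and $x\in g(U_{d})$ because $g(U_{d})\cap Y=U_{d}$. Thus $g(\bigcup_{d}U_{d})=\bigcup_{d}g(U_{d})$.

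Directed suprema in $\mathcal{O}(X)$ are unions, so $g$ is continuous from $\Sigma\mathcal{O}(Y)$ to $\Sigma\mathcal{O}(X)$. By property (3) of irreducible sets, $g(\mathcal{V})$ is Scott irreducible. It covers $X$: points outside $Y$ lie in $X\setminus Y$, and points of $Y$ lie in some $V\in\mathcal{V}$, hence in $g(V)$. By $SI$-compactness of $X$, there is $V_{0}\in\mathcal{V}$ with $g(V_{0})=X$. Then $V_{0}=g(V_{0})\cap Y=Y$, so $Y\in\mathcal{V}$, and $Y$ is $SI$-compact.

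The potential obstacle is confirming that $g$ is well defined and that $g(U)\cap Y=U$, so that continuity holds exactly rather than only up to inclusion. This is handled by the maximal-extension observation above, and it is the reason we use the join of all extensions rather than an arbitrary one. Unlike in Theorem 6.3, we need no separate compactness argument with a directed family $\mathcal{D}$, because the identity $g(V_{0})\cap Y=V_{0}$ already yields $V_{0}=Y$.
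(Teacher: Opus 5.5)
Your proof is correct and follows the paper's argument. Both use the same map $g$, get Scott continuity from preservation of (directed) unions, and push the irreducible cover forward to $X$. The only difference is the last step: the paper invokes compactness of $X$ a second time on the directed family $\{(X\setminus Y)\cup V\mid V\in\tau,\ V\cap Y=U_0\}$, whereas your identity $g(V_0)\cap Y=V_0$ gives $V_0=Y$ directly. (In fact $g(U)=(X\setminus Y)\cup U$ as a set.) This is a valid simplification.
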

\begin{proof}Let $\mathcal{U}\subseteq \tau|_{Y}$ be a Scott irreducible open cover of $Y$. Define the map $g:\tau|_{Y}\longrightarrow \tau$ by $$\forall\ U\in\tau|_{Y}, g(U)=\bigcup\{V\cup (X\setminus Y)\mid V\in\tau, V\cap Y=U\}.$$

Claim: $g(\bigcup\limits_{i\in I}U_{i})=\bigcup\limits_{i\in I}g(U_{i})$ for $\{U_{i}\mid i\in I\}\subseteq\tau|_{Y}$.

 Actually, if $x\in g(\bigcup\limits_{i\in I}U_{i})$, then $x\in V\in\tau$ with $V\cap Y=\bigcup\limits_{i\in I}U_{i}$ or $x\in X\setminus Y$. As $X\setminus Y\subseteq\bigcup\limits_{i\in I}g(U_{i})$, we need only to consider the case where $x\in Y$. This means that $x\in U_{i_{0}}$ for some $i_{0}\in I$ and hence $x\in g(U_{i_{0}})\subseteq\bigcup\limits_{i\in I}g(U_{i})$. Conversely, if $y\in\bigcup\limits_{i\in I}g(U_{i})$, then $y\in g(U_{i_{1}})$ for some $i_{1}\in I$. Similarly, we suppose $y\in Y$. It follows that there is a $W\in\tau$ with $W\cap Y=U_{i_{1}}$ such that $y\in W$. As $\{U_{i}\mid i\in I\}\subseteq\tau|_{Y}$, we can choose for each $j\in I\setminus\{i_{1}\}$, a $W_{j}\in \tau$ satisfying $W_{j}\cap Y=U_{j}$. Let $W_{I}=W\cup(\bigcup\limits_{j\in I\setminus\{i_{1}\}}W_{j})$.
 One can check that $y\in W_{I}$ and $W_{I}\subseteq g(\bigcup\limits_{i\in I}U_{i})$.

 By this Claim, $g$ is continuous with respect to the Scott topology. So we know $g(\mathcal{U})$ is Scott irreducible in $\Sigma (\tau)$. Furthermore, $g(\mathcal{U})$ covers $X$. By the $SI$-compactness of $X$, $X\in g(\mathcal{U})$. Whence $X=g(U_{0})=\bigcup\mathcal{D}$, where $$\mathcal{D}=\{(X\setminus Y)\cup V\mid V\in\tau ,V\cap Y=U_{0}\}$$ for some $U_{0}\in\mathcal{U}$. Since $\mathcal{D}$ is directed, $X\in\mathcal{D}$ by using the $SI$-compactness of $X$ again. Thus $U_{0}=Y\in\mathcal{U}$ and then $Y$ is $SI$-compact.\end{proof}

\begin{definition} A topological space $X$ is called weakly $SI$-compact if there is a base $\mathcal{B}$ of the topology of $X$ such that a Scott irreducible open cover $\mathcal{U}$ of $X$ contains $X$ as soon as $\mathcal{U}$ is contained in $\mathcal{B}$.
\end{definition}

\begin{theorem} Let $X$ and $Y$ be compact spaces. If $\Sigma \mathcal{O}(X)$ and $\Sigma \mathcal{O}(Y)$ are sober, then $X\times Y$ is weakly $SI$-compact.

\end{theorem}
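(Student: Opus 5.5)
We work with the rectangle base
$$\mathcal{B}=\{U\times V\mid U\in\mathcal{O}(X),\ V\in\mathcal{O}(Y)\}$$
of $X\times Y$. Let $\mathcal{U}\subseteq\mathcal{B}$ be a Scott irreducible open cover of $X\times Y$. We must show $X\times Y\in\mathcal{U}$.

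The first step is to project $\mathcal{U}$ to the factors. Define $p_{1}:\mathcal{O}(X\times Y)\to\mathcal{O}(X)$ by $p_{1}(W)=\{x\in X\mid \{x\}\times Y\subseteq W\}$. Since $Y$ is compact, the tube lemma shows that $p_{1}(W)$ is open. The map $p_{1}$ preserves directed unions: if $\{x\}\times Y\subseteq\bigcup\mathcal{D}$ with $\mathcal{D}$ directed, compactness of $\{x\}\times Y$ gives a single member of $\mathcal{D}$ containing it. Hence $p_{1}$ is Scott continuous, and $p_{1}(\mathcal{U})$ is Scott irreducible in $\Sigma\mathcal{O}(X)$. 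Note that $p_{1}(U\times V)$ equals $U$ if $V=Y$, and equals $\emptyset$ otherwise (when $U\neq\emptyset$). This projection alone loses information. So instead I would use the simpler maps $\pi_{1}(U\times V)=U$, restricted to $\mathcal{B}$. That map is not well defined on empty rectangles, but these can be discarded.

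The cleaner route is to take closures. Let $\mathcal{C}=cl_{\sigma}(\mathcal{U})$, which is an irreducible Scott closed subset of $\mathcal{O}(X\times Y)$. Consider $\mathcal{A}_{1}=\{U\mid U\times V\in\mathcal{U},\ U\times V\neq\emptyset\}$ and define $\mathcal{A}_{2}$ similarly for the second coordinate. I claim $\mathcal{A}_{1}$ is Scott irreducible in $\mathcal{O}(X)$. Take Scott open sets $\mathcal{G},\mathcal{H}\subseteq\mathcal{O}(X)$ meeting $\mathcal{A}_{1}$. Their preimages under the Scott continuous map $W\mapsto\bigcup\{U\mid U\times V\subseteq W,\ V\neq\emptyset\}$ are Scott open; this map preserves directed unions, a point I would check. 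These preimages meet $\mathcal{U}$, so by irreducibility some $U\times V\in\mathcal{U}$ lies in both, giving $U\in\mathcal{G}\cap\mathcal{H}$. Both $\mathcal{A}_{1}$ and $\mathcal{A}_{2}$ cover their factors because $\mathcal{U}$ covers $X\times Y$.

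Now I apply sobriety exactly as in the proof of Theorem 3.4 and Lemma 3.5. The set $cl_{\sigma}(\mathcal{A}_{1})$ is irreducible closed, so it equals $\dn A$ for some open $A$, and $X=\bigcup\mathcal{A}_{1}\subseteq A$ gives $A=X$. Thus $X$ lies in the Scott closure of $\mathcal{A}_{1}$. Similarly $Y\in cl_{\sigma}(\mathcal{A}_{2})$. It remains to lift this back to $\mathcal{U}$, and this is the main obstacle: we must show that $X\times Y$ lies in the Scott closure of $\mathcal{U}$, and then derive $X\times Y\in\mathcal{U}$. For the first part I would use the joint map $\beta:\mathcal{O}(X)\times\mathcal{O}(Y)\to\mathcal{O}(X\times Y)$, $\beta(U,V)=U\times V$, which is Scott continuous. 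The pair set $\{(U,V)\mid U\times V\in\mathcal{U}\}$ is Scott irreducible, by the same preimage argument using $\beta$. Its closure in $\Sigma(\mathcal{O}(X)\times\mathcal{O}(Y))$ contains $(X,Y)$; here I would use that the closure of an irreducible set has a supremum when the factors are sober, and that supremum is $(X,Y)$ by the covering property.

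For the second part, the closure statement alone does not put $X\times Y$ into $\mathcal{U}$. I expect to need compactness of $X\times Y$ as well. The set $\mathcal{O}(X\times Y)\setminus\{X\times Y\}$ is Scott closed because $X\times Y$ is compact (Tychonoff for two factors). If $X\times Y\notin\mathcal{U}$, then $\mathcal{U}$ lies in this closed set, and hence so does its closure. But the closure contains $\beta(X,Y)=X\times Y$, a contradiction. This last step is where the whole argument hinges. I would verify carefully that continuity of $\beta$ transfers the membership of $(X,Y)$ in the closure of the pair set to membership of $X\times Y$ in $cl_{\sigma}(\mathcal{U})$, using $\beta(cl\,S)\subseteq cl\,\beta(S)$.
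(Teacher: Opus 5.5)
You have a genuine gap. It is the sentence asserting that the closure of the pair set $\mathcal{P}=\{(U,V)\mid U\times V\in\mathcal{U}\}$ in $\Sigma(\mathcal{O}(X)\times\mathcal{O}(Y))$ contains $(X,Y)$.

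\emph{Why it fails.} Every subset of the complete lattice $\mathcal{O}(X)\times\mathcal{O}(Y)$ has a supremum, so ``has a supremum'' is vacuous. What you actually need is that an irreducible closed set contains its supremum, i.e.\ sobriety of the Scott topology of the product poset. Sobriety of $\Sigma\mathcal{O}(X)$ and $\Sigma\mathcal{O}(Y)$ only gives sobriety of the product space $\Sigma\mathcal{O}(X)\times\Sigma\mathcal{O}(Y)$, whose topology is contained in $\sigma(\mathcal{O}(X)\times\mathcal{O}(Y))$ but need not equal it. Worse, the claim is the whole theorem in disguise. Since $X$ and $Y$ are compact, $\{X\}$ and $\{Y\}$ are Scott open, so $\{(X,Y)\}$ is open. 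Hence ``$(X,Y)\in cl(\mathcal{P})$'' is equivalent to $(X,Y)\in\mathcal{P}$, i.e.\ to $X\times Y\in\mathcal{U}$. If you instead read the closure in the product topology, the claim becomes true. But then continuity of $\beta$ out of $\Sigma\mathcal{O}(X)\times\Sigma\mathcal{O}(Y)$ is a joint-continuity statement you have not justified. Either way, no single topology on $\mathcal{O}(X)\times\mathcal{O}(Y)$ does both jobs in your argument.

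\emph{Two smaller slips.} First, irreducibility of $\mathcal{P}$ does not come from $\beta$, which points the wrong way; it comes from the map $W\mapsto(pr_X(W),pr_Y(W))$. Second, empty rectangles must be removed from $\mathcal{P}$. If $\emptyset\in\mathcal{U}$, then $(X,\emptyset),(\emptyset,Y)\in\mathcal{P}$, and irreducibility of $\mathcal{P}$ is again equivalent to the conclusion.

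\emph{Repair via the product topology (the paper's route).} Drop $\beta$ and the compactness of $X\times Y$ entirely. The map $f(W)=(pr_X(W),pr_Y(W))$ is continuous from $\Sigma\mathcal{O}(X\times Y)$ into $\Sigma\mathcal{O}(X)\times\Sigma\mathcal{O}(Y)$, since projections preserve unions. So $f(\mathcal{U})$ is irreducible there. The product of sober spaces is sober, so the closure of $f(\mathcal{U})$ is $\dn(M,N)$, and the covering property forces $(M,N)=(X,Y)$. Since $\{X\}\times\{Y\}$ is already open in the product topology, $(X,Y)\in f(\mathcal{U})$. As $\mathcal{U}$ consists of rectangles, this means $X\times Y\in\mathcal{U}$.

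\emph{Repair from your own step 3.} You can also finish without any product sobriety. From $X\in cl_{\sigma}(\mathcal{A}_{1})$ and Scott openness of $\{X\}$, you get $X\times V_0\in\mathcal{U}$ with $V_0\neq\emptyset$. Likewise $U_0\times Y\in\mathcal{U}$ with $U_0\neq\emptyset$. Now set $\mathcal{G}=\{W\mid X\times\{y\}\subseteq W \text{ for some } y\}$ and $\mathcal{H}=\{W\mid \{x\}\times Y\subseteq W \text{ for some } x\}$. Both are Scott open in $\mathcal{O}(X\times Y)$ by compactness of the slices (your abandoned tube-lemma idea), and each meets $\mathcal{U}$. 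Irreducibility then gives a rectangle $U\times V\in\mathcal{U}\cap\mathcal{G}\cap\mathcal{H}$, which forces $U=X$ and $V=Y$. This route uses only that $X$ and $Y$ are $SI$-compact (Lemma 3.5), so it proves the conclusion under that weaker hypothesis.
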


\begin{proof}Define a map $f:\Sigma\mathcal{O}(X\times Y)\longrightarrow\Sigma \mathcal{O}(X)\times\Sigma \mathcal{O}(Y)$ as follows:
$$\forall\ M\in\mathcal{O}(X\times Y), f(M)=(pr_{X}(M),pr_{Y}(M)),$$ where $pr_{X},pr_{Y}$ are projections.

Claim : $f$ is continuous.

Let $\mathcal{A}\in\sigma(\mathcal{O}(X))$ and $\mathcal{B}\in\sigma(\mathcal{O}(X))$. There is no doubt that $f^{-1}(\mathcal{A}\times\mathcal{B})$ is an upper set. If $\mathcal{D}$ is a directed subset of $\mathcal{O}(X\times Y)$ with $\bigcup\mathcal{D}\in f^{-1}(\mathcal{A}\times\mathcal{B})$, then $\bigcup pr_{X}(\mathcal{D})\in\mathcal{A}$ and $\bigcup pr_{Y}(\mathcal{D})\in\mathcal{B}$. Since $\mathcal{A}$ and $\mathcal{B}$ are Scott open, there are $D_{1},D_{2}\in\mathcal{D}$ such that $pr_{X}(D_{1})\in\mathcal{A}$ and $pr_{Y}(D_{2})\in\mathcal{B}$. Using the directedness of $\mathcal{D}$, take a $D_{3}\in\mathcal{D}$ with $D_{1}\cup D_{2}\subseteq D_{3}$. Then we know $D_{3}\in\mathcal{D}\cap f^{-1}(\mathcal{A}\times\mathcal{B})$.

Consider $\mathcal{S}=\{\ U\times V\mid U\in \mathcal{O}(X), V\in\mathcal{O}(Y)\}$. Then $\mathcal{S}$ is a base of $X\times Y$. Let $\mathcal{U}\subseteq\mathcal{S}$ be a Scott irreducible open cover of $X\times Y$. Assume that $X\times Y\not \in \mathcal{U}$. By Claim, $f(\mathcal{U})$ is irreducible in the product space $\Sigma \mathcal{O}(X)\times\Sigma \mathcal{O}(Y)$. As $\Sigma \mathcal{O}(X)$ and $\Sigma \mathcal{O}(Y)$ are sober, $\Sigma \mathcal{O}(X)\times\Sigma \mathcal{O}(Y)$ is also sober. So there are $M\in\mathcal{O}(X)$ and $N\in\mathcal{O}(Y)$ such that $$cl_{\Sigma \mathcal{O}(X)\times\Sigma \mathcal{O}(Y)}(f(\mathcal{U}))=\downarrow_{\mathcal{O}(X)\times{O}(Y)}(M,N)=\downarrow_{\mathcal{O}(X)}M\times\downarrow_{\mathcal{O}(Y)}N.$$
Then we have $f(\mathcal{U})\subseteq\downarrow_{\mathcal{O}(X)}M\times\downarrow_{\mathcal{O}(Y)}N$. This means that $\bigcup pr_{X}(\mathcal{U})\subseteq M$ and $\bigcup pr_{Y}(\mathcal{U})\subseteq N$. Equivalently, $X=pr_{X}(\bigcup\mathcal{U})=M$ and $Y=pr_{Y}(\bigcup\mathcal{U})=N$. Since $X$ and $Y$ are compact, $\mathcal{O}(X)\setminus\{X\}$ and $\mathcal{O}(Y)\setminus\{Y\}$ are Scott closed in $\Sigma \mathcal{O}(X)$ and $\Sigma \mathcal{O}(Y)$, respectively. Note that $$\mathcal{O}(X)\times\mathcal{O}(Y)\setminus\{(X,Y)\}=(\mathcal{O}(X)\setminus\{X\}\times\mathcal{O}(Y))\cup(\mathcal{O}(X)\times\mathcal{O}(Y)\setminus\{Y\}).$$ It follows that $\mathcal{O}(X)\times\mathcal{O}(Y)\setminus\{(X,Y)\}$ is closed in the product space $\Sigma \mathcal{O}(X)\times\Sigma \mathcal{O}(Y)$. Clearly, $f(\mathcal{U})\subseteq\mathcal{O}(X)\times\mathcal{O}(Y)\setminus\{(X,Y)\}$ and then $$cl_{\Sigma \mathcal{O}(X)\times\Sigma \mathcal{O}(Y)}(f(\mathcal{U}))=\mathcal{O}(X)\times\mathcal{O}(Y)\subseteq\mathcal{O}(X)\times\mathcal{O}(Y)\setminus\{(X,Y)\},$$ a contradiction. Thus, $X\times Y\in\mathcal{U}$ and $X\times Y$ is weakly $SI$-compact.\end{proof}

\quad Finally, we will consider the $SI$-compactness of powerspaces.

\begin{proposition} Let $X$ be a topological space. Then the following statements hold.

{\rm(1)} if $X$ is compact then $P_{S}(X)$ is SI-compact;

{\rm(2)} if $P_{S}(X)$ is compact, then $X$ is compact.
\end{proposition}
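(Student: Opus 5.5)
The plan rests on one observation: when $X$ is compact, $X$ itself belongs to $\mathcal{Q}(X)$, and in the reverse inclusion order it is the least element of $\mathcal{Q}(X)$. So $P_{S}(X)$ has a bottom point (in its specialization order), which should force supercompactness. From supercompactness, $SI$-compactness follows directly.

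For (1), let $X$ be compact. Then $X\in\mathcal{Q}(X)$, and every $K\in\mathcal{Q}(X)$ satisfies $K\subseteq X$, i.e. $X\sqsubseteq K$ in the specialization order of $P_{S}(X)$. I will check that the only open set of $P_{S}(X)$ containing the point $X$ is $\mathcal{Q}(X)$ itself. Indeed, an open set is a union of basic sets $\Box U$, and $X\in\Box U$ forces $U=X$, so $\Box U=\mathcal{Q}(X)$. Hence every open set containing $X$ is the whole space, and $P_{S}(X)=\up X$ is supercompact, as in Lemma 2.1. Now let $\mathcal{U}$ be any open cover of $P_{S}(X)$, Scott irreducible or not. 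Some member of $\mathcal{U}$ contains the point $X$, and that member equals $\mathcal{Q}(X)$. Therefore $\mathcal{Q}(X)\in\mathcal{U}$, and $P_{S}(X)$ is $SI$-compact. This is the same reasoning as for the space $(Y,\tau)$ in Example 3.7. The empty space can be set aside as a trivial case.

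For (2), suppose $P_{S}(X)$ is compact and let $\{U_{i}\mid i\in I\}$ be an open cover of $X$. I can pass to the directed family of finite unions, so the $U_{i}$ may be assumed directed, and the goal is to show $X=U_{i}$ for some $i$. For each $x\in X$ the set $\up x$ is compact and saturated, so it lies in $\mathcal{Q}(X)$. Since $\up x\subseteq U$ iff $x\in U$ for open $U$, every point $\up x$ of $P_{S}(X)$ lies in some $\Box U_{i}$.

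To cover all of $\mathcal{Q}(X)$, I take a general $K\in\mathcal{Q}(X)$. It is compact, so it sits inside a finite union of the $U_{i}$, and by directedness inside a single $U_{i}$. Thus $\{\Box U_{i}\}$ is an open cover of $P_{S}(X)$. Compactness gives finitely many $U_{i}$ whose boxes cover $\mathcal{Q}(X)$, and directedness yields one $U_{j}$ with $\mathcal{Q}(X)=\Box U_{j}$. Then every $\up x\subseteq U_{j}$, so $X=U_{j}$, and $X$ is compact.

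The step needing the most care is using directedness in (2). A single compact $K$ need not lie in one member of the original cover, so the cover must be replaced by its finite unions before forming the boxes. Beyond that, the only point to verify is the routine check that $\Box$ is monotone.
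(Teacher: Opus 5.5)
Your proof is correct and takes essentially the same approach as the paper. In (1) the point $X\in\mathcal{Q}(X)$ forces any cover member containing it to equal $\Box X=\mathcal{Q}(X)$. In (2) you cover $\mathcal{Q}(X)$ by boxes of finite unions $\Box(\bigcup\mathcal{F})$ and conclude via $X=\bigcup\mathcal{Q}(X)$, a step you justify explicitly through $\up x\in\mathcal{Q}(X)$ while the paper leaves it implicit.
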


\begin{proof}(1) Let $\mathcal{U}\subseteq\mathcal{O}(P_{S}(X))$ be a Scott irreducible open cover of $\mathcal{Q}(X)$.  Since $X$ is compact, or equivalently $X\in\mathcal{Q}(X)$, $X\in \bigcup\limits_{i\in I}\Box U_{i}\in\mathcal{U}$. So $\mathcal{Q}(X)=\bigcup\limits_{i\in I}\Box U_{i}\in\mathcal{U}$ and hence $P_{S}(X)$ is SI-compact.

(2)Let $\mathcal{U}\subseteq\mathcal{O}(X)$ be an open cover of $X$. The collection of all sets having the form $\Box(\bigcup\mathcal{F})$ for a finite subset $\mathcal{F}\subseteq\mathcal{U}$ is denoted by $\mathcal{V}$. For every $K\in\mathcal{Q}(X)$, $K\subseteq\bigcup\mathcal{U}$ and hence $K\subseteq\bigcup\mathcal{F}_{0}$ for some finite subset $\mathcal{F}_{0}\subseteq\mathcal{U}$ by the compactness of $K$. This means that $\mathcal{V}$ covers $\mathcal{Q}(X)$. As $P_{S}(X)$ is compact, there are finite subsets $\mathcal{F}_{1},\mathcal{F}_{2},\cdot\cdot\cdot,\mathcal{F}_{n}\subseteq\mathcal{U}$ such that $$\mathcal{Q}(X)\subseteq(\Box(\bigcup\mathcal{F}_{1}))\cup(\Box(\bigcup\mathcal{F}_{2}))\cup\cdot\cdot\cdot\cup(\Box(\bigcup\mathcal{F}_{n})).$$ So we have $$X=\bigcup\mathcal{Q}(X)\subseteq(\bigcup\mathcal{F}_{1})\cup(\bigcup\mathcal{F}_{2}))\cup\cdot\cdot\cup(\bigcup\mathcal{F}_{n})).$$ Therefore, $X$ is compact.\end{proof}

\begin{corollary}Let $X$ be a topological space. Then the following statements are equivalent.

{\rm(1)} $X$ is compact;

{\rm(2)} $P_{S}(X)$ is compact;

{\rm(3)} $P_{S}(X)$ is SI-compact.

\end{corollary}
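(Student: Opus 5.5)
The corollary follows by chaining the implications already established in Proposition 6.10. For the first link, (1)$\Longrightarrow$(3) is exactly Proposition 6.10(1). When $X$ is compact, $X$ itself is a nonempty compact saturated set, so $X\in\mathcal{Q}(X)$. Any open cover $\mathcal{U}$ of $P_S(X)$ must then contain a member $\bigcup_{i\in I}\Box U_i$ with $X\in\Box U_{i_0}$ for some $i_0$, which forces $U_{i_0}=X$. Since $\Box X=\mathcal{Q}(X)$, that member is the whole space. Hence $P_S(X)$ is in fact supercompact, and in particular $SI$-compact.

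Next, (3)$\Longrightarrow$(2) holds because every $SI$-compact space is compact. This is Remark 3.2(1): the Scott topology is order compatible, and directed families are Scott irreducible.

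Finally, (2)$\Longrightarrow$(1) is Proposition 6.10(2). Given an open cover $\mathcal{U}$ of $X$, I would form the family of sets $\Box(\bigcup\mathcal{F})$ with $\mathcal{F}$ a finite subfamily of $\mathcal{U}$. Each $K\in\mathcal{Q}(X)$ is compact, so this family covers $\mathcal{Q}(X)$. Compactness of $P_S(X)$ yields finitely many such sets that still cover $\mathcal{Q}(X)$. To conclude, I use $X=\bigcup\mathcal{Q}(X)$, which holds because each saturated set $\up x$ is compact; this gives a finite subcover of $X$.

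The only point needing any care is this last step, namely checking that $\up x\in\mathcal{Q}(X)$ for every $x$, so that $\bigcup\mathcal{Q}(X)=X$. It is immediate, since any open set containing $x$ contains $\up x$. So the main obstacle is essentially nil, and the proof is a direct citation of Proposition 6.10 together with Remark 3.2(1).
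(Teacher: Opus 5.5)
Your proof is correct and takes the route the paper intends: the corollary is given without a separate proof, as an immediate consequence of parts (1) and (2) of the preceding proposition on $P_S(X)$ (numbered 6.11 in the paper, not 6.10) together with Remark 3.2(1) that every $SI$-compact space is compact, and your arguments for those parts match the paper's. Your explicit check that $\up x\in\mathcal{Q}(X)$ for every $x$, so that $X=\bigcup\mathcal{Q}(X)$, fills in a step the paper uses without comment.
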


\quad Unfortunately, the SI-compactness of $P_{S}(X)$ does not imply the SI-compactness of $X$ (See example 6.13).

\begin{example} Let $X$ be a compact and non SI-compact space. By Proposition 6.12, $P_{S}(X)$ is SI-compact but $X$ is not.
\end{example}

\begin{proposition} Let $X$ be a topological space. Then the following statements are equivalent.

(a) $X$ is a SI-compact space;

(b) $P_{H}(X)$ is SI-compact.
\end{proposition}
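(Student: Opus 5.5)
The plan is to transfer Scott irreducible open covers back and forth between $\mathcal{O}(X)$ and $\mathcal{O}(P_{H}(X))$ along two maps that preserve arbitrary unions. Such maps preserve directed suprema, so they are Scott continuous, and by property (3) of irreducible sets they send Scott irreducible sets to Scott irreducible sets. This is the same pattern used in Proposition 3.3 and Theorems 6.3 and 6.8. The two maps are
\[
\varphi:\mathcal{O}(X)\longrightarrow\mathcal{O}(P_{H}(X)),\ \varphi(U)=\diamond U,
\qquad
\psi:\mathcal{O}(P_{H}(X))\longrightarrow\mathcal{O}(X),\ \psi(\mathfrak{U})=\{x\in X\mid cl(\{x\})\in\mathfrak{U}\}.
\]

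First I check the basic properties. We have $\diamond(\bigcup_{i}U_{i})=\bigcup_{i}\diamond U_{i}$, so $\varphi$ preserves all unions. Moreover $\diamond U=\Gamma_{0}(X)$ iff $U=X$: if $U\neq X$, then $X\setminus U$ is a nonempty closed set missing $U$. The map $\psi$ is the preimage map of $\eta:X\to P_{H}(X)$, $x\mapsto cl(\{x\})$. This $\eta$ is continuous because $\eta^{-1}(\diamond U)=U$ for every open $U$. Hence $\psi$ preserves all unions and $\psi(\Gamma_{0}(X))=X$.

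The key extra fact is that $\psi(\mathfrak{U})=X$ forces $\mathfrak{U}=\Gamma_{0}(X)$. Every open set of $P_{H}(X)$ is an upper set in the specialization order, which is inclusion, since the subbasic sets $\diamond U$ are upward closed. Every $F\in\Gamma_{0}(X)$ contains some $cl(\{x\})$. If $cl(\{x\})\in\mathfrak{U}$ for all $x$, then $F\in\mathfrak{U}$.

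(a)$\Rightarrow$(b). Let $\mathcal{U}$ be a Scott irreducible open cover of $P_{H}(X)$. Then $\psi(\mathcal{U})$ is Scott irreducible in $\Sigma\mathcal{O}(X)$. It covers $X$, because each $cl(\{x\})$ lies in some member of $\mathcal{U}$. By the $SI$-compactness of $X$ we get $\psi(\mathfrak{U})=X$ for some $\mathfrak{U}\in\mathcal{U}$, so $\mathfrak{U}=\Gamma_{0}(X)\in\mathcal{U}$.

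(b)$\Rightarrow$(a). Let $\mathcal{U}$ be a Scott irreducible open cover of $X$. Then $\varphi(\mathcal{U})$ is Scott irreducible in $\Sigma\mathcal{O}(P_{H}(X))$. It covers $\Gamma_{0}(X)$, since any nonempty closed $F$ contains a point lying in some $U\in\mathcal{U}$, so $F\in\diamond U$. By (b), $\diamond U=\Gamma_{0}(X)$ for some $U\in\mathcal{U}$, and hence $U=X\in\mathcal{U}$.

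I expect the main point needing care to be the upper-set argument giving $\psi(\mathfrak{U})=X\Rightarrow\mathfrak{U}=\Gamma_{0}(X)$, which relies on the open sets of the lower powerspace being inclusion-upward closed. The rest is routine verification that union-preserving maps are Scott continuous.
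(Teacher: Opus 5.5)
Your proposal is correct and follows essentially the same route as the paper. For (a)$\Rightarrow$(b) it uses the preimage map of $x\mapsto cl(\{x\})$ together with the fact that open sets of $P_{H}(X)$ are upper sets under inclusion. For (b)$\Rightarrow$(a) it uses the union-preserving map $U\mapsto\diamond U$ together with the fact that $\diamond U=\Gamma_{0}(X)$ forces $U=X$.
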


\begin{proof}${\rm(a)\Longrightarrow(b)}$ Note that $\xi:X\longrightarrow P_{H}(X)$ is continuous, where $\xi(x)=cl(\{x\})$, for all$\ x\in X$. Thus $\xi^{-1}:\mathcal{O}(P_{H}(X))\longrightarrow\mathcal{O}(X)$ is continuous with respect to the Scott topology. Suppose $\mathcal{F}\subseteq\mathcal{O}(P_{H}(X))$ is a Scott irreducible open cover of $\Gamma_{0}(X)$. Then $\xi^{-1}(\mathcal{F})$ is also irreducible. Clearly, $\xi^{-1}(\mathcal{F})$ covers $X$ since $\{cl(\{x\})\mid x\in X\}\subseteq \bigcup\mathcal{F}$. It follows from the SI-compactness of $X$ that $X\in\xi^{-1}(\mathcal{F})$. Choose a $F\in\mathcal{F}$ with $X=\xi^{-1}(F)$. Let $F=\bigcup\limits_{i\in I}(\bigcap\limits_{j\in J_{i}}\diamond U_{j})$, where  $J_{i}$ is finite for each $i\in I$ and $\{U_{j}\mid j\in J_{i}, i\in I\}\subseteq\mathcal{O}(X)$. If $A\in\Gamma_{0}(X)$ and $a\in A$, then $a\in\xi^{-1}(\mathcal{F})$ and hence $cl(\{a\})\in F$. Clearly, $F$ is upper in $\Gamma_{0}(X)$. Then we have $A\in F$. This implies $\Gamma_{0}(X)=F$. Therefore, $P_{H}(X)$ is SI-compact.

${\rm(b)\Longrightarrow(a)}$ Define a map $\eta:\Sigma(\mathcal{O}(X))\longrightarrow\Sigma(\mathcal{O}(P_{H}(X)))$ by $\eta(U)=\diamond U$. Obviously, $\eta$ is continuous since $\eta(U)$ preserves arbitrary unions. Let $\mathcal{E}\subseteq\mathcal{O}(X)$ be a Scott irreducible open cover of $X$. Then $\eta(\mathcal{E})$ is also irreducible in $\Sigma(\mathcal{O}(P_{H}(X)))$. If $B\in\Gamma_{0}(X)$ and $b\in B$, then $b\in E$ for some $E\in\mathcal{E}$. So we have $B\in\diamond E$ and hence $\Gamma_{0}(X)=\bigcup\eta(\mathcal{E})$. As $P_{H}(X)$ is SI-compact, $\Gamma_{0}(X)\in\eta(\mathcal{E})$. Take a $E_{0}\in\mathcal{E}$ such that $\Gamma_{0}(X)=\diamond E_{0}$. Especially, $\{cl(\{x\})\mid x\in X\}\subseteq\diamond E_{0}$. It is easy to deduce that $X=E_{0}$. Thus $X$ is SI-compact.\end{proof}

\section{Conclusions }
In this paper, we mainly discussed the $SI$-compactness and $\upsilon$-compactness of topological spaces. Specifically, we gave a new method for obtaining a Scott non sober frame by using the non $SI$-compact space and we further got a countable $T_{1}$ space whose open set lattice is Scott non sober. However, we still did not know the answers of the following questions.

\begin{problem} {\rm Is every $\sigma_{\mathcal{C}}$-compact space $SI$-compact?}
\end{problem}

\begin{problem} {\rm Is $(\widehat{P},\omega(\widehat{P}))$ sober for the dcpo $\widehat{P}$ presented in Example 4.8?}
\end{problem}

\begin{problem} {\rm Let $X$ be a countable Hausdorff space. Is $\Sigma\mathcal{O}(X)$ sober? In particular, for the rational numbers subspace $\mathbb{Q}$ of real line $\mathbb{R}$, is  $\Sigma \mathcal{O}(\mathbb{Q})$ sober?}
\end{problem}

\begin{problem} {\rm Let $X$ and $Y$ be a pair of compact spaces such that $\Sigma \mathcal{O}(X)$ and $\Sigma \mathcal{O}(Y)$ are sober. Is $X\times Y$ always $SI$-compact?}
\end{problem}
\vspace{0.3cm}

{\bf Conflict of Interest Statement:} On behalf of all authors, the corresponding author states that there is no conflict of interest.

\vspace{0.2cm}
{\bf Date Availability Statement:} No new data are created.

\newpage

\section{Appendix}

{\bf 1. The proof of case (2) in Lemma 4.9}

For convenience, we let $A\cap (\{n_{0}\}\times \mathbb{N})=\{(n_{0},a_{k})\mid k\in\mathbb{N}\}$, where $a_{1}< a_{2}<\cdot\cdot\cdot <a_{k}<\cdot\cdot\cdot$ in $\mathbb{N}$.
Using strict order $\prec_{2}$, we know
\setlength{\baselineskip}{0.7\baselineskip}

 \begin{center}$\begin{array}{lll}
\up(n_{1},y_{1}y_{2}\cdot\cdot\cdot y_{m})\cap max(\widehat{P})&=&\{(n_{1},\top)\}\cup\{(f_{n_{1},r}(y'_{1}y'_{2}\cdot\cdot\cdot y'_{k'_{1}}),\top)\mid n_{1}<r,\\
&& y_{1}y_{2}\cdot\cdot\cdot y_{m}\leq y'_{1}y'_{2}\cdot\cdot\cdot y'_{k'_{1}},k'_{1},r\in\mathbb{N}\}.\\
\end{array}$\end{center}

 Then $\{(n_{0},a_{k_{0}}),(n_{1},y_{1}y_{2}\cdot\cdot\cdot y_{m})\}$ has an upper bound $z$ in $\widehat{P}$. Since $\{a_{k_{0}}\}$ has infinitely many possibilities, $z=(n_{0},\top)$ and $(n_{1},y_{1}y_{2}\cdot\cdot\cdot y_{m})\prec_{2} (n_{0},\top)$.

Now, we have $z\in\{(f_{n_{1},r}(y_{1}y_{2}\cdot\cdot\cdot y_{k'_{1}}),\top)\mid n_{1}<r, y_{1}y_{2}\cdot\cdot\cdot y_{m}\leq y'_{1}y'_{2}\cdot\cdot\cdot y'_{k'_{1}},k'_{1},r\in\mathbb{N}\}$. Let $z=(f_{n_{1},r_{0}}(y'_{1}y'_{2}\cdot\cdot\cdot y'_{k'_{1}}),\top)$ for some $n_{1}<r_{0}$ and $y_{1}y_{2}\cdot\cdot\cdot y_{m}\leq y'_{1}y'_{2}\cdot\cdot\cdot y'_{k'_{1}}$ in $\mathbb{N}^{<\omega}$.

Then $z=(f_{n_{1},r_{0}}(y'_{1}y'_{2}\cdot\cdot\cdot y'_{k'_{1}}),\top)=(n_{0},\top)$.

Next, we claim that $A\subseteq\dn(f_{n_{1},r_{0}}(y'_{1}y'_{2}\cdot\cdot\cdot y'_{k'_{1}}),\top)$.

Assume that $A\not\subseteq\dn(f_{n_{1},r_{0}}(y'_{1}y'_{2}\cdot\cdot\cdot y'_{k'_{1}}),\top)$. Take a $(n_{1},m''_{3})\in A\setminus\dn(f_{n_{1},r_{0}}(y'_{1}y'_{2}\cdot\cdot\cdot y'_{k'_{1}}),\top)$. Note that
\begin{center}$\begin{array}{lll}
\dn(f_{n_{1},r_{0}}(y'_{1}y'_{2}\cdot\cdot\cdot y'_{k_{1}'}),\top)&=&L_{f_{n_{1},r_{0}}(y'_{1}y'_{2}\cdot\cdot\cdot y'_{k_{1}'})}\cup\{(n_{1},y'_{1}y'_{2}\cdot\cdot\cdot y'_{r}))\mid 1\leq r\leq k_{1}'\},\\
&& \cup\{(f_{n_{1},r_{0}}(y'_{1}y'_{2}\cdot\cdot\cdot y'_{k_{1}'-1}),a)\mid 1\leq a\leq y'_{k_{1}'-1}\}.\\
\end{array}$\end{center}
We just consider the following two possibilities (2.1) and (2.2).

(2.1) $m''_{3}\in\mathbb{N}$. $\{(n_{0},a_{k_{0}}),(n_{1},m''_{3}),(n_{1},y_{1}y_{2}\cdot\cdot\cdot y_{m})\}$ has an upper bound $\widetilde{z}'$. As $n_{0}\neq n_{1}$, $\widetilde{z}'=(n_{0},\top)$ or $\widetilde{z}'=(n_{1},\top)$. As $(n_{1},m''_{3})\in A\setminus\dn(f_{n_{1},r_{0}}(y'_{1}y'_{2}\cdot\cdot\cdot y'_{k_{1}'}),\top)=\dn(n_{0},\top)$, $\widetilde{z}'\neq(n_{0},\top)$. Now, $(n_{0},a_{k_{0}})\prec_{3}\widetilde{z}'=(n_{1},\top)$ or $(n_{0},a_{k_{0}})\prec_{4}\widetilde{z}'=(n_{1},\top)$.

If $(n_{0},a_{k_{0}})\prec_{3}\widetilde{z}'=(n_{1},\top)$, then $n_{1}=f_{\overline{r}_{1}'''n_{0}}(\widetilde{a}''''_{k_{0}})$ for some $\overline{r_{1}}'''<n_{0}$ and  $a_{k_{0}}\leq\widetilde{a}''''_{k_{0}}$. Whence, $n_{1}=f_{\overline{r_{1}}'''n_{0}}(\widetilde{a}''''_{k_{0}})>n_{0}$ and $n_{0}=f_{n_{1},r_{0}}(y'_{1}y'_{2}\cdot\cdot\cdot y'_{k_{1}'})>n_{1}$, impossible.

If $(n_{0},a_{k_{0}})\prec_{4}\widetilde{z}'=(n_{1},\top)$, combining $n_{0}=f_{n_{1},r_{0}}(y'_{1}y'_{2}\cdot\cdot\cdot y'_{k_{1}'})$, then we know $n_{1}=f_{n_{1},r_{0}}(y'_{1}y'_{2}\cdot\cdot\cdot y'_{k_{1}'}\overline{a}'''_{k_{0}})$ for some $a_{k_{0}}\leq \overline{a}'''_{k_{0}}$. Whence, $n_{1}=f_{n_{1},r_{0}}(y'_{1}y'_{2}\cdot\cdot\cdot y'_{k_{1}'}\overline{a}'''_{k_{0}})>n_{1}$, impossible.

(2.2) If $m''_{3}\in\mathbb{N}^{<\omega}\setminus\{(n_{1},y'_{1}y'_{2}\cdot\cdot\cdot y'_{r}))\mid 1\leq r\leq k_{1}'\}$, then $\{(n_{0},a'_{\widehat{k}_{0}}),(n_{1},m''_{3}),(n_{1},y_{1}y_{2}\cdot\cdot\cdot y_{m})\}$ has an upper bound $\widetilde{z}''$ for some $max\{a_{k_{0}},y_{1},y_{2},\cdot\cdot\cdot,y_{m},\phi_{1},\phi_{2},\cdot\cdot\cdot,\phi_{\overline{k}'''}\}<a'_{\widehat{k}_{0}}$, where $m''_{3}=\phi_{1}\phi_{2}\cdot\cdot\cdot\phi_{\overline{k}'''}$. Obviously, $\widetilde{z}''\in\up(n_{1},y_{1}y_{2}\cdot\cdot\cdot y_{m})\cap max(\widehat{P})$.

If $\widetilde{z}''=(n_{1},\top)$, then  $(n_{0},a_{\widehat{k}_{0}})\prec_{3}\widetilde{z}''=(n_{1},\top)$ or $(n_{0},a_{\widehat{k}_{0}})\prec_{4}\widetilde{z}''=(n_{1},\top)$. Then we know $n_{0}<n_{1}$, it is impossible for $f_{n_{1},r_{0}}(y'_{1}y'_{2}\cdot\cdot\cdot y'_{k'_{1}})=n_{0}>n_{1}$.

If $(n_{1},y_{1}y_{2}\cdot\cdot\cdot y_{m})\prec_{2}\widetilde{z}''$, then $\widetilde{z}''=(f_{n_{1}\widetilde{r}''_{0}}(\widetilde{s}''),\top)$ for some $n_{1}<\widetilde{r}''_{0}$ and $y_{1}y_{2}\cdot\cdot\cdot y_{m}\leq\widetilde{s}''$ in $\mathbb{N}^{<\omega}$. Since $(n_{1},m''_{3})\in A\setminus\dn(f_{n_{1},r_{0}}(y'_{1}y'_{2}\cdot\cdot\cdot y'_{k_{1}'}),\top)$, $\widetilde{z}''\neq (n_{0},\top)$. This implies $(n_{0},a'_{\widehat{k}_{0}})\prec_{3}\widetilde{z}''$ or $(n_{0},a'_{\widehat{k}_{0}})\prec_{4}\widetilde{z}''$. In addition, according to $(n_{1},m''_{3})\prec_{2}\widetilde{z}''$, we have $m''_{3}\leq\widetilde{s}''$ in $\mathbb{N}^{<\omega}$. Suppose $(n_{0},a'_{\widehat{k}_{0}})\prec_{3}\widetilde{z}''$. Then $len (\widetilde{s}'')=1$, $n_{0}=\widetilde{r}''_{0}$ and $a'_{\widehat{k}_{0}}\leq\widetilde{s}''$. As $len (\widetilde{s}'')=1$ and $y_{1}y_{2}\cdot\cdot\cdot y_{m}\leq\widetilde{s}''$, we know $y_{1}=\widetilde{s}''$ and then $a'_{\widehat{k}_{0}}\leq y_{1}$, a contradiction. Suppose $(n_{0},a'_{\widehat{k}_{0}})\prec_{4}\widetilde{z}''$. Since $n_{0}=f_{n_{1},r_{0}}(y'_{1}y'_{2}\cdot\cdot\cdot y'_{k_{1}'})$, we have $f_{n_{1}\widetilde{r}''_{0}}(\widetilde{s}'')=f_{n_{1},r_{0}}(y'_{1}y'_{2}\cdot\cdot\cdot y'_{k_{1}}\overline{y}''')$ for some $a'_{\widehat{k}_{0}}\leq \overline{y}'''$.
So we have $\widetilde{s}''=y'_{1}y'_{2}\cdot\cdot\cdot y'_{k_{1}'}\overline{y}'''\geq m''_{3}=\phi_{1}\phi_{2}\cdot\cdot\cdot\phi_{\overline{k}'''}$. Since $\overline{y}'''\geq a'_{\widehat{k}_{0}}>\phi_{\overline{k}'''}$, $y'_{1}y'_{2}\cdot\cdot\cdot y'_{k_{1}'}\geq \phi_{1}\phi_{2}\cdot\cdot\cdot\phi_{\overline{k}'''}$. This means that $m'_{3}\in\{(n_{1},y'_{1}y'_{2}\cdot\cdot\cdot y'_{r}))\mid 1\leq r\leq k_{1}'\} $, a contradiction.

In a conclusion, $A$ has an upper bound in case (2).

{\bf 2. The proof of case(3) in Lemma 4.9}




Take a $(n_{0},w_{1}w_{2}\cdot\cdot\cdot w_{\widetilde{m}})\in A\cap (\{n_{0}\}\times \mathbb{N}^{<\omega})$, $\{(n_{0},w_{1}w_{2}\cdot\cdot\cdot w_{\widetilde{m}}),(n_{1},\hat{m}_{1})\}$ has an upper bound $l'$. Since $n_{0}\neq n_{1}$, $l'\in\up(n_{0},w_{1}w_{2}\cdot\cdot\cdot w_{\widetilde{m}})\cap max(\widehat{P})$. Note that

\begin{center}$\begin{array}{lll}
\up(n_{0},w_{1}w_{2}\cdot\cdot\cdot w_{\widetilde{m}})\cap max(\widehat{P})&=&\{(n_{0},\top)\}\cup\{(f_{n_{0}q}(w'_{1}w'_{2}\cdot\cdot\cdot w'_{\widetilde{m}'}),\top)\mid n_{0}<q,\\
&&  w_{1}w_{2}\cdot\cdot\cdot w_{\widetilde{m}}\leq w'_{1}w'_{2}\cdot\cdot\cdot w'_{\widetilde{m}'}\}.\\
\end{array}$\end{center}

Since $A\cap (\{n_{0}\}\times \mathbb{N}^{<\omega})$ is infinite, $l'=(n_{0},\top)$. Then we can see $(n_{1},\hat{m}_{1})<_{3}l'=(n_{0},\top)$ or $(n_{1},\hat{m}_{1})<_{4}l'=(n_{0},\top)$.

(3.1) $(n_{1},\hat{m}_{1})<_{3}l'=(n_{0},\top)$.

Then $n_{0}=f_{pn_{1}}(\hat{m}'_{1})$ for some $p<n_{1}$ and $\hat{m}'_{1}\leq\hat{m}_{1}$.

Claim: $A\subseteq \dn (n_{0},\top)=\dn(f_{pn_{1}}(\hat{m}'_{1}),\top)$.

Assume that $A\not\subseteq \dn(n_{0},\top)=\dn(f_{pn_{1}}(\hat{m}'_{1}),\top)$. Then there is a $(n_{1},m_{5})\in A\setminus\dn(f_{pn_{1}}(\hat{m}_{1}),\top)$. Note that $$\dn(f_{pn_{1}}(\hat{m}_{1}),\top)=L_{f_{pn_{1}}(\hat{m}_{1})}\cup\{(p,\hat{m}_{1})\}\cup\{(n_{1},j)\mid 1\leq j\leq \hat{m}'_{1}\},$$ where $(p,\hat{m}_{1})\in\{p\}\times\mathbb{N}^{<\omega}$. There are only 2 possibilities (3.1.1-3.1.2).

(3.1.1) $m_{5}\in\mathbb{N}$ with $m_{5}>\hat{m}'_{1}$.

 Then $\{(n_{0},w_{1}w_{2}\cdot\cdot\cdot w_{\widetilde{m}}),(n_{1},m_{5})\}$ has an upper bound $\widetilde{l}$. Similarly, $\widetilde{l}\in\up(n_{0},w_{1}w_{2}\cdot\cdot\cdot w_{\widetilde{m}})\cap max(\widehat{P})$. Thus,  $\widetilde{l}\in\{(f_{n_{0}q}(w'_{1}w'_{2}\cdot\cdot\cdot w'_{\widetilde{m}'}),\top)\mid n_{0}<q, w_{1}w_{2}\cdot\cdot\cdot w_{\widetilde{m}}\leq w'_{1}w'_{2}\cdot\cdot\cdot w'_{\widetilde{m}'}\}$ or $\widetilde{l}=(n_{0},\top)$.

If $\widetilde{l}=(n_{0},\top)$, then $(n_{1},m_{5})\prec_{3}\widetilde{l}=n_{0}=f_{pn_{1}}(\hat{m}'_{1})$. Whence, $m_{5}\leq\hat{m}'_{1}$, impossible.

If $\widetilde{l}\in\{(f_{n_{0}q}(w'_{1}w'_{2}\cdot\cdot\cdot w'_{\widetilde{m}'}),\top)\mid n_{0}<q, w_{1}w_{2}\cdot\cdot\cdot w_{\widetilde{m}}\leq w'_{1}w'_{2}\cdot\cdot\cdot w'_{\widetilde{m}'}\}$, or equivalently, $\widetilde{l}=(f_{n_{0}\widetilde{q}}(w'_{1}w'_{2}\cdot\cdot\cdot w'_{\widetilde{m}''}),\top)$ for some $n_{0}<\widetilde{q}$ and $w_{1}w_{2}\cdot\cdot\cdot w_{\widetilde{m}}\leq w'_{1}w'_{2}\cdot\cdot\cdot w'_{\widetilde{m}''}$ , then $(n_{1},m_{5})\prec_{3}\widetilde{l}$ or $(n_{1},m_{5})\prec_{4}\widetilde{l}$.

If $\widetilde{m}''=1$, then $(n_{1},m_{5})<_{3}\widetilde{l}=(f_{n_{0}\widetilde{q}}(w'_{1}),\top)$. Thus, $n_{1}=\widetilde{q}>n_{0}$. It contradicts with $n_{0}=f_{pn_{1}}(\hat{m}_{1})>n_{1}$, impossible.

If $1<\widetilde{m}$, then $(n_{1},m_{5})\prec_{4}\widetilde{l}=(f_{n_{0}\widetilde{q}}(w'_{1}w'_{2}\cdot\cdot\cdot w'_{\widetilde{m}''}),\top)$. Hence,  $n_{1}=f_{n_{0}\widetilde{q}}(w'_{1}w'_{2}\cdot\cdot\cdot w'_{\widetilde{m}''-1})$ and $m_{5}\leq w'_{\widetilde{m}''}$. Now, we have $n_{1}=f_{n_{0}\widetilde{q}}(w'_{1}w'_{2}\cdot\cdot\cdot w'_{\widetilde{m}''-1})>n_{0}$ and $n_{0}=f_{pn_{1}}(\hat{m}_{1})>n_{1}$, impossible.

(3.1.2) $m_{5}\in \mathbb{N}^{<\omega}$. For convenience, we let $m_{5}=\theta_{1}\theta_{2}\cdot\cdot\cdot\theta_{q_{1}}\in\mathbb{N}^{<\omega}$. Then we have $\{(n_{0},w_{1}w_{2}\cdot\cdot\cdot w_{\widetilde{m}}),(n_{1},\hat{m}_{1}),(n_{1},\theta_{1}\theta_{2}\cdot\cdot\cdot\theta_{q_{1}})\}$ has an upper bound $l_{0}$. As $n_{0}\neq n_{1}$, $l_{0}=(n_{0},\top)$ or $l_{0}=(n_{1},\top)$. As $(n_{1},m_{5})\in A\setminus\dn(f_{pn_{1}}(\hat{m}_{1}),\top)=\dn(n_{0},\top)$, $l_{0}\neq(n_{0},\top)$. Hence, we know $l_{0}=(n_{1},\top)$. Then $(n_{1},\theta_{1}\theta_{2}\cdot\cdot\cdot\theta_{q_{1}})\prec_{2}l_{0}=(n_{0},\top)$. As a result, $n_{0}=f_{n_{1}q^{\circ}}(\theta'_{1}\theta'_{2}\cdot\cdot\cdot\theta'_{q'_{1}})$ for some $n_{1}<q^{\circ}$ and $\theta_{1}\theta_{2}\cdot\cdot\cdot\theta_{q_{1}}\leq\theta'_{1}\theta'_{2}\cdot\cdot\cdot\theta'_{q'_{1}}$ in $\mathbb{N}^{<\omega}$. Now, we can see $n_{0}=f_{pn_{1}}(\hat{m}'_{1})=f_{n_{1}q^{\circ}}(\theta'_{1}\theta'_{2}\cdot\cdot\cdot\theta'_{q'_{1}})$ and hence $p=n_{1}<n_{1}$, a contradiction.

Consequently, $A\subseteq \dn (n_{0},\top)=\dn(f_{pn_{1}}(\hat{m}'_{1}),\top)$ whenever $(n_{1},\hat{m}_{1})\prec_{3}l'=(n_{0},\top)$.

(3.2) $(n_{1},\hat{m}_{1})\prec_{4}l'=(n_{0},\top)$.

Then $n_{1}=f_{c'd'}(o')$ and $n_{0}=f_{c'd'}(o'\hat{m}''_{1})$ for some $c',d',\hat{m}''_{1}\in\mathbb{N}$, $o'\in\mathbb{N}^{<\omega}$ and $\hat{m}_{1}\leq\hat{m}''_{1}$.

Claim: $A\subseteq\dn (n_{0},\top)=\dn(f_{c'd'}(o'\hat{m}''_{1}),\top)$.

Assume that $A\not\subseteq\dn(f_{c'd'}(o'\hat{m}_{1}),\top)$. Then we can take a $(n_{1},m_{6})\in A\setminus\dn(f_{c'd'}(o'\hat{m}_{1}),\top)$. Note that

\setlength{\baselineskip}{1.0\baselineskip}

\begin{center}$\begin{array}{lll}
\dn(f_{c'd'}(o'\hat{m}''_{1}),\top)=&=&L_{f_{c'd'}(o'\hat{m}_{1})}\cup\{(c',s'')\mid s''\leq o'\hat{m}''_{1}, s''\in\mathbb{N}^{<\omega}\}\\
&& \cup\{(n_{1},g)\mid 1\leq g\leq \hat{m}''_{1}\},\\
\end{array}$\end{center}

where $\{(c',o'),(c',o'\hat{m}''_{1})\}\subseteq\{c'\}\times\mathbb{N}^{<\omega}$.
So there are only 2 possibilities (3.2.1-3.2.2).

(3.2.1) $m_{6}\in\mathbb{N}$ with $\hat{m}''_{1}<m_{6}$.

Then $\{(n_{0},w_{1}w_{2}\cdot\cdot\cdot w_{\widetilde{m}}),(n_{1},m_{6})\}$ has an upper bound $l_{2}$. Clearly, we can see
$l_{2}\in\up(n_{0},w_{1}w_{2}\cdot\cdot\cdot w_{\widetilde{m}})\cap max(\widehat{P})$. Suppose $l_{2}=(n_{0},\top)=(f_{c'd'}(o'\hat{m}''_{1}),\top)$. Then $(n_{1},m_{6})\prec_{4}(f_{c'd'}(o'\hat{m}''_{1}),\top)$. Hence, $m_{6}\leq\hat{m}''_{1}$, a contradiction. Suppose $l_{2}=(f_{n_{0}q_{2}}(w''_{1}w''_{2}\cdot\cdot\cdot w''_{\widetilde{m}''}),\top)$ for some $n_{0}<q_{2}$ and $w_{1}w_{2}\cdot\cdot\cdot w_{\widetilde{m}}\leq w''_{1}w''_{2}\cdot\cdot\cdot w''_{\widetilde{m}''}$. If $\widetilde{m}''=1$, then
$(n_{1},m_{6})\prec_{3}(f_{n_{0}q_{2}}(w''_{1}),\top)$. So we have $n_{1}=q_{2}>n_{0}$. This contradicts $n_{1}=f_{c'd'}(o')<f_{c'd'}(o'\hat{m}''_{1})=n_{0}$. If $1<\widetilde{m}$, then $(n_{1},m_{6})\prec_{4}(f_{n_{0}q_{2}}(w''_{1}w''_{2}\cdot\cdot\cdot w''_{\widetilde{m}''}),\top)$. Therefore, $n_{1}=f_{n_{0}q_{2}}(w''_{1}w''_{2}\cdot\cdot\cdot w''_{\widetilde{m}''-1})>n_{0}$, impossible.

(3.2.2) $m_{6}\in\mathbb{N}^{<\omega}$. Then $\{(n_{0},w_{1}w_{2}\cdot\cdot\cdot w_{\widetilde{m}}),(n_{1},\hat{m}_{1}),(n_{1},m_{6})\}$ has an upper bound $l_{3}$. Since $n_{0}\neq n_{1}$, $l_{3}=(n_{0},\top)$ or $l_{3}=(n_{1},\top)$. Suppose $l_{3}=(n_{0},\top)=(f_{c'd'}(o'\hat{m}''_{1}),\top)$. Then $(n_{1},m_{6})\prec_{2}l_{3}=(f_{c'd'}(o'\hat{m}''_{1}),\top)$. Whence, $n_{1}=c'<f_{c'd'}(o')=n_{1}$, a contradiction. Suppose $l_{3}=(n_{1},\top)$. This implies $(n_{0},w_{1}w_{2}\cdot\cdot\cdot w_{\widetilde{m}})\prec_{2}(n_{1},\top)$ and $(n_{1},\hat{m}_{1})\leq(n_{1},\top)$. It follows from $(n_{0},w_{1}w_{2}\cdot\cdot\cdot w_{\widetilde{m}})\prec_{2}(n_{1},\top)=(f_{c'd'}(o'),\top)$ that there are $n_{0}<q_{3}$ and $w_{1}w_{2}\cdot\cdot\cdot w_{\widetilde{m}}\leq\overline{w}'''_{1}\overline{w}'''_{2}\cdot\cdot\cdot \overline{w}'''_{\widetilde{m}'''}$ in $\mathbb{N}^{<\omega}$ such that $f_{c'd'}(o')=f_{n_{0}q_{3}}(\overline{w}'''_{1}\overline{w}'''_{2}\cdot\cdot\cdot \overline{w}'''_{\widetilde{m}'''})$.  Consequently, $n_{0}=c'<f_{c'd'}(o'\hat{m}''_{1})=n_{0}$, impossible.

Consequently, $A\subseteq\dn (n_{0},\top)=\dn(f_{c'd'}(o'\hat{m}''_{1}),\top)$, whenever $(n_{1},\hat{m}_{1})\prec_{4}l'=(n_{0},\top)$.

In a conclusion, $A$ has an upper bound in case (3).

{\bf 3 The proof of case(4) in Lemma 4.9.}



Choose a $(n_{0},x_{1}x_{2}\cdot\cdot\cdot x_{\widetilde{r}})\in A\cap (\{n_{0}\}\times \mathbb{N}^{<\omega})$. Then $\{(n_{0},x_{1}x_{2}\cdot\cdot\cdot x_{\widetilde{r}}),(n_{1},e_{1}e_{2}\cdot\cdot\cdot e_{k_{1}^{\prime}})\}\in \mathbf{Fin}(A)$ has an upper bounds $h^{\ast}$. Since $n_{0}\neq n_{1}$, $h^{\ast}\in\up(n_{0},x_{1}x_{2}\cdot\cdot\cdot x_{\widetilde{r}})\cap max(\widehat{P})$.
Since $A\cap (\{n_{0}\}\times \mathbb{N}^{<\omega})$ is infinite, $h^{\ast}=(n_{0},\top)$ and $(n_{1},e_{1}e_{2}\cdot\cdot\cdot e_{k_{1}^{\prime}})<_{2}h^{\ast}=(n_{0},\top)$. So we have $n_{0}=f_{n_{1}t'_{1}}(e'_{1}e'_{2}\cdot\cdot\cdot e'_{\overline{k}_{1}^{\prime}})$ for some $n_{1}<t'_{1}$ and $e_{1}e_{2}\cdot\cdot\cdot e_{k_{1}^{\prime}}\leq e'_{1}e'_{2}\cdot\cdot\cdot e'_{\overline{k}_{1}^{\prime}}$.

Now, we claim that $A\subseteq\dn(n_{0},\top)=\dn(f_{n_{1}t'_{1}}(e'_{1}e'_{2}\cdot\cdot\cdot e'_{\overline{k}_{1}^{\prime}}),\top)$.

Assume that $A\not\subseteq\dn(f_{n_{1}t'_{1}}(e'_{1}e'_{2}\cdot\cdot\cdot e'_{\overline{k}_{1}^{\prime}}),\top)$. Take a $(n_{1},m_{1})\in A\setminus\dn(f_{n_{1}t'_{1}}(e'_{1}e'_{2}\cdot\cdot\cdot e'_{\overline{k}_{1}^{\prime}}),\top)$. Note that

 \begin{center}$\begin{array}{lll}
\dn(f_{n_{1}t'_{1}}(e'_{1}e'_{2}\cdot\cdot\cdot e'_{\overline{k}_{1}^{\prime}}),\top)&=&L_{f_{n_{1}t'_{1}}(e'_{1}e'_{2}\cdot\cdot\cdot e'_{\overline{k}_{1}^{\prime}})}\cup\{(n_{1},(e'_{1}e'_{2}\cdot\cdot\cdot e'_{s}))\mid 1\leq s\leq \overline{k}_{1}^{\prime}\}\\
&&\cup\{(f_{n_{1}t'_{1}}(e'_{1}e'_{2}\cdot\cdot\cdot e'_{\overline{k}_{1}^{\prime}-1}),e)\mid 1\leq e\leq e_{\overline{k}_{1}^{\prime}}\}.\\
\end{array}$\end{center}

Then $\{(n_{0},x_{1}x_{2}\cdot\cdot\cdot x_{\widetilde{r}}),(n_{1},e_{1}e_{2}\cdot\cdot\cdot e_{k_{1}^{\prime}}),(n_{1},m_{1})\}\in\mathbf{Fin}(A)$ has an upper bound $h'_{1}$. Obviously, $h'_{1}\in\up(n_{0},x_{1}x_{2}\cdot\cdot\cdot x_{\widetilde{r}})\cap max(\widehat{P})$. As $(n_{1},m_{1})\not\in\dn(f_{n_{1}t'_{1}}(e'_{1}e'_{2}\cdot\cdot\cdot e'_{\overline{k}_{1}^{\prime}}),\top)$, we have that $(n_{0},x_{1}x_{2}\cdot\cdot\cdot x_{\widetilde{r}})\prec_{2} h'_{1}$. Let $h'_{1}=(f_{n_{0}t'_{1}}(x'_{1}x'_{2}\cdot\cdot\cdot x'_{\widetilde{r}'}),\top)$ with $n_{0}<t'_{1}$ and $x_{1}x_{2}\cdot\cdot\cdot x_{\widetilde{r}}\leq x'_{1}x'_{2}\cdot\cdot\cdot x'_{\widetilde{r}'}$. Assume that $n_{1}=f_{n_{0}t'_{1}}(x'_{1}x'_{2}\cdot\cdot\cdot x'_{\widetilde{r}'})$. Then we have $n_{1}=f_{n_{0}t'_{1}}(x'_{1}x'_{2}\cdot\cdot\cdot x'_{\widetilde{r}'})>n_{0}=f_{n_{1}t'_{1}}(e'_{1}e'_{2}\cdot\cdot\cdot e'_{\overline{k}_{1}^{\prime}})>n_{1}$, impossible. So $n_{1}\neq f_{n_{0}t'_{1}}(x'_{1}x'_{2}\cdot\cdot\cdot x'_{\widetilde{r}'})$ and $(n_{1},e_{1}e_{2}\cdot\cdot\cdot e_{k_{1}^{\prime}})\prec_{2}(f_{n_{0}t'_{1}}(x'_{1}x'_{2}\cdot\cdot\cdot x'_{\widetilde{r}'}),\top)$. Consequently,$n_{0}=n_{1}$,impossible.

Therefore, $A\subseteq\dn(n_{0},\top)=\dn(f_{n_{1}t'_{1}}(e'_{1}e'_{2}\cdot\cdot\cdot e'_{\overline{k}_{1}^{\prime}}),\top)$ in subcase (4).

{\bf 4. The proof of case (6) in Lemma 4.10.}




Choose a $(n_{0},\delta_{1}\delta_{2}\cdot\cdot\cdot\delta_{k_{0}})\in A\cap(\{n_{0}\}\times\mathbb{N}^{<\omega})$, a $(n_{1},k_{1})\in A\cap(\{n_{1}\}\times\mathbb{N})$ and a $(n_{2},\lambda_{1}\lambda_{2}\cdot\cdot\cdot\lambda_{k_{2}})\in A\cap(\{n_{2}\}\times\mathbb{N}^{<\omega})$.
Then $\{(n_{0},\delta_{1}\delta_{2}\cdot\cdot\cdot\delta_{k_{0}}),(n_{1},k_{1}),(n_{2},\lambda_{1}\lambda_{2}\cdot\cdot\cdot\lambda_{k_{2}})\}$ has an upper bound $\overline{g}$. Since $A\cap (\{n_{0}\}\times \mathbb{N}^{<\omega})$ is infinite, $\overline{g}=(n_{0},\top)$, $(n_{1},k_{1})\prec_{3}\overline{g}$ and $(n_{2},\lambda_{1}\lambda_{2}\cdot\cdot\cdot\lambda_{k_{2}})\prec_{2}\overline{g}$ with $k_{2}=1$.

By $(n_{2},\lambda_{1})\prec_{2}\overline{g}=(n_{0},\top)$, we have $n_{0}=f_{n_{2}\overline{h}_{1}}(\lambda_{1})$ for some $n_{2}<\overline{h}$.
Since $(n_{1},k_{1})\prec_{3}\overline{g}=(n_{0},\top)=(f_{n_{2}\overline{h}_{1}}(\lambda_{1}),\top)$, $n_{1}=\overline{h}_{1}$ and $k_{1}\leq\lambda_{1}$.
In the following, we assert that $A\subseteq\dn(n_{0},\top)=\dn(f_{n_{2}\overline{h}_{1}}(\lambda_{1}),\top)$.

Assume that $A\not\subseteq\dn(f_{n_{2}\overline{h}_{1}}(\lambda_{1}),\top)$. Choose a $(n_{3},\overline{k}_{3})\in A\setminus\dn(f_{n_{2}\overline{h}_{1}}(\lambda_{1}),\top)$. Note that
$$\dn(f_{n_{2}\overline{h}_{1}}(\lambda_{1}),\top)=L_{f_{n_{2}\overline{h}_{1}}(\lambda_{1})}\cup\{(n_{2},\lambda_{1})\}\cup\{(\overline{h}_{1},i)\mid 1\leq i\leq \lambda_{1}\},$$ where $(n_{2},\lambda_{1})\in\{n_{1}\}\times\mathbb{N}^{<\omega}$.

(6.1) $n_{3}=n_{1}=\overline{h}_{1}$ and $\overline{k}_{3}\in\mathbb{N}$ with $\lambda_{1}<\overline{k}_{3}$.

Then $\{(n_{0},\delta_{1}\delta_{2}\cdot\cdot\cdot\delta_{k_{0}}),(n_{1},\overline{k}_{3}),(n_{2},\lambda_{1})\}$ has an upper bound $\overline{g}_{3}$. Then $\overline{g}_{3}=(n_{0},\top)$ and $\overline{g}_{3}=(n_{2},\top)$. As $(n_{1},\overline{k}_{3})\in A\setminus\dn(f_{n_{2}\overline{h}_{1}}(\lambda_{1}),\top)=\dn(n_{0},\top)$, $\overline{g}_{3}\neq(n_{0},\top)$. Then $\overline{g}_{3}=(n_{2},\top)$ and $(n_{0},\delta_{1}\delta_{2}\cdot\cdot\cdot\delta_{k_{0}})\prec_{2}\overline{g}_{3}\neq(n_{2},\top)$. This implies $n_{0}<n_{2}$. This contradicts $n_{0}=f_{n_{2}\overline{h}_{1}}(\lambda_{1})>n_{2}$.

(6.2) $n_{3}=n_{1}=\overline{h}_{1}$ and $\overline{k}_{3}\in\mathbb{N}^{<\omega}$.

Then $\{(n_{0},\delta_{1}\delta_{2}\cdot\cdot\cdot\delta_{k_{0}}),(n_{1},k_{1}),(n_{2},\lambda_{1}), (n_{1},\overline{k}_{3})\}$ has an upper bound $\overline{g}'_{3}$. This is impossible since $n_{0},n_{1},n_{2}$ are different and $\delta_{1}\delta_{2}\cdot\cdot\cdot\delta_{k_{0}},\lambda_{1},\overline{k}_{3}\in\mathbb{N}^{<\omega}$.

(6.3) $n_{3}=n_{2}$ and $\overline{k}_{3}\in\mathbb{N}$.

Then $\{(n_{0},\delta_{1}\delta_{2}\cdot\cdot\cdot\delta_{k_{0}}),(n_{1},k_{1}),(n_{2},\lambda_{1}), (n_{2},\overline{k}_{3})\}$ has an upper bound $\overline{g}_{4}$. Since $\delta_{1}\delta_{2}\cdot\cdot\cdot\delta_{k_{0}},\lambda_{1}\in\mathbb{N}^{<\omega}$, $\overline{g}_{4}=(n_{0},\top)$ and $\overline{g}_{4}=(n_{2},\top)$. In addition, as $k_{1},\overline{k}_{3}\in\mathbb{N}$, $\overline{g}_{4}=(n_{1},\top)$ and $\overline{g}_{4}=(n_{2},\top)$. Consequently, $\overline{g}_{4}=(n_{2},\top)$ and then $(n_{0},\delta_{1}\delta_{2}\cdot\cdot\cdot\delta_{k_{0}})\prec_{2}\overline{g}_{4}=(n_{2},\top)$. By strict order $\prec_{2}$, $n_{0}<n_{2}$. However, $n_{0}=f_{n_{2}\overline{h}_{1}}(\lambda_{1})>n_{2}$, a contradiction.

(6.4) $n_{3}=n_{2}$ and $\overline{k}_{3}\in\mathbb{N}^{<\omega}\setminus\{\lambda_{1}\}$.

Then  $\{(n_{0},\delta_{1}\delta_{2}\cdot\cdot\cdot\delta_{k_{0}}),(n_{1},k_{1}),(n_{2},\lambda_{1}), (n_{2},\overline{k}_{3})\}$ has an upper bound $\overline{g}'_{4}$. Clearly, $\overline{g}'_{4}=(n_{0},\top)$ and $\overline{g}'_{4}=(n_{2},\top)$. As $(n_{2},\overline{k}_{3})\in A\setminus\dn(f_{n_{2}\overline{h}_{1}}(\lambda_{1}),\top)=\dn(n_{0},\top)$, $\overline{g}'_{4}\neq(n_{0},\top)$. So $\overline{g}'_{4}=(n_{2},\top)$ and $(n_{0},\delta_{1}\delta_{2}\cdot\cdot\cdot\delta_{k_{0}})\prec_{2}\overline{g}'_{4}=(n_{2},\top)$. Similarly, $n_{0}<n_{2}$ but $n_{0}=f_{n_{2}\overline{h}_{1}}(\lambda_{1})>n_{2}$, a contradiction.

In other words, $A\subseteq\dn(n_{0},\top)=\dn(f_{n_{2}\overline{h}_{1}}(\lambda_{1}),\top)$ in case (6).

{\bf 5. The proof of case (8) in Lemma 4.10.}

Choose a $(n_{0},\rho_{1}\rho_{2}\cdot\cdot\cdot\rho_{u_{0}})\in A\cap(\{n_{0}\}\times\mathbb{N}^{<\omega})$, a $(n_{1},m'_{1})\in A\cap(\{n_{1}\}\times\mathbb{N})$ and a $(n_{2},m'_{2})\in A\cap(\{n_{2}\}\times\mathbb{N})$.
Clearly, $\{(n_{0},\rho_{1}\rho_{2}\cdot\cdot\cdot\rho_{u_{0}}),(n_{1},m'_{1}),(n_{2},m'_{2})\}$
has an upper bound $v_{1}$.
Since $A\cap (\{n_{0}\}\times \mathbb{N}^{<\omega})$ is infinite, we have $v_{1}=(n_{0},\top)$ and the following 2 cases:

(1) $(n_{1},m'_{1})\prec_{3}v_{1}$ and $(n_{2},m'_{2})\prec_{4}v_{1}$;

(2) $(n_{1},m'_{1})\prec_{4}v_{1}$ and $(n_{2},m'_{2})\prec_{3}v_{1}$.

 Case 1: $(n_{1},m'_{1})\prec_{3}v_{1}$ and $(n_{2},m'_{2})\prec_{4}v_{1}$.

 By $(n_{1},m'_{1})\prec_{3}v_{1}$, we have $v_{1}=(n_{0},\top)=(f_{d,n_{1}}(m'_{1}),\top)$ for some $d<n_{1}$. Since $(n_{2},m'_{2})\prec_{4}v_{1}=(n_{2},\top)$, $n_{2}=f_{a,b}(s)$ and $n_{0}=f_{a,b}(sm'_{2})$ for some $a,b\in\mathbb{N}$ and $s\in\mathbb{N}^{<\omega}$. So we know $n_{0}=f_{a,b}(sm'_{2})=f_{d,n_{1}}(m'_{1})$. Whence, $m'_{1}=sm'_{2}$, impossible.

Case 2: $(n_{1},m'_{1})\prec_{4}v_{1}$ and $(n_{2},m'_{2})\prec_{3}v_{1}$.

Impossibly, it is  similar to case 1.

All in all, the case (8) is always impossible.


\newpage
\begin{thebibliography}{s9}







\small {\baselineskip 0.07in


\bibitem{RTT45} S. Abramsky, A. Jung, Domain theory, in: S. Abramsky, D. Gabbay, T. Maibaum (Eds.), Semantic Structures, in: Handbook
of Logic in Computer Science, vol. 3, Clarendon Press, 1994, pp. 1-168.

\bibitem{WM34} M. Brecht, T. Kawai, On the commutativity of the powerspace constructions, Logical Methods in Computer Science, 15(3) (2019)
13:1-13:25.

\bibitem{GHY30} Y. Chen, H. Kou, Z. Lyu, Two topologies on the lattice of Scott closed subsets, Topology and its Applications, 306 (2022) 107918.

\bibitem{AQ21} M. Ern\'{e}, Infinite distributive laws versus local connectedness and compactness properties, Topology and its Applications, 156(12)(2009) 2054-2069.

\bibitem{KDFGY35} M. Ern\'{e}, Categories of locally hypercompact spaces and quasicontinuous posets, Applied Categorical Structures, 26 (2018) 823-854.


\bibitem{RE77} R. Engelking, General Topology, Polish Scientific Publishers, Warszawa, 1977.

\bibitem{GG03} G. Gierz, K. H. Hofmann, K. Keimel, J. D. Lawson, M. W. Mislove and D. S. Scott, Continuous Lattices and Domains, Encyclopedia of Mathematics and its Applications, vol. 93, Cambridge University Press, Cambridge, 2003.

\bibitem{JGKD13} J. Goubault-Larrecq, Non-Hausdorff Topology and Domain Theory, New Mathematical Monographs, vol. 22, Cambridge University Press, New York, 2013.


\bibitem{GGL23}Y. H. Han, S. S. Hong, C. K. Lee, P. U. Park, A generalization of posets, Comm. Korean Math. Soc,4(1) (1989), 129-138.


\bibitem{AGL23} R. Heckmann, An upper power domain construction in terms of strongly compact sets, in: Lecture Notes in Computer
Science, vol. 598, Springer, Berlin Heidelberg New York, 1992, 272-293.



\bibitem{MU13} R. Heckmann, K. Keimel, Quasicontinuous Domains and the Symth Power-domain, Electronic Notes in Theoretical Computer Science, 298(2013) 215-232.

\bibitem{BZ35} Z. He, K. Wang, A compact space is not always $SI$-compact, Rocky Mountain Journal of Mathematics, 52(6)(2022) 2041-2051.


\bibitem{BWR30} Z. He, B. Zhao, Some problems about co-consonant spaces, submitted to Acta Mathematica Sinica, English Series, 2023.

\bibitem{IJ82}  J. Isbell, Completion of a construction of Johnstone, Proceedings of the American Mathematical Society,  85
(1982) 333-334.

\bibitem{SOJ81} P. T. Johnstone, Scott is not always sober, in Continuous Lattices, Lecture Notes in Mathematics, 871 (1981) 282-283.


\bibitem{WSMJ81} H. Kou, Uk-admitting dcpo's need not be sober, in: Domains and Processes, Semantic Structure on Domain Theory, vol.
1, Kluwer, 2001, 41-50.


\bibitem{GOY03} J. D. Lawson, G. Wu, X. Xi. Well-filtered spaces, compactness, and the lower topology, Houston Journal of Mathematics, 46(2020)
    283-294.

\bibitem{KOY35}  S. O. Lee, On countably approximating lattices, J. Korean Math. Soc, 25(1)(1988), 11-23.




\bibitem{GOUY03} H. Miao, Q. Li and D. Zhao, On two problems about sobriety of topological spaces, Topology and its Applications, 295 (2021) 107667.

\bibitem{AB75} H. Miao, X. Xi, Q. Li and D. Zhao. Not every countable complete distributive lattice is sober. Mathematical Structures in Computer Science, 1-23 (2023).

\bibitem{ANVB75} A. Schalk, Algebras for Generalized Power Constructions, PhD Thesis, Technische Hochschule Darmstadt, 1993.

\bibitem{ARU175} W. Thron, Lattice-equivalence of topological spaces, Duke Mathematical Journal, 29 (1962), 671-679.



\bibitem{XSW87} X. Xu, Compact subsets of the Sorgenfrey line (in Chinese), Journal of Jiangxi Polytechnlc University, 12(4)(1990) 1-2.

\bibitem{SDF03} X. Xi, J. D. Lawson, On well-filtered spaces and ordered sets, Topology and its Applications, 228(2017)139-144.

\bibitem{KU13} X. Xu, C. Shen, X. Xi, D. Zhao, On $T_{0}$ spaces determined by well-filtered spaces, Topology and its Applications, 282 (2020) 107323.

\bibitem{HZ16} X. Xu, X. Xi, D. Zhao, A complete Heyting algebra whose Scott space is non-sober, Fundamenta Mathematicae, 252 (2021) 315-323.

\bibitem{HZT96}X. Xu, D. Zhao, Some open problems on well-filtered spaces and sober spaces, Topology and its Applications, 301(2021) 107540.

\bibitem{HQT116} J. Yang, J. Shi, Countably sober spaces, Electronic Notes in Theoretical Computer Science,  333 (2017) 143-151.

\bibitem{DME15} D. Zhao, W. K. Ho, On topologies defined by irreducible sets, Journal of Logical and Algebraic Methods in Programming, 84 (2015) 185-195.
\bibitem{DFG15} D. Zhao, X. Xi and Y. Chen, A new dcpo whose Scott topology is well-filtered but not sober,  Topology and its Applications, 252 (2019),
97-102.

}








\end{thebibliography}
\end{document}